\numberwithin{equation}{section}
\newtheorem{theorem}{Theorem}[section]
\newtheorem{ctheorem}[theorem]{Conjectural Theorem}
\newtheorem{proposition}[theorem]{Proposition}
\newtheorem{observation}[theorem]{Observation}
\newtheorem{prop}[theorem]{Proposition}
\newtheorem{lemma}[theorem]{Lemma}
\newtheorem{corollary}[theorem]{Corollary}
\theoremstyle{definition}
\newtheorem{definition}[theorem]{Definition}
\theoremstyle{remark}
\newtheorem{remark}[theorem]{Remark}
\theoremstyle{remark}
\newtheorem{example}[theorem]{Example}
\theoremstyle{remark}
\newtheorem{note}[theorem]{Note}
\theoremstyle{remark}
\newtheorem{question}[theorem]{Question}
\theoremstyle{remark}
\newtheorem{conjecture}[theorem]{Conjecture}
\newcommand{\dist}{\mathrm{dist}}
\newcommand{\graph}{\mathrm{graph}}
\newcommand{\dom}{\mathrm{dom}}
\newcommand{\spt}{\mathrm{spt}}
\newcommand{\haus}{\mathcal{H}}
\newcommand{\leb}{\mathcal{L}}
\newcommand{\cC}{\mathcal{C}}
\newcommand{\cS}{\mathcal{S}}
\newcommand{\cG}{\mathcal{G}}
\newcommand{\cB}{\mathcal{B}}
\newcommand{\cK}{\mathcal{K}}
\newcommand{\cQ}{\mathcal{Q}}
\newcommand{\cT}{\mathcal{T}}
\newcommand{\cU}{\mathcal{U}}
\newcommand{\cF}{\mathcal{F}}
\newcommand{\proj}{\mathrm{proj}}
\newcommand{\setm}{\setminus}
\newcommand{\eps}{\epsilon}
\newcommand{\R}{\mathds{R}}
\newcommand{\dR}{\mathds{R}}
\newcommand{\dZ}{\mathds{Z}}
\newcommand{\norm}[1]{\left\|#1\right\|}
\newcommand{\ton}[1]{\left(#1\right)}
\newcommand{\cur}[1]{\left\{#1\right\}}
\newcommand{\abs}[1]{\left|#1\right|}
\newcommand{\B}[2]{B_{#1}\ton{#2}}
\title{Quantitative Reifenberg Theorem for Measures}
\author{Nick Edelen}  \address{Nick Edelen - University of Notre Dame (USA)}\email{nedelen@nd.edu}
\author{Aaron Naber}  \address{Aaron Naber - Institute for Advanced Study - Princeton (USA)} \email{anaber@ias.edu}
\author{Daniele Valtorta}  \address{Daniele Valtorta - University of Milano-Bicocca (EU)} \email{daniele.valtorta@unimib.it}
\thanks{The first author was supported by NSF grants DMS-1606492 and DMS-220430, the second author has been supported by NSF grant DMS-1406259, the third author has been supported by SNSF grant 200021\_159403/1}
\date{\today}
\begin{document}
\begin{abstract}

We study generalizations of Reifenberg's Theorem for measures in $\R^n$ under assumptions on the Jones' $\beta$-numbers, which appropriately measure how close the support is to being contained in a subspace.  Our main results, which hold for general measures without density assumptions
, give effective measure bounds on $\mu$ away from a closed $k$-rectifiable set with bounded Hausdorff measure.  We show examples to see the sharpness of our results.  Under further density assumptions one can translate this into a global measure bound and $k$-rectifiable structure for $\mu$.  Applications include quantitative Reifenberg theorems on sets and discrete measures, as well as upper Ahlfor's regularity estimates on measures which satisfy $\beta$-number estimates on all scales.

\end{abstract}
\maketitle

\maketitle
\tableofcontents

\section{Introduction}

In his famous work on the solution of Plateau's problem \cite{reif_orig}, Reifenberg proved that if a closed set $S \subset \R^n$ is sufficiently well approximated by $k$-dimensional planes at all scales and points in the set, then it is $C^{0,\alpha}$-bi-H\"older to a disk.  The exponent $\alpha$ can be made arbitrarily close to $1$, so long as the approximation is sufficiently close.  Specifically, Reifenberg considered a two-sided closeness condition on $S$:
\begin{equation}\label{eqn:two-sided-reif}
\inf_{V^k} r^{-1} d_H(S \cap B_r(x), V \cap B_r(x)) \leq \delta(n,\alpha), \quad \forall x \in S \cap B_1, r \leq 8,
\end{equation}
where the infimum is taken over all affine $k$-planes.  Given \eqref{eqn:two-sided-reif} Reifenberg concludes that $S \cap B^n_1$ is bi-H\"older to $B^k_1$.  Sets satisfying \eqref{eqn:two-sided-reif} are often called Reifenberg flat.

We remark that \eqref{eqn:two-sided-reif} has a strong uniform connotation. First of all, it rules out holes in $S$.  Indeed, under \eqref{eqn:two-sided-reif} $S$ is topologically a disk, not just a closed subset of a disk. As a consequence, \eqref{eqn:two-sided-reif} automatically endows $S$ with lower mass bounds, that is, a set $S$ satisfying \eqref{eqn:two-sided-reif} is ``lower $k$-Ahlfors regular''.  However \eqref{eqn:two-sided-reif} cannot guarantee an upper mass bound (``upper Ahlfors regularity''), since we cannot take $\alpha = 1$ in Reifenberg's theorem, except in the trivial case $\delta=0$.  See Example \ref{example:koch} for more on this. 

A second consequence of \eqref{eqn:two-sided-reif} is that the set $S$ has no ``excess set'', meaning that at all points and at all scales $S$ is uniformly contained in a small tubular neighborhood around a $k$-dimensional plane. In other words, $S$ cannot have parts of small measure (or even of zero measure) far away from its approximating plane at all points and at all scales.

This paper is motivated by the question: how can we control the mass and structure of $S$ if we allow for holes and excess sets?  This question has begun playing a role in various settings, in particular in the analysis of singular sets of nonlinear PDE's, see \cite{naber-valtorta:harmonic}.

Especially in recent years, various papers dealing with this question have appeared in literature. In these papers, a need arose for \emph{one-sided} notions of closeness with an integral flavor, which in particular allows for holes and excess set.  Natural quantities that satisfy these requirements were introduced by Jones in \cite{jones}. They are the so-called $\beta$-numbers for a nonnegative Borel measure $\mu$:
\begin{align}
\beta^k_{\mu, p}(x, r) &:= \left( \inf_{V^k} r^{-k} \int_{B_r(x)} r^{-p} d(z, V)^p d\mu(z) \right)^{1/p} \quad \text{for} \ 1\leq p <\infty \notag\\
\beta^k_{\mu, \infty}(x, r) &:= \inf_{V^k} \left( \inf \{ \delta : \spt\mu \cap B_r(x) \subset B_{\delta r}(V) \} \right)
\end{align}
where the infima are taken over all affine $k$-planes $V^k$.  See Definition \ref{d:beta_numbers} for a slight generalization, which will be used in this paper.

Toro \cite{toro:reifenberg} and David-Toro \cite{davidtoro} considered several generalizations of Reifenberg's Theorem.  They showed, for example, that if a closed set $S$ satisfies the uniform Reifenberg \eqref{eqn:two-sided-reif} and
\begin{gather}
\int_0^1 \beta^k_{\haus^k\llcorner S, p}(x, r)^2 dr/r \leq M \quad \forall x \in S,
\end{gather}
with $p=1$ or $\infty$, then the parameterization of Reifenberg's Theorem is bi-Lipschitz.  The reason it suffices to assume the $\beta$-numbers are square-summable is captured in the Koch snowflake of Example \ref{example:koch}. 

David-Toro \cite{davidtoro} also proved that if a collection of points is in some discrete sense Reifenberg flat, then one can find a bi-H\"{o}lder mapping containing all the points.  Assuming further a kind of ``discrete $\beta$-number'' summability condition, one can take the mapping to be bi-Lipschitz.  Because \cite{davidtoro} parameterize the entire net of points by a disk, a topological assumption in the spirit of small-tilting is necessary for their results (see \cite[counterexample 12.4]{davidtoro}; think thin twisted bands attached to an annulus).

If one is not interested in topological information, but only in control over the mass and rectifiability, it is reasonable to think that bounds on the $\beta_p$ ($p < \infty$) will be sufficient.  The intuition is that we should only need to parameterize the region of $S$ near the $L^p$-best planes; the ``excess set'' away from the best-planes already has controlled mass.

The works of Tolsa \cite{tolsa:jones-rect} and Azzam-Tolsa \cite{azzam-tolsa} verify part of this intuition.  They prove that a set $S$ is $k$ rectifiable if and only if
\begin{gather}
\int_0^1 \beta^k_{\haus^k \llcorner S, 2}(x, r)^2 dr/r < \infty \quad \text{$(\haus^k \llcorner S)$-a.e. $x$.}
\end{gather}
In fact their Theorem holds for general Radon measures, assuming $\mu$-a.e. positivity and finiteness of the upper density.

In this paper we shall prove effective bounds for the total measure and mass under a Dini-type assumption on the $\beta_2$, without any Ahlfors-regularity or Reifenberg-flat assumptions.  In fact, we shall consider general nonnegative Borel-regular measures, without any ($\sigma$-)finiteness or density assumptions, and prove a combination of measure estimates on $\mu$ and Minkowski estimates on its support.  As a corollary we recover the rectifiability theorem of \cite{azzam-tolsa}.

Our complete list of theorems for the paper is rather long and at times technical, since it is important for the applications to consider the estimates in a good deal of generality.  We therefore wait until Section \ref{sec:main_thm} to give a complete list of main theorems.  In preparation we list here a few corollaries of these main results which illustrate what type of behavior one might expect.  We begin with the following, which holds for a general Borel-regular measure without density assumptions:

\begin{theorem}\label{thm:teaser1}
Let $\mu$ be a nonnegative Borel-regular measure supported in $B^n_1(0)$, and $\eps \geq 0$.  Suppose
\begin{equation}\label{eqn:teaser1-hyp}
\mu \left\{ z \in B_1(0) : \int_0^2 \beta^k_{\mu, 2}(z, r)^2 \frac{dr}{r} > M \right\} \leq \Gamma\, .
\end{equation}
Then there is a closed, $k$-rectifiable set $\cK \subset B_1(0)$ with $\haus^k(\cK)<c(n)$, so that we have
\begin{gather}
|B_r(\cK)| \leq c(n) r^{n-k} \quad \forall r \leq 1, \quad \text{and} \quad \mu(B_r(x)) \geq \eps r^k \quad \forall x \in \cK, r \leq 1 ,
\end{gather}
and
\begin{gather}
\mu(B_1(0) \setminus \cK) \leq c(n)(\eps + M) + \Gamma.
\end{gather}
\end{theorem}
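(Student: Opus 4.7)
The plan is to reduce Theorem \ref{thm:teaser1} to the main theorem of the paper (stated in Section \ref{sec:main_thm}), which provides the effective rectifiable structure under a pointwise Dini bound on the $\beta^k_{\mu,2}$. The only extra work required is to absorb the integrated hypothesis \eqref{eqn:teaser1-hyp} into a pointwise bound on a large subset, at the cost of the $\Gamma$ term.

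Explicitly, I first split $B_1(0)$ into
$$G := \Big\{z \in B_1 : \int_0^2 \beta^k_{\mu, 2}(z, r)^2 \, \tfrac{dr}{r} \leq M \Big\}, \qquad B := B_1 \setminus G,$$
so that \eqref{eqn:teaser1-hyp} gives $\mu(B) \leq \Gamma$, and I set $\mu' := \mu \llcorner G$. By monotonicity $\beta^k_{\mu', 2}(z, r) \leq \beta^k_{\mu, 2}(z, r)$ for every $z$ and $r$, so the Dini bound $\int_0^2 \beta^k_{\mu', 2}(z, r)^2 \, dr/r \leq M$ holds pointwise on $G$; the standard lower semicontinuity of $\beta$-numbers in the base point extends it to $\spt \mu' \subset \overline G$.

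I then apply the main theorem to $\mu'$ with density parameter $\eps$. This produces a closed, $k$-rectifiable set $\cK \subset B_1$ with $\haus^k(\cK) \leq c(n)$ and Minkowski bound $|B_r(\cK)| \leq c(n) r^{n-k}$ for all $r \leq 1$, the density bound $\mu'(B_r(x)) \geq \eps r^k$ for $x \in \cK$ and $r \leq 1$ (which transfers to $\mu$ since $\mu \geq \mu'$), and the effective estimate $\mu'(B_1 \setminus \cK) \leq c(n)(\eps + M)$. Adding back the bad-set contribution,
$$\mu(B_1 \setminus \cK) \;\leq\; \mu'(B_1 \setminus \cK) + \mu(B) \;\leq\; c(n)(\eps + M) + \Gamma,$$
which is the desired estimate.

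The main obstacle is of course the main theorem itself, which is the heart of the paper and uses a quantitative stratification combined with a Reifenberg-type best-plane construction. By contrast, the reduction above is entirely soft, relying only on the elementary monotonicity $\beta^k_{\mu \llcorner G, 2} \leq \beta^k_{\mu, 2}$ to trade the integrated hypothesis for a pointwise one on $\spt\mu'$, and on the density lower bound being built directly into the output of the main theorem.
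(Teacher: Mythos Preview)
Your reduction is correct and matches the paper's argument. The paper's own proof of Theorem~\ref{thm:teaser1} is the one-liner ``apply Theorem~\ref{thm:main} with the trivial covering pair $\cC=\cC_0=B_1$, $r_x\equiv 0$,'' since Theorem~\ref{thm:main} already accepts the weak-$L^1$ hypothesis \eqref{eq_main_integral_bounds} directly; your restriction to $\mu'=\mu\llcorner G$ is therefore redundant at this level, but it is precisely the step the paper performs one layer down, in deducing Theorem~\ref{thm:main} from the pointwise core estimate Theorem~\ref{thm:core-estimate}.
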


Here $|A|$ denotes the $n$-dimensional Lebesgue measure of $A$, and $B_r(A)$ the $r$-tubular neighborhood of $A$.  In this general case when one does not assume upper or lower density bounds, effective estimates must be split into two pieces: a $\mu$-noncollapsed set $\cK$ for which there exists effective Minkowski estimates (in particular $\cK$ has finite $k$-dimensional Hausdorff measure), and a set $B_1\setminus \cK$ which has uniformly bounded $\mu$-measure.  See Example \ref{ex:sharp} to see that this setup is the sharp one.\\  

Assuming additionally a bound on the \emph{lower-density} allows us to prove apriori upper bounds on the measure of $\mu$.
\begin{corollary}\label{cor:teaser1}
If $\mu$ is a non-negative Borel-regular measure satisfying the hypothesis \eqref{eqn:teaser1-hyp}, and with
\begin{gather}
\Theta_*^k(\mu, x) \leq b \quad \text{for $\mu$-a.e. $x \in B_1$},
\end{gather}
then
\begin{gather}
\mu(B_1) \leq c(n)(b + M) + \Gamma.
\end{gather}
\end{corollary}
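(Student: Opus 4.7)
The plan is to apply Theorem \ref{thm:teaser1} with a value of $\eps$ tuned to $b$, in such a way that the noncollapsed set $\cK$ produced by the theorem is forced to lie inside a $\mu$-null set. Once this is done, the estimate on $\mu(B_1)$ reduces immediately to the estimate on $\mu(B_1 \setminus \cK)$ that Theorem \ref{thm:teaser1} already supplies.

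Concretely, let $c_0 = c_0(k)$ denote whatever normalization appears in the definition of the lower density $\Theta_*^k$, so that $\Theta_*^k(\mu,x) = c_0^{-1} \liminf_{r \to 0^+} r^{-k}\mu(B_r(x))$ (for instance $c_0 = \omega_k$). I would invoke Theorem \ref{thm:teaser1} with $\eps := 2 c_0 b$. This produces a closed $k$-rectifiable set $\cK \subset B_1(0)$ satisfying $\mu(B_r(x)) \ge 2 c_0 b\, r^k$ for every $x \in \cK$ and every $r \le 1$, together with
\[
\mu(B_1 \setminus \cK) \le c(n)(\eps + M) + \Gamma = c(n)(b+M) + \Gamma.
\]
Passing to the liminf as $r \to 0^+$ in the lower ball bound shows $\Theta_*^k(\mu,x) \ge 2b$ for every $x \in \cK$. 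Thus
\[
\cK \subset \{ z \in B_1 : \Theta_*^k(\mu,z) > b \},
\]
and by the hypothesis $\Theta_*^k(\mu,\cdot) \le b$ $\mu$-a.e.\ the right hand side is a $\mu$-null Borel set. Since $\cK$ is closed (hence Borel) and $\mu$ is Borel-regular, we conclude $\mu(\cK) = 0$. Combining with the previous display yields $\mu(B_1) = \mu(\cK) + \mu(B_1 \setminus \cK) \le c(n)(b+M) + \Gamma$.

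I do not anticipate any serious obstacle: the only way the Minkowski/measure decomposition in Theorem \ref{thm:teaser1} could fail to control $\mu(B_1)$ is via mass concentrated on $\cK$, and by design the noncollapse property on $\cK$ is incompatible with the upper bound on the lower density as soon as $\eps$ is chosen strictly larger than (the appropriate normalization of) $b$. The only calibration issue is to match $c_0$ to the precise convention used for $\Theta_*^k$ in Section \ref{sec:main_thm}; this only affects the final dimensional constant $c(n)$.
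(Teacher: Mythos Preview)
Your argument is correct. It is a genuinely different route from the one the paper takes, though both rest on Theorem~\ref{thm:teaser1}/Theorem~\ref{thm:main}.

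The paper deduces Corollary~\ref{cor:teaser1} from Corollary~\ref{cor:main_density_1} part U), whose proof introduces a \emph{nontrivial} covering pair: at $\mu$-a.e.\ point $x$ one selects a radius $s_x$ with $\mu(B_{s_x}(x)) \le 2\omega_k b\, s_x^k$ (possible by the lower-density bound), applies the Main Theorem to this covering, and then combines the measure estimate away from $\cC'$ with the packing bound $\sum s_x^k \le c(n)$ to control $\sum \mu(B_{s_x}(x)) \le c(n) b$. Your approach instead keeps the trivial covering $r_x \equiv 0$, chooses $\eps > b$, and observes that the noncollapsing condition $\mu(B_r(x)) \ge \eps r^k$ for all $0 < r \le 1$ forces $\Theta_*^k(\mu,x) \ge \eps > b$ on $\cK$, so $\cK$ lies in the $\mu$-null exceptional set and contributes nothing. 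For this particular corollary your route is shorter and avoids the covering-pair machinery; the paper's route, on the other hand, is the one that generalizes cleanly to the setting of Theorem~\ref{thm:main_density} where $r_x > 0$ is allowed and one only has density control down to scale $r_x$, so the limit $r \to 0$ in the noncollapsing inequality is unavailable. Two minor calibrations: in this paper $\Theta_*^k(\mu,x) = \liminf_{r\to 0} r^{-k}\mu(B_r(x))$ with no $\omega_k$, so your $c_0 = 1$; and the additivity $\mu(B_1) = \mu(\cK) + \mu(B_1\setminus\cK)$ is justified simply because $\cK$ is Borel and $\mu$ is Borel-regular.
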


A key consequence of Corollary \ref{cor:teaser1} is proving upper-Ahlfors-regularity from some kind of $\beta$-number condition.  In certain Theorems (e.g. \cite{girela}) this allows us to remove the assumption of upper-Ahlfors-regularity.  We also bring the readers attention to \cite{miskiewicz}, who has recently and independently obtained a similar result to Theorem \ref{thm:teaser2}, assuming upper and lower bounds on the upper-density.
\begin{theorem}\label{thm:teaser2}
Suppose $\mu$ is a nonnegative Borel-regular measure supported in $B_1$ with the property
\begin{gather}
\Theta^{k}_*(\mu, x) \leq b\quad \text{for $\mu$-a.e. $x \in B_1$},
\end{gather}
and satisfying \emph{one} of the following conditions:
\begin{enumerate}
\item[A)] For $\mu$-a.e. $x \in B_1$, 
\begin{gather}
\int_0^2 \beta_{\mu,2}(x, r)^2 \frac{dr}{r} \leq M.
\end{gather}

\item[B)] For $\mu$-a.e. $x \in B_1$, and every $0 < r \leq 1$, 
\begin{gather}
\int_{B_r(x)} \int_0^{2r} \beta_{\mu,2}(z, s)^2 \frac{ds}{s} d\mu(z) \leq M^2 r^k
\end{gather}

\item[C)] $\mu(B_1) < \infty$, and for $\mu$-a.e. $x \in B_1$, and every $0 < r \leq 1$,
\begin{gather}
\int_{B_r(x)} \int_0^{2r} \beta_{\mu, 2}(z, s)^2 \frac{ds}{s} d\mu(z) \leq M \mu(B_r(x)).
\end{gather}
\end{enumerate}

Then for every $x \in B_1$, $0 < r \leq 1$, we have
\begin{gather}
\mu(B_r(x)) \leq c(n) (b + M) r^k.
\end{gather}

Moreover, if $\Theta^{*, k}(\mu, x) > 0$ $\mu$-a.e., then $\mu$ is $k$-rectifiable.
\end{theorem}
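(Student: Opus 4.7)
The plan is to reduce all three cases to Corollary \ref{cor:teaser1} by a rescaling argument. Fix $x \in B_1$ and $0 < r \leq 1$, let $T(y) := x + ry$, and consider the rescaled pushforward $\nu := r^{-k} T_* \mu$ on $B_1(0)$. A direct change of variables shows that $\nu(B_1(0)) = r^{-k} \mu(B_r(x))$, that the lower-density bound is preserved ($\Theta^k_*(\nu, \tilde z) = \Theta^k_*(\mu, T(\tilde z)) \leq b$ for $\nu$-a.e.\ $\tilde z$), and that the $\beta$-numbers transform covariantly as $\beta^k_{\nu, 2}(\tilde z, \rho) = \beta^k_{\mu, 2}(T(\tilde z), r\rho)$. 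The problem thus reduces to verifying the hypothesis \eqref{eqn:teaser1-hyp} for $\nu$ on $B_1(0)$ in each case and reading off the conclusion.

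Case (A) is immediate: the substitution $r' = r\rho$ gives $\int_0^2 \beta^k_{\nu,2}(\tilde z, \rho)^2 \, d\rho/\rho = \int_0^{2r} \beta^k_{\mu,2}(T(\tilde z), r')^2 \, dr'/r' \leq M$ for $\nu$-a.e.\ $\tilde z$, so \eqref{eqn:teaser1-hyp} holds with $\Gamma = 0$ and Corollary \ref{cor:teaser1} yields $\nu(B_1) \leq c(n)(b+M)$. In Case (B), the same change of variables converts the hypothesis into $\int_{B_1(0)} F(\tilde z) \, d\nu(\tilde z) \leq M^2$, where $F(\tilde z) := \int_0^2 \beta^k_{\nu,2}(\tilde z, \rho)^2 \, d\rho/\rho$; Chebyshev at threshold $M$ then gives \eqref{eqn:teaser1-hyp} with $\Gamma = M$, and Corollary \ref{cor:teaser1} again produces $\nu(B_1) \leq c(n)(b+M)$. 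Pulling back yields the scale-invariant bound $\mu(B_r(x)) \leq c(n)(b+M)\, r^k$ in both cases.

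The main obstacle will be Case (C), which is the sole reason the extra hypothesis $\mu(B_1) < \infty$ appears. Here the rescaled condition reads $\int F \, d\nu \leq M \nu(B_1)$, and Chebyshev at threshold $2M$ yields \eqref{eqn:teaser1-hyp} with $\Gamma = \nu(B_1)/2$. Applying Corollary \ref{cor:teaser1} produces the self-referential inequality $\nu(B_1) \leq c(n)(b + 2M) + \tfrac{1}{2}\nu(B_1)$. I would then absorb the last term into the left-hand side to conclude $\nu(B_1) \leq 2c(n)(b+2M)$; the absorption is legitimate precisely because $\nu(B_1) \leq r^{-k}\mu(B_1) < \infty$ by the hypothesis in (C). Without the a priori finiteness this step would fail, so the seemingly mild extra assumption is genuinely essential.

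For the final $k$-rectifiability statement, the measure bound just established yields the upper density bound $\Theta^{*,k}(\mu, x) \leq c(n)(b+M)$ everywhere, which combined with the assumed $\Theta^{*,k}(\mu, x) > 0$ gives $0 < \Theta^{*,k}(\mu, x) < \infty$ for $\mu$-a.e.\ $x$. It remains to verify that $\int_0^1 \beta^k_{\mu,2}(x, r)^2 \, dr/r < \infty$ for $\mu$-a.e.\ $x$: this is directly assumed in Case (A), and in Cases (B), (C) it follows from Fubini together with the now-established finiteness $\mu(B_1) < \infty$. The rectifiability criterion of Azzam--Tolsa \cite{azzam-tolsa} then delivers $k$-rectifiability of $\mu$.
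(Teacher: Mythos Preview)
Your proof is correct and follows essentially the same route as the paper's: rescale to $\nu = \mu_{x,r}$, verify the weak-$L^1$ hypothesis of Corollary~\ref{cor:teaser1} (the paper invokes the equivalent Theorem~\ref{thm:main_density}~U) in each case---with $\Gamma = 0$, $\Gamma = M$ via Chebyshev, and $\Gamma = \tfrac12\nu(B_1)$ via Chebyshev at threshold $2M$ respectively---and absorb in case~(C) using the finiteness assumption. The only cosmetic difference is that for rectifiability you cite Azzam--Tolsa directly, whereas the paper invokes its own Theorem~\ref{thm:main_rect}; both are valid here since the upper-Ahlfors bound you established gives $0 < \Theta^{*,k}(\mu,x) < \infty$ $\mu$-a.e.
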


When $\mu = \haus^k \llcorner S$, where $\haus^k$ is the $k$-dimensional Hausdorff measure, any one of the conditions A)-C) of Theorem \ref{thm:teaser2} implies $\mu$ is $\sigma$-finite, and therefore we obtain as an immediate Corollary:
\begin{corollary}\label{cor:teaser2}
If $\mu = \haus^k \llcorner S$ satisfies any of the conditions A)-C) of Theorem \ref{thm:teaser2}, then $S$ is $k$-rectifiable, and for every $x \in B_1$, $0 < r \leq 1$, we have
\begin{gather}
\haus^k(S \cap B_r(x)) \leq c(n)(1 + M) r^k.
\end{gather}
\end{corollary}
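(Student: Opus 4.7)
The plan is to reduce Corollary \ref{cor:teaser2} to Theorem \ref{thm:teaser2} by first establishing local finiteness of $\mu = \haus^k \llcorner S$. Once $\mu$ is locally finite, the classical density estimates for Hausdorff measure supply the upper bound on the lower density that Theorem \ref{thm:teaser2} requires as a hypothesis.

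First I would handle local finiteness. Under (C) this is immediate from $\mu(B_1) < \infty$. Under (A), the Dini bound $\int_0^2 \beta_{\mu, 2}(z, r)^2 \frac{dr}{r} \leq M$ holds $\mu$-a.e., so the ``bad'' set in the hypothesis \eqref{eqn:teaser1-hyp} has $\mu$-measure zero; Theorem \ref{thm:teaser1} applied with $\eps = 0$ and $\Gamma = 0$ then yields a closed $k$-rectifiable $\cK \subset B_1$ with $\haus^k(\cK) \leq c(n)$ and $\mu(B_1 \setminus \cK) \leq c(n) M$, giving $\haus^k(S \cap B_1) = \mu(B_1) \leq c(n)(1+M)$. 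Under (B), Chebyshev's inequality applied to the Carleson-type hypothesis at scale $r = 1$ reduces matters to case (A) up to constants (with $\Gamma \lesssim M$), and Theorem \ref{thm:teaser1} again delivers local finiteness, after at most a finite cover of $B_1$ by suitably centered sub-balls.

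With $\mu = \haus^k \llcorner S$ now locally finite, the standard Hausdorff density estimates give $2^{-k} \leq \Theta^{*,k}(\mu, x) \leq 1$ for $\mu$-a.e. $x \in S$. The upper bound implies $\Theta_*^k(\mu, x) \leq 1$, so Theorem \ref{thm:teaser2} applies with $b = 1$, producing the desired estimate $\haus^k(S \cap B_r(x)) = \mu(B_r(x)) \leq c(n)(1 + M) r^k$ for all $x \in B_1$ and $0 < r \leq 1$. The lower density bound $\Theta^{*,k}(\mu, x) \geq 2^{-k} > 0$ $\mu$-a.e.\ activates the moreover clause of Theorem \ref{thm:teaser2}, yielding $k$-rectifiability of $\mu$, and hence of $S$.

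The only substantive step is the local finiteness in cases (A) and (B); this is what converts the $\beta$-number hypothesis into the upper-density hypothesis of Theorem \ref{thm:teaser2}, via the previously established density-free Theorem \ref{thm:teaser1}. Without that reduction, neither the Ahlfors bound nor the rectifiability could be read off from Theorem \ref{thm:teaser2} alone, since its hypotheses explicitly require a finite density bound $b$. Once local finiteness is in hand, everything else is routine density theory.
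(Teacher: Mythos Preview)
Your proof is correct and takes a genuinely different route from the paper's.

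The paper does not invoke Theorem~\ref{thm:teaser1} at all. Instead, for cases (A) and (B) it picks any point $x$ with $\beta^k_{\mu,2}(x,2)<\infty$, lets $V=V^k_\mu(x,2)$, and slices $S$ into $S_i=\{z:d(z,V)>1/i\}$. Finiteness of the $\beta$-number forces $\haus^k(S_i)<\infty$ immediately (Chebyshev on the integral defining $\beta$), so each $\haus^k\llcorner S_i$ already satisfies the density hypothesis of Theorem~\ref{thm:teaser2}; applying that theorem to every slice and letting $i\to\infty$ gives the bound and rectifiability off $V$, while $S\cap V$ is handled trivially.

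Your argument instead feeds $\mu=\haus^k\llcorner S$ directly into Theorem~\ref{thm:teaser1} and exploits the special feature that $\mu(\cK)=\haus^k(S\cap\cK)\leq\haus^k(\cK)\leq c(n)$, so the packing estimate on $\cK$ becomes a genuine mass bound. This is a nice observation and makes the reduction more uniform: you get $\mu(B_1)<\infty$ in one step rather than via an exhaustion. The paper's slicing trick, on the other hand, is more self-contained (it uses nothing beyond the definition of $\beta$ to obtain finiteness) and avoids the finite-cover bookkeeping you need in case (B). Both approaches are short; yours leans harder on the paper's own machinery, the paper's is slightly more elementary.
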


The corollary is similar in spirit to \cite[section 3]{AS}, where the authors use a different notion of $\beta$-numbers based on a Choqet integral in terms of Hausdorff content and require lower regularity of the set $S$.

Summability conditions like \eqref{eqn:teaser1-hyp} have arisen in several interesting scenarios.  The $\beta$-numbers have long been a bridge between rectifiability and boundedness of Calderon-Zygmund operators.  Indeed for Ahlfors-David-regular sets $S$, David-Semmes \cite{david-semmes} show that uniform rectifiability is equivalent to an $L^1$-bound of the form
\begin{gather}
\int_{B_r(x) \cap S} \int_{0}^r \beta^k_{\haus^k\llcorner S, 2}(z, s)^2 \frac{ds}{s} d\haus^k(z) \leq M \mu(B_r(x)) \quad \forall x, r.
\end{gather}

Also, in the past 20 years there has been significant research on boundedness of Calderon-Zygmund operators on sets or measures satisfying only an upper-Ahlfors-regularity condition (e.g. \cite{david:vanishing}, \cite{tolsa:l2-boundedness}, \cite{nazarov-tolsa-volberg:cauchy}).  Very recently \cite{girela} (see also \cite{azzam-tolsa} Theorem 1.4, and \cite{jaye-nazarov-tolsa}) has shown that if a Radon measure $\mu$ is upper-Ahlfors-regular, and satisfies
\begin{gather}\label{eq:girela-hyp}
\int_{B_r(x)} \int_0^r \beta^k_{\mu, 2}(z, s)^2 \frac{\mu(B_s(z))}{s^k} \frac{ds}{s} d\mu(z) \leq M \mu(B_r(x)) \quad \forall x, r, 
\end{gather}
then Calderon-Zygmund operators are bounded in $L^2(\mu)$.

Combining this result with Theorem \ref{thm:teaser2}, we obtain

\begin{corollary}[Theorem \ref{thm:teaser2} $+$ \cite{girela}]\label{cor:teaser3}
Let $\mu$ be a nonnegative Borel-regular measure with the property that for $\mu$-a.e. $x \in B_1$, and every $0 < r \leq 1$, we have
\begin{gather}
\Theta^{k}_*(\mu, x) \leq b, \quad \int_{B_r(x)} \int_0^{2r} \beta^k_{\mu,2}(z, r)^2 \frac{dr}{r} d\mu(z) \leq M \mu(B_r(x))  .
\end{gather}
Then Calderon-Zygmund operators are bounded in $L^2(\mu)$.
\end{corollary}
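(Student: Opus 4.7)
The plan is to combine Theorem~\ref{thm:teaser2}(C) with the Calder\'on--Zygmund boundedness criterion of \cite{girela} in two short steps. First I would apply Theorem~\ref{thm:teaser2}(C), whose hypotheses are exactly the two assumptions given in the corollary: $\Theta^k_*(\mu,x)\le b$ for $\mu$-a.e.\ $x\in B_1$, and
$$
\int_{B_r(x)}\!\int_0^{2r}\beta^k_{\mu,2}(z,s)^2\,\frac{ds}{s}\,d\mu(z)\le M\,\mu(B_r(x))\quad\text{for all }x\in B_1,\ 0<r\le 1.
$$
(The mild technical point of verifying $\mu(B_1)<\infty$ so that case (C) applies can be handled by a standard truncation/exhaustion argument, or observed a posteriori from the bound about to be produced.) The conclusion of Theorem~\ref{thm:teaser2} then yields the upper-Ahlfors estimate
$$
\mu(B_r(x))\le c(n)(b+M)\,r^k\quad\text{for every }x\in B_1,\ 0<r\le 1.
$$

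Second, I would feed this back into the hypothesis in order to verify the Dini-type condition \eqref{eq:girela-hyp} required by \cite{girela}. The key observation is that the upper-Ahlfors bound just obtained gives $\mu(B_s(z))/s^k\le c(n)(b+M)$ uniformly in $z\in B_1$ and $0<s\le 1$, so this factor pulls straight out of the inner integral:
$$
\int_{B_r(x)}\!\int_0^{r}\beta^k_{\mu,2}(z,s)^2\,\frac{\mu(B_s(z))}{s^k}\,\frac{ds}{s}\,d\mu(z)\le c(n)(b+M)\int_{B_r(x)}\!\int_0^{2r}\beta^k_{\mu,2}(z,s)^2\,\frac{ds}{s}\,d\mu(z)\le c(n)(b+M)M\,\mu(B_r(x)).
$$
This is precisely \eqref{eq:girela-hyp}, with $M$ replaced by $c(n)(b+M)M$. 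Combined with the upper-Ahlfors bound from Step~1, the main theorem of \cite{girela} then gives the advertised $L^2(\mu)$-boundedness of Calder\'on--Zygmund operators.

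There is no substantive obstacle in the argument: the corollary is really a direct chaining of two already-proved theorems, where Theorem~\ref{thm:teaser2}(C) converts the density-plus-integral assumptions into upper-Ahlfors-regularity, and \cite{girela} converts upper-Ahlfors-regularity plus a $\beta$-number estimate into Calder\'on--Zygmund boundedness. The only step meriting attention is the bookkeeping in Step~2, where it is crucial that the upper-Ahlfors estimate supplies exactly the missing density factor $\mu(B_s(z))/s^k$ needed to promote our hypothesis to Girela's on the nose.
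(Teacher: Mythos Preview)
Your proposal is correct and follows essentially the same approach as the paper: apply Theorem~\ref{thm:teaser2} to obtain upper-Ahlfors regularity, observe that this makes the factor $\mu(B_s(z))/s^k$ bounded so that hypothesis \eqref{eq:girela-hyp} follows from the assumed $\beta$-number estimate, and then invoke \cite{girela}. The paper's own proof is terser (it simply asserts that \eqref{eq:girela-hyp} is satisfied once upper-Ahlfors regularity is known), but your explicit computation in Step~2 is exactly the intended bookkeeping.
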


Another important application can be found in the analysis of singular sets.  In \cite{naber-valtorta:harmonic}, \cite{naber-valtorta:varifold} the authors use a similar (though less general) technique to prove uniform mass bounds and rectifiability for the singular set of nonlinear harmonic maps and integral currents with bounded mean curvature.

In fact our Theorems generalize the discrete and rectifiable Reifenberg Theorems of \cite{naber-valtorta:harmonic}, which prove mass bounds for discrete measures assuming small $\beta$-bounds at all scales.  Our Main Theorem \ref{thm:main} works for arbitrary Borel-regular measures, relaxes the smallness assumption, allows the user to specify the scale of packing, and most importantly requires only weak $\beta$-bounds at a \emph{single scale}.

Note that there is a growing literature on $\beta$-numbers and problems related to rectifiability. For example, \cite{Lerman,BS1,BS2} proves necessary and sufficient condition for rectifiability of one dimensional curves, and \cite{AS} focuses on the traveling salesman problem.
\vspace{5mm}

Our proofs are based on a generalized corona decomposition for the support of $\mu$. 
Starting from $\B 1 0$, we cover $\spt \mu$ with smaller and smaller balls, where at each step we decrease the radia of our covering by a factor of $\rho$. For each ball in the covering, we consider the $k$-dimensional plane achieving the minimum in the definition of $\beta$, i.e., the plane minimizing the $L^2(\mu)$ norm of the distance. The part of $\spt \mu$ away from this best approximating plane $V$ will be counted as ``excess set'', and the estimates on $\beta$ imply that this part has uniformly bounded measure. The part close to $V$ will be covered by balls satisfying a Vitali condition. Each of these balls is classified into ``good'' or ``bad'' according to how spread the support of $\mu$ is inside them. In particular, good balls are the ones where the support of $\mu$ effectively spans $V$, while bad balls are those where $\mu$ is concentrated near some lower dimensional subspace. 

On good balls, we will be able to exploit the bounds on the $\beta$ numbers to obtain nice estimates on the tilting of the best planes from one scale to the next. This will allow us to use a Reifenberg-type construction to create a sequence of smooth manifolds $T_i$ approximating $\spt \mu$ over good balls. These manifolds will have uniform bi-Lipschitz bounds. 

On bad balls, we will lose the ability to refine this Reifenberg-type construction. However, on these balls the support of $\mu$ is almost contained in a $k-1$ dimensional space, making it easy to obtain uniform $k$-dimensional estimates on it.

Thus we will be able to decompose $\mu$ into a piece with controlled measure, and a piece supported on a set with uniform $k$-dimensional packing estimates. By assuming an upper bound over the (lower) density of $\mu$, we will be able to turn the $k$-dimensional estimates on the support into mass estimates on $\mu$ itself. Lower bounds on the (upper) density of $\mu$ will allow us to conclude the $k$-rectifiability of this measure.


\textbf{Acknowledgments}
The authors thank Max Engelstein and Benjamin Jaye for leading us to reference \cite{girela}, and warmly thank Tatiana Toro for several useful conversations.  The first author expresses his deepest gratitude to Leon Simon for his insight and inspiration.

\vspace{.5cm}

\section{Main Theorems}\label{sec:main_thm}

In this section, we introduce the relevant definitions in order to state in a clean fashion our main theorems.  
We begin by slightly generalizing the standard definition of $\beta_2$, by allowing $\beta_2$ to only be zero on balls with small $\mu$-measure (as in the results this is all one needs).

\begin{definition}\label{d:beta_numbers}
Let $\mu$ be a Borel measure in $\R^n$, and $\bar\eps_\beta \geq 0$.  The \emph{$k$-dimensional truncated $\beta$-number of $\mu$} in $B_r(x)$ is defined to be
\begin{gather}
\beta^k_{\mu,2,\bar\epsilon_\beta}(x, r)^2 = \left\{ \begin{array}{l l} \inf_{V^k} r^{-k-2} \int_{B_r(x)} d(z, V)^2 d\mu(z) & \mu(B_r(x)) > \bar\eps_\beta r^k \\
0 & \mu(B_r(x)) \leq \bar\eps_\beta r^k . \end{array} \right. ,
\end{gather}
where we take the infimum over affine $k$-planes $V^k$.  We write $V^k_\mu(x, r)$ for any of the affine $k$-planes achieving $\beta^k_{2,\mu}(x, r)$.  
\end{definition}

\begin{remark}
 In all the theorems in this paper, we will assume that $\bar \eps_\beta$ is smaller than a constant depending on the parameters of the specific theorem. This means that we do not require any control over the measure $\mu$ on balls with small mass. This can be helpful in some applications. 
 
 Since this assumption is not common, we remark for the sake of clarity that with $\bar \eps_\beta=0$ all the statements of this paper are evidently still valid.
\end{remark}

Often we will simply drop subscripts when no confusion arises, for instance we may write $\beta_2$ in place of $\beta^k_{2,\mu,\bar\epsilon_\beta}$ and $V$ in place of $V^k_\mu$.  
We make some elementary remarks on distortion:

\begin{remark}[Monotonicity of $\beta_2$]\label{rem:pointwise-integral}
Note the following monotonicity type properties:
\begin{enumerate}
\item If $B_r(y) \subset B_R(x)$, and we know $\mu B_R(x) > \bar\epsilon_\beta R^k$, then $\beta_2(y, r)^2 \leq (R/r)^{k+2}\beta_2(x, R)^2$.

\item If $|x - y| < r$, and $\mu \ton{B_{2r}(y)} > 2^k \bar\epsilon_\beta r^k$, then $\beta_2(x, r)^2 \leq 2^{k+2} \beta_2(y, 2r)^2$.

\item Lastly, if $\spt \mu \subset B_1$, and $\mu B_1 > \bar\eps_\beta 2^k$, then for any $x \in B_1$ we have $\beta_2(x, 16)^2 \leq c(k) \beta_2(x, 2)^2$.  
\end{enumerate}
\end{remark}

\begin{remark}[Scale-invariance of $\beta_2$]
A trivial but important property to note is the following: if $\mu_{x,r}(A) := r^{-k} \mu(x + rA)$, then $\beta^k_{\mu_{x, r}, 2}(y, s) = \beta^k_{\mu, 2}(x + ry, rs)$.
\end{remark}

Now our main theorem will be quite general in nature, something which naturally involves more technical details than would be liked in an introduction, thus let us take a moment to preface it.  Besides allowing $\mu$ to be a general Borel-regular measure, it may be in practice that one only has $\beta$-number estimates on part of $\mu$, or only has estimates until a given fixed scale, which may vary from point to point in $\mu$, for instance in the case of discrete measures.  Therefore, associated to our estimates we must consider a covering of the measure $\mu$ which tells us where the estimates hold.  The following notation gives us a consistent way of dealing with this:

\begin{definition}\label{def:mu_C}
Let $\cC$ be a closed subset of $\dR^n$, and $r_x : \cC \to \R_+$ a radius function, we say $(\cC, r_x)$ is a \emph{covering pair} and we decompose $\cC = \cC_+ \cup \cC_0$, with $r_x > 0$ on $\cC_+$ and $r_x = 0$ on $\cC_0$.  Given a measure $\mu$ on $\dR^n$, a covering pair $(\cC, r_x)$ is called a \emph{covering pair for $\mu$} if $\cC \supseteq \spt\mu$.

If $\cC' \subset \cC$, we write $\cC'_+ = \cC' \cap \cC_+$ and $\cC'_0 = \cC' \cap \cC_0$.  For shorthand we will sometimes write
\begin{gather}
B_{r_x}(\cC') := \cC'_0 \cup \bigcup_{x \in \cC'_+} B_{r_x}(x).
\end{gather}

Associated to any $\cC'$ we have the packing measure
\begin{gather}
\mu_{\cC'} := \haus^k \llcorner \cC'_0 + \sum_{x \in \cC'_+} r_x^k \delta_x.
\end{gather}
By $\delta_x$ we mean the Dirac delta centered at $x$.
\end{definition}

\vspace{.25cm}
We are now ready to state our main theorems. Aside from the principal results, which are stated in a very general fashion, we prove many corollaries under different additional assumptions. For the reader's convenience, we split these results into different subsections. 
\subsection{Main theorems}\label{ss:main_thm}

Our main technical Theorem is the following, which is stated with a good deal of generality.  In particular, besides only assuming our measure $\mu$ is Borel-regular, without additional density assumptions, we only assume weak summability estimates on the $\beta$ numbers, as opposed to the more standard assumption of a.e. control or $L^1$ control.
In subsequent subsections we will add additional hypotheses to obtain improved results, however almost all will follow in rather short order from the following: 

\begin{theorem}[Main Theorem]\label{thm:main}
There is a constant $c(n)$ so that the following holds: Let $\eps \geq 0$, and $\mu$ be a non-negative Borel-regular measure in $\R^n$ supported in $B_1(0)$, and $(\cC, r_x)$ a covering pair for $\mu$ with $r_x \leq 1$.  Suppose 
\begin{equation}\label{eq_main_integral_bounds}
\mu \cur{ z \in \B 1 0: \int_{r_z}^2 \beta^k_{\mu,2,\bar\eps_\beta}(z, r)^2\frac{dr}{r} > M } \leq \Gamma ,
\end{equation}
and that $\bar\eps_\beta \leq c(k)\max\{ M, \eps\}$.  Then there is a closed subset $\cC' \subset \cC$ which admits the following properties:
\begin{enumerate}
\item[A)] Packing bounds:
\begin{gather}\label{eq_C_+'_est}
\haus^k(\cC'_0) + \sum_{x \in \cC'_+} r_x^k \leq c(n).
\end{gather}
In fact, we have the Minkowski type estimate
\begin{gather}\label{eq_mink_est}
r^{k-n} |B_r(\cC')| \leq c(n) \quad  \forall \,0 < r \leq 1\, .
\end{gather}

\item[B)] Measure control away from $\cC'$:
\begin{gather}\label{eq_main_measure_away}
\mu \ton{ B_1 \setminus B_{r_x}(\cC') } \leq c(n)(\eps + M) + \Gamma\, .
\end{gather}

\item[C)] A noncollapsing condition on $\cC'$:
\begin{gather}\label{eq_main_noncoll}
\mu(B_r(x)) \geq \max\{M, \eps\} r^k \quad \forall x \in \cC'\, , \quad \forall r_x \leq r \leq 1\, .
\end{gather}

\item[D)] Fine-scale packing structure: $\cC'_0$ is closed, rectifiable, and admits the upper bounds
\begin{gather}
\haus^k(\cC'_0 \cap B_r(x)) \leq c(n) r^k \quad \forall x \in B_1, 0 < r \leq 1.
\end{gather}


\end{enumerate}
\end{theorem}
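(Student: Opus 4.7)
The plan follows the corona decomposition sketched in the introduction. Fix a small parameter $\rho \in (0,1)$ depending only on $n, k$ and work at the dyadic scales $r_j = \rho^j$. Starting from $B_1(0)$, I would build a tree of refinement balls: at each vertex $B_{r_j}(x)$ compute the best $L^2$-plane $V_{x,j} := V^k_\mu(x, r_j)$, subtract off the excess set $\{z \in \spt \mu \cap B_{r_j}(x) : d(z, V_{x,j}) > \rho r_j\}$, and cover the remainder by a maximal $\rho r_j$-net. Each child $B_{r_{j+1}}(y)$ is then classified as \emph{stopping} (if $r_{j+1} \leq r_y$; add $y$ to $\cC'_+$), \emph{low-mass} (if $\mu(B_{r_{j+1}}(y)) < \max\{M,\eps\} r_{j+1}^k$; discard into the $(\eps + M)$ budget), \emph{bad-integral} (if the center satisfies $\int_{r_{j+1}}^2 \beta_2^2 \, dr/r > M$; charge to $\Gamma$), \emph{bad-ball} (if $\spt \mu \cap B_{r_{j+1}}(y)$ clusters in a quantitative neighborhood of some $(k-1)$-dimensional affine subspace of $V_{x,j}$; packed separately), or \emph{good}, in which case refinement continues.

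The central analytic ingredient is a one-step tilting estimate along good parent--child pairs. The spanning condition defining ``good'' forces $V^k_\mu(\cdot, \cdot)$ to be uniquely determined up to a pivot controlled by $\beta_2$: the angle between $V_{x,j}$ and $V_{y, j+1}$ is bounded by $C \rho^{-(k+2)/2} \beta_2(x, r_j)$. Squaring and summing along a descending good chain gives a Dini-type bound $\sum_j \angle(V_{x_j, j}, V_{x_{j-1}, j-1})^2 \leq c \sum_j \beta_2(x_j, r_j)^2$, which by Remark \ref{rem:pointwise-integral} and Chebyshev is controlled via the integrated hypothesis \eqref{eq_main_integral_bounds}. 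This is precisely the Reifenberg--Dini condition exploited in \cite{davidtoro}, \cite{naber-valtorta:harmonic}. I would then execute the Reifenberg manifold construction: glue the affine pieces $V_{x,j}$ by a partition of unity subordinate to the $\rho r_j$-nets to produce smooth $k$-manifolds $T_j$ converging to a closed rectifiable $T_\infty$, with uniform bi-Lipschitz graphical parametrizations over each $V_{x,j}$ on the good part. Setting $\cC'_0 := T_\infty$ and $\cC' := T_\infty \cup \cC'_+$, the Minkowski bound (A), fine-scale packing (D), and noncollapsing (C) all follow from this bi-Lipschitz structure together with the stopping rules.

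For (B), I would tally the mass discarded at each refinement step. The excess at each good or bad vertex contributes at most $\rho^{-2} r_j^k \beta_2(x, r_j)^2$; summed over the tree this discretizes $\int \beta_2^2 \, dr/r$, which by \eqref{eq_main_integral_bounds} is bounded by $c(n)(\eps + M) + \Gamma$. Low-mass balls contribute at most $\max\{M,\eps\} r^k$ each and pack via Vitali inside their parents; bad-integral balls feed into $\Gamma$ directly; bad balls, whose support sits in a thin $(k-1)$-dimensional slab, are covered by $O_k(1)$ child balls with total $k$-content $\leq c(n) r^k$, so their contribution packs against the same $\max\{M,\eps\} r^k$ threshold applied to their descendants. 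The main obstacle I anticipate is twofold: first, building the manifolds $T_j$ with uniform Lipschitz constants in the presence of bad and stopping regions requires buffering the refinement so the tilting induction on good balls never pivots across a discarded region, while keeping the buffer thin enough in $\mu$-measure to remain absorbable; second, and more subtly, calibrating the thresholds distinguishing good from bad spanning, the excess cutoff, and the low-mass cutoff so that the \emph{integrated and measure-weighted} hypothesis \eqref{eq_main_integral_bounds}, rather than a pointwise or $L^\infty$ one, is exactly what closes the estimates, and so that the noncollapsing constant in (C) can be taken as $\max\{M,\eps\}$ rather than a dimensional constant.
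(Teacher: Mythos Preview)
Your outline has the right skeleton (good/bad dichotomy, tilting-controlled Reifenberg manifolds, excess accounting), and it matches the paper's strategy at that level. But there are two genuine gaps where your sketch diverges from what actually makes the argument close.

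\textbf{First, the bad-ball accounting and the tree-of-trees.} You write that bad balls are ``covered by $O_k(1)$ child balls with total $k$-content $\leq c(n) r^k$'' and then continue refining. But a constant $c(n) \geq 1$ here gives exponential blow-up under iteration: this is exactly the double-counting obstruction the paper flags in the outline (``no matter how small we fix $\rho$, attempting to iterate this will result in an exponential error''). The paper's resolution is not to handle bad balls inside a single tree, but to run two \emph{separate} stopping-time constructions: good trees (Section~\ref{section:good-tree}), whose leaves are bad balls with packing $\leq 50^k$ via the bi-Lipschitz manifold $T_\infty$; and bad trees (Section~\ref{section:bad-tree}), whose leaves are good balls with \emph{small} packing $\leq 2c_1(n)\rho$ from the $(k-1)$-dimensional slab. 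These are then chained (Section~\ref{section:finishing}): at each bad leaf build a bad tree, at each of its good leaves build a good tree, and so on. The product $50^k \cdot 2c_1\rho \leq 1/2$ (by choice of $\rho$) makes the total leaf packing across all trees a convergent geometric series (Theorem~\ref{thm:tree-packing}). Your single-tree picture does not have this mechanism.

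\textbf{Second, $\cC'_0 := T_\infty$ is too small.} Because there are infinitely many good trees (one per good leaf of each bad tree), there is no single $T_\infty$. Worse, a region of $\spt\mu$ may alternate good/bad infinitely often and therefore lie in the leaves of \emph{every} tree; no individual tree's measure estimate ever sees it (the paper makes this explicit at the start of Section~\ref{section:finishing}: ``we expect strict inclusion $\overline{\cup_\cT \cC_0(\cT)} \subsetneq \cC'_0$''). The paper instead defines $\cC'_0 = \bigcap_i \overline{B_{r_i}(\cQ_i)}$ where $\cQ_i$ is the collection of \emph{all} good and bad $r_i$-balls across \emph{all} trees, and proves rectifiability of $\cC'_0$ only a posteriori by showing $\haus^k\llcorner\cC'_0$ itself satisfies the $\beta$-hypotheses (Section~\ref{subsection:structure-of-C_0}).

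A smaller point: your plan to charge ``bad-integral'' balls to $\Gamma$ at every scale over-counts, since a single bad point lies in such balls at all scales. The paper instead uses Borel regularity to excise the bad-integral set once, replacing $\mu$ by $\mu' = \mu\llcorner(B_1\setminus U)$ with $\mu(U)\leq\Gamma$, and then works under a uniform pointwise $\beta$-bound (Theorem~\ref{thm:core-estimate}). This is what lets the pointwise tilting estimates run cleanly.
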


\begin{remark}
Theorem \ref{thm:main} has a scale-breaking quality to it: the hypothesis and measure estimates B), C) scale with the measure, but the packing estimate does not. Note also that in this theorem we make no assumption on the density of $\mu$.
\end{remark}

\begin{remark}
The $\cC'_0$ is actually a kind of ``uniformly rectifiable,'' in the sense that whenever $\haus^k(\cC'_0 \cap B_r(x)) > 2^{-k-1}\omega_k r^k$, there is some bi-Lipschitz $k$-manifold $T$ so that
\begin{gather}
\haus^k(\cC'_0 \cap T) \geq \frac{1}{2} \haus^k(\cC'_0 \cap T \cap B_r(x)).
\end{gather}
\end{remark}

\begin{remark}
If \eqref{eq_main_integral_bounds} holds for some $\cC$ that is not closed, then conclusions A)-D) hold for the closure $\overline{\cC'} = \cC'_+ \cup \overline{\cC'_0}$ (this equality is due to Lemma \ref{lem:original-packing}).
\end{remark}

\begin{remark}
In fact one can prove upper-Ahlfors-regularity of the packing measure $\mu_{\cC'}$ (on the scales $r\in (r_x,1]$), either by using Theorem \ref{thm:main_upperahlfors} or by analyzing the tree structures more closely.
\end{remark}

Let us also remark in the above that though we estimate all of $\mu$ on the ball, it is necessary to break $\mu$ into two complimentary pieces and give apriori fundamentally different types of estimates on each piece.  Indeed, in $(A)$ we look at the part of $\mu$ covered by $\cC'$ and prove packing estimates on the support of $\mu$, which give little control over $\mu$ itself.  On the other hand, we show in $(B)$ that on the remaining part of $\mu$ a uniform bound on the measure itself.  Packing estimates and measure estimates are not related to one another for a general $\mu$, though we will see in the next subsection how to convert between the two under additional lower or upper density estimates.

\subsection{Mass Bounds with Density}\label{ss:main_thm_density}
The previous theorem works under essentially no assumptions on $\mu$ other than the weak $\beta_2$ bound. As expected, we obtain better control over $\mu$ by assuming also some control over its $k$-dimensional density. In particular, let us now see that assuming either upper or lower density bounds on $\mu$ allows us to apply Theorem \ref{thm:main} in order to conclude mass bounds.\\

 For the applications, in particular for a generalization of the discrete Reifenberg of \cite{naber-valtorta:harmonic}, we will again want a version which is with respect to a general covering.  In this case it is unnecessary, and indeed unnatural given the purpose of such a covering, to make density assumptions on $\mu$ below the scale being considered.  Thus let us consider the lower and upper density of $\mu$ with respect to a covering pair $(\cC,r_x)$ of $\mu$ given by
\begin{align}\label{eq_deph_density}
&\Theta^{*,k}(\mu,\cC,x)\equiv \limsup_{r\to r_x} \frac{\mu(B_{ r/50}(x))}{(r/50)^k}\, ,\notag\\
&\Theta^{k}_*(\mu,\cC,x)\equiv \liminf_{r\to r_x} \frac{\mu(B_{r/50}(x))}{(r/50)^k}\, .
\end{align}
Of course, if $\cC=\cC_0$ then this is the usual upper and lower density, while if $\cC=\cC_+$ and $\mu$ is finite then $\Theta^{*,k}(\mu,\cC,x)=\Theta^{k}_*(\mu,\cC,x) = (r_x/50)^{-k}\mu(\overline B_{r_x/50}(x))$. 

Recall that if $\mu=\haus^k\llcorner S$, and $\haus^k(S) < \infty$, then for $\haus^k$-a.e. $s\in S$:
\begin{gather}\label{eq_haus_dens}
 2^{-k}\leq \Theta^{*,k}(\haus^k\llcorner S,x)\leq 1\, .
\end{gather}

Let us now see how to derive global mass bounds under density assumptions on our measure:

\begin{theorem}[Mass Bounds under Density]\label{thm:main_density}
Let $\mu$ be a non-negative Borel-regular measure in $B_1$ and $(\cC,r_x)$ a covering pair for $\mu$ such that $r_x \leq 1$.  Suppose
\begin{equation}\label{eq_massbound_integralest}
\mu \cur{ z \in B_1 : \int_{r_z}^2 \beta^k_{\mu,2,\bar\eps_\beta}(z, r)^2\frac{dr}{r}> M } \leq \Gamma ,
\end{equation}

Then we have:
\begin{enumerate}
\item[L)] If $\Theta^{*,k}(\mu,\cC, x) \geq a > 0$ for $\mu$-a.e. $x$, then there exists $\cC' \subseteq \cC$ with $\mu(B_1 \setminus B_{r_x}(\cC')) = 0$ so that
\begin{gather}
\haus^k(\cC_0')+\sum_{\cC'_+} r_x^k \leq  \frac{c(n)(\bar\eps_\beta + M + \Gamma)}{a} + c(n)\, .
\end{gather}
\item[U)] If $\Theta^{k}_*(\mu,\cC, x) \leq b$ for $\mu$-a.e. $x\in \cC$, then 
\begin{gather}
\mu(B_1) \leq c(n)(\bar\eps_\beta + b + M) + \Gamma.
\end{gather}
\end{enumerate}
\end{theorem}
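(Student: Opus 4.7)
The plan for both parts is to apply the Main Theorem \ref{thm:main} with a carefully chosen $\eps$, and then use the density hypothesis to convert between mass estimates and packing-type estimates.

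\textbf{Part (L).} First I would apply Theorem~\ref{thm:main} with $\eps := \bar\eps_\beta/c(k)$, the minimal choice consistent with the hypothesis $\bar\eps_\beta \leq c(k)\max\{M,\eps\}$. This produces a closed $\cC^{(0)} \subseteq \cC$ with $\haus^k(\cC^{(0)}_0) + \sum_{\cC^{(0)}_+} r_x^k \leq c(n)$ and a residual mass bound $\mu(B_1 \setminus B_{r_x}(\cC^{(0)})) \leq c(n)(\bar\eps_\beta + M) + \Gamma$. The upper-density hypothesis then lets us cover this residual set efficiently: for $\mu$-a.e.\ $x$ in it, the definition of $\Theta^{*,k}(\mu,\cC,x)\geq a$ produces some radius $\tilde r_x$ with $\mu(B_{\tilde r_x/50}(x)) \geq \tfrac{a}{2}(\tilde r_x/50)^k$, and a standard Vitali extraction produces a disjoint subfamily $\{B_{\tilde r_i/50}(x_i)\}$ which covers $\mu$-a.e.\ of the residual. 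Disjointness then forces
$$
\sum_i \tilde r_i^k \leq \frac{c(n)}{a}\sum_i \mu(B_{\tilde r_i/50}(x_i)) \leq \frac{c(n)}{a}\,\mu(B_1 \setminus B_{r_x}(\cC^{(0)})) \leq \frac{c(n)(\bar\eps_\beta + M + \Gamma)}{a},
$$
and $\cC' := \cC^{(0)} \cup \{x_i\}_i$, together with the extended radius function $\tilde r_{x_i}$, realizes both the claimed full $\mu$-coverage and the claimed packing bound.

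\textbf{Part (U).} Here I would apply Theorem~\ref{thm:main} with $\eps := \max\{b,\bar\eps_\beta/c(k)\}$, obtaining $\cC'$ with $\mu(B_1 \setminus B_{r_x}(\cC')) \leq c(n)(\bar\eps_\beta + b + M) + \Gamma$, so it remains to bound $\mu(B_{r_x}(\cC'))$. For the Hausdorff piece $\cC'_0$ (on which $r_x = 0$) the lower-density bound $\Theta^k_*(\mu,\cdot)\leq b$, together with a standard Vitali fine-covering argument at vanishing scales, yields $\mu\llcorner\cC'_0 \leq c(n)\,b\,\haus^k\llcorner\cC'_0$, and hence $\mu(\cC'_0) \leq c(n)b$ via the Main Theorem's packing bound. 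For the discrete piece $\bigcup_{\cC'_+} B_{r_x}(x)$, the density at $x \in \cC'_+$ gives $\mu(B_{r_x/50}(x)) \leq c(n)b\,r_x^k$; one then passes from scale $r_x/50$ to the full ball $B_{r_x}(x)$ by invoking the upper-Ahlfors-regularity of the packing measure $\mu_{\cC'}$ alluded to in the remark following the Main Theorem (or the separately stated Theorem~\ref{thm:main_upperahlfors}). This yields $\mu(B_{r_x}(x)) \leq c(n)(b+M)\,r_x^k$, and summing against $\sum_{\cC'_+} r_x^k \leq c(n)$ gives $\mu(\bigcup_{\cC'_+} B_{r_x}(x)) \leq c(n)(b+M)$, completing the bound on $\mu(B_1)$.

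The main obstacle is the second half of (U): upgrading the single-scale density bound $\mu(B_{r_x/50}(x)) \leq c\,b\,r_x^k$ into the full upper-Ahlfors-type bound $\mu(B_{r_x}(x)) \leq c(b+M)\,r_x^k$, since a factor-$50$ enlargement cannot be absorbed by the density hypothesis alone and genuinely requires the tree-structure/upper-Ahlfors machinery from the Main Theorem. Part (L), by contrast, is essentially a direct consequence of the Main Theorem combined with a weighted Vitali cover of the residual mass, with the factor $1/a$ arising transparently from the upper-density conversion.
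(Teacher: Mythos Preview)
Your approach to (U) has the gap you anticipate, and neither of your proposed fixes works. Invoking Theorem~\ref{thm:main_upperahlfors} is illegitimate here: its hypothesis~\eqref{eqn:upperahlfors-hyp} demands the weak $\beta$-estimate at \emph{every} scale, which Theorem~\ref{thm:main_density} does not assume. Upper-Ahlfors-regularity of the packing measure $\mu_{\cC'}$ is likewise no help---it bounds $\sum_{y\in\cC'_+\cap B_r(x)}r_y^k$, not $\mu(B_r(x))$. Your treatment of $\cC'_0$ has a separate gap: you assert that $\Theta^k_*(\mu,\cdot)\leq b$ plus a Vitali fine cover gives $\mu\llcorner\cC'_0\leq c\,b\,\haus^k\llcorner\cC'_0$, but the disjoint Vitali balls only satisfy $\mu(B_{r_j})\leq 2b\,r_j^k$, while covering $\cC'_0$ requires the enlarged balls $B_{5r_j}$, for which no mass bound is available from a liminf hypothesis.

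The paper's resolution of (U) is to choose the covering pair \emph{before} applying the Main Theorem rather than after. Set $\cS_+=\{x\in\cC:\Theta^k_*(\mu,\cC,x)\leq b\}$, $\cS_0=\cC\setminus\cS_+$ (so $\mu(\cS_0)=0$), and for each $x\in\cS_+$ use the liminf to pick $s_x\in(0,1)$ with $\mu(B_{s_x}(x))\leq 2b\omega_k s_x^k$. Apply Theorem~\ref{thm:main} to the pair $(\cS,s_x)$: the output $\cS'$ has $\mu(\cS'_0)=0$ and $\sum_{\cS'_+}s_x^k\leq c(n)$, so
\[
\mu(B_1)\leq c(n)(\eps+M)+\Gamma+\sum_{x\in\cS'_+}\mu(B_{s_x}(x))\leq c(n)(\eps+M)+\Gamma+c(n)b\,.
\]
The factor-$50$ issue never arises because the radii were tailored from the outset to make the mass bound exact.

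Your Part (L) has an analogous gap: disjointness of the $B_{\tilde r_i/50}(x_i)$ gives only $\sum_i\mu(B_{\tilde r_i/50}(x_i))\leq\mu\bigl(\bigcup_iB_{\tilde r_i/50}(x_i)\bigr)$, and this union need not lie in the residual $B_1\setminus B_{r_x}(\cC^{(0)})$. For $x_i\in\cC_+$ the radii $\tilde r_i$ are pinned near $r_{x_i}$ and cannot be shrunk, so the balls may protrude into $B_{r_x}(\cC^{(0)})$, where $\mu$ is uncontrolled. Your argument is essentially the paper's proof of the special case $\cC=\cC_0$ (Corollary~\ref{cor:main_density_1}), where the residual is open and Lemma~\ref{lem:density-to-ineq} applies directly. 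For general $\cC$ the paper takes a substantially different route: it builds an intermediary disjoint packing measure from generalized $\mu$-centers of mass (Lemma~\ref{lem:intermediary-packing}), shows its $\beta$-numbers are controlled by those of $\mu$, and then bounds it via a baby discrete-Reifenberg (Theorems~\ref{thm:disjoint-discrete} and~\ref{thm:discretized-packing}).
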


Note that, since \eqref{eq_massbound_integralest} is only an assumption at one scale, one cannot hope to obtain upper Ahlfors regularity for $\mu$.  However, in Theorem \ref{thm:main_upperahlfors} we will see that if one assumes \eqref{eq_massbound_integralest} on all points and scales, then one may obtain uniform upper Ahlfors regularity on the measure.  We refer to Examples \ref{ex_density} and \ref{ex_packing} to see that the results of Theorem \ref{thm:main_density} are sharp, in that the improvement from Theorem \ref{thm:main} to Theorem \ref{thm:main_density} may fail without the corresponding density bounds.\\

As in the main theorem, our primary applications of the above are either in the case when $\cC_+=\emptyset$, so we are controlling $\mu$ on all points and scales, or in the discrete case $\cC_0=\emptyset$.  Let us record our statements in these cases: 

\begin{corollary}[Mass Bounds under Density]\label{cor:main_density_1}
Let $\mu$ be a non-negative Borel-regular measure in $B_1$, and $(\cC, r_x)$ a covering pair for $\mu$ with $\cC = \cC_0$.  Suppose
\begin{equation}
\mu \cur{ z \in B_1 : \int_{0}^2 \beta^k_{\mu,2,\bar\eps_\beta}(z, r)^2\frac{dr}{r}> M } \leq \Gamma \, .
\end{equation}
Then we have
\begin{enumerate}
\item[L)] If $\Theta^{*,k}(\mu, x) \geq a > 0$ for $\mu$-a.e. $x$, then $\exists$ $\cC' = \cC'_0 \subseteq \spt\mu$ with $\mu(B_1 \setminus \cC') = 0$ s.t. 
 \begin{gather}
  \haus^k(\cC'_0) \leq  \frac{c(n)(\bar\eps_\beta + M + \Gamma)}{a} + c(n)\, . 
 \end{gather}
\item[U)] If $\Theta^{k}_*(\mu, x) \leq b$ for $\mu$-a.e. $x$, then $\mu(B_1) \leq c(n)(\bar\eps_\beta + b + M) + \Gamma.$
\end{enumerate}
\end{corollary}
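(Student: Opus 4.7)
The plan is to invoke Theorem \ref{thm:main_density} directly, after verifying that when the covering pair is the ``continuous'' one $(\cC, r_x) = (\cC_0, 0)$ the covering-pair densities introduced in \eqref{eq_deph_density} reduce to the standard upper and lower $k$-densities of $\mu$. Concretely, $r_x \equiv 0$ makes $\int_{r_z}^{2} = \int_{0}^{2}$, so the integral hypothesis \eqref{eq_massbound_integralest} is exactly the one assumed in the corollary; and the substitution $s = r/50$ in \eqref{eq_deph_density} gives
\[
\Theta^{*,k}(\mu,\cC,x) = \limsup_{s \to 0}\frac{\mu(B_s(x))}{s^k} = \Theta^{*,k}(\mu,x), \qquad \Theta^{k}_*(\mu,\cC,x) = \Theta^{k}_*(\mu,x),
\]
so the density hypotheses of (L) and (U) translate verbatim to those of the corresponding cases of Theorem \ref{thm:main_density}. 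Also $\cC = \cC_0 \supseteq \spt\mu$ and $r_x \leq 1$, so $(\cC, r_x)$ is a valid covering pair for $\mu$.

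For part (L), I would apply Theorem \ref{thm:main_density}(L) to obtain $\cC' \subseteq \cC$ with $\mu(B_1 \setminus B_{r_x}(\cC')) = 0$ and packing bound
\[
\haus^k(\cC'_0) + \sum_{x \in \cC'_+} r_x^k \leq \frac{c(n)(\bar\eps_\beta + M + \Gamma)}{a} + c(n).
\]
Since $\cC' \subseteq \cC_0$, one has $\cC'_+ = \emptyset$ and $\cC' = \cC'_0$, so that $B_{r_x}(\cC') = \cC'_0 \cup \bigcup_{\cC'_+} B_{r_x}(x)$ collapses to $\cC'_0 = \cC'$. The conclusion $\mu(B_1 \setminus \cC') = 0$ and $\haus^k(\cC'_0) \leq c(n)(\bar\eps_\beta + M + \Gamma)/a + c(n)$ then follow immediately.

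For part (U), Theorem \ref{thm:main_density}(U) applies directly under the translated density bound $\Theta^{k}_*(\mu, x) \leq b$, yielding $\mu(B_1) \leq c(n)(\bar\eps_\beta + b + M) + \Gamma$ with no further work. There is no real obstacle in this proof; it is essentially a definition-chase specializing Theorem \ref{thm:main_density} to the case $\cC_+ = \emptyset$. The only small point worth spelling out is the identification of the covering-pair density with the ordinary density at $r_x = 0$, which is precisely what makes the $r/50$ factor in \eqref{eq_deph_density} harmless.
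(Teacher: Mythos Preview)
Your proof is correct: specializing Theorem~\ref{thm:main_density} to the case $\cC=\cC_0$, $r_x\equiv 0$ is exactly one of the two ways the paper handles this corollary (see the remark in Section~\ref{sec:density} that Corollaries~\ref{cor:main_density_1}--\ref{cor:main_upperahlfors_2} ``are immediate from Theorems~\ref{thm:main_density}, \ref{thm:main_upperahlfors}''). The paper also gives a separate, more hands-on argument for Corollary~\ref{cor:main_density_1} directly from Theorem~\ref{thm:main}: for (L) one first extracts the closed rectifiable set $K_0$ with $\haus^k(K_0)\leq c(n)$ and $\mu(B_1\setminus K_0)\leq c(n)(\eps+M)+\Gamma$, then uses the density lower bound together with Lemma~\ref{lem:density-to-ineq} to bound $\haus^k$ of the remainder $K_1=\{z\in B_1\setminus K_0:\Theta^{*,k}(\mu,z)\geq a\}$; for (U) one builds a new covering pair $(\cS,s_x)$ by picking, at each point of small lower density, a radius with $\mu(B_{s_x}(x))\leq 2\omega_k b\,s_x^k$, and then feeds this into Theorem~\ref{thm:main}. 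That route is more self-contained (it avoids the discretization machinery of Section~\ref{sec:density} underlying Theorem~\ref{thm:main_density}) and makes the mechanism transparent, whereas your route is shorter once Theorem~\ref{thm:main_density} is in hand. One cosmetic point: the corollary asserts $\cC'_0\subseteq\spt\mu$, which you do not address; this is harmless since one may replace $\cC'$ by $\cC'\cap\spt\mu$ without affecting either $\mu(B_1\setminus\cC')=0$ or the $\haus^k$ bound.
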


Additionally, in the case of a discrete cover (i.e. $\cC_0=\emptyset$) we have the following:

\begin{corollary}[Packing Estimates under Density]\label{cor:main_density_2}
Let $\mu$ be a non-negative Borel-regular measure supported in $B_1$, and $(\cC, r_x)$ a covering pair for $\mu$ with $\cC = \cC_+$.  Suppose
\begin{equation}
\mu \cur{ z \in B_1 : \int_{r_z}^2 \beta^k_{\mu,2,\bar\eps_\beta}(z, r)^2\frac{dr}{r}> M } \leq \Gamma .
\end{equation}
Then we have
\begin{enumerate}
\item[L)] If $\mu(B_{r_x}(x)) \geq a r_x^k$, then $\exists$ $\cC' = \cC'_+ \subseteq \spt\mu$ with $\mu(B_1 \setminus B_{r_x}(\cC')) = 0$ s.t.
\begin{gather}
\sum_{x \in \cC'_+} r_x^k \leq  \frac{c(n)(\bar\eps_\beta + M + \Gamma)}{a} + c(n)\, .
\end{gather}
\item[U)] If $\mu(B_{r_x}(x)) \leq b r_x^k$, then $\mu(B_1) \leq c(n)(\bar\eps_\beta + b + M) + \Gamma .$
\end{enumerate}
\end{corollary}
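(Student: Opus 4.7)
The corollary is the discrete ($\cC=\cC_+$) specialization of Theorem~\ref{thm:main_density}; in this setting the densities in~\eqref{eq_deph_density} reduce to $\mu(\overline B_{r_x/50}(x))/(r_x/50)^k$, and the scale mismatch with the corollary's hypothesis (stated at scale $r_x$) is the main bookkeeping issue. My plan is to handle Part U directly from Theorem~\ref{thm:main}, and Part L by extending the cover produced by Theorem~\ref{thm:main} via a Besicovitch selection.

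For Part U, I would apply Theorem~\ref{thm:main} with $\eps=b$ to obtain a subcover $\cC' \subseteq \cC$ with $\sum_{\cC'} r_x^k \leq c(n)$ and residual mass $\mu(B_1 \setminus B_{r_x}(\cC')) \leq c(n)(b+M)+\Gamma$ (the $\bar\eps_\beta$ term is absorbed via the hypothesis $\bar\eps_\beta\leq c(k)\max\{M,\eps\}$). The upper density $\mu(B_{r_x}(x))\leq b r_x^k$ then yields
\[
\mu\Bigl(\bigcup_{x\in\cC'} B_{r_x}(x)\Bigr) \leq b \sum_{x\in\cC'} r_x^k \leq c(n)b,
\]
and summing the covered and residual masses gives $\mu(B_1) \leq c(n)(\bar\eps_\beta + b+M)+\Gamma$, as required.

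For Part L, I would apply Theorem~\ref{thm:main} with $\eps=a$ to get an initial $\cC'_1$ with $\sum_{\cC'_1} r_x^k \leq c(n)$ and residual mass $\mu(E) \leq c(n)(a+M)+\Gamma$ on $E := B_1 \setminus B_{r_x}(\cC'_1)$. To achieve the full covering $\mu(B_1\setminus B_{r_x}(\cC'))=0$, I extend $\cC'_1$ by applying a Besicovitch covering to $\{B_{r_x}(x) : x\in\cC\cap E\}$, producing $N=N(n)$ subfamilies $\cF_1,\ldots,\cF_N$ of pairwise disjoint balls that together cover $E\cap\spt\mu$. The lower density $\mu(B_{r_x}(x))\geq a r_x^k$ then converts disjoint-ball mass into packing: on each $\cF_i$ we have $\sum_{\cF_i} r_x^k \leq a^{-1}\sum_{\cF_i}\mu(B_{r_x}(x))$; summing over $i$ and combining with the initial packing should yield $\sum r_x^k \leq c(n) + c(n)(\bar\eps_\beta+M+\Gamma)/a$.

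The main obstacle is controlling $\sum_{\cF_i}\mu(B_{r_x}(x))$ in Part L. Since the Besicovitch balls are centered in $E$ but may extend into $B_{r_x}(\cC'_1)$, the naive bound $\sum_{\cF_i}\mu(B_{r_x}) \leq \mu(E)$ can fail. I would resolve this either by passing to concentric half-balls $B_{r_x/2}(x)$ and verifying that a comparable lower density persists at that smaller scale (using the $\beta$-control to propagate density between scales), or by a direct overlap analysis invoking the noncollapsing property C) of Theorem~\ref{thm:main} on $\cC'_1$ to absorb the mass of any overlap with the initial cover.
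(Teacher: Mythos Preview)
Your treatment of Part U is correct and matches the paper's argument for Theorem~\ref{thm:main_density} U): apply Theorem~\ref{thm:main}, bound the mass inside the covered region via the upper density hypothesis, and add the residual.

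For Part L, your direct route from Theorem~\ref{thm:main} has a genuine gap, and neither proposed fix resolves it. The obstacle is exactly the one you flag: the Besicovitch balls $B_{r_x}(x)$ with $x\in E$ can protrude into $B_{r_x}(\cC'_1)$, where $\mu$ carries no a priori upper bound---in Part L the total mass $\mu(B_1)$ may well be infinite---so $\sum_{\cF_i}\mu(B_{r_x}(x))$ is uncontrolled. Your first fix fails because the hypothesis $\mu(B_{r_x}(x))\geq a r_x^k$ does not propagate to scale $r_x/2$ under $\beta$-control alone: mass can concentrate in the annulus $B_{r_x}(x)\setminus B_{r_x/2}(x)$ while lying near a $k$-plane, keeping $\beta$ small. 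Your second fix cannot help because noncollapsing C) is a \emph{lower} bound on $\mu$ over the $\cC'_1$-balls, which does nothing to limit how much mass your Besicovitch balls pick up there.

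The paper treats the corollary as the $\cC=\cC_+$ case of Theorem~\ref{thm:main_density}, whose Part L is proved by a different mechanism (Section~\ref{sec:density}). One builds an intermediate disjoint packing measure $\nu$ placed at the $\mu$-centers-of-mass of a Vitali family drawn from $\cU(\cC)$ (Lemma~\ref{lem:uniform-covering}), shows via Proposition~\ref{prop:COM-control} that the $\beta$-numbers of $\nu$ are bounded by $a^{-1}$ times those of $\mu$ (Lemma~\ref{lem:intermediary-packing}), and then applies Theorem~\ref{thm:main} to $\nu$ rather than to $\mu$ (Theorems~\ref{thm:disjoint-discrete} and~\ref{thm:discretized-packing}). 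Since $\nu$ is already a disjoint packing measure, its total mass \emph{is} the packing sum, and the overlap issue never arises.
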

\vspace{.5cm}

\subsection{\texorpdfstring{$\beta_2$}{beta2}-Estimates on all Scales}\label{ss:main_thm_allscales}

In this subsection we study what improvements to the main results are obtained when the $\beta_2$-estimates are assumed to exist on all scales.  Under a density bound, either lower or upper, one is able to conclude upper Ahlfors regularity estimates.  Our most general result is the following:

\begin{theorem}[Upper Ahlfor's Regularity]\label{thm:main_upperahlfors}
Take $\mu$ a non-negative Borel-regular measure in $B_1$ and $(\cC,r_x)$ a covering pair for $\mu$  such that $r_x \leq 1$, and for all $x\in \cC$ and all $r > 0$ we have
\begin{equation}\label{eqn:upperahlfors-hyp}
\mu \cur{ z \in B_r(x) : \int_{r_z}^{2r} \beta^k_{\mu,2,\bar\eps_\beta}(z, s)^2\frac{ds}{s}> M } \leq M r^k\, ,
\end{equation}
Then we have
\begin{enumerate}
\item[L)] If $\Theta^{*,k}(\mu,\cC, x) \geq a > 0$ for $\mu$-a.e. $x\in \cC$, then there is a $\cC' \subset \cC$ with $\mu(B_1 \setminus B_{r_x}(\cC')) = 0$, satisfying:
\begin{gather}
\haus^k(\cC'_0\cap B_r(x))+\sum_{z \in \cC'_+\cap B_r(x)} r_z^k \leq  \ton{\frac{c(n)(\bar\eps_\beta + M)}{a} + c(n)}r^k \quad \forall x\in \cC', r \geq r_x.
\end{gather}
\item[U)] If $\Theta^{k}_*(\mu,\cC, x) \leq b$ for $\mu$-a.e. $x\in \cC$, then there is a $\cC' \subset \cC$, with $B_{r_x}(\cC') \supset \cC$, and satisfying:
\begin{gather}\label{eq_upper_Ahl}
\mu(B_r(x)) \leq c(n)(\bar\eps_\beta + b + M)r^k \quad \forall x \in \cC', r \geq r_x.
\end{gather}
\end{enumerate}
\end{theorem}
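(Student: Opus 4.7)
The plan is to bootstrap Theorem \ref{thm:main_density} from a single scale to all scales by rescaling. The point is that the hypothesis \eqref{eqn:upperahlfors-hyp} is designed to be scale-invariant: when restricted to any ball $B_{r_0}(x_0)$ with $x_0 \in \cC$, $r_0 > 0$, it becomes, after rescaling to unit size, precisely the one-scale hypothesis \eqref{eq_massbound_integralest} of Theorem \ref{thm:main_density} with $\Gamma = M$.

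Concretely, for $x_0 \in \cC$ and $r_0 \geq r_{x_0}$, set $\tilde\mu(A) := r_0^{-k}\mu(x_0 + r_0 A)$, $\tilde\cC := (\cC\cap\overline{B_{r_0}(x_0)}-x_0)/r_0$, and $\tilde r_{\tilde z} := \min(r_{x_0+r_0\tilde z}/r_0,1)$. The scale-invariance $\beta^k_{\tilde\mu,2,\bar\eps_\beta}(\tilde z,s)=\beta^k_{\mu,2,\bar\eps_\beta}(x_0+r_0\tilde z,r_0 s)$ combined with the substitution $s=r_0\tilde s$ turns \eqref{eqn:upperahlfors-hyp} into
\[
\tilde\mu\cur{\tilde z\in B_1:\int_{\tilde r_{\tilde z}}^{2}\beta^k_{\tilde\mu,2,\bar\eps_\beta}(\tilde z,\tilde s)^2\frac{d\tilde s}{\tilde s}>M}\leq M,
\]
which is exactly \eqref{eq_massbound_integralest} for $\tilde\mu$ with $\Gamma=M$. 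Both density conditions $\Theta^{k}_*(\mu,\cC,\cdot)\leq b$ and $\Theta^{*,k}(\mu,\cC,\cdot)\geq a$ are scale-free and carry over verbatim to $\tilde\mu$. For part (U), apply Theorem \ref{thm:main_density}(U) to $\tilde\mu$ to obtain $\tilde\mu(B_1)\leq c(n)(\bar\eps_\beta+b+M)+M$; rescaling back yields $\mu(B_{r_0}(x_0))\leq c(n)(\bar\eps_\beta+b+M)\,r_0^k$, which is \eqref{eq_upper_Ahl}. The choice $\cC':=\cC$ trivially satisfies $B_{r_x}(\cC')\supset\cC$.

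Part (L) is the more delicate half, since the conclusion requires a single $\cC'\subset\cC$ that simultaneously (i) covers $\mu$-a.e.~of $B_1$ and (ii) satisfies the local packing bound at every scale $r\geq r_x$. Applying Theorem \ref{thm:main_density}(L) to each rescaled ball $B_{r_0}(x_0)$ in isolation yields only a family of candidate subsets $\cC'_{x_0,r_0}\subseteq\cC\cap B_{r_0}(x_0)$, with no guarantee they assemble consistently. To resolve this, the plan is to reopen the corona/tree construction underlying Theorem \ref{thm:main}: that construction is already built scale by scale along a dyadic hierarchy, and at every dyadic ball the scale-invariant hypothesis \eqref{eqn:upperahlfors-hyp} provides exactly the $\beta_2$-control needed to run the one-scale argument. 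The resulting $\cC'$ then inherits the Minkowski estimate \eqref{eq_mink_est} and tree packing estimate \eqref{eq_C_+'_est} at every scale, yielding $\haus^k(\cC'_0\cap B_r(x))+\sum_{z\in\cC'_+\cap B_r(x)}r_z^k\leq Cr^k$ for every $x\in\cC'$ and $r\geq r_x$. The covering property $\mu(B_1\setminus B_{r_x}(\cC'))=0$ follows as in the proof of Theorem \ref{thm:main_density}(L), using the measure estimate \eqref{eq_main_measure_away} applied at every rescaled subball together with the $\mu$-a.e.\ lower-density bound $\Theta^{*,k}(\mu,\cC,\cdot)\geq a$.

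The main obstacle is precisely this uniform construction of $\cC'$ for part (L): Theorem \ref{thm:main_density}(L) cannot be used as a pure black box because its output a priori depends on the scale. The resolution should come from the fact that the tree construction in Theorem \ref{thm:main} is inherently multi-scale, so once the scale-invariant hypothesis is in hand, the tree packing bounds automatically localize to every dyadic subball. A minor technical nuisance in the rescaling is that points $z$ with $r_z>r_0$ must be truncated to $\tilde r_{\tilde z}=1$; these are naturally absorbed as atoms of the rescaled covering pair and do not affect the packing count.
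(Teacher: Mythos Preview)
Your plan for part (U) is the right idea but the execution has a real gap. Taking $\cC' = \cC$ does not work: for $x_0 \in \cC$ and $r_0 \geq r_{x_0}$, there may be points $z \in B_{r_0}(x_0) \cap \cC$ with $r_z \gg r_0$. Your truncation $\tilde r_{\tilde z} = 1$ then destroys the density hypothesis: the quantity $\Theta^k_*(\tilde\mu, \tilde\cC, \tilde z)$ for such a truncated point equals $50^k r_0^{-k} \mu(\overline{B_{r_0/50}(z)})$, which has no reason to be $\leq b$ (the original bound was at scale $r_z/50$, not $r_0/50$). So Theorem \ref{thm:main_density}(U) cannot be invoked on the rescaled data. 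This is not a minor nuisance; the paper devotes Lemma \ref{lem:uniform-covering} to it, constructing a subset $\cU(\cC, r_x/5) \subset \cC$ with the property that if $x \in \cU$ and $r \geq r_x$ then every $y \in B_r(x) \cap \cC$ has $r_y \leq 10r$. Taking $\cC' = \cU(\cC, r_x/5)$, one can then rescale $B_{10r}(x)$ without any truncation and apply Theorem \ref{thm:main_density}(U) cleanly. The covering property $B_{r_x}(\cC') \supset \cC$ comes from part (B) of that lemma.

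For part (L) your diagnosis of the obstacle is exactly right, but your proposed cure --- ``reopen the corona/tree construction'' and hope the packing ``automatically localizes'' --- is not what the paper does and it is unclear how it would yield a \emph{single} $\cC'$ with packing at all scales. The paper's route is different and more concrete: fix $\cC'_+$ once and for all as a Vitali subcover of $\{B_{5s_x}(x) : x \in \cU(\cS)_+\}$, where $s_x \sim r_x/50$ is chosen so that $\mu(B_{s_x}(x)) \geq \tfrac{a}{2}\omega_k s_x^k$. To prove packing of this \emph{fixed} collection at every scale, form the intermediary disjoint packing measure $\nu = \sum r_y^k \delta_{Y}$ where $Y$ is the generalized $\mu$-center of mass of $B_{r_y}(y)$. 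The recentering at centers of mass is the crucial step (Lemma \ref{lem:intermediary-packing}): it lets one transfer the $\beta$-bounds from $\mu$ to $\nu$ via Jensen, obtaining $\beta^k_{\nu,2}(x,r)^2 \leq c(k)a^{-1}\beta^k_{\mu,2}(x,4r)^2$. One then applies the discrete Reifenberg (Theorem \ref{thm:disjoint-discrete}) to $\nu$ at each scale $B_r(x)$, giving $\nu(B_r(x)) \leq C r^k$ and hence the packing of $\cC'_+$. The $\cC'_0$ part is handled analogously with $s_x < \delta$ and $\delta \to 0$. None of this touches the tree construction again; it is a discretization argument layered on top of Theorem \ref{thm:main} applied to $\nu$.
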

\vspace{.25cm}

Despite appearances this theorem is not a straightforward corollary of Theorem \ref{thm:main_density}.  The issue is $\cC_+$: in L), we want a \emph{single} choice of balls which admit packing at all scales.  
When $\cC_+ = \emptyset$ then the proof becomes much easier.

In the spirit of the previous results let us again study the above in the cases when $\cC_+=\emptyset$ or in the discrete case when $\cC_0=\emptyset$.  In the case $\cC_+=\emptyset$ we have produced a generalization of the rectifiable Reifenberg from \cite{naber-valtorta:harmonic}, while in the discrete case $\cC_0=\emptyset$ we have produced a generalization of the discrete Reifenberg from \cite{naber-valtorta:harmonic}.  We will analyze this a bit more in the next subsection.  Let us begin with the $\cC_+=\emptyset$ case:

\begin{corollary}[Upper Ahlfor's Regularity]\label{cor:main_upperahlfors_1}
Let $\mu$ be a non-negative Borel-regular measure in $B_1$, and $(\cC, r_x)$ a covering pair for $\mu$ with $\cC = \cC_0$ and $r_x \leq 1$.  Suppose that for all $x$ and $r > 0$:
\begin{equation}
\mu \cur{ z \in B_r(x) : \int_{0}^{2r} \beta^k_{\mu,2,\bar\eps_\beta}(z, s)^2\frac{ds}{s}> M } \leq M r^k\, ,
\end{equation}
Then:
\begin{enumerate}
\item[L)] If $\Theta^{*,k}(\mu, x) \geq a$ for $\mu$-a.e. $x$, then there is a $\cC' = \cC'_0 \subset \cC$ with $\mu(B_1\setminus \cC') = 0$, so that
 \begin{gather}
  \haus^k(\cC'\cap B_r(x))\leq  \ton{\frac{c(n)(\bar\eps_\beta + M)}{a} + c(n)}r^k \quad \forall x \in B_1, r > 0 \, .
 \end{gather}
\item[U)] If $\Theta^{k}_*(\mu, x) \leq b$ for $\mu$-a.e. $x$, then
 \begin{gather}
  \mu(B_r(x)) \leq c(n)(\bar\eps_\beta + b + M)r^k \quad \forall x \in B_1, r > 0 \, .
 \end{gather}
\end{enumerate}
\end{corollary}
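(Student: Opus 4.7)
The plan is to deduce Corollary \ref{cor:main_upperahlfors_1} from Theorem \ref{thm:main_density} by rescaling the hypothesis onto each ball $B_r(x)$ separately. Since $\cC_+ = \emptyset$ here, we bypass the principal difficulty of Theorem \ref{thm:main_upperahlfors}, namely producing a single discrete cover which realizes the packing bound at all scales simultaneously.

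Fix $x \in B_1$ and $0 < r \leq 1$; the regime $r > 1$ follows from the $r=1$ bound because $\spt\mu \subset B_1$. Define a rescaled measure supported in $B_1(0)$ by $\tilde\mu(A) := r^{-k}\mu(x + rA)$. The scale-invariance remark, combined with the fact that restricting the $L^2$ best-plane integral to a smaller region can only decrease its value, yields $\beta^k_{\tilde\mu,2,\bar\eps_\beta}(y,s) \leq \beta^k_{\mu,2,\bar\eps_\beta}(x + ry,\, rs)$ for every $y \in B_1$ and $s \in (0,2]$. The substitution $\rho = rs$ together with the rescaling of $\mu$ then converts the hypothesis into
\begin{equation*}
\tilde\mu \Bigl\{ y \in B_1 : \int_0^2 \beta^k_{\tilde\mu,2,\bar\eps_\beta}(y,s)^2 \frac{ds}{s} > M \Bigr\} \leq M,
\end{equation*}
so Theorem \ref{thm:main_density} applies to $\tilde\mu$ with $\Gamma = M$ and trivial covering pair $(\spt\tilde\mu,\,0)$. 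The pointwise density conditions $\Theta^{*,k}(\mu,\cdot) \geq a$ and $\Theta^k_*(\mu,\cdot) \leq b$ are scale-invariant and transfer to $\tilde\mu$ at $\tilde\mu$-a.e.\ point.

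For part U), Theorem \ref{thm:main_density} U) gives $\tilde\mu(B_1) \leq c(n)(\bar\eps_\beta + b + M) + M$, which upon rescaling back produces $\mu(B_r(x)) \leq c(n)(\bar\eps_\beta + b + M)\,r^k$. For part L), Theorem \ref{thm:main_density} L) produces a closed $\tilde\cC'_{x,r} \subset \spt\tilde\mu$ with $\tilde\mu(B_1 \setminus \tilde\cC'_{x,r}) = 0$ and $\haus^k(\tilde\cC'_{x,r}) \leq \tfrac{c(n)(\bar\eps_\beta + M)}{a} + c(n)$. Since $\tilde\cC'_{x,r}$ is closed and its open complement in $B_1$ has $\tilde\mu$-measure zero, necessarily $\spt\tilde\mu \cap B_1 \subset \tilde\cC'_{x,r}$. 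Undoing the rescaling, $\spt\mu \cap B_r(x)$ sits inside a closed set of $\haus^k$-measure at most $\bigl(\tfrac{c(n)(\bar\eps_\beta + M)}{a} + c(n)\bigr) r^k$, so the single choice $\cC' := \spt\mu$ delivers the required packing estimate simultaneously at every scale.

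The only subtle point is the recognition that, unlike in Theorem \ref{thm:main_upperahlfors} L), no gluing of the scale-dependent sets is required: the conclusion of part L) asks for a Hausdorff bound on the \emph{support} of $\mu$, and each $\tilde\cC'_{x,r}$ already contains that support locally. This is precisely the feature that breaks down when $\cC_+ \neq \emptyset$, where the packing set is not canonically determined and the finer argument behind Theorem \ref{thm:main_upperahlfors} becomes genuinely necessary.
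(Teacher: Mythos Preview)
Your argument for part U) is correct and is essentially how the paper proves Theorem \ref{thm:main_upperahlfors} U) itself: one restricts $\mu$ to $B_r(x)$, rescales, and invokes Theorem \ref{thm:main_density} U). (A small omission: your $\tilde\mu$ as written is not supported in $B_1$; you must restrict to $B_r(x)$ before rescaling. This is harmless, since restriction only decreases the $\beta$-integrand and the lower density.)

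Your argument for part L) has a genuine gap. Theorem \ref{thm:main_density} L) does \emph{not} assert that the output set $\cC'$ is closed, and indeed in the paper's proof it is taken to be $\{x:\Theta^{*,k}(\mu,x)\geq a\}$, which is merely Borel. Without closedness you cannot conclude $\spt\tilde\mu\cap B_1\subset\tilde\cC'_{x,r}$, and a Hausdorff bound on a set does not in general pass to its closure, so your choice $\cC'=\spt\mu$ is not justified from the statement of Theorem \ref{thm:main_density} alone. The difficulty is real: you need a \emph{single} set that carries the Hausdorff bound at every scale, and the black-box output of Theorem \ref{thm:main_density} at scale $B_r(x)$ a priori depends on $(x,r)$.

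The fix is exactly what the paper does in proving Theorem \ref{thm:main_upperahlfors} L) (restricted to $\cC_0$): take the scale-independent choice $\cC'_0=\{x:\Theta^{*,k}(\mu,x)\geq a\}$ from the outset. This set satisfies $\mu(B_1\setminus\cC'_0)=0$ by hypothesis, and the Hausdorff bound at each scale follows by running the density-to-packing argument (Lemma \ref{lem:density-to-ineq} combined with Theorem \ref{thm:discretized-packing}, or simply Theorem \ref{thm:main_density} L) once one knows its proof always outputs this same set). Alternatively, and more directly, just specialize Theorem \ref{thm:main_upperahlfors} to $\cC=\cC_0$, which is the one-line route the paper takes.
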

\vspace{.25cm}

Finally let us end with the following discrete case when $\cC_0=\emptyset$:

\begin{corollary}[Upper Ahlfor's Regularity]\label{cor:main_upperahlfors_2}
Let $\mu$ be a non-negative Borel-regular measure supported in $B_1$, and $(\cC, r_x)$ a covering pair for $\mu$ with $\cC = \cC_+$ and $r_x \leq 1$.  Suppose that for all $x$ and $r > 0$ we have
\begin{equation}
\mu \cur{ z \in B_r(x) : \int_{r_z}^{2r} \beta^k_{\mu,2,\bar\eps_\beta}(z, s)^2\frac{ds}{s}> M } \leq M r^k\, ,
\end{equation}
Then:
\begin{enumerate}
\item[L)] If $\mu(B_{r_x}(x)) \geq a r_x^k$, then there is a $\cC' \subset \cC$ with $\mu(B_1 \setminus B_{r_x}(\cC')) = 0$, and satisfying
\begin{gather}
\sum_{z \in \cC'_+\cap B_r(x)} r_z^k \leq  \ton{\frac{c(n)(\bar\eps_\beta + M)}{a} + c(n)}r^k \quad \forall x \in \cC', r \geq r_x.
\end{gather}
\item[U)] If $\mu(B_{r_x}(x)) \leq b\, r_x^k$, then there is a $\cC' \subset \cC$ with $B_{r_x}(\cC') \supset \cC$, and satisfying
\begin{gather}
\mu(B_r(x)) \leq c(n)(\bar\eps_\beta + b + M)r^k \quad \forall x \in \cC', r \geq r_x.
\end{gather}
\end{enumerate}
\end{corollary}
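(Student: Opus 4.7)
This corollary is the discrete specialization $\cC_0 = \emptyset$, $\cC = \cC_+$ of Theorem \ref{thm:main_upperahlfors}, so the plan is to apply that theorem after verifying that its density hypotheses reduce to the discrete density conditions stated here, modulo a universal factor $50^k$ absorbed into $c(n)$. In the pure $\cC_+$ setting, the remark following \eqref{eq_deph_density} gives the closed-form expression
\begin{equation*}
\Theta^{*,k}(\mu,\cC,x) = \Theta^k_*(\mu,\cC,x) = (r_x/50)^{-k}\mu(\overline B_{r_x/50}(x)).
\end{equation*}

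For part U), one line of monotonicity suffices: $\mu(\overline B_{r_x/50}(x)) \leq \mu(B_{r_x}(x)) \leq b\, r_x^k$ gives $\Theta^k_*(\mu,\cC,x) \leq 50^k b$, and Theorem \ref{thm:main_upperahlfors} U) applied with constant $50^k b$ (absorbed into $c(n)$) yields \eqref{eq_upper_Ahl}. Part L) is more delicate because the analogous monotonicity runs the wrong way: $\mu(B_{r_x}(x)) \geq a r_x^k$ does not directly bound $\mu(\overline B_{r_x/50}(x))$ from below. My plan here is to first extract a Vitali subcollection $\cC' \subset \cC$ with $\{B_{r_z/5}(z)\}_{z\in\cC'}$ pairwise disjoint and $\{B_{r_z}(z)\}_{z\in\cC'}$ covering $\bigcup_{x\in\cC} B_{r_x/5}(x)$, so that $\mu(B_1 \setminus B_{r_x}(\cC')) = 0$, and then to install the Ahlfors packing bound by rescaling: for each $x\in\cC'$ and each dyadic $r\geq r_x$, rescale $B_r(x)$ to $B_1(0)$, apply Theorem \ref{thm:main_density} L) to the rescaled measure to obtain packing in $B_1$, and translate back. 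The discrete mass hypothesis $\mu(B_{r_x}(x))\geq a r_x^k$ and the integral $\beta_2$-assumption \eqref{eqn:upperahlfors-hyp} are both scale-covariant, so they transfer to each rescaled ball with the same constants $a$ and $M$.

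The main obstacle, flagged by the authors just below Theorem \ref{thm:main_upperahlfors}, is that this naive rescale-and-apply procedure produces potentially different subcollections $\cC'(r)$ at different scales $r$, whereas L) requires a \emph{single} $\cC'$ that admits packing uniformly at every scale $r\geq r_x$ simultaneously. The resolution is to perform the selection inside the tree-based Reifenberg construction underlying Theorem \ref{thm:main}: the generations of the tree refine a chosen set of covering balls, and the noncollapsing condition \eqref{eq_main_noncoll} established at coarser generations is preserved as one descends to finer generations, producing a single subcollection that is uniformly good at every scale at once. Verifying that the tree-refinement criterion is compatible with the discrete lower-mass bound $\mu(B_{r_x}(x))\geq a r_x^k$, rather than the slightly stronger density bound $\mu(\overline B_{r_x/50}(x))\geq a(r_x/50)^k$ formally required by the main theorem, is the technical heart of the argument and is where the bulk of the work is expected to concentrate.
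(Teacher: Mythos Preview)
Your treatment of part U) is correct and matches the paper's one-line deduction from Theorem~\ref{thm:main_upperahlfors}.

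For part L), you have correctly spotted that the discrete hypothesis $\mu(B_{r_x}(x)) \geq a\, r_x^k$ does not directly yield the density bound $\Theta^{*,k}(\mu,\cC,x) \geq a$ (monotonicity runs the wrong way), so the corollary is not a pure black-box consequence of the statement of Theorem~\ref{thm:main_upperahlfors}. However, your proposed resolution via the tree construction is substantially more involved than what is actually needed, and is not the paper's route. In the paper's proof of Theorem~\ref{thm:main_upperahlfors} L) (Section~\ref{sec:density}) the density hypothesis is used only once, to manufacture a radius $s_x \in [r_x/50, r_x/25]$ with $\mu(B_{s_x}(x)) \geq \tfrac{\omega_k a}{2}\, s_x^k$; after that, everything is fed into Theorem~\ref{thm:discretized-packing}, whose hypothesis is precisely the discrete lower mass bound you already have at scale $r_x$. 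The ``single $\cC'$ working at all scales'' obstacle you flag is resolved there, and not by tree refinement: one fixes $\cC'$ \emph{once} as a Vitali subcollection of $\cU(\cC)_+$ (Lemma~\ref{lem:uniform-covering}), and then applies Theorem~\ref{thm:discretized-packing} at every center--scale pair $(x,r)$ to this same fixed $\cC'$. So for the corollary you simply rerun that proof with $s_x = r_x$, skipping the density step entirely; there is no need to revisit the good/bad tree machinery or the noncollapsing condition~\eqref{eq_main_noncoll}.

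The only residual wrinkle is cosmetic: with $s_x = r_x$ rather than $s_x \leq r_x/25$, the Vitali covering argument gives $\mu\big(B_1 \setminus B_{c(n)\, r_x}(\cC')\big) = 0$ rather than $\mu\big(B_1 \setminus B_{r_x}(\cC')\big) = 0$. The paper absorbs this into its informal ``immediate''; it is a constant-factor discrepancy in the covering radius, not a structural gap requiring new ideas.
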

\vspace{.25cm}

\subsection{Applications.}\label{section:applications}

In this subsection we record a variety of applications of the previous results.  In some of these applications the distinction between rectifiability of a set and of a measure is important.  Let us be explicit: we say a Borel measure $\mu$ on $\dR^n$ is $k$-rectifiable if $\mu \ll \haus^k$, and $\mu(\dR^n \setminus \cK) = 0$ for some $k$-rectifiable set $\cK$.


Our first application is to the structure of measures which satisfy an $L^1$ $\beta$-estimate at some scale.  Under the additional hypothesis of lower and upper finiteness on the upper density it was shown in \cite{azzam-tolsa} that such measures are rectifiable. A decomposition similar in spirit to the one obtained here has been proven in \cite{BS2} for the case $k=1$. Here we prove a sharp generalization of these results which does not require any density assumptions.  Let us begin with an effective version:

\begin{theorem}[Structure under $\beta_2$-control]\label{thm:main-thm-L1}
Let $\mu$ be a non-negative Borel-regular measure supported in $B_1$, and $\eps > 0$.  Suppose that
\begin{gather}
\int_{B_1} \int_0^2 \beta^k_{\mu, 2, 0}(z, r)^2 \frac{dr}{r} d\mu(z) \leq M^2
\end{gather}
Then we can decompose $\mu = \mu_h + \mu_\ell + \mu_0$, so that:
\begin{enumerate}
\item[A)] $\mu_h(B_1 \setminus \cK_h) = 0$ for some $k$-rectifiable set $\cK_h$, with volume bounds
\begin{gather}\label{eqn:structure-ahlfors-reg}
\haus^k(\cK_h \cap B_r(x)) \leq c(n) r^k \quad \forall x \in B_1, 0 < r \leq 1,
\end{gather}
and such that $\haus^k \llcorner \cK_h \leq c(k)\eps^{-1} \mu$ on \emph{open} sets.

\item[B)] $\mu_\ell$ is $k$-rectifiable, with mass bounds $\mu_\ell(B_1) \leq c(n)(\eps + M)$.

\item[C)] $\mu_0(B_1) \leq c(n)(\eps + M)$, and $\Theta^{*, k}(\mu, x) = 0$ for $\mu_0$-a.e. $x$.
\end{enumerate}
\end{theorem}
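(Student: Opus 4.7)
The plan is to let a single application of Theorem \ref{thm:main} at density parameter $\eps$ do the bulk of the work, and then split $\mu$ into three pieces by hand around the resulting noncollapsed rectifiable set. First I would apply Markov's inequality to the averaged hypothesis
\begin{gather*}
\int_{B_1}\int_0^2 \beta^k_{\mu,2,0}(z,r)^2\,\frac{dr}{r}\,d\mu(z)\leq M^2,
\end{gather*}
which yields the one-scale weak bound $\mu\cur{z\in B_1:\int_0^2\beta_{\mu,2,0}^2\,dr/r>M}\leq M$. This places us in the setting of Theorem \ref{thm:main} with covering pair $(\spt\mu,\,r_x\equiv 0)$, $\Gamma=M$, and $\bar\eps_\beta=0$, producing a closed $k$-rectifiable set $\cK_h:=\cC'$ enjoying the Ahlfors-type upper bound $\haus^k(\cK_h\cap B_r(x))\leq c(n)r^k$ from (D), the noncollapsing $\mu(B_r(y))\geq \eps r^k$ for all $y\in\cK_h$, $0<r\leq 1$, from (C), and the complementary mass bound $\mu(B_1\setminus\cK_h)\leq c(n)(\eps+M)$ from (B).

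I set $\mu_h:=\mu\llcorner\cK_h$, for which the rectifiability and Ahlfors bound claims of (A) are immediate. The only nontrivial piece of (A) is $\haus^k\llcorner\cK_h\leq c(k)\eps^{-1}\mu$ on open $U$, which I would verify by a Vitali $5r$-covering: for each $x\in\cK_h\cap U$ choose $r(x)$ with $5r(x)<\min(\dist(x,\partial U),1)$, extract a disjoint subfamily $\{B_{r_i}(x_i)\}$ whose $5$-enlargement still covers $\cK_h\cap U$, and chain the Ahlfors bound on the large balls with the noncollapsing on the small ones:
\begin{gather*}
\haus^k(\cK_h\cap U)\leq c(n)\sum_i(5r_i)^k\leq c(n)\eps^{-1}\sum_i\mu(B_{r_i}(x_i))\leq c(n)\eps^{-1}\mu(U).
\end{gather*}

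Next, I set $E_0:=\{x\in B_1:\Theta^{*,k}(\mu,x)=0\}$, $\mu_0:=\mu\llcorner E_0$, and $\mu_\ell:=\mu-\mu_h-\mu_0$. The noncollapsing forces $\Theta^{*,k}(\mu,\cdot)\geq\eps$ on $\cK_h$, so $\cK_h\cap E_0=\emptyset$, and consequently both $\mu_0$ and $\mu_\ell$ are dominated by $\mu\llcorner(B_1\setminus\cK_h)$; the mass bounds in (B) and (C) then follow from the Main Theorem's (B), while the density assertion in (C) is by definition of $E_0$.

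The most delicate remaining point is rectifiability of $\mu_\ell$. Markov applied to the hypothesis supplies $\int_0^2\beta_{\mu,2}^2\,dr/r<\infty$ $\mu_\ell$-a.e., and $\Theta^{*,k}(\mu,\cdot)>0$ $\mu_\ell$-a.e. by construction, so I would invoke the Azzam--Tolsa theorem \cite{azzam-tolsa} to conclude rectifiability once the missing upper density finiteness $\Theta^{*,k}(\mu_\ell,\cdot)<\infty$ $\mu_\ell$-a.e. is in hand. The hard step is therefore to exclude a positive-$\mu_\ell$-measure subset on which $\Theta^{*,k}(\mu,\cdot)=\infty$. I would dispatch this by slicing $\spt\mu_\ell$ along the dyadic level sets $\{2^j\eps\leq\Theta^{*,k}(\mu,\cdot)<2^{j+1}\eps\}$ for $j\geq 0$, applying Azzam--Tolsa on each finite level to get rectifiability piece by piece, and ruling out the residual $\{\Theta^{*,k}(\mu,\cdot)=\infty\}$ by iterating Theorem \ref{thm:main} at density scales $\eps_j\to\infty$ (using the $\beta$-integrability to absorb the $M$ term).
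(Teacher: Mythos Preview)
Your overall architecture — Markov's inequality to pass to the weak-$L^1$ hypothesis, then Theorem \ref{thm:main} with the trivial covering pair $r_x\equiv 0$ and $\Gamma=M$, then splitting the complement of $\cK_h$ by upper density — is exactly the paper's proof, and your treatment of parts (A) and (C) is correct. One minor point: your Vitali argument for $\haus^k\llcorner\cK_h\leq c\,\eps^{-1}\mu$ invokes the Ahlfors bound on $\cK_h$ and produces a constant $c(n)$; the paper instead applies Lemma \ref{lem:density-to-ineq} (a \emph{fine} Vitali cover that bounds $\haus^k_\delta$ directly without the Ahlfors bound) and obtains the sharper $c(k)$ stated in the theorem.

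The genuine gap is in part (B), in your handling of the set $\cK_\infty:=\{x\in B_1\setminus\cK_h:\Theta^{*,k}(\mu,x)=\infty\}$. As you have set things up, $\mu_\ell$ carries $\mu\llcorner\cK_\infty$, and since $k$-rectifiability of a measure requires $\mu_\ell\ll\haus^k$, you must either prove $\mu_\ell(\cK_\infty)=0$ or relocate this piece. The first alternative is simply false in general (nothing prevents $\mu$ from having atoms outside the particular $\cC'$ handed to you by Theorem \ref{thm:main}), and your proposed iteration at $\eps_j\to\infty$ cannot rescue it: the complementary measure bound $\mu(B_1\setminus\cK_j')\leq c(n)(\eps_j+M)+\Gamma$ only \emph{worsens} as $\eps_j$ grows, so it never forces $\cK_\infty$ into the successive noncollapsed sets. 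The paper's fix is a one-line observation you are missing: $\mu\llcorner(B_1\setminus\cK_h)$ is a \emph{finite} measure (bounded by $c(n)(\eps+M)$), so Lemma \ref{lem:density-to-ineq} gives $\haus^k(\cK_\infty)=0$ immediately. One then enlarges $\cK_h\mapsto\cK_h\cup\cK_\infty$ — this leaves the Ahlfors bound and the density-comparison inequality in (A) untouched since the added set is $\haus^k$-null — and redefines $\mu_\ell$ on the remaining set $\{0<\Theta^{*,k}(\mu,\cdot)<\infty\}\setminus\cK_h$, where Theorem \ref{thm:main_rect} (the paper's internal substitute for Azzam--Tolsa) applies directly.
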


The $\mu_0$ piece of the decomposition is important in the above, see Example \ref{ex_density} for an illustration of this.  If we assume $\beta$-control at all scales, we obtain a similar decomposition, which satisfies an upper-Ahlfors-regularity condition on $\mu_\ell, \mu_0$.  Notice again we assume nothing on the density.

\begin{corollary}[$\beta_2$-control at all scales]\label{cor:main-thm-L1-scales}
Let $\mu$ be a non-negative Borel-regular measure supported in $B_1$, and $\eps > 0$.  Suppose that
\begin{gather}
\int_{B_r(x)} \int_0^{2r} \beta^k_{\mu,2,0}(z, r)^2 \frac{dr}{r} d\mu(z) \leq M^2 r^k \quad \forall x \in B_1, 0 < r \leq 1.
\end{gather}
Then we can decompose $\mu = \mu_h + \mu_\ell + \mu_0$ in the same manner as Theorem \ref{thm:main-thm-L1}, but which instead of \eqref{eqn:structure-ahlfors-reg} admits the volume measure bounds:
\begin{gather}
\haus^k(\cK_h \cap B_r(x)) \leq c(n)(1 + M/\eps) r^k\quad \text{and}\quad (\mu_\ell + \mu_0)(B_r(x)) \leq c(n)(\eps + M) r^k
\end{gather}
for all $x \in B_1$, and $0 < r \leq 1$.
\end{corollary}

Assuming lower and upper bounds on the upper density, and finiteness of $\int_0^1 \beta_2(x,r)^2 \frac {dr}{r}$ for $\mu$-almost all $x$, it was shown in \cite{azzam-tolsa} that such measures are rectifiable.  Here, as a Corollary to Theorem \ref{thm:main-thm-L1} we prove a generalization of this result where we only require lower bounds on the upper density to prove rectifiability of the support of $\mu$, and upper bounds on the lower density to prove rectifiability for the whole measure $\mu$. 

\begin{theorem}[compare to \cite{azzam-tolsa}]\label{thm:main_rect}
Let $\mu$ be a non-negative Borel-regular measure in $B_1$ such that for $\mu$-a.e. $x$
\begin{equation}
\int_{0}^{2} \beta^k_{\mu,2,0}(x, s)^2\frac{ds}{s} <\infty, \quad \text{ and } \quad \Theta^{*, k}(\mu, x) > 0\, .
\end{equation}
Then there is a $k$-rectifiable set $\cK_0$ so that $\mu(B_1 \setminus \cK_0) = 0$.  Moreover, if $\Theta_*^k(\mu, x)<\infty$ for $\mu$-a.e. $x$, then $\mu$ is $k$-rectifiable.
\end{theorem}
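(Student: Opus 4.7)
The plan is to derive Theorem~\ref{thm:main_rect} from Theorem~\ref{thm:main-thm-L1} (for the support assertion) and from Theorem~\ref{thm:teaser2} (for the measure assertion) after a countable decomposition of $\mu$ that upgrades the $\mu$-a.e.\ hypotheses to uniform quantitative bounds on each piece. Two difficulties must be overcome: the $\beta$-integral is only $\mu$-a.e.\ finite rather than globally integrable, and $\mu$ is not assumed to be locally finite anywhere.

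\emph{The locally infinite part.} Let $F = \{x \in B_1 : \mu(B_r(x)) = \infty \text{ for every } r > 0\}$, closed by lower semicontinuity of $r \mapsto \mu(B_r(\cdot))$. For $x \in F$ satisfying the hypothesis, pick $s > 0$ so small that $\beta^k_{\mu,2,0}(x,s) < \infty$ and let $V = V_\mu(x,s)$; Chebyshev yields $\mu(B_s(x) \setminus B_\eta(V)) \leq \eta^{-2} s^{k+2}\beta^k_{\mu,2,0}(x,s)^2 < \infty$ for every $\eta > 0$, so the infinite mass on $B_s(x)$ must concentrate on $V$. A point $y \in F \cap B_{s/2}(x)$ at distance $> \eta$ from $V$ would admit a small ball of finite $\mu$-mass, contradicting $y \in F$; hence $F \cap B_{s/2}(x) \subseteq V$. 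By Lindelöf, $F$ is contained (up to the $\mu$-null set where the hypothesis fails) in a countable union of affine $k$-planes, and therefore is $k$-rectifiable. Under the extra assumption of the second assertion, $F \subseteq \{\Theta^k_*(\mu,\cdot) = \infty\}$, so in fact $\mu(F) = 0$.

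\emph{The locally finite part and support rectifiability.} On $L = B_1 \setminus F$ cover $L$ by countably many open balls $\{B_\alpha\}$ with $\mu(B_\alpha) < \infty$, and for each $j \in \N$ set
\[
A_{j,\alpha} = \Bigl\{ x \in B_\alpha : \int_0^2 \beta^k_{\mu,2,0}(x,s)^2 \tfrac{ds}{s} \leq j,\ \Theta^{*,k}(\mu,x) > \tfrac{1}{j}\Bigr\},
\]
a countable family whose union is $\mu$-conull in $L$. For $\nu = \mu \llcorner A_{j,\alpha}$, the pointwise monotonicity $\beta^k_{\nu,2,0} \leq \beta^k_{\mu,2,0}$ under restriction gives
\[
\int_{B_\alpha} \int_0^2 \beta^k_{\nu,2,0}(x,s)^2 \tfrac{ds}{s} \, d\nu(x) \leq j\, \nu(B_\alpha) < \infty,
\]
so after translating and rescaling $\nu$ so that $\spt\nu \subseteq B_1(0)$, Theorem~\ref{thm:main-thm-L1} with any fixed $\eps > 0$ produces $\nu = \nu_h + \nu_\ell + \nu_0$. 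By Lebesgue differentiation for the Radon measure $\mu \llcorner B_\alpha$, $\mu$-a.e.\ point of $A_{j,\alpha}$ is a $\mu$-density point of $A_{j,\alpha}$, so $\Theta^{*,k}(\nu,x) = \Theta^{*,k}(\mu,x) > 1/j$ at $\nu_0$-a.e.\ $x$; since by Theorem~\ref{thm:main-thm-L1}(C) $\nu_0$ is concentrated on $\{\Theta^{*,k}(\nu,\cdot) = 0\}$, this forces $\nu_0 = 0$. Hence $\nu$ is supported on the $k$-rectifiable set $\cK_h \cup \spt \nu_\ell$, and the countable union over $(j,\alpha)$ together with the planes from the previous paragraph yields the desired $k$-rectifiable $\cK_0$ with $\mu(B_1 \setminus \cK_0) = 0$.

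\emph{Rectifiability of $\mu$.} Under the additional assumption $\Theta^k_*(\mu,x) < \infty$ $\mu$-a.e., we have $\mu(F) = 0$ by the first paragraph, so only the $L$-part matters. Refine the decomposition by intersecting $A_{j,\alpha}$ with $\{\Theta^k_*(\mu,x) \leq j\}$ to obtain $A'_{j,\alpha}$, still covering $\mu$-a.e.\ of $B_1$. For $\nu' = \mu \llcorner A'_{j,\alpha}$, the inequalities $\nu' \leq \mu$ and $\beta^k_{\nu',2,0} \leq \beta^k_{\mu,2,0}$, combined with the density-point argument of the preceding paragraph (now also applied to the lower density), verify all hypotheses of Theorem~\ref{thm:teaser2} condition (A) with constants $b = M = j$ together with $\Theta^{*,k}(\nu',\cdot) > 1/j$; that theorem then gives that $\nu'$ is $k$-rectifiable. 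Countable summation preserves both absolute continuity with respect to $\haus^k$ and the rectifiable-support property, so $\mu$ itself is $k$-rectifiable. The principal obstacle throughout is the density-point argument: it needs $\mu$ to be Radon on each piece, which is precisely what forces us to peel off $F$ by hand and to further restrict to the finite-mass balls $B_\alpha$ before invoking Theorems~\ref{thm:main-thm-L1} and~\ref{thm:teaser2}.
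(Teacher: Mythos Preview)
Your argument is correct and takes a route genuinely different from the paper's, though the two share the same skeleton. Both must first cut down to locally finite pieces; you do this by isolating the locally-infinite set $F$ and showing (via Chebyshev against a local best plane) that $F$ lies in countably many affine $k$-planes, whereas the paper fixes a single global plane $V = V_\mu(x_0,2)$ and works on the finite-mass slabs $U_j = \{d(\cdot, V) > 1/j\}$. The more interesting divergence is in the rectifiability step. You restrict $\mu$ to the level sets $A_{j,\alpha}$, invoke Theorem~\ref{thm:main-thm-L1}, and use Lebesgue--Besicovitch differentiation to force $\nu_0 = 0$. The paper instead passes from $\mu$ to $\haus^k \llcorner K_{ij}$: the lower-density bound $\Theta^{*,k}(\mu,\cdot) > 1/i$ together with Lemma~\ref{lem:density-to-ineq} gives $\haus^k \llcorner K_{ij} \leq i\,\mu$, and since $\haus^k$ on a set of finite $\haus^k$-measure automatically satisfies $2^{-k} \leq \Theta^{*,k} \leq 1$, Proposition~\ref{prop:rect_with_bounds} applies directly with no density-point argument needed. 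Your route keeps everything phrased in terms of $\mu$ and makes explicit where Lebesgue differentiation enters; the paper's switch to $\haus^k$ is shorter precisely because it outsources the density control to standard Hausdorff-measure density estimates. For the second assertion the two proofs are essentially the same: restrict to sets where $\Theta^k_*$ is bounded and invoke upper-Ahlfors regularity (Theorem~\ref{thm:main_upperahlfors} in the paper, Theorem~\ref{thm:teaser2} for you) to get $\mu \ll \haus^k$. One small imprecision: you write that $\nu$ is supported on $\cK_h \cup \spt\nu_\ell$, but $\spt\nu_\ell$ need not be rectifiable; what Theorem~\ref{thm:main-thm-L1}(B) gives is that $\nu_\ell$ is concentrated on some rectifiable $\cK_\ell$, so $\nu$ lives on $\cK_h \cup \cK_\ell$.
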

\vspace{.25cm}

In another direction, applications of these techniques arise in \cite{naber-valtorta:harmonic}, where it was proved that the singular sets of some nonlinear equations are rectifiable with uniform measure bounds.  The key connection between singular sets and the type of theory presented here are the rectifiable Reifenberg and discrete Reifenberg theorems of \cite{naber-valtorta:harmonic}.  The techniques of the current paper not only reproduce these, but allow us to prove generalizations which are substantially better in several regards.  To discuss this let us state our generalized discrete Reifenberg. 

\begin{theorem}[Discrete Reifenberg]\label{thm:main_discrete}
Let $\{B_{r_i}(x_i)\}\subseteq B_1$ be a disjoint collection of balls, and consider a generalized packing measure $\mu \equiv \sum \mu_i r_i^{k}\, \delta_{x_i}$. Let us suppose we have the estimate
\begin{equation}\label{eq_discrete_integral_bounds}
\mu \cur{ x_i \in B_1 : \int_{r_i}^2 \beta^k_{\mu,2,\bar\eps_\beta}(x_i, r)^2\frac{dr}{r}> M } \leq \Gamma .
\end{equation}
Then the following hold:
\begin{enumerate}
\item[L)] If $\mu_i \geq a > 0$, then we have $\sum r_i^k \leq \frac{c(n)(\bar\eps_\beta + M + \Gamma)}{a} + c(n)$.
\item[U)] If $\mu_i \leq b$, then we have $\mu(B_1)\leq c(n)(b+\bar\eps_\beta + M) + \Gamma$.
\end{enumerate}
\end{theorem}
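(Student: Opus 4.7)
The plan is to derive Theorem~\ref{thm:main_discrete} as an immediate corollary of Theorem~\ref{thm:main_density} by recording the discrete data of $\mu$ in a suitable covering pair. I would set $\cC = \cC_+ := \{x_i\}$ (so $\cC_0 = \emptyset$) with radius function $r_{x_i} := r_i$. The measure $\mu = \sum \mu_i r_i^k \delta_{x_i}$ is trivially Borel-regular, $\spt\mu \subseteq \cC$, and the hypothesis $\{B_{r_i}(x_i)\} \subseteq B_1$ forces $r_i \leq 1$, so $(\cC, r_x)$ is an admissible covering pair for $\mu$ and \eqref{eq_discrete_integral_bounds} is precisely the integral assumption \eqref{eq_massbound_integralest} required by Theorem~\ref{thm:main_density}.

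The key computation is that disjointness of the balls $B_{r_i}(x_i)$ forces $B_{r_i/50}(x_i)$ to miss every $x_j$ with $j \neq i$, since any such $x_j$ satisfies $|x_i - x_j| \geq r_i + r_j > r_i/50$. Consequently $\mu(\overline{B}_{r_i/50}(x_i)) = \mu_i r_i^k$, and the definitions in \eqref{eq_deph_density} give
$$\Theta^{*,k}(\mu,\cC,x_i) = \Theta^{k}_*(\mu,\cC,x_i) = 50^k \mu_i .$$
Hypothesis L) therefore yields $\Theta^{*,k}(\mu,\cC,x_i) \geq 50^k a \geq a$, while hypothesis U) gives $\Theta^{k}_*(\mu,\cC,x_i) \leq 50^k b$; in both cases the factor $50^k$ will be absorbed into the dimensional constant.

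With these density bounds I would apply Theorem~\ref{thm:main_density}(U) directly to obtain $\mu(B_1) \leq c(n)(\bar\eps_\beta + b + M) + \Gamma$, settling part U). For part L), Theorem~\ref{thm:main_density}(L) produces $\cC' \subseteq \cC$ with $\mu(B_1 \setminus B_{r_x}(\cC')) = 0$ together with the desired packing bound on $\cC'$. The only step that is not a verbatim translation is verifying that the packing sum is actually over all $i$, i.e.\ that $\cC' = \cC$. Here I would again invoke disjointness: if some $x_j \notin \cC'$, then the positive mass $\mu_j r_j^k \geq a r_j^k > 0$ at $x_j$ combined with $\mu(B_1 \setminus B_{r_x}(\cC')) = 0$ would force $x_j \in B_{r_i}(x_i)$ for some $x_i \in \cC'$ with $i \neq j$; but the disjointness of $B_{r_i}(x_i)$ and $B_{r_j}(x_j)$ gives $|x_i - x_j| \geq r_i + r_j > r_i$, a contradiction. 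Hence $\cC' = \cC$ and the argument is complete. I do not foresee a substantive obstacle: the whole proof is a dictionary between the discrete packing hypotheses stated here and the covering-pair density hypotheses of Theorem~\ref{thm:main_density}, made exact by the disjointness of the balls.
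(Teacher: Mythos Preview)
Your proposal is correct and follows essentially the same approach as the paper: the paper's proof is the single line ``Define the covering pair $\cC = \cC_+ = \{x_i\}_i$, with $r_{x_i} = r_i$. Then Theorem~\ref{thm:main_discrete} is an immediate Corollary of Theorem~\ref{thm:main_density}.'' You have in fact supplied more detail than the paper does, in particular the disjointness argument forcing $\cC' = \cC$ in part L), which the paper leaves implicit.
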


The above generalizes the discrete Reifenberg of \cite{naber-valtorta:harmonic} in several ways, for instance it applies to a general class of discrete measures and only requires a bound on the $\beta$-estimates, not smallness.  The most fundamental improvement however, is that one only needs to assume the $\beta$-estimates of \eqref{eq_discrete_integral_bounds} on the first scale in order to conclude the packing estimates.  In \cite{naber-valtorta:harmonic}, the discrete Reifenberg requires $\beta$-estimates analogous to \eqref{eq_discrete_integral_bounds} on {\it every} point and scale in order to conclude the desired mass bound.

We want to point out that in a recent and interesting work \cite{miskiewicz} has independently obtained a generalization of discrete Reifenberg, which also only requires boundedness of the $\beta$-estimates (as opposed to smallness), but still requires $L^1$-estimates at all points and scales.

In a similar spirit, let us consider a generalization of the rectifiable Reifenberg of \cite{naber-valtorta:harmonic}.  In \cite{naber-valtorta:harmonic}, it is shown that if a set $S$ satisfies the correct $\beta$-estimates on {\it every} point and scale, then $S$ is rectifiable with uniform mass bounds.  In the same vein as our generalized discrete Reifenberg, to obtain the mass bounds we may weaken this assumption substantially by only requiring $\beta$-estimates on the first scale:

\begin{theorem}[Mass Bounds for Sets]\label{thm:main_hausdorff_massbounds}
Take $S\subseteq B_1$, and assume we have the estimate
\begin{equation}\label{eq_hausdorff_integral_bounds}
\haus^k\cur{ x \in S : \int_{0}^2 \beta^k_{\haus^k \llcorner S,2,\bar\eps_\beta}(x, r)^2\frac{dr}{r}> M } \leq \Gamma .
\end{equation}
Then we have that $\haus^k(S)\leq c(n)(1 + \bar\eps_\beta + M) + \Gamma$ .
\end{theorem}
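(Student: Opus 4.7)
The plan is to apply Corollary \ref{cor:main_density_1} U) to the Borel-regular measure $\mu := \haus^k \llcorner S$, with covering pair $\cC = \cC_0 = S$ and $r_x \equiv 0$. The hypothesis \eqref{eq_hausdorff_integral_bounds} is exactly the $\beta$-integral bound required by the corollary, so what needs to be checked is the upper-density input $\Theta^k_*(\mu, x) \leq b$ for some universal $b = c(n)$, and a mild technicality arising from the fact that $\haus^k(S)$ may a priori be infinite.

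First I would handle the case $\haus^k(S) < \infty$ directly. In this case the classical upper-density bound for Hausdorff measure on a set of finite measure gives $\Theta^{*,k}(\mu, x) \leq c(n)$ for $\mu$-a.e.\ $x$, and hence $\Theta^k_*(\mu, x) \leq c(n)$. Corollary \ref{cor:main_density_1} U) then immediately produces $\haus^k(S) = \mu(B_1) \leq c(n)(1 + \bar\eps_\beta + M) + \Gamma$, as desired.

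To avoid the circular appearance of using ``$\haus^k(S) < \infty$'' to prove $\haus^k(S) < \infty$, I would exploit monotonicity of $\beta_2$ under restriction of the measure: if $\mu' \leq \mu$, then for every affine $k$-plane $V$ one has $\int_{B_r(x)} d(z,V)^2\, d\mu' \leq \int_{B_r(x)} d(z,V)^2\, d\mu$, and the truncation threshold $\bar\eps_\beta$ only helps $\mu'$, so $\beta^k_{\mu',2,\bar\eps_\beta}(x,r) \leq \beta^k_{\mu,2,\bar\eps_\beta}(x,r)$. Therefore for any Borel $S' \subseteq S \cap B_1$ with $\haus^k(S') < \infty$ and $\mu' := \haus^k \llcorner S'$, the set $\{x \in S' : \int_0^2 \beta^k_{\mu',2,\bar\eps_\beta}(x,r)^2\, dr/r > M\}$ is contained in $S' \cap \{x \in S : \int_0^2 \beta^k_{\mu,2,\bar\eps_\beta}(x,r)^2\, dr/r > M\}$, which has $\haus^k$-measure at most $\Gamma$ by hypothesis. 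Applying the previous paragraph to $\mu'$ yields $\haus^k(S') \leq c(n)(1 + \bar\eps_\beta + M) + \Gamma$, uniformly over all such $S'$. Since Borel sets in $\R^n$ are inner-regular for $\haus^k$ by Borel (in fact compact) subsets of finite $\haus^k$-measure, passing to the supremum over $S'$ gives $\haus^k(S) \leq c(n)(1 + \bar\eps_\beta + M) + \Gamma$ and finishes the proof.

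The only real obstacle is this density-reduction step: one wants to plug in a measure to which Corollary \ref{cor:main_density_1} U) applies, but the natural candidate $\mu = \haus^k \llcorner S$ need not be locally finite a priori, so the Hausdorff-density bound cannot be invoked directly. Once the monotonicity of $\beta_2$ under $\mu' \leq \mu$ is observed, the desired estimate drops out of Corollary \ref{cor:main_density_1} U) applied to finite-measure Borel subsets, combined with inner regularity.
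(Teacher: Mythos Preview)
Your argument is correct, and it differs from the paper's route in how the possibly-infinite-measure case is reduced to the finite-measure case.

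The paper first passes to the closure of $S$ via lower-semicontinuity of $\beta$ (Lemma \ref{thm:lsc}), and then argues, as in the proof of Corollary \ref{cor:teaser2}, that the hypothesis forces $\haus^k\llcorner S$ to be $\sigma$-finite: since some point $x$ has $\beta^k_{\haus^k\llcorner S,2}(x,2)<\infty$, the slices $S_i=\{z\in S:d(z,V^k(x,2))>1/i\}$ all have finite $\haus^k$-measure. On each slice the standard density bound $\Theta^{*,k}\le 1$ is available, and Theorem \ref{thm:main_density} (equivalently Corollary \ref{cor:main_density_1} U) applies. This plane-slicing argument is entirely internal to the paper.

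Your route instead exploits monotonicity of $\beta^k_{\cdot,2,\bar\eps_\beta}$ under restriction $\mu'\le\mu$ (which is indeed correct, including the truncation threshold), applies Corollary \ref{cor:main_density_1} U) uniformly to every finite-measure Borel $S'\subset S$, and then takes a supremum. The final step invokes the fact that $\haus^k$ on $\R^n$ is inner regular by subsets of finite $\haus^k$-measure. This is true but is the content of a nontrivial theorem of Davies (subsets of analytic sets of prescribed Hausdorff measure); it is \emph{not} a general property of Borel-regular outer measures, and you should cite it explicitly. Once Davies' theorem is granted, your argument is a clean one-liner and avoids both the closure step and the plane-slicing; the paper's approach, by contrast, is self-contained but slightly longer.
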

\begin{remark}
It is worth noting that the under the above assumption $S$ need not be rectifiable.  It follows from Theorem \ref{thm:main} that all but some definite amount must be, but to prove all of $S$ is rectifiable requires assuming a true weak-$L^1$ assumption.
\end{remark}

\subsection{Outline of proof}\label{ss:proof_outline}

Let us establish some basic intuition about what information $\beta$-numbers can and cannot give us.

First, it is instructive to point out that the $n$-Lebesgue measure $\leb^n$ has summably finite $\beta^k$-number at every point:
\begin{gather}
\int_0^1 \beta^k_{\leb^n, 2}(x, r)^2 \frac{dr}{r} \leq c(n).
\end{gather}
This is calculated in Example \ref{ex_density}.  This says that finiteness or even smallness of the summed $\beta$-number does not by itself give any kind of $k$-dimensional structure.  In particular, given a condition like \eqref{eq_main_integral_bounds}, without any extra information, we cannot hope to have a $k$-dimensional packing or Hausdorff estimate away from arbitrarily small $\mu$-measure.  In other words, the $M$ in the measure estimate \eqref{eq_main_measure_away} is necessary.

Moreover, as illustrated in Example \eqref{ex_packing}, by approximating the $n$-Lebesgue measure by packed $k$-spheres, even if we are dealing with a nice $k$-dimensional manifold we cannot hope to improve the $k$-dimensional Hausdorff estimates we obtain to stronger Minkowski or packing estimates.  The problem in this case is that we have no information about the scale at which things start to look $k$-dimensional.  Thus we see there is some subtlety as to what estimates exist, and it is important to split our general estimates into two types: Hausdorff estimates on one piece and measure estimates on the other.  

\vspace{2.5mm}

However, let us come to terms with this, and attempt prove a packing and measure estimate as in Theorem \ref{thm:main}.  A basic motivating observation is the that $\beta_2$-numbers control mass at any fixed distance from the $L^2$-best planes.  For example, if we write $V^k$ for the $L^2$-best plane of $B_1(0)$ which realizes $\beta^k_{\mu, 2}(0, 1)$, then
\begin{gather}\label{eqn:example-excess}
\mu (B_1(0) \setminus B_\rho(V)) \leq \rho^{-2} \beta(0, 1)^2 \, .
\end{gather}
In other words, the region of $B_1(0)$ with $\mu$-mass much larger than $\beta(0, 1)^2$ is concentrated in the neighborhood $B_\rho(V^k)$.

\eqref{eqn:example-excess} suggests the following naive strategy.  Take a Vitali cover of $B_\rho(V^k)$ by balls of size $\rho$, and thereby obtain a $k$-dimensional packing estimate on high-mass region in $B_1(0)$.  In each ball $B_{\rho}(x)$, we can repeat the same process with balls of size $\rho^2$, obtaining a $k$-packing estimate on the high-mass region of $B_{\rho}(x)$.  Then repeat in each $B_{\rho^2}$, and then in each $B_{\rho^3}$, etc...

Unfortunately, no matter how small we fix $\rho$, attempting to iterate this will result in an exponential error, due to double-counting on the ball-overlaps.  If one attempts to use disjoint cubes instead of balls, an exponential error arises for a different reason: the $k$-volume of a $k$-plane in the cube varies with orientation.

In order to obtain a finite $k$-dimensional packing estimate, we require either some kind of \emph{global memory} as we progress in scale, or the knowledge that big mass regions look like a \emph{very small portion} of a $k$-plane.  It turns out these two options give a dichotomy: in any ball, the region with big mass either looks ``very $k$-dimensional,'' giving us enough tilting control to construct a global memory; or it looks ``at most $(k-1)$-dimensional,'' giving us very good packing estimates to compensate for double-counting.  This dichotomy is captured in Lemma \ref{lem:ball-dichotomy}.

We call balls satisfying the first condition of being ``very $k$-dimensional'' \emph{good}, and those satisfying the second condition of being ``at most $(k-1)$-dimensional''  \emph{bad}.  See Definition \ref{def:good-bad-ball} for a precise formulation.  It is worth emphasizing that our notion of a {\it good} ball, as compared to similar constructions in previous results (e.g. \cite{naber-valtorta:harmonic}), is much more general.  Our ability to technically deal with such {\it good} balls without additional measure assumptions is a key observation for the paper.

\vspace{2.5mm}

Now let us assume we have a condition like
\begin{gather}
\int_0^1 \beta^k_{\mu, 2}(z, r)^2 \frac{dr}{r} \leq \delta(n)^2 M \quad \forall z \in B_1(0),
\end{gather}
where $\delta(n)$ is some small constant.  We say a ball $B_r(x)$ is a \emph{stop} ball if $\mu(B_r(x)) \leq M r^k$, so that non-stop balls have big mass relative to $\delta(n)^2M$.

We implement two separate stopping-time arguments, one for good balls (Section \ref{section:good-tree}) and one for bad balls (Section \ref{section:bad-tree}).  In each case we build a \emph{tree} of balls, which is a successive refinement of coverings by good, bad, and stop balls.  We then chain these trees together (Section \ref{section:finishing}) to obtain our estimate.  Let us describe the good and bad tree constructions in a little more detail.


\vspace{2.5mm}

A \emph{good tree} is built in the following manner.  Start at some initial good ball $B_{r_0}(g_0)$, with $L^2$-best plane $L_0$.  We define the good/bad/stop balls at the next scale $r_1$ to be a Vitali cover of $B_{r_1/40}(L_0)$.  Let us call these $\{B_{r_1}(g)\}_{g \in \cG_1}$, $\{B_{r_1}(b_1)\}_{b \in \cB_1}$, and $\{B_{r_1}(s)\}_{s \in \cS_1}$, respectively.

Now $B_{r_0}(g_0)$ is covered by $B_{r_0}(g_0) \setminus B_{r_1/50}(L_0)$, and the good/bad/stop balls at scale $r_1$.  The set $B_{r_0}(g_0) \setminus B_{r_1/50}(L_0)$ is called the \emph{excess set} for $B_{r_0}(g_0)$.

Write $L_{1, g}$ for the $L^2$-best plane associated to good ball $B_{r_1}(g)$.  We define the good/bad/stop balls at scale $r_2$ to be a Vitali cover of
\begin{gather}
\left( \bigcup_{g \in \cG_1} B_{r_1}(g) \cap B_{r_2/40}(L_{1,g}) \right) \setminus \left( \bigcup_{b \in \cB_1} B_{r_1}(b) \cup \bigcup_{s \in \cS_1} B_{r_1}(s) \right).
\end{gather}
In other words, we cover the neighborhoods of $L^2$-best planes, while avoiding all previous bad/stop balls.  Now $B_{r_0}(g_0)$ is covered by the various excess sets, and the good $r_2$-balls, and the bad/stop balls at scales $r_1$ and $r_2$.

We proceed in this fashion, inducting into the $r_i$-good balls, and avoiding the bad/stop balls at scales $r_i, r_{i-1}, \ldots, r_1$.  We end up with a cover of $B_{r_0}(g_0)$ by excess sets, bad/stop balls at all scales, and a region inside every good $r_i$-ball collection.  This covering is called the \emph{good tree}.

In the good tree, we have good tilting control.  We can perform a construction analogous to \cite{reif_orig}, \cite{toro:reifenberg}, \cite{davidtoro}, \cite{naber-valtorta:harmonic}, to build at each scale a bi-Lipschitz manifold $T_i$ that cuts through all $r_i$-good balls in a controlled fashion, and all bad/stop balls at scales $r_i, r_{i-1}, \ldots, r_1$.  Since the corresponding balls of radius $r_i/10$ are pairwise disjoint, and their intersection with $T_i$ has $k$-measure comparable to $r_i^k$, we obtain the $k$-packing estimate in a good tree, and we can bound the measure of stop balls and excess sets in terms of the packing estimate.

Let us elaborate a little on the manifold construction.  As in \cite{reif_orig}, each $T_i$ is essentially a glorified interpolation of planes, constructed inductively via maps $\sigma_i$.  We set $T_0 = L_0$, and then $T_{i+1} = \sigma_{i+1}(T_i)$.  More precisely, if $\{B_{r_i}(g)\}_{g \in \cG_i}$ is the collection of good $r_i$-balls, and $\{L_{i,g}\}$ are associated $L^2$-best planes, then $\sigma_i$ is the following interpolation of projection mappings
\begin{gather}
\sigma_i(x) = \sum_{g \in\cG_i} \phi_{i, g}(x) (\proj_{L_{i,g}}(x - X_{i,g}) + X_{i,g}) \, .
\end{gather}
Here $X_{i,g}$ is the (generalized) $\mu$-center of mass for $B_{r_i}(g)$ (see Definition \ref{def:gen-COM}), $\proj_{L_{i,g}}$ the linear projection to $L_{i,g}$, and $\phi_{i,g}$ is a partition of unity subordinate to $\{B_{3r_i}(g)\}$.

Estimates of Section \ref{subsection:good-tilting} control the tilting of subsequential $L^2$-best planes in terms of $M^{-1/2} \beta$, and consequently in Section \ref{subsection:good-tilting-map} we obtain scale-invariant $C^1$ bounds on $\sigma_i$ in terms of $M^{-1/2} \beta$ also.  These estimates are essentially standard, but we emphasize that our construction requires \emph{no upper mass control} in the good and bad balls.  We may be packing regions with infinite $\mu$-mass.

These various estimates imply that the composition $\sigma_i \circ \cdots \circ \sigma_1$ has a bi-Lipschitz bound at $x$ like
\begin{gather}
\prod_{\ell=0}^i (1 + c(n) M^{-1} \beta(x, r_\ell)^2) \leq e^{c(n)\delta^2}\, .
\end{gather}
This gives a uniformly bi-Lipschitz bound on each $T_i$, and shows that $T_i \to T_\infty$ for some bi-Lipschitz $T_\infty$.

Ultimately, the main Theorem \ref{thm:good-tree} of Section \ref{section:good-tree} says: the good tree decomposes $B_{r_0}(g_0)$ into a region with bounded mass, a collection of bad balls with packing, and a subset of $T_\infty$ (the region inside every good $r_i$-ball collection).

\vspace{2.5mm}

The \emph{bad tree} is constructed in essentially the same way as the good tree, except with the roles of good and bad balls swapped, and with a ``best'' $(k-1)$-plane instead of a best $L^2$ $k$-plane.  In bad trees we are covering a neighborhood of a $(k-1)$-plane, and can therefore naively estimate the $k$-packing of all bad/good/stop balls at all scales.  In fact we can make our $k$-packing estimate \emph{very small}, by choosing the scale-drop very small, which also allows us to not worry about issues like double counting, which is a serious concern for the good ball coverings.  The measure of stop balls and excess sets is then controlled by the packing.

The main Theorem \ref{thm:bad-tree} of Section \ref{section:bad-tree} says: the bad tree decomposes a bad ball $B_{r_0}(b_0)$ into a region with bounded mass, a collection of good balls with \emph{small} packing, and a region with $\haus^k$-measure $0$ (the region inside every bad $r_i$-ball collection).

\vspace{2.5mm}

We outline how to chain these trees together.  In a good tree, away from the bad balls we have packing and measure estimates down to a small scale.  We sometimes call the bad balls of a good tree the \emph{leaves}.  Similarly, in a bad tree we have small-scale packing and measure estimates away from the good balls (the \emph{bad tree leaves}).

To obtain global estimates down to a small scale we first construct an initial good or bad tree at $B_1(0)$, which of course we can assume is not a stop ball (otherwise we are done anyway).  At any leaf of this good/bad tree, we build bad/good tree, and thereby obtain global estimates away from smaller leaves.  Then in leaves of the secondary trees, we build tertiary trees,  and continue this construction inductively.

Each time we build a new family of trees, the trees switch type, and we get estimates on a smaller scale.  The type-switching is very important: it means we can always cancel double-counting error with the small bad-tree packing, choosing an appropriate scale drop (equation \eqref{eq_rho}).  In fact we end up with global packing on \emph{all} leaves of \emph{all} trees (Theorem \ref{thm:tree-packing}), which allows us to concatenate packing and measure estimates from each individual tree.

\section{Examples}

In this section we gather some examples regarding measures and $\beta$ number estimates. We start with a basic example on the finiteness of the $\beta$ number for graphs in order to help build an intuition.

\subsection{Graphs}
\begin{example}\label{ex_C2manifold}
 Let $S$ be the graph of a $C^2$ function $f:\R^k\to \R^{n-k}$, so that $S\subset \R^n$. Evidently, one expects $\haus^k\llcorner S$ to have nice $\beta^2$ bounds. In order to verify this, we compute the Taylor expansion of $f$ around $(x_0,f(x_0))\in S$:
 \begin{gather}
  \abs{f(x)-f(x_0)-\nabla f(x_0)\cdot (x-x_0)}\leq \norm{\nabla^2 f}_\infty \abs{x-x_0}^2\, .
 \end{gather}
In particular, using simple geometric considerations, this proves that
\begin{gather}
\beta^k_{\haus^k\llcorner S}((x_0,f(x_0)),r)^2\leq c(k,\norm{\nabla f}_\infty)\norm{\nabla^2 f}_\infty r\, .
\end{gather}
In turn, this implies the uniform pointwise bound
\begin{gather}\label{eq_C2D}
 \int_0^1 \beta^k_{\haus^k\llcorner S}((x_0,f(x_0)),r)^2 \frac{dr}{r}\leq c(k,\norm{\nabla f}_\infty)\cdot \norm{\nabla^2 f}_\infty \, .
\end{gather}
\end{example}
\vspace{.25cm}

It is easy to see that a similar computation holds for all graphs of $C^{1,\alpha}$ functions.\\

As opposed to the previous example, we show an easy case where the integral in \eqref{eq_C2D} does not converge.
\begin{example}\label{ex_lip_graph}
 Let $S$ be the graph of the function $\alpha \abs x$ in $\R^2$, where $\alpha \neq 0$. Then clearly $\beta^k_{\haus^k\llcorner S}((0,0),r)$ is constant in $r$, and thus $\int_0^1 \beta^k_{\haus^k\llcorner S,2}(r)^2 \frac{dr}{r}=\infty$. This proves that the estimate in \eqref{eq_C2D} cannot hold in a pointwise sense for Lipschitz graphs.
\end{example}

It is worth noticing however that although the pointwise bound of \eqref{eq_C2D} does not hold for Lipschitz graphs, an integral estimate follows from \cite[theorem 6]{Dorronsoro} (see also \cite[theorem 1.2]{AS}  and \cite[Theorem 1.42]{david-semmes}). For the sake of completeness, here we report the result (without proof):
\begin{proposition}\label{lemma_1}
 Let $f:\R^k \to \R^{n-k}$ be a Lipschitz map with Lipschitz constant bounded by $L$, and let $S$ be the graph of $f$. Then for all $x\in S$ and $r\geq 0$ we have
 \begin{gather}
  \int_{B_r(x)} \ton{\int_0^r \beta^k_{\haus^k\llcorner S}(x, s)^2 \frac {ds}{s} }d\haus^k(x)\leq C(k) (1+L^2)^{k/2} L^2 r^k\, .
 \end{gather}
\end{proposition}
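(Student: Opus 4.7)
The plan is to deduce this from Dorronsoro's classical $L^2$ estimate for the deviation of a Sobolev (in particular Lipschitz) function from its best affine approximations on balls. First I would reduce the problem from the graph $S\subset \R^n$ to its base $\R^k$. Let $\pi: S \to \R^k$ denote projection onto the first factor; then $\pi$ is bi-Lipschitz with $\Lip(\pi^{-1})\leq \sqrt{1+L^2}$, and the area formula gives $d(\haus^k\llcorner S)(y,f(y)) = J(y)\,dy$ with $1\leq J(y)\leq (1+L^2)^{k/2}$ for a.e.\ $y$. Writing $x=(x_0,f(x_0))$ with $x_0\in \R^k$, one has $\pi(S\cap B_s(x)) \subset B_s(x_0)\cap \R^k$, so it suffices to prove the estimate as an integral over $x_0\in B_r^k$ with $s$ ranging in $(0,r)$.

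Next, for each $(x_0,s)$ I would bound $\beta^k_{\haus^k\llcorner S}(x,s)^2$ by taking as candidate affine $k$-plane the graph of the $L^2$-best affine approximation $A_{x_0,s}(y) = a_{x_0,s} + M_{x_0,s}(y-x_0)$ of $f$ on $B_s(x_0)\cap \R^k$. The perpendicular distance from $(y,f(y))$ to this graph is bounded by $|f(y)-A_{x_0,s}(y)|$, so using the Jacobian estimate above,
\begin{gather}
s^{k+2}\beta^k_{\haus^k\llcorner S}(x,s)^2 \;\leq\; (1+L^2)^{k/2} \inf_{A \text{ affine}} \int_{B_s(x_0)\cap \R^k} |f(y)-A(y)|^2\,dy\, .
\end{gather}
Now I would invoke Dorronsoro's theorem: for $f\in W^{1,2}_{\mathrm{loc}}(\R^k;\R^{n-k})$,
\begin{gather}
\int_{\R^k}\int_0^\infty \inf_{A} \ton{s^{-k-2}\int_{B_s(x_0)}|f-A|^2}\frac{ds}{s}\,dx_0 \;\leq\; C(k)\int_{\R^k}|\nabla f|^2\,dx_0 \, .
\end{gather}
To use this locally, I would apply it to a suitable cutoff/extension $\tilde f$ of $f$ whose Lipschitz constant is controlled by $L$ and which agrees with $f$ on $B_{2r}$. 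Combined with the pointwise bound $|\nabla f|\leq L$, the right-hand side is controlled by $C(k)L^2 r^k$. Multiplying by the Jacobian loss $(1+L^2)^{k/2}$ and undoing the change of variables back to $S$ (which only introduces another bounded Jacobian factor, absorbable into the constant) yields the asserted bound $C(k)(1+L^2)^{k/2}L^2 r^k$.

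The main obstacle is the localization of Dorronsoro's inequality: its standard statement is global and holds under $W^{1,2}$ assumptions on all of $\R^k$, whereas we only know Lipschitz control on $B_r(x)$ projected to the base. A standard McShane/Kirszbraun extension of $f|_{B_{2r}}$ to a globally Lipschitz function with the same constant $L$, together with the observation that the inner integrand at $(x_0,s)$ with $s\leq r$ and $x_0\in B_r$ depends only on $f|_{B_{2r}}$, allows one to apply the global Dorronsoro estimate to the extension. Apart from this, the proof is bookkeeping: a careful track of the $(1+L^2)^{k/2}$ factor from the area element and the $L^2$ factor from $\|\nabla f\|_\infty^2$, together with the fact that $\int_0^r(\cdots)\frac{ds}{s}$ restricts the integration to scales compatible with the base ball of radius $2r$.
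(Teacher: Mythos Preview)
The paper does not actually prove this proposition: it explicitly presents the result ``(without proof)'' and attributes it to Dorronsoro \cite[theorem 6]{Dorronsoro}, with additional pointers to \cite{AS} and \cite{david-semmes}. Your approach---reducing to the base via the bi-Lipschitz projection, comparing $\beta$ to the $L^2$ affine deviation of $f$, and invoking Dorronsoro's square-function estimate after a Lipschitz extension/cutoff---is exactly the route the paper has in mind, and the sketch is correct in its essentials.

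One small bookkeeping correction: the change of variables from integration over $S\cap B_r(x)$ back to the base $\R^k$ introduces a \emph{second} Jacobian factor bounded by $(1+L^2)^{k/2}$. This factor depends on $L$ and therefore cannot be ``absorbed into the constant'' $C(k)$ as you write; your argument as stated yields $C(k)(1+L^2)^{k}L^2 r^k$ rather than $C(k)(1+L^2)^{k/2}L^2 r^k$. This is harmless for every application in the paper (all that matters is a polynomial-in-$L$ bound times $r^k$), but if you want the exponent $k/2$ as stated you would need to organize the two Jacobian contributions more carefully, or simply accept the weaker exponent.
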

\vspace{5mm}

\subsection{GeneralizedS Koch Snowflake}

\begin{example}\label{example:koch}

In this example, we recall some properties of the snowflake curve, which will allow us to illustrate why it is reasonable to use summability properties of the \textit{square} of the beta numbers in order to obtain measure bounds and rectifiability. We start by briefly recalling the construction of the Koch curve.

The construction of a Koch curve (snowflake) of height $\kappa>0$ is well known (as a reference, see \cite[section 4.13]{mattila}).  The basic building block is the following:  Given $a,b\in \dR^2$ and the induced segment $\ell=[a,b]\subseteq \dR^2$ of length $|\ell|$, split the segment into three equal parts $[a,x_1]\cup[x_1,x_2]\cup[x_2,b]$, and replace the middle segment $[x_1,x_2]$ with a pair of segments $[x_1,z]\cup [z,x_2]$ having the same lenght and such that $d(z,\ell)=\kappa \abs{\ell}$.  To build the Koch snowflake let us consider a sequence of piecewise segments $\gamma_i$ defined as follows.  Let $\gamma_0$ be the unit segment $[0,1]\times\{0\}\subseteq \R^2$.  To construct $\gamma_{i+1}$ from $\gamma_i$ let us apply the building block construction above with parameter $\kappa$ to each segment of $\gamma_i$.  Limiting gives the Koch curve $\gamma$.

It is clear that the length of the curve $\gamma_i$ is given by $|\gamma_i|=\ton{1+(\sqrt{1+c\kappa^2}-1)/3}|\gamma_{i+1}|$, so the length of the snowflake $\gamma$ will be infinity for any $\kappa>0$. This is a simple application of the Pythagorean theorem, and the extra square power on $\kappa$ comes from the fact that loosely speaking at each step we are adding some length $\kappa$ to the curve, but in a direction perpendicular to it.\\

To build the generalized Koch snowflake let us instead consider a sequence of parameters $\kappa_i$.  Let $\gamma_0$ be the unit segment as before, and now to construct $\gamma_{i+1}$ from $\gamma_i$ let us apply the building block construction above with parameter $\kappa_{i}$ to each segment of $\gamma_i$.  Limiting gives the generalized snowflake $\gamma$.  It is clear that the length of the curve $\gamma_{i+1}$ is given by $|\gamma_{i+1}|\sim (1+c\kappa_i^2) |\gamma_{i}|$, so the length of the snowflake $\gamma$ is finite if and only if $\sum \kappa_i^2<\infty$.

This suggests that the finiteness of the Hausdorff measure of the set $S$ is related to the summability properties of \textit{squares} of its $\beta$ numbers over scales. 

\end{example}

\subsection{\texorpdfstring{$\beta$}{beta}-Control without Density Bounds}
The following examples illustrate that some of the assumptions in the main theorems (and its corollaries) are necessary.  In particular, we will see how densities play a role in our results.

\begin{example}\label{ex_density}
This example illustrates how in Theorem \ref{thm:main-thm-L1}, we cannot get rid of the $\mu_0$ part in the decomposition without assuming lower bounds on $\Theta^{*,k}$.  
Consider the $n$-dimensional Lebesgue measure $\mu = \leb^n\llcorner \B 1 0$.  Then for every $x, r$, we have
\begin{align}
 \beta^k (x, r)^2 \leq r^{-k-2} \int_{B_r} 2r^2 d\leb^n  = 2\omega_n r^{n-k}\, . 
\end{align}

Hence for all $x\in \B 1 0$ and $R\leq 2$:
\begin{gather}
 \int_0^R \beta^k (x, r)^2 (x, r)\frac{dr}{r} \leq c(n) \int_0^R r^{n-k-1}dr \leq \frac{c(n)}{n-k}R^{n-k}\, .
\end{gather}

However $\leb^n$ is trivially not supported on any set of finite $\haus^k$-measure.  This tells us that finiteness (or even smallness) of $\int \beta^2 dr/r$ by itself does not give any kind of $k$-dimensional structure.
\end{example}

In the next example, we show that in the main Theorem \ref{thm:main}, we cannot obtain uniform packing estimates on the whole support of the measure $\mu$. In other words, we show that in general point $(B)$ in theorem \ref{thm:main} is necessary.

\begin{example}\label{ex_packing}
Fix $\rho \leq 1$.  Let $\{x_i\}$ be a maximal $\rho$-net in $B_2$, and we can assume the $x_i$ lie on a lattice parallel to coordinate axes.  For each $x_i$, let $S_i$ be a $k$-sphere centered at $x_i$, of radius $\rho^{n/k}$.  Set $S = B_1 \cap \ton{\cup_i S_i}$, and let $\mu = \haus^k \llcorner S$.

We have by construction that
\begin{gather}
\mu(B_1) \leq c(n) , \quad \Theta^{k}(x, \mu) = 1 \quad \forall x \in \spt \mu .
\end{gather}
We claim that for all $x\in \B 1 0$:
\begin{gather}
 \int_0^2 \beta^k (x,r)^2 \frac{dr}{r} \leq c(n)\, .
\end{gather}

Let us prove the Claim.  Morally what is happening, is that on scales $< \rho^{n/k}$ we look like a $k$-sphere, while on scales $> \rho^{n/k}$ we look like the Lebesgue measure as in the last example.  Fix $r > 0$.  If $r \leq \rho^{n/k}/10$, then $S \cap B_r(x)$ is the graph of a $C^2$ function over some $k$-dimensional plane. Thus, according to example \ref{ex_C2manifold}, we deduce
\begin{gather}
 \int_0^{10^{-1} \rho^{n/k}} \beta^k (x,r)^2 \frac{dr}{r} \leq c(n)\, 
\end{gather}
for every $x$.

Suppose $r \geq \rho^{n/k}$.  Then as with the Lebesgue measure we can calculate
\begin{gather}
\beta^k(x, r)^2\leq c(n) r^{-k-2} \cdot (r/\rho)^k \sum_{\ell=0}^{r/\rho} (\ell \rho + \rho^{n/k})^2 \cdot \rho^n (r/\rho)^{n-k-1} \leq 
c(n) r^{n-k} \, .
\end{gather}
Therefore
\begin{gather}
 \int_{10^{-1} \rho^{n/k}}^2 \beta^k (x,r)^2 \frac{dr}{r} \leq c(n)\, \quad \text{ and thus } \quad \int_0^{2} \beta^k (x,r)^2 \frac{dr}{r} \leq c(n)
\end{gather}
also.  This proves the claim.

So $\mu$ satisfies all the hypothesis of Theorem \ref{thm:main} uniformly in $\rho$, and has uniformly bounded upper density.  However we trivially have that $S$ fails any kind of packing estimate for $r \geq \rho$:
\begin{gather}
|B_\rho(S)| \geq |B_1| \geq c(n) .
\end{gather}

Let's go further, and even assume that we have cut out a set $S'$ of measure $\haus^k(S') \leq \epsilon$.  Then the number of entire $k$-spheres $S_i$ which can be covered by $S'$ is at most $c(n) \epsilon \rho^{-n}$.  Therefore
\begin{gather}
|B_\rho(S \setminus S')| \geq |B_1 \setminus B_{c(n) \epsilon}| \geq 1 - c(n) \epsilon^n .
\end{gather}

This shows that in general we cannot hope to have reasonable $k$-dimensional packing estimates on the support of $\mu$, not even if we consider the support of $\mu$ away from a portion with $\epsilon$ mass. 
\end{example}

\subsection{Sharpness without Density Assumptions}

In this subsection we give a brief example both illustrating Theorem \ref{thm:main} and showing that the estimates given are sharp.

\begin{example}\label{ex:sharp}
	For $L^k\subseteq \dR^n$ a $k$-dimensional subspace with $\{x_i\}\subseteq L^k\cap B_1$ a countable dense subset and $\Lambda_i>0$ any collection of positive numbers.  Let us consider the measure
	
\begin{align}
\mu = \leb^n\llcorner B_1 + \sum_i \Lambda_i\,\delta_{x_i}\, ,
\end{align}
where $\leb^n$ is the standard Lebesgue measure and $\delta_{x_i}$ is a Dirac delta centered at $x_i\in L^k\cap B_1$.  As in Example \ref{ex_density} we have the pointwise $\beta$-control
\begin{align}
	\int_0^2 \beta^k_{\mu}(x, s)^2 \frac {ds}{s} <C(n) \text{ for all }x\in B_1\, ,
\end{align}
where notice the bound on the right hand side is independent of the $\Lambda_i$.  Clearly, this measure has no upper or lower $k$-density bounds at {\it any} point.  Applying Theorem \ref{thm:main}, or more appropriately Corollary \ref{cor:main-thm-L1-scales}, we obtain the $k$-rectifiable subset $\cK=L^k\cap B_1$ with uniform $k$-Minkowski and $k$-Hausdorff estimates such that $\mu(B_1\setminus \cK) < C(n)$, as claimed.

We see in this extreme example where no point has either an upper or lower density bound that we cannot strengthen the estimates of the main theorems.
\end{example}

\vspace{.5cm}

\section{Preliminaries}

In this section, we gather the necessary preliminary results on $\beta$-estimates and tilting of the best approximating planes. Our ambient space will always be $\R^n$, equipped with the usual Euclidean metric.

Most of our Theorems are stated in a $k$-dimensional scale-invariant form.  When we say ``apply Theorem X to the measure $\mu$ at scale $B_r(x)$,'' we mean ``apply Theorem X to the measure $\mu_{x,r}$,'' where
\begin{gather}
\mu_{x,r}(A) := r^{-k} \mu(x + rA).
\end{gather}

\subsection{Definitions and notation}
Let us outline some basic notation and definitions used throughout the paper.  Given a set $A \subset \R^n$, write
\begin{gather}
B_r(A) = \{ x : \dist(x, A) < r \}
\end{gather}
for the (open) $r$-tubular neighborhood of $A$.  We write $|A|=\leb^n(A)$ for the $n$-dimensional volume, and $\overline{A}$ for the closure of the set $A$.  For ease of notation we will sometimes write $B_r(0)$ as $B_r$.  We write $\haus^k(A)$ for the $k$-dimensional Hausdorff measure of $A$, and $\haus^k_\delta$ for the usual $\delta$-approximation of Hausdorff measure.

Given a measure $\mu$, the upper-/lower-$k$-densities are defined as
\begin{gather}
\Theta^{*,k}(\mu, x) = \limsup_{r \to 0} \frac{\mu(B_r(x))}{r^k}, \quad \Theta_*^k(\mu, x) = \liminf_{r \to 0} \frac{\mu(B_r(x))}{r^k}.
\end{gather}

We write $d_H$ for the usual Hausdorff distance between sets
\begin{gather}
d_H(A_1, A_2) = \inf\{ r : A_1 \subset B_r(A_2) \quad\text{and}\quad A_2 \subset B_r(A_1) \}.
\end{gather}
Given two subspaces $L_1, L_2$, we define a Grassmanian type distance
\begin{gather}
d_G(L_1, L_2) = d_H(B_1(0) \cap L_1, B_1(0) \cap L_2).
\end{gather}
If $L_1, L_2$ are affine spaces, we define $d_G$ in terms of the associated subspaces.

Given an affine or linear space $L$, we will write $p_L$ for the \emph{linear} projection mapping, onto the associated linear subspace.  It's easy to show that
\begin{gather}\label{eqn:tilting-equivalence}
d_G(L_1, L_2) \leq |p_{L_1} - p_{L_2}| \leq 2 d_G(L_1, L_2) ,
\end{gather}
where $|p| = \sup_{|v| \leq 1} |p(v)|$ denotes the operator norm.  In this paper we will deal exclusively with the operator norm for linear mappings.

Given a function $f$, we define the $C^k$-norm at scale $\rho$ to be
\begin{gather}
|f|_{C^k_\rho} = \sum_{\ell=0}^k \sup_{\dom(f)} \rho^{\ell-1} |D^\ell f| \, , \quad \text{where} \quad \abs{D^\ell f} = \sup_{\abs {v_1}=\cdots=\abs{v_\ell} = 1 } \abs{D^\ell(f)[v_1,\cdots,v_\ell]}\, ,
\end{gather}
and $|f|_{C^k_\rho(\Omega)} = |f|_{\Omega}|_{C^1_\rho}$.  This definition is scale-invariant for graph-dilation: if $f_\rho(y)=\rho^{-1} f(\rho y)$, then $|f|_{C^k_\rho}=|f_\rho|_{C^k_1}$.  Given an affine $k$-plane $L$, and a function $f : \Omega \subset L \to L^\perp$,
\begin{gather}
\graph_\Omega(f) = \{ x + f(x) : x \in \Omega \}.
\end{gather}

We require the following effective notion of linear-independence for a collection of points.
\begin{definition}\label{def:COM}
We say a collection of points $p_0, \ldots, p_k$ are in \emph{$\rho$-general position} if, for each $i$, we have $p_i \not\in B_\rho(<p_0, \ldots, p_{i-1}>)$.  Here by $<p_0, \ldots, p_{i-1}>$ we mean the affine $(i-1)$-space containing these points.
\end{definition}

The $\mu$-center of mass of $B_r(x)$ is defined to be
\begin{gather}
\frac{1}{\mu(B_r(x))} \int_{B_r(x)} z d\mu(z).
\end{gather}
Let us define here a generalized $\mu$-center of mass, which captures a crucial property of the center of mass even if $\mu(B_r(x)) = \infty$.  We beg the reader's patience in justifying the well-definedness of Definition \ref{def:gen-COM} until Proposition \ref{prop:gen-COM}.
\begin{definition}\label{def:gen-COM}
We define the \emph{generalized $\mu$-center of mass} $X_{x, r}$ of a ball $B_r(x)$ as follows.  If $\mu(B_r(x)) < \infty$, let
\begin{gather}
X_{x, r} = \frac{1}{\mu(B_r(x))} \int_{B_r(x)} z d\mu(z)
\end{gather}
be the usual center of mass.  If $\mu(B_r(x)) = \infty$, we let $X_{x, r}$ be any point in the intersection
\begin{gather}
\overline{B_r(x)} \cap \bigcap \left\{\text{affine $V^k$} : \int_{B_r(x)} d(z, V)^2 d\mu(z) < \infty \right\}.
\end{gather}
\end{definition}

We now define our fundamental notion of good and bad ball. This definition depends on a choice of $\rho$, $m$, and is of course is specific to some measure $\mu$.  Usually the $\rho$, $m$, $\mu$ will be unambiguous from the context, but if not we will be explicit.

\begin{definition}\label{def:good-bad-ball}
We say a ball $B_r(x)$ is \emph{good} if there are points $y_0, \ldots, y_k \in B_r(x)$, such that for each $i$
\begin{gather}
\mu(B_{\rho r}(y_i)) \geq m (\rho r)^k,
\end{gather}
and if $Y_i$ is the generalized $\mu$-center of mass of $B_{\rho r}(y_i)$, then the $\{Y_i\}$ are in $\rho r$-general position.  We say $B_r(x)$ is \emph{bad} if it is not good.
\end{definition}
In simple terms, a good ball has large mass which is spread on some affine $k$-plane.  The reason we care is because $\beta$-numbers control the distance between centers of mass, and $L^2$-best planes (Proposition \ref{prop:COM-control}).  If we have lots of mass nicely spread out over a $k$-plane, we can effectively control tilting between different $L^2$-best planes.\\

\subsection{Basic lemmas}

We collect a few basic results used throughout the paper.  First, we require the following standard measure-theoretic result.
\begin{lemma}\label{lem:density-to-ineq}
Let $\mu$ be a non-negative Borel-regular measure.
\begin{enumerate}
\item[A)] Let $A_1 \subset A_2$, and suppose
\begin{gather}
\Theta^{*, k}(\mu \llcorner A_2, x) \geq t \quad \text{for $\mu$-a.e. $x \in A_1$}.
\end{gather}
Then
\begin{gather}
t\haus^k(A_1) \leq \mu(A_2).
\end{gather}

\item[B)] Suppose $\mu$ is finite, and for some $A$ we know instead that
\begin{gather}
\Theta^{*, k}(\mu, x) \geq t \quad \text{for $\mu$-a.e. $x \in A$},
\end{gather}
then
\begin{gather}
t\haus^k(A) \leq \mu(A).
\end{gather}
\end{enumerate}
\end{lemma}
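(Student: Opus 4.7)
Both parts reduce to the classical Vitali covering argument that relates upper densities of a Borel measure to Hausdorff measure; I will outline (A), with (B) being the special case $A_2=A$ together with finiteness of $\mu$.

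Fix small $\eta,\delta>0$ and let $N\subset A_1$ be a $\mu$-null set outside which the density hypothesis $\Theta^{*,k}(\mu\llcorner A_2,x)\geq t$ holds. For every $x\in A_1\setminus N$, unwinding the $\limsup$ yields arbitrarily small radii $r(x)\leq\delta$ with
$$\mu(A_2\cap B_{r(x)}(x))\geq (t-\eta)\,r(x)^k,$$
so $\{B_{r(x)}(x):x\in A_1\setminus N\}$ is a fine cover of $A_1\setminus N$.

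Next I apply the Vitali $5r$-covering lemma (or the Besicovitch covering theorem in $\R^n$, to sharpen the dimensional constant) to extract a countable pairwise disjoint subfamily $\{B_{r_i}(x_i)\}_i$ whose $5$-fold enlargements still cover $A_1\setminus N$. Since $5r_i\leq 5\delta$,
$$\haus^k_{10\delta}(A_1\setminus N) \leq c(n)\sum_i r_i^k \leq \frac{c(n)}{t-\eta}\sum_i\mu(A_2\cap B_{r_i}(x_i)) \leq \frac{c(n)}{t-\eta}\,\mu(A_2),$$
using the density estimate in the middle and the pairwise disjointness of the $B_{r_i}(x_i)$ at the end. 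Sending $\delta\to 0$ and then $\eta\to 0$ yields $t\,\haus^k(A_1\setminus N)\leq c(n)\,\mu(A_2)$, and the clean constant stated in the lemma is recovered in the paper's normalization of $\haus^k$ and $\Theta^{*,k}$ (or by refining the cover via Besicovitch and letting the enlargement factor tend to unity).

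For part (B), the identical scheme with $A_2=A$ applies, provided one uses Borel regularity $\mu(A)=\inf\{\mu(U):U\supset A\text{ open}\}$ to force $\bigcup_i B_{r_i}(x_i)\subset U$, replacing $\mu(A_2)$ by $\mu(A)$ in the final estimate; finiteness of $\mu$ is used here to make such a $U$ available. The one subtlety is replacing $A_1\setminus N$ by $A_1$: a priori the $\mu$-null exceptional set could have positive $\haus^k$-measure, but in all applications of this lemma in the paper $A_1$ is Borel and the density hypothesis holds on a set whose complement in $A_1$ is negligible in both senses, so this issue does not arise. The main work, modest as it is, is bookkeeping the Vitali constant and carefully taking the limits in $\eta$ and $\delta$.
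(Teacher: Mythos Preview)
Your approach is the same as the paper's---a fine Vitali covering argument---but you miss the refinement that yields the sharp constant $1$ rather than $c(n)$. The basic $5r$-Vitali lemma you invoke only gives $t\,\haus^k(A_1\setminus N)\leq 5^k\,\mu(A_2)$, and neither the normalization of $\haus^k$ and $\Theta^{*,k}$ nor Besicovitch will remove that $5^k$: normalization absorbs at most an $\omega_k$, and Besicovitch replaces $5^k$ by a dimensional overlap constant, not by $1$.

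The paper instead uses the stronger form of the fine Vitali covering theorem: the disjoint subcollection $\{B_{s_i}(x_i)\}_i$ can be chosen so that
\[
A_1 \subset \bigcup_{i=1}^{N} B_{s_i}(x_i) \cup \bigcup_{i>N} B_{5s_i}(x_i)\qquad\text{for every }N.
\]
Since one may assume $\mu(A_2)<\infty$, the disjointness and the density lower bound give $\tau\sum_i \omega_k s_i^k\leq \mu(A_2)<\infty$, so the tail $5^k\tau\sum_{i>N}\omega_k s_i^k$ is $o(1)$ as $N\to\infty$. Thus the $5^k$ penalty only hits a vanishing term, and one obtains $\tau\,\haus^k_{5\delta}(A_1)\leq \mu(A_2)+o(1)$. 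This ``split at $N$'' trick is the missing step.

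Your remark about the $\mu$-null exceptional set $N$ possibly having positive $\haus^k$-measure is a fair observation; the paper's proof glosses over the same point (it asserts the cover is fine on all of $A_1$), so you are not losing anything relative to the paper there. Part B) in the paper is exactly as you describe: take open $U\supset A$ with $\mu(U)\leq\mu(A)+\eps$ and apply part A) with $A_1=A$, $A_2=U$.
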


\begin{proof}
Let us prove A).  The proof is given in \cite{simon:gmt}, but for the convenience of the reader we reproduce it here.  We can assume $\mu(A_2) < \infty$ and $t > 0$.  Take $\delta > 0$, and $0 < \tau < t$.

By assumption, the collection of balls
\begin{gather}
\{ B_s(x) : x \in A_1, \quad s < \delta, \quad \text{and}\quad\mu(B_s(x) \cap A_2) > \tau \omega_k s^k \}
\end{gather}
covers $A_1$ finely.  Therefore we can choose a disjoint Vitali subcollection $\{B_{s_i}(x_i)\}_i$, so that
\begin{gather}
A_1  \subset \bigcup_{i=1}^N B_{s_i}(x_i) \cup \bigcup_{i=N+1}^\infty B_{5s_i}(x_i) \quad \forall N.
\end{gather}
We have
\begin{gather}
\tau \sum_i \omega_k s_i^k \leq \mu(\cup_i B_{s_i}(x_i) \cap A_2) \leq \mu(A_2) < \infty.
\end{gather}

We then calculate, for any $N$, 
\begin{align}
\tau \haus^k_{5\delta} (A_1) 
&\leq \tau \sum_{i=1}^N \omega_k s_i^k + 5^k \tau \sum_{i=N+1}^\infty \omega_k s_i^k \\
&\leq \mu(A_2) + o(1) \quad \text{ as $N \to \infty$}.
\end{align}
Taking $N \to \infty$, $\delta \to 0$, then $\tau \to t$ proves part A).

Let us prove part B).  Since $\mu$ is finite, we can choose an open $U \supset A$ so that $\mu(U) \leq \mu(A) + \eps$.  Then taking $A_1 = A$, and $A_2 = U$, we can apply the first part of the Lemma to deduce
\begin{gather}
t\haus^k(A) \leq \mu(A) + \eps.
\end{gather}
Now take $\eps \to 0$.
\end{proof}

\vspace{2.5mm}

Next it is worth pointing out that $\beta^k_\mu(x, r)$ is lower-semicontinuous in $(x, r)$.  This implies in particular that pointwise bounds on the $\beta$-numbers or summed-$\beta$-numbers in a set can always be extended to the closure.

\begin{lemma}\label{thm:lsc}
Suppose $\beta^k_{2,\mu,\bar \epsilon_\beta} (x, r)^2 < \infty$.  If $x_i \to x$ and $r_i \to r > 0$, then
\begin{gather}
\beta^k_{\mu} (x, r)^2  \leq \liminf_i \beta^k_{\mu} (x_i, r_i)^2  .
\end{gather}
\end{lemma}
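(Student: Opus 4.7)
The plan is to pass to the limit in the variational definition of $\beta$ via compactness of planes plus Fatou's lemma. First I would set $\ell := \liminf_i \beta^k_\mu(x_i, r_i)^2$; assuming $\ell < \infty$ (otherwise there is nothing to show) and passing to a subsequence, I may suppose $\beta^k_\mu(x_i, r_i)^2 \to \ell$.

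Next I would dispose of the threshold case.  If $\mu(B_r(x)) \leq \bar\eps_\beta r^k$ then $\beta^k_\mu(x,r)^2 = 0 \leq \ell$ by definition, so assume $\mu(B_r(x)) > \bar\eps_\beta r^k$.  For any $\delta > 0$ small, $B_{r-\delta}(x) \subset B_{r_i}(x_i)$ eventually (since $x_i \to x$ and $r_i \to r$), so $\mu(B_{r-\delta}(x)) \leq \mu(B_{r_i}(x_i))$; combined with continuity from below $\mu(B_{r-\delta}(x)) \uparrow \mu(B_r(x))$, any sub-subsequence on which $\mu(B_{r_i}(x_i)) \leq \bar\eps_\beta r_i^k$ would force $\mu(B_r(x)) \leq \bar\eps_\beta r^k$, contradicting our assumption.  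Hence eventually $\mu(B_{r_i}(x_i)) > \bar\eps_\beta r_i^k$ and a minimizer $V_i := V^k_\mu(x_i, r_i)$ (or, if the infimum is not attained, a sequence of near-minimizers) is genuinely defined.

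Then I would extract a limit plane.  Any $L^2$-minimizer $V_i$ must intersect $\overline{B_{r_i}(x_i)}$, since otherwise a small translation toward the ball strictly decreases the functional; so I write $V_i = p_i + L_i$ with $p_i \in \overline{B_{r_i}(x_i)}$ and $L_i$ a $k$-dimensional linear subspace.  The $p_i$ stay bounded and $L_i$ lies in a compact space, so passing to a further subsequence $p_i \to p$ and $L_i \to L$, whence $d(z, V_i) \to d(z, V)$ uniformly on compact sets for $V := p + L$.

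Finally I would conclude via Fatou.  Fixing $\delta > 0$ small, so that $B_{r-\delta}(x) \subset B_{r_i}(x_i)$ for large $i$,
\[
\int_{B_{r-\delta}(x)} d(z, V)^2 \, d\mu(z) \leq \liminf_i \int_{B_{r-\delta}(x)} d(z, V_i)^2 \, d\mu(z) \leq \liminf_i r_i^{k+2} \beta^k_\mu(x_i, r_i)^2 = r^{k+2} \ell,
\]
and sending $\delta \to 0$ via monotone convergence gives $\int_{B_r(x)} d(z, V)^2 d\mu \leq r^{k+2} \ell$, so $\beta^k_\mu(x,r)^2 \leq \ell$.  The only mildly delicate step is the plane-compactness argument — ensuring minimizers (or near-minimizers) may be parametrized by bounded data — which reduces to the elementary observation that an optimal plane must meet the ball; everything else is routine measure theory.
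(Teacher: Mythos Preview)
Your proof is correct and follows essentially the same approach as the paper: dispose of the threshold case, extract a subsequential limit plane via compactness, and pass to the limit with Fatou. The only cosmetic differences are that the paper phrases the argument as a contradiction and applies Fatou directly to $d(z,V_i)^2\,1_{B_{r_i}(x_i)}$ (using $\liminf_i 1_{B_{r_i}(x_i)} \geq 1_{B_r(x)}$) rather than restricting to $B_{r-\delta}(x)$ and sending $\delta\to 0$; you are also more explicit about why the minimizing planes stay in a compact family, which the paper leaves implicit.
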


\begin{proof}
We can assume $\beta^k_\mu (x, r) > 0$, otherwise the statement is vacuous, so in particular $\mu \ton{B_r(x)} > \bar\epsilon_\beta r^k$.  Suppose, towards a contradiction, we have a sequence $r_i \to r$, and $x_i \to x$, so that
\begin{gather}
\liminf_i \beta^k_\mu (x_i, r_i) < \beta^k_\mu (x, r)\, .
\end{gather}
First of all, since $1_{\B {r}{x}}\leq \liminf_{i\to \infty} 1_{\B {r_i}{x_i}}$, by Fatou's lemma we have
\begin{gather}
\liminf_i \mu \ton{B_{r_i}(x_i)} > \bar\epsilon_\beta r_i^k\, .
\end{gather}

Moreover, let $V_i = V(x, r_i)$ and $V = V(x, r)$.  We can assume, by passing to a subsequence, that $V_i \to W$ for some $W$.  Since
\begin{gather}
d(z, V_i)^2 1_{B_{r_i}(x_i)} \to d(z, W)^2 1_{A} \quad \forall z,
\end{gather}
for some set $A \supset B_r(x)$, we have again by Fatou that
\begin{gather}
r^{-k-2} \int_{B_r(x)} d(z, W)^2 d\mu(z) \leq \liminf_i \beta^k_\mu(x_i, r_i) < \beta^k_\mu(x, r),
\end{gather}
which contradicts our definition of $V$.
\end{proof}

\begin{remark}\label{rem:sum-vs-int}
Another observation which will prove useful is that we can replace the Dini-type condition $\int \beta \frac{dr}{r}$ with a sum over scales.  By monotonicity of $\beta$, we have
\begin{gather}
\sum_{\alpha \in \dZ \,\, : \,\, 2^{\alpha} \leq r/2} \beta^k_{\mu, 2, 2^k\bar\eps_\beta}(x, 2^\alpha)^2 \leq 2^{k+3} \int_0^r \beta^k_{\mu,2,\bar\eps_\beta}(x, s)^2 ds/s,
\end{gather}
and conversely
\begin{gather}
\int_0^{r} \beta^k_{\mu, 2,2^k\bar\eps_\beta}(x, s)^2 ds/s \leq 2^{k+3} \sum_{\alpha \in \dZ \,\, : \,\, 2^\alpha \leq r} \beta^k_{\mu,2,\bar\eps_\beta}(x, 2^\alpha)^2.
\end{gather}
\end{remark}

\vspace{2.5mm}

As discussed in Definition \ref{def:gen-COM} we will be using the (generalized) center of mass extensively to control tilting between $L^2$-best planes.  We need two preparatory Lemmas, which allow us to control $k$-plane Grassmanian distances by considering only a choice of points in general position.

\begin{lemma}\label{lem:general-pos-bounds}
Suppose $x_0, \ldots, x_k$ in $B_1$ are in $\rho$-general position, and there is an affine space $W$ so that $d(x_i, W) \leq \delta$ for each $i$.  Then $<x_0, \ldots, x_k> \cap B_1 \subset B_{c(n, \rho) \delta}(W)$.
\end{lemma}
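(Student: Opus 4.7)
The plan is to parametrize $V:=\langle x_0,\ldots,x_k\rangle$ by affine coordinates against the $x_i$, push those coordinates onto $W$, and control the error by the $\ell^1$-size of the affine coefficients. For each $i$, pick $y_i\in W$ with $|x_i-y_i|\leq\delta$. Any $p\in V$ has a representation $p=\sum_{i=0}^k\lambda_i x_i$ with $\sum_i\lambda_i=1$ (uniqueness is not needed, but follows from affine independence, which is a consequence of $\rho$-general position). Setting $p':=\sum_i\lambda_i y_i\in W$ gives
\[
d(p,W)\leq|p-p'|\leq\delta\sum_{i=0}^k|\lambda_i|,
\]
so the lemma reduces to bounding $\sum_i|\lambda_i|\leq c(n,\rho)$ whenever $p\in B_1$.

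For this I would translate to the vectors $v_i=x_i-x_0$ for $i=1,\ldots,k$, and exploit $\rho$-general position via a Gram--Schmidt-type estimate. By the definition of $\rho$-general position, each $v_i$ has a component of size at least $\rho$ perpendicular to $\mathrm{span}(v_1,\ldots,v_{i-1})$, while $|v_i|\leq 2$ since $x_0,x_i\in B_1$. This forces the Gram matrix $G_{ij}=\langle v_i,v_j\rangle$ to satisfy $\det G\geq\rho^{2k}$, while $\lambda_{\max}(G)\leq\mathrm{tr}(G)\leq 4k$; hence $\lambda_{\min}(G)\geq\det(G)/\lambda_{\max}(G)^{k-1}\geq c(k,\rho)>0$.

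Writing a generic $p\in V\cap B_1$ as $p=x_0+\sum_{i=1}^k c_i v_i$ then gives
\[
|p-x_0|^2=c^{T}Gc\geq\lambda_{\min}(G)\,|c|^2,
\]
and since $|p-x_0|\leq 2$ we get $|c|\leq c(k,\rho)$. Converting back to affine coordinates, $\lambda_i=c_i$ for $i\geq 1$ and $\lambda_0=1-\sum_{i\geq 1}c_i$, yields $\sum_i|\lambda_i|\leq 1+2|c|_{\ell^1}\leq c(n,\rho)$, which plugged into the display above finishes the proof.

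The only substantive step is the Gram-determinant bound $\det G\geq\rho^{2k}$ coming from $\rho$-general position; everything else is linear-algebra bookkeeping. No serious obstacle is expected beyond stating the Gram--Schmidt estimate carefully.
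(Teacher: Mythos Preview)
Your proof is correct and follows essentially the same approach as the paper: express a point of $V\cap B_1$ as $x_0+\sum_i a_i(x_i-x_0)$, bound the coefficients $|a_i|\leq c(n,\rho)$ using $\rho$-general position, then push onto $W$ via chosen $y_i\in W$. The paper simply attributes the coefficient bound to ``Gram--Schmidt orthogonalization'' without detail, whereas you supply a clean quantitative argument via the Gram-determinant/eigenvalue estimate; this is a welcome elaboration of the same idea.
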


\begin{proof}
By Gram-Schmidt orthogonalization, we can write any $y \in <x_0, \ldots, x_k> \cap B_1$ as
\begin{gather}
y = x_0 + \sum_{i=1}^k a_i(x_i - x_0),
\end{gather}
with $|a_i| \leq c(n,\rho)$.  Let $w_i\in W$ with $|x_i-w_i|\leq \delta$, then we can estimate
\begin{align}
  |y_i-w_0-\sum a_i(w_i-w_0)|\leq |x_0-w_0|+\sum |a_i|\,\big(|x_i-w_i|+|x_0-w_0|\big)\leq c(n,\rho)\delta\, ,
\end{align}
as claimed.
\end{proof}

In the following Lemma the fact that both planes have the same dimension is crucial.
\begin{lemma}\label{lem:contain-to-bound}
Let $V, W$ be affine $k$-spaces, such that $V \cap B_2 \subset B_\delta(W)$, and $V \cap B_1 \neq\emptyset$.  Then $d_H(V \cap B_2, W \cap B_2) \leq c(n) \delta$.
\end{lemma}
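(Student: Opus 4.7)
The plan is to build an affine frame in $V$ in uniform general position, transfer it to nearby points in $W$ via the hypothesis, and then read off both inclusions of the Hausdorff distance by inverting this frame in $W$. If $\delta \geq \delta_0(n)$ for some small $\delta_0$, then $d_H(V\cap B_2, W\cap B_2) \leq 4 \leq c(n)\delta$ trivially, so I may assume $\delta$ is small. I would pick $v_0 \in V \cap B_1$ and an orthonormal basis $e_1,\ldots,e_k$ of the associated linear subspace of $V$, and set $v_i := v_0 + \tfrac12 e_i$. Then $v_0,\ldots,v_k \in V \cap B_{3/2}$ are in $\tfrac12$-general position (cf.\ Definition \ref{def:COM}), and their affine hull is $V$. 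By hypothesis, I would choose $w_i \in W$ with $|v_i - w_i| \leq \delta$ for each $i$.

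\textbf{Easy direction.} For the inclusion $V\cap B_2 \subset B_{2\delta}(W\cap B_2)$: given $v \in V \cap B_2$, the hypothesis produces $w \in W$ with $|v - w| \leq \delta$, so $|w| \leq 2 + \delta$. Since $|w_0| \leq 1 + \delta < 2$, the segment $[w, w_0]$ (contained in the affine space $W$) meets $\partial B_2$ at a point $w' \in W \cap \overline{B_2}$ with $|w - w'| \leq \delta$, giving $v \in B_{2\delta}(W \cap B_2)$.

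\textbf{Hard direction.} For $W\cap B_2 \subset B_{c(n)\delta}(V\cap B_2)$: given $w \in W\cap B_2$, I would expand $w-w_0 = \sum_{i=1}^k a_i (w_i - w_0)$. Since $|(w_i - w_0) - \tfrac12 e_i| \leq 2\delta$, for $\delta$ small the vectors $\{w_i - w_0\}_{i=1}^k$ form a well-conditioned basis of the associated linear subspace of $W$, with coefficients bounded by $|a_i| \leq c(n)$ (using also $|w - w_0| \leq 3$). Then $v := v_0 + \sum a_i(v_i - v_0) \in V$ satisfies $|v - w| \leq c(n) \delta$ by the triangle inequality applied termwise. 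If $|v| > 2$, I would slide along the segment $[v, v_0] \subset V$ until hitting $\partial B_2$, losing at most a further $c(n)\delta$ because $|v_0| \leq 1$.

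\textbf{Main obstacle.} The main technical point --- and the reason the lemma emphasizes that the two planes have the same dimension --- is the quantitative invertibility of the frame $\{w_i - w_0\}$ inside the associated linear subspace of $W$. This is a standard perturbation-of-basis argument, but it only works when $\dim V = \dim W$: if $\dim W > \dim V$, then $W\cap B_2$ could contain directions transverse to $V$ that no small $\delta$ would absorb, and the desired bound $c(n)\delta$ would fail. Everything else in the proof is routine, including the segment-sliding trick used in both directions to rescue points that stray slightly outside $B_2$.
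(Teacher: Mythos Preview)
Your proposal is correct and follows essentially the same approach as the paper: build an orthonormal affine frame in $V$ centered at a point of $V\cap B_1$, transfer it to nearby points $w_i\in W$, observe that the $w_i-w_0$ form a well-conditioned basis of the linear part of $W$ (this is where $\dim V=\dim W$ is used), and expand any $w\in W\cap B_2$ in this basis to find a nearby point of $V$. The paper is terser about the easy direction and the ``slide back into $B_2$'' step, and your constant in the claim $|w-w'|\leq\delta$ should really be $|w-w'|\leq c\,\delta$, but this is immaterial to the conclusion.
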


\begin{proof}
We can assume $\delta \leq \delta_0(n)$.  The assumptions imply $V \cap B_2 \subset B_{c(n) \delta}(W \cap B_2)$.  Choose $x_0 \in V$ to minimize $|x_0|$.  Now let $x_i \in V$ be chosen so that
\begin{gather}
|x_i - x_0| = 1, \quad <x_i - x_0, x_j - x_0> = 0.
\end{gather}

Let $y_i$ be a point in $W$ with $d(x_i, y_i) \leq \delta$.  Then
\begin{gather}
|y_i - y_0| \geq 1 - 2\delta, \quad |<y_i - y_0, y_j - y_0>| \leq 8\delta .
\end{gather}
Hence, for $\delta \leq \delta_0(n)$, the $y_i$ are in $1/2$-general position in $W$.  One can see this using the Gram-Schmidt orthogonalization process.

For any vector $w \in W \cap B_2$, since $y_i \in B_{2+c(n)\delta} \cap W$, we have
\begin{gather}
w = y_0 + \sum_i a_i (y_i - y_0), \quad |a_i| \leq c(n) ,
\end{gather}
and therefore $d(w, V) \leq c(n) \delta$.  Since $V \cap B_1 \neq \emptyset$, we have $W \subset B_{c(n) \delta}(V \cap B_2)$.
\end{proof}

\vspace{2.5mm}

Our approximating maps $\sigma_i$ in the good tree are locally perturbations of affine projections.  We need a standard lemma, due in this formulation to L. Simon, which tracks how graphs change under almost-projections.

\begin{lemma}[``Squash Lemma'' \cite{simon_reif}]\label{lem:squash}
Let $L$, $\tilde L$ be affine $k$-planes, with $L \cap B_{3/2} \neq \emptyset$.  Let $G \subset \dR^n$ be a graph over $L$:
\begin{gather}
G = \graph_{\Omega \subset L} f, \quad |f| + |Df| \leq 1, \quad B_{5/2} \cap L \subset \Omega \subset L.
\end{gather}
Consider the mapping $\Phi : B^n_3 \to \dR^n$ defined by
\begin{gather}
\Phi(x) = m + p_{\tilde L}(x - m) + e(x),
\end{gather}
where $p_{\tilde L}$ is the linear projection operator onto $\tilde L$.

Suppose
\begin{gather}
d(m, \tilde L) \leq \eps, \quad d_H(L \cap B_3, \tilde L \cap B_3) \leq \eps, \quad \text{and}\quad |e| + |De| \leq \eps \text{ on } B_{5/2}.
\end{gather}
Then provided $\eps \leq \eps_0(n)$, we have
\begin{gather}
\Phi(G \cap B_{5/2} ) = \graph_{U \subset \tilde L} \tilde f, \quad |\tilde f| + |D \tilde f| \leq 8\eps, \quad B_2 \cap \tilde L \subset U \subset \tilde L.
\end{gather}
\end{lemma}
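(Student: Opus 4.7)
The plan is to realize $\Phi(G \cap B_{5/2})$ as a graph over $\tilde L$ by parameterizing it through $L$, postcomposing with orthogonal projection onto $\tilde L$, and applying the inverse function theorem. Every estimate will draw on two smallness inputs: the hypothesis $|e| + |De| \leq \eps$, and the linearized tilting bound $|p_L - p_{\tilde L}| \leq c(n)\eps$, which follows from the assumed Hausdorff closeness of the two $k$-planes together with Lemma \ref{lem:contain-to-bound} and \eqref{eqn:tilting-equivalence}.

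Let $L_0, \tilde L_0$ be the linear subspaces parallel to $L, \tilde L$; I continue to write $p_{\tilde L}$ for the linear projection onto $\tilde L_0$. Replacing $m$ by its closest point $m_{\tilde L} \in \tilde L$ only changes $\Phi$ by an $O(\eps)$ translation which I absorb into $e$, so I may assume $m \in \tilde L$. For $y \in \Omega_0 := L \cap B_{5/2}$ (the hypothesis forces $\Omega \supset L \cap B_{5/2}$), set $z(y) = y + f(y) \in G$ and split the image into its $\tilde L$- and $\tilde L_0^\perp$-components:
\begin{gather}
\Phi(z(y)) = \Psi(y) + N(y), \quad \Psi(y) := m + p_{\tilde L}(z(y) - m) + p_{\tilde L} e(z(y)) \in \tilde L, \quad N(y) := (I - p_{\tilde L}) e(z(y)) \in \tilde L_0^\perp.
\end{gather}
Once I show $\Psi : \Omega_0 \to \tilde L$ is a $C^1$ diffeomorphism onto a set $U \supset \tilde L \cap B_2$, the claim follows with $\tilde f := N \circ \Psi^{-1}$, since $|N| \leq \eps$, $|DN| \leq 2\eps$, and $|D\Psi^{-1}| \leq 1 + c(n)\eps$ immediately give $|\tilde f| + |D\tilde f| \leq 8\eps$ for $\eps \leq \eps_0(n)$.

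The differential estimate is routine. For $v \in L_0$ one has $D\Psi(y)[v] = p_{\tilde L}(v) + p_{\tilde L}(Df(y)v) + p_{\tilde L} De(z(y))[v + Df(y)v]$. Using $p_L(v) = v$ and $p_L(Df(y)v) = 0$ (since $Df(y)v \in L_0^\perp$), the tilting bound gives $|p_{\tilde L} v - v|$ and $|p_{\tilde L}(Df(y)v)|$ both bounded by $c(n)\eps|v|$; combined with $|De| \leq \eps$ and $|Df| \leq 1$ this yields $|D\Psi(y)v - v| \leq c(n)\eps|v|$. Hence $D\Psi(y) : L_0 \to \tilde L_0$ is bi-Lipschitz with constants $1 \pm c(n)\eps$, and integration along segments in the convex set $\Omega_0$ gives global injectivity of $\Psi$ together with the stated inverse bound.

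The surjectivity $\Psi(\Omega_0) \supset \tilde L \cap B_2$ is the main obstacle and the only step beyond linear algebra; this is where the anchoring hypothesis $L \cap B_{3/2} \neq \emptyset$ enters. The pointwise displacement bound $|\Psi(y) - y| \leq c(n)\eps$ on $\Omega_0$ (which follows from $d(y, \tilde L) \leq \eps$, $|p_{\tilde L} f(y)| = |(p_{\tilde L} - p_L)f(y)| \leq c(n)\eps$, and $|e| \leq \eps$, together with the normalization $m \in \tilde L$) shows that $\Psi(L \cap \partial B_{5/2})$ avoids $\tilde L \cap B_{5/2 - c(n)\eps}$, while the Hausdorff bound together with $L \cap B_{3/2} \neq \emptyset$ produces, for every $w \in \tilde L \cap B_2$, a point $y_0 \in L \cap B_{5/2}$ with $|\Psi(y_0) - w| \leq c(n)\eps$. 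A standard degree/continuity argument on the convex domain $L \cap B_{5/2}$ then upgrades this near-preimage to an exact preimage, completing the proof.
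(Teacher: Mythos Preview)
Your proof is correct and follows the same overall architecture as the paper: decompose $\Phi(y+f(y))$ into its $\tilde L$-tangential part $\Psi$ (the paper's $h$) and $\tilde L$-normal part $N$, prove $\Psi:L\cap B_{5/2}\to\tilde L$ is a diffeomorphism onto a domain containing $\tilde L\cap B_2$, and set $\tilde f=N\circ\Psi^{-1}$.

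The one point of departure is how invertibility and surjectivity of $\Psi$ are established. The paper chooses singular-value-adapted orthonormal bases $\{e_i\}\subset L_0$ and $\{\tilde e_i\}\subset\tilde L_0$ with $p_{\tilde L}(e_i)=\lambda_i\tilde e_i$ and $|\lambda_i-1|\leq 6\eps$; identifying both planes with $\R^k$ through these bases turns $h$ into a self-map of $\R^k$ with $|Dh-I|\leq 20\eps$, so both injectivity and the image bound $h(B^k_{5/2})\supset B^k_2$ drop out in one line from the displacement estimate $|h(x)-x|\leq 8\eps$. You instead work coordinate-free in the ambient $\R^n$, estimating $|D\Psi(y)v-v|$ directly and closing surjectivity with a degree/connectedness argument. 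Both routes are valid; the paper's basis trick makes surjectivity trivial, while your approach avoids introducing coordinates at the cost of a slightly more topological finish --- which, as stated, is a little terse and would benefit from being spelled out: $\Psi$ is an open map, $\Psi(\partial(L\cap B_{5/2}))$ lies outside $B_{5/2-c(n)\eps}$, and $\Psi(L\cap B_{3/2})$ meets $B_2$, so the connected set $\tilde L\cap B_2$ is disjoint from $\partial(\Psi(L\cap B_{5/2}))$ and hence lies entirely in the open image.
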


\begin{proof}
For ease of notation let us write $\tilde p$, $\tilde p^\perp$, $p$ for the linear projections onto $\tilde L$, $\tilde L^\perp$, $L$, respectively.

Write $L = y + L_0$, and $\tilde L = \tilde y + \tilde L_0$, where $L_0$ and $\tilde L_0$ are linear subspaces.  Choose ON bases $\{e_i\}_i$ of $L_0$, and $\{\tilde e_i\}_i$ of $\tilde L_0$, so that $\tilde p(e_i) = \lambda_i \tilde e_i$ (see for example \cite[Lemma 7.1]{dlms} for the details).  By assumption,
\begin{gather}
|\lambda_i - 1| \leq 2d_G(L, \tilde L) \leq 6 \eps.
\end{gather}

Define $h : \Omega \to \tilde L$ by setting
\begin{align}
h(x) &= \tilde y + \tilde p(x - \tilde y) + \tilde p(f(x)) + \tilde p(e(x + f(x))) \\
&\equiv \tilde p^\perp(\tilde y) + \tilde p(x) + \tilde p(f(x)) + \tilde p(e(x + f(x))).
\end{align}
so that
\begin{gather}
\Phi(x + f(x)) = h(x) + \tilde p^\perp(e(x + f(x))) + \tilde p^\perp(m - \tilde y).
\end{gather}

For $x \in L \cap B_{5/2}$, we have,
\begin{align}
|h(x) - x| 
&\leq |\tilde y + \tilde p(x - \tilde y)| + |\tilde p - p| |f(x)| + |e(x + f(x))|\\
&\leq d_H(L \cap B_3, \tilde L \cap B_3) + 2d_G(L, \tilde L) + \eps\\
&\leq 8\eps, \label{eqn:squash-h-small}\, .
\end{align}
Since $p(f(x))=0$, we have $\tilde p (f(x))=[\tilde p - p ] (f(x))$ and so:
\begin{align}
|Dh(x) - \tilde p| 
&\leq |\tilde p - p| |Df| + |\tilde p| |De| |I + Df| \\
&\leq 8\eps. \label{eqn:squash-Dh-small}
\end{align}

Let us identify $L$, $\tilde L$ (and hence $L_0$, $\tilde L_0$) with $\dR^k$ via the bases $\{e_i\}_i$, $\{\tilde e_i\}_i$.  Under this identification, we have
\begin{gather}
|Dh - I| \leq |Dh - \tilde p| + |\tilde p - I| \leq 20\eps \quad \text{on $B^k_{5/2}$}.
\end{gather}
Therefore, provided $\eps \leq \eps_0(n)$, $h$ is a diffeomorphism from $B_{5/2} \cap L$ onto its image $U$ in $\tilde L$, with $|Dh^{-1}| \leq 2$.  From \eqref{eqn:squash-h-small} we can say $U \supset B_2 \cap \tilde L$.

Define
\begin{gather}
\tilde f(z) = \tilde p^\perp(m - \tilde y) + \tilde p^\perp(e(h^{-1}(z) + f(h^{-1}(z)))).
\end{gather}
By assumption and \eqref{eqn:squash-Dh-small}, we have for $z \in U$, 
\begin{gather}
|\tilde f| \leq d(m, \tilde L) + |e| \leq 2 \eps, \quad |D \tilde f| \leq |De| |I + Df| |Dh^{-1}| \leq 4\eps.
\end{gather}

Since we can write
\begin{gather}
\Phi(x + f(x)) = h(x) + \tilde f(h(x)),
\end{gather}
this completes the proof of Lemma \ref{lem:squash}.
\end{proof}

\vspace{2.5mm}

\subsection{Generalized center of mass}
We prove well-definedness of the generalized $\mu$-center of mass, and the relation it has with $L^2$-best planes.

\begin{proposition}\label{prop:gen-COM}
 Suppose that $\mu(\B r x)=\infty$. Then there exists $X\in \overline{\B r x}$ such that $X\in V^k$ for all affine $k$-planes $V^k$ such that $\int_{\B r x} d(x,V)^2 d\mu(x)<\infty$. 

In particular, the notion of generalized $\mu$-center of mass (Definition \ref{def:gen-COM}) is well-defined.
\end{proposition}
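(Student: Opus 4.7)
My plan is to prove well-definedness via a finite-intersection/compactness argument driven by a simple Chebyshev estimate. Write $\cF$ for the family of all affine $k$-planes $V$ with $\int_{B_r(x)} d(z,V)^2\,d\mu(z) < \infty$. If $\cF = \emptyset$ the conclusion is vacuous (any $X \in \overline{B_r(x)}$ works), so assume $\cF \neq \emptyset$ and set $K_V := V \cap \overline{B_r(x)}$; each $K_V$ is a compact subset of the compact set $\overline{B_r(x)}$.

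The key observation is a Chebyshev-type bound: for any $V \in \cF$ and any $\epsilon>0$,
\[
\mu\bigl(B_r(x) \setminus B_\epsilon(V)\bigr) \;\leq\; \epsilon^{-2} \int_{B_r(x)} d(z,V)^2 \, d\mu(z) \;<\; \infty.
\]
Hence for any finite subfamily $V_1,\dots,V_N \in \cF$, the set $B_r(x) \setminus \bigcap_i B_\epsilon(V_i) = \bigcup_i\bigl(B_r(x)\setminus B_\epsilon(V_i)\bigr)$ has finite $\mu$-measure. Combined with the hypothesis $\mu(B_r(x))=\infty$, this forces
\[
\mu\Bigl(B_r(x) \cap \bigcap_{i=1}^N B_\epsilon(V_i)\Bigr) \;=\; \infty,
\]
and in particular this intersection contains at least one point $z_\epsilon$.

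I would then verify the finite intersection property for $\{K_V\}_{V \in \cF}$ by a soft compactness extraction: pick $z_{1/j} \in B_r(x) \cap \bigcap_{i=1}^N B_{1/j}(V_i)$ and, using compactness of $\overline{B_r(x)}$, pass to a subsequential limit $z^\ast \in \overline{B_r(x)}$. Since $d(z_{1/j}, V_i) < 1/j$ for every $i$ and each $V_i$ is closed, we get $z^\ast \in V_i$, and hence $z^\ast \in \bigcap_{i=1}^N K_{V_i}$. The finite intersection property together with compactness of $\overline{B_r(x)}$ then yields $\bigcap_{V \in \cF} K_V \neq \emptyset$, and any point $X$ in this intersection satisfies the requirements.

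There is no serious obstacle, but I would flag one tempting wrong turn. One might try to establish the finite-intersection step via a quantitative metric-geometry lemma bounding $d(z, V_1 \cap \cdots \cap V_N)$ by a multiple of $\max_i d(z, V_i)$; however, such a bound carries constants depending on principal angles between the subspaces and breaks down entirely if the intersection is empty. The soft compactness extraction above bypasses this issue: I never need to compare the thickened intersection $\bigcap_i B_\epsilon(V_i)$ to the un-thickened $\bigcap_i V_i$ quantitatively, and I obtain nonemptiness of the latter (inside $\overline{B_r(x)}$) as a byproduct of the extraction rather than as an a priori input.
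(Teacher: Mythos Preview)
Your proof is correct and rests on the same core mechanism as the paper's---the Chebyshev bound $\mu(B_r(x)\setminus B_\epsilon(V)) \leq \epsilon^{-2}\int d(z,V)^2\,d\mu < \infty$ combined with compactness of $\overline{B_r(x)}$---but the packaging differs in one notable way. The paper first invokes a dimension-counting observation: since intersections of affine $k$-planes are affine subspaces and dimension can drop only finitely many times, there is a \emph{fixed} finite subcollection $V_0,\dots,V_{k+1}$ with $\bigcap_\alpha V_\alpha = \bigcap_{i=0}^{k+1} V_i$, and then runs the Chebyshev/compactness contradiction once against those $k+2$ planes. You instead verify the finite intersection property for the family $\{K_V\}$ against \emph{arbitrary} finite subfamilies and appeal to the FIP characterization of compactness. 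Your route is arguably cleaner in that it does not use the affine-subspace structure of the $V$'s at all (it would work verbatim for any family of closed sets satisfying the Chebyshev-type bound), whereas the paper's route is slightly more economical in that it identifies the whole intersection with a single finite one and needs only one extraction.
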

\begin{proof}
 Consider the collection $\cur{V_\alpha}_{\alpha \in A}$ of all $k$-affine spaces $V$ such that
 \begin{gather}
  \int_{\B r x} d(x,V)^2 d\mu(x)<\infty\, .
 \end{gather}
It's easy to see that there must be some finite subcollection $V_0, \ldots, V_{k+1}$ so that
\begin{gather}
W := \bigcap_\alpha V_\alpha = \bigcap_{i=0}^{k+1} V_i.
\end{gather}

Suppose, towards a contradiction, that $\overline{B_r(x)} \cap W = \emptyset$.  Then it must follow
\begin{gather}
 \overline{B_r(x)} \cap \bigcap_{i=0}^{k+1} B_\eps(V_i)=\emptyset
\end{gather}
for some $\eps > 0$, because $\overline{B_r(x)}$ and each $V_i$ are closed.

But then, using the definition of $V_i$, we have
\begin{align}
\mu(B_r(x))
&= \mu \left(B_r(x) \cap (\dR^n \setminus \cup_{i=0}^{k+1} B_\eps(V_i)) \right)  \\
&\leq \sum_{i=0}^{k+1} \mu(B_r(x) \cap (\dR^n \setminus B_\eps(V_i)) \\
&\leq \sum_{i=0}^{k+1} \eps^{-2} \int_{B_r(x)} d(z, V_i)^2 d\mu(z) \\
& < \infty,
\end{align}
which is a contradiction.  In the penultimate inequality we used that $k < n$.
\end{proof}
\vspace{.2cm}

Recall that we defined in Definition \ref{def:gen-COM} the generalized $\mu$-center of mass to be any $X$ as in Proposition \ref{prop:gen-COM} whenever a ball has infinite $\mu$-mass.  The essential characteristics of the generalized center of mass is that its distance from best approximating planes is controlled by the $\beta$-numbers. More specifically:
\begin{prop}\label{prop:COM-control}
Suppose $B_r(y) \subset B_R(x)$, and $\mu\ton{ B_R(x)} > \bar\eps_\beta R^k$.  Let $Y$ be the generalized center of mass for $B_r(y)$.  Then
\begin{gather}
d(Y, V(x, R))^2 \leq \frac{R^{k+2}}{\mu(B_r(y))} \beta^k_{\mu,2,\bar\eps_\beta}(x, R)^2\, .
\end{gather}
\end{prop}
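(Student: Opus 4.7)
The plan is to split into two cases based on whether $\mu(B_r(y))$ is finite or infinite, since the generalized center of mass is defined differently in each situation.

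In the finite case, $Y$ is the ordinary $\mu$-center of mass
$Y = \mu(B_r(y))^{-1} \int_{B_r(y)} z \, d\mu(z).$
Since $V = V(x,R)$ is affine, the distance function $d(\cdot, V)$ is a convex function (indeed, $d(z,V) = |z - p_V(z) - q|$ for appropriate $q$, which is a seminorm composed with an affine map). So Jensen's inequality gives
$d(Y, V) \leq \mu(B_r(y))^{-1} \int_{B_r(y)} d(z,V) \, d\mu(z).$
Then Cauchy--Schwarz on the right side, followed by the monotonicity $B_r(y) \subset B_R(x)$, yields
$d(Y,V)^2 \leq \mu(B_r(y))^{-1} \int_{B_r(y)} d(z,V)^2 \, d\mu(z) \leq \mu(B_r(y))^{-1} \int_{B_R(x)} d(z,V)^2 \, d\mu(z),$
which by the defining equation for $V = V(x,R)$ is exactly the claimed bound.

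In the infinite case, the right-hand side is $0$ (or vacuous if $\beta = \infty$), so it suffices to show $Y \in V$. Here I use that under the hypothesis $\mu B_R(x) > \bar\eps_\beta R^k$ the best plane $V$ realizes
$\int_{B_R(x)} d(z,V)^2 \, d\mu(z) = R^{k+2} \beta^k_{\mu,2,\bar\eps_\beta}(x,R)^2,$
which we may assume finite (else there is nothing to prove). Restricting to $B_r(y) \subset B_R(x)$, this integral remains finite, so $V$ belongs to the collection of planes appearing in the intersection in Definition \ref{def:gen-COM}. Hence by construction $Y \in V$, and $d(Y,V) = 0$.

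I do not expect any real obstacle here; the only subtlety is ensuring that the convexity/Jensen step is legitimate (which it is, as $d(\cdot, V)$ is the absolute value of an affine function composed into the orthogonal complement, hence convex) and matching the boundary case of infinite mass with the generalized definition. Existence of the minimizer $V$ whenever $\beta^k_{\mu,2,\bar\eps_\beta}(x,R) < \infty$ is standard — one extracts a limit plane from a minimizing sequence using compactness of the Grassmannian together with a tightness argument from the bounded second-moment control.
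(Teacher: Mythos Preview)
Your proof is correct and follows essentially the same route as the paper: split on finite versus infinite mass, handle the infinite case via the definition of the generalized center of mass (so $Y \in V(x,R)$), and in the finite case apply Jensen's inequality with the convexity of $d(\cdot, V)$ together with the inclusion $B_r(y) \subset B_R(x)$. The only cosmetic difference is that the paper applies Jensen directly to the convex function $z \mapsto d(z,V)^2$ in one step, whereas you apply Jensen to $d(\cdot,V)$ and then square via Cauchy--Schwarz; these are equivalent.
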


\begin{proof}
We can assume $\beta^k_\mu(x, R) < \infty$, otherwise there is nothing to show.  By the previous proposition, if $\mu \ton{\B r y }=\infty$, then $Y\in V(x,R)$. Otherwise, by Jensen's inequality, we calculate
\begin{align}
d(Y, V(x, R))^2
&\leq \frac{1}{\mu(B_r(y)) } \int_{B_r(y) } d(z, V(x, R))^2 d\mu(z) \\
&\leq \frac{R^{k+2}}{\mu(B_r(y)) } R^{-k-2} \int_{B_R(x)} d(z, V(x, R))^2 d\mu(z) . \qedhere
\end{align}

\end{proof}

\vspace{.25cm}

\subsection{Tilting and packing control}\label{subsection:good-tilting}

We are ready to prove the original dichotomy described in the Proof Outline: in good balls the $\beta$-numbers control tilting, and in bad balls we have good packing estimates.

Let us first give some intuition about why it is necessary to have mass effectively spread out to say anything about the $L^2$-best plane.  As a guiding example, consider the measure $\mu_1=\haus^{k-1}\llcorner L$, where $L$ is a linear $(k-1)$-dimensional space.  In this case, it is clear that $\beta^k_{\mu_1}(x,r)=0$ for all $x$ and $r$. However, the notion of best approximating $k$-dimensional plane is not even uniquely defined.

We can make this example more precise by slightly modifying the measure $\mu$. Consider
\begin{gather}
 \mu_2 = \mu_1 + \delta_{x_1} + \chi \delta_{x_2}\, ,
\end{gather}
where $\abs{x_1}=d(x_1,L)=1/2$ and $\abs{x_2}=d(x_2,<L,x_1>)=1/10$. In other words, $x_1$ is orthogonal to $L$ and $x_2$ is orthogonal to the subspace spanned by $x_1$ and $L$.

It is clear that $V(0,1/4)=<L,x_2>$, and $\beta^k_{\mu_2}(0,1/4)=0$, while we have the simple estimate
\begin{gather}
 \beta^k_{\mu_2}(0,1)\leq \chi/10^2\, .
\end{gather}
Moreover, it is easy to see that $V(0,1)$ can be made arbitrarily close to $<L,x_1>$ by choosing $\chi$ small enough. This implies that the distance between $V(0,1)$ and $V(0,1/4)$ cannot be uniformly bounded by $\beta(0,1)+\beta(0,1/4)$.

The issue in both these examples is that mass is not sufficiently $k$-dimensional at scale $\approx 1$ for $\beta(0, 1)$ and $\beta(0, 1/4)$ to sense the right plane.  This is why we can only hope to get tilting control in good balls.  In bad balls, mass will be sufficiently $(k-1)$-dimensional that we have good packing.

We start by showing this straightforward dichotomy:
\begin{lemma}\label{lem:ball-dichotomy}
Given a non-negative Borel measure $\mu$, and a choice of $m, \rho$, then at least one of the following occurs:
\begin{enumerate}
\item[A)] There are points $x_0, \ldots, x_k \in B_1$ such that for each $i$ we have
\begin{equation}\label{eqn:good-ball-points}
\mu \ton{B_\rho(x_i)} > m \rho^k , \quad\text{and}\quad X_i \not\in B_{\rho}(<X_0, \ldots, X_{i-1}>),
\end{equation}
where $X_i$ is the generalized center of mass of $B_\rho(x_i)$.

\item[B)] There is an affine $W^{k-1}$ such that
\begin{gather}\label{eqn:bad-ball-dichotomy}
\mu (B_1 \setminus B_{2\rho}(W^{k-1})) \leq 2^n \rho^{k-n} m.
\end{gather}
\end{enumerate}
In particular, if $B_1$ is bad then \eqref{eqn:bad-ball-dichotomy} holds for some $W^{k-1}$.
\end{lemma}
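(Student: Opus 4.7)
My plan is a greedy selection argument: I will try to construct the points $x_0, \ldots, x_k$ of conclusion (A) one index at a time, and show that the first failed step produces the affine plane $W^{k-1}$ of conclusion (B).

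First I would handle the base case: if no $x_0 \in B_1$ satisfies $\mu(B_\rho(x_0)) > m\rho^k$, then a maximal $\rho$-separated net in $B_1$ covers $B_1$ by $\rho$-balls, and a volume count of the corresponding disjoint $(\rho/2)$-subballs inside $B_2$ bounds its cardinality by $c(n)\rho^{-n}$, whence $\mu(B_1) \leq c(n)\, m\, \rho^{k-n}$ and any $(k-1)$-plane $W^{k-1}$ works. Inductively, suppose I have already selected $x_0, \ldots, x_j$ with $j \leq k-1$ such that the generalized centers of mass $X_0, \ldots, X_j$ are in $\rho$-general position. Let $W_j := \langle X_0, \ldots, X_j\rangle$, an affine space of dimension at most $j \leq k-1$, and search for $x_{j+1} \in B_1$ with $\mu(B_\rho(x_{j+1})) > m\rho^k$ whose generalized center of mass $X_{j+1}$ lies outside $B_\rho(W_j)$. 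If such a point exists, continue; otherwise I claim that (B) holds with any $(k-1)$-plane $W^{k-1} \supseteq W_j$.

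The key link between the two alternatives uses Proposition \ref{prop:gen-COM}: the generalized center of mass $X$ of any ball $B_\rho(x)$ lies in $\overline{B_\rho(x)}$, so for $x \in B_1 \setminus B_{2\rho}(W_j)$ the triangle inequality yields $d(X, W_j) \geq d(x, W_j) - \rho > \rho$, hence $X \notin B_\rho(W_j)$. If in addition $\mu(B_\rho(x)) > m\rho^k$, then $x$ would be an admissible choice for $x_{j+1}$, contradicting the failure of step $j+1$. Therefore $\mu(B_\rho(x)) \leq m\rho^k$ for every $x \in B_1 \setminus B_{2\rho}(W_j)$, and a standard Vitali cover of $B_1 \setminus B_{2\rho}(W_j)$ by at most $c(n)\rho^{-n}$ balls of radius $\rho$ gives $\mu(B_1 \setminus B_{2\rho}(W_j)) \leq c(n)\, m\, \rho^{k-n}$; since $W_j \subseteq W^{k-1}$ implies $B_{2\rho}(W_j) \subseteq B_{2\rho}(W^{k-1})$, the same bound passes to $W^{k-1}$.

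The final ``in particular'' sentence is automatic: if $B_1$ is bad in the sense of Definition \ref{def:good-bad-ball}, then no $(k+1)$-tuple as in (A) exists, so the greedy construction necessarily fails at some index $j+1 \leq k$, and (B) holds by the preceding paragraph. I do not anticipate a genuine obstacle here; the only delicate point is that the spatial condition ``$x$ is far from $W_j$'' must imply the center-of-mass condition ``$X$ is far from $W_j$'', which is precisely what Proposition \ref{prop:gen-COM} supplies. Pinning down the precise constant $2^n$ (rather than a generic $c(n)$) is a matter of a slightly sharper packing count in the Vitali step.
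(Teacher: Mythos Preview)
Your proposal is correct and follows essentially the same greedy-selection argument as the paper: attempt to build the points inductively, and when the step fails at index $j+1\leq k$, use that every $x\in B_1\setminus B_{2\rho}(W_j)$ has $\mu(B_\rho(x))\leq m\rho^k$ together with a $\rho$-net covering bound to obtain (B). The paper's use of $d(X_i,W)\geq d(x_i,W)-\rho$ is exactly your triangle-inequality step via $X_i\in\overline{B_\rho(x_i)}$, and the $2^n\rho^{-n}$ cardinality bound you flag is indeed just the (slightly loose) count of a maximal $\rho$-net in $B_1$.
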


\begin{proof}
Let us construct such a collection $x_0, \ldots, x_k$ by induction, and see what happens when it fails.  Take $i \geq 0$, and suppose we've obtained $x_0, \ldots, x_{i-1}$ satisfying \eqref{eqn:good-ball-points}.  Write $W = <X_0, \ldots, X_{i-1}>$.

Suppose we can find an $x_i \in B_1 \setm B_{2\rho}(W)$ with $\mu \ton{B_\rho(x_i)} > m \rho^k$.  Then we have
\begin{gather}
d(X_i, V) \geq d(x_i, V) - \rho \geq \rho \, .
\end{gather}
And therefore we have constructed an $x_i$.

Otherwise, necessarily
\begin{gather}
y \in B_1 \setminus B_{2\rho}(W) \implies \mu \ton{B_\rho(y)} \leq m \rho^k .
\end{gather}
Take a maximal $\rho$-net $\{y_j\}_j$ of $B_1 \setm B_{2\rho}(W)$.  Then the balls $\{B_\rho(y_j)\}_j$ cover $B_1 \setm B_{2\rho}(W)$, and 
\begin{gather}
\#\{y_j\}_j \leq 2^n \rho^{-n}
\end{gather}
and $\mu \ton{B_\rho(y_j)} \leq m \rho^k$ for each $j$.  Since $\dim V = i \leq k-1$, the lemma is established.
\end{proof}

An easy consequence of Lemma \ref{lem:ball-dichotomy} is the bad-ball packing.
\begin{theorem}\label{thm:packing-control}
Let $B_1$ be a bad ball, and let $W^{k-1}$ be the $(k-1)$-plane of Lemma \ref{lem:ball-dichotomy}.  Then
\begin{enumerate}
\item[A)] If $\{y_i\}_i$ is a maximal $2\rho/5$-net in $B_{5\rho}(W^{k-1})$, then
\begin{gather}
\#\{y_i\}_i \rho^k \leq c_1(n) \rho.
\end{gather}

\item[B)] There is an $m_0(n, \rho) > 0$ so that if $m = m_0(n,\rho)\eps$, then 
\begin{gather}
\mu(B_1 \setminus B_{2\rho}(W^{k-1})) \leq \eps/2.
\end{gather}

\item[C)] If we know
\begin{gather}
\mu(B_\rho(y)) \leq \Gamma \rho^k \quad \forall y \in B_1 \,.
\end{gather}
Then provided $\rho \leq \rho_0(n, \eps, \Gamma)$, and $m = m_0(n, \rho)\eps$ as in part B), we have
\begin{gather}
\mu(B_1) \leq \eps.
\end{gather}
\end{enumerate}
\end{theorem}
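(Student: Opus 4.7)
The plan is to prove the three parts in sequence, each building on the dichotomy already established in Lemma \ref{lem:ball-dichotomy}. Since $B_1$ is bad, the affine $(k-1)$-plane $W^{k-1}$ is constructed from points in $B_1$, so $W^{k-1} \cap B_1 \neq \emptyset$, which pins down the location of the tube $B_{5\rho}(W^{k-1})$ and lets us bound it against $B_1$.

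For part A, I would use a standard volume-packing argument. The tubular neighborhood $B_{5\rho}(W^{k-1})$, restricted to the region near $B_1$ where the net lives, has $n$-Lebesgue measure at most $c(n)\rho^{n-k+1}$: this is a codimension-$(n-k+1)$ tube of radius $5\rho$ around a $(k-1)$-plane intersected with a bounded ambient region. A maximal $2\rho/5$-net provides disjoint balls $B_{\rho/5}(y_i)$ of volume at least $c(n)\rho^n$ each, and all these balls sit inside a slightly fatter tube of the same codimension. Summing volumes yields $\#\{y_i\}\,\rho^n \leq c(n)\rho^{n-k+1}$, which rearranges to $\#\{y_i\}\,\rho^k \leq c_1(n)\rho$.

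For part B, I would invoke \eqref{eqn:bad-ball-dichotomy} of the preceding lemma directly: the badness of $B_1$ gives
\begin{gather}
\mu(B_1 \setminus B_{2\rho}(W^{k-1})) \leq 2^n \rho^{k-n} m.
\end{gather}
Choosing $m_0(n,\rho) := 2^{-n-1}\rho^{n-k}$ so that $m = m_0(n,\rho)\eps$ plugs in to give the desired bound $\leq \eps/2$.

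For part C, I would decompose $B_1 = A \cup (B_1 \setminus A)$, where $A := B_1 \cap B_{2\rho}(W^{k-1})$. Part B handles the complement. For $A$, maximality of the $2\rho/5$-net $\{y_i\}$ in $B_{5\rho}(W^{k-1})$ ensures the balls $\{B_{2\rho/5}(y_i)\}_i$ cover $B_{5\rho}(W^{k-1})$, hence a fortiori the balls $\{B_\rho(y_i)\}_i$ cover $A \subset B_{2\rho}(W^{k-1})$. Using the hypothesis $\mu(B_\rho(y)) \leq \Gamma \rho^k$ together with part A gives
\begin{gather}
\mu(A) \leq \sum_i \mu(B_\rho(y_i)) \leq \#\{y_i\}\,\Gamma\rho^k \leq c_1(n)\Gamma\rho.
\end{gather}
Setting $\rho_0(n,\eps,\Gamma) := \eps/(2c_1(n)\Gamma)$, so that $\rho \leq \rho_0$ forces $\mu(A) \leq \eps/2$, and combining with part B completes the bound $\mu(B_1) \leq \eps$. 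There is no serious obstacle here: the argument is a routine consequence of the previous lemma, and the only point requiring care is the covering relation between the maximal $2\rho/5$-net and the $\rho$-balls used to invoke the hypothesis, and the ordering of quantifiers so that the constants $m_0$ and $\rho_0$ depend only on the stated parameters.
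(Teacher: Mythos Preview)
Your proposal is correct and follows essentially the same route as the paper: the volume-packing count in a $(k-1)$-tube for part A, the direct substitution $m_0 = 2^{-n-1}\rho^{n-k}$ into \eqref{eqn:bad-ball-dichotomy} for part B, and the covering of $B_1 \cap B_{2\rho}(W^{k-1})$ by the $\rho$-balls of the net combined with parts A and B for part C. The paper's proof is terser but makes the identical moves, arriving at $\mu(B_1) \leq 2^n\rho^{k-n}m + c_1(n)\Gamma\rho$ and then choosing $\rho$ small.
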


\begin{proof}
Part A) follows because
\begin{gather}
\#\{y_i\}_i \leq \frac{\omega_{n-k+1} \omega_{k-1} 3 \cdot 30^n}{\omega_n} \rho^{-k+1}.
\end{gather}
Part B) follows by choosing $m_0 = \rho^{n-k} 2^{-n}/2$.  In part C), the assumption and parts A), B) imply
\begin{gather}
\mu(B_1) \leq 2^n \rho^{k-n} m + c_1(n) \Gamma \rho.
\end{gather}
\end{proof}

Let us now prove tilting control in good balls.
\begin{theorem}[Good-ball tilting]\label{thm:tilting-control}
Suppose $x \in B_7$, and $B_\rho(x)$ is a good ball, and that $\mu \ton{B_8} > \bar\epsilon_\beta 8^k$, $\mu \ton{B_{8\rho}(x)} > \bar\epsilon_\beta (8\rho)^k$.  Then we have that
\begin{gather}
d_H(V(x, 8\rho) \cap B_{8\rho}(x), V(0, 8) \cap B_{8\rho}(x))^2 \leq \frac{c(n,\rho)}{m} \ton{\beta^k_{\mu}(x, 8\rho)^2 + \beta^k_\mu(0, 8)^2}\, .
\end{gather}

Therefore we get that
\begin{gather}
d_G(V(x, 8\rho), V(0, 8))^2 \leq \frac{c(n, \rho)}{m} \ton{\beta^k_{\mu}(x, 8\rho)^2 + \beta^k_\mu(0, 8)^2}\, .
\end{gather}
\end{theorem}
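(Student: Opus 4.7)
The plan is to use the $k+1$ generalized centers of mass that goodness of $B_\rho(x)$ provides as common witnesses that both best planes $V(x,8\rho)$ and $V(0,8)$ must pass close to; since the witnesses are in general position, this will force the two planes to be close to each other. Concretely, by Definition \ref{def:good-bad-ball} applied with the ball $B_\rho(x)$ (so $r=\rho$), there exist $y_0,\dots,y_k\in B_\rho(x)$ with $\mu(B_{\rho^2}(y_i))\geq m\rho^{2k}$ whose generalized $\mu$-centers of mass $Y_0,\dots,Y_k$ (Definition \ref{def:gen-COM}) are in $\rho^2$-general position. Each $Y_i$ lies in $\overline{B_{\rho^2}(y_i)}\subset B_{2\rho}(x)$.

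Next I would apply Proposition \ref{prop:COM-control} to each $Y_i$ at the two relevant scales. Since $x\in B_7$ and $\rho$ is small, $B_{\rho^2}(y_i)\subset B_{8\rho}(x)\cap B_8(0)$, and the hypothesized lower mass bounds $\mu B_8>\bar\epsilon_\beta 8^k$ and $\mu B_{8\rho}(x)>\bar\epsilon_\beta (8\rho)^k$ ensure the truncation in the definition of $\beta$ is inactive. The proposition then yields
\begin{gather*}
d(Y_i,V(x,8\rho))^2\leq \frac{(8\rho)^{k+2}}{m\rho^{2k}}\beta^k_\mu(x,8\rho)^2,\qquad d(Y_i,V(0,8))^2\leq \frac{8^{k+2}}{m\rho^{2k}}\beta^k_\mu(0,8)^2,
\end{gather*}
each of the form $c(k,\rho)m^{-1}\beta^2$. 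Letting $\delta^2:=c(k,\rho)m^{-1}\bigl(\beta^k_\mu(x,8\rho)^2+\beta^k_\mu(0,8)^2\bigr)$, every $Y_i$ is within distance $\delta$ of both $V(x,8\rho)$ and $V(0,8)$.

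To convert pointwise closeness of the witnesses into Hausdorff closeness of the two planes, I would rescale $B_{2\rho}(x)$ to $B_1$ via $z\mapsto (z-x)/(2\rho)$; the rescaled $Y_i$ are in $B_1$, lie in $\rho/2$-general position, and are within distance $\delta/(2\rho)$ of each of the rescaled best planes. Lemma \ref{lem:general-pos-bounds} then forces the affine span $\Pi:=\langle Y_0,\dots,Y_k\rangle$ (a full $k$-plane, since we have $k+1$ points in general position) to lie, after unscaling, within a $c(n,\rho)\delta$-neighborhood of each of $V(x,8\rho)$ and $V(0,8)$ on $B_{2\rho}(x)$. A scale-invariant form of Lemma \ref{lem:contain-to-bound}, applied separately to each pair $(\Pi,V(x,8\rho))$ and $(\Pi,V(0,8))$, extends these inclusions to $B_{8\rho}(x)$ (at the cost of a universal constant factor), and the Hausdorff triangle inequality gives the first conclusion. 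The Grassmannian bound then follows because both planes pass within distance $O(\delta)\ll 8\rho$ of $x$, so $(8\rho)^{-1}$ times the Hausdorff distance over $B_{8\rho}(x)$ is comparable to $d_G$; dividing by $(8\rho)^2$ is absorbed into $c(n,\rho)$.

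The main obstacle is the bookkeeping of the various rescalings and verifying at each stage that the relevant planes intersect the relevant balls so that Lemma \ref{lem:contain-to-bound} applies and that the $\rho^2$-general-position hypothesis scales correctly into a constant depending only on $n$ and $\rho$. The substantive content of the argument, however, is entirely contained in Proposition \ref{prop:COM-control} together with the rigidity produced by having $k+1$ centers of mass in general position — a feature guaranteed by the definition of goodness, and precisely what fails in the bad-ball dichotomy of Lemma \ref{lem:ball-dichotomy}.
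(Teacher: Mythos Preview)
Your proposal is correct and follows essentially the same approach as the paper: obtain the $Y_i$ from goodness, bound $d(Y_i,V(\cdot))$ via Proposition~\ref{prop:COM-control}, use Lemma~\ref{lem:general-pos-bounds} to control the span $L=\langle Y_0,\dots,Y_k\rangle$, then Lemma~\ref{lem:contain-to-bound} and the triangle inequality. The only cosmetic difference is that the paper applies both Lemmas~\ref{lem:general-pos-bounds} and~\ref{lem:contain-to-bound} directly at scale $8\rho$ (noting $Y_i\in B_{2\rho}(x)\subset B_{8\rho}(x)$ are in $\rho^2$-general position, hence $\rho/8$-general position after normalization), which avoids your intermediate pass through $B_{2\rho}(x)$ and the subsequent extension to $B_{8\rho}(x)$.
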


\begin{proof}
In the below any $c$ will depend only on $n, \rho$.  By virtue of being a good ball, we have $y_0, \ldots, y_k \in B_\rho(x)$ so that
\begin{gather}
\mu \ton{B_{\rho^2}(y_i)} > m \rho^{2k}\, ,
\end{gather}
and the generalized center-of-masses $Y_i$ are in $\rho^2$-general position.

By Proposition \ref{prop:COM-control} we have
\begin{align}
d(Y_i, V(x, 8\rho))^2 &\leq \frac{(8\rho)^{k+2}}{m \rho^{2k}} \beta^k_\mu(x, 8\rho)^2 \leq \frac{c}{m} \beta^k_\mu(x, 8\rho)^2\, ,\notag\\
d(Y_i, V(0, 8))^2 &\leq \frac{8^{k+2}}{m\rho^{2k}} \beta^k_\mu(0, 8)^2  \leq \frac{c}{m} \beta^k_\mu(0, 8)^2\, .
\end{align}

Therefore, letting $L = <Y_0, \ldots, Y_k>$ we have by Lemma \ref{lem:general-pos-bounds} (at scale $8\rho$) that
\begin{align}
&L \cap B_{8\rho}(x) \subset B_{c m^{-1/2} \beta^k_\mu(x, 8\rho) }(V(x, 8\rho))\, ,\notag \\
&L \cap B_{8\rho}(x) \subset B_{c m^{-1/2} \beta^k_\mu(0, 8)}(V(0, 8))\, .
\end{align}

Since $Y_i \in B_{2\rho}(x)$ we have $L \cap B_{2\rho}(x) \neq\emptyset$.  Therefore, we have by Lemma \ref{lem:contain-to-bound} (at scale $8\rho$)
\begin{align}
&d_H(V(x, 8\rho) \cap B_{8\rho}(x), V(0, 8) \cap B_{8\rho}(x))^2 \notag\\
&\leq 2 d_H(L \cap B_{8\rho}(x), V(x, 8\rho) \cap B_{8\rho}(x))^2  + 2d_H(L \cap B_{8\rho}(x), V(0, 8) \cap B_{8\rho}(x))^2 \\
&\leq \frac{c}{m}(\beta^k_\mu(x, 8\rho)^2 + \beta^k_\mu(0, 8)^2) \, , \notag
\end{align}
as claimed.
\end{proof}
\vspace{1cm}

\section{Main construction}\label{section:construction}

We will use the dichotomy of good/bad balls of Definition \ref{def:good-bad-ball} to obtain good estimates as we drop down in scale.  In very simple terms, either we have good tilting control, and can obtain volume bounds by a Reifenberg type process; or, in bad balls, we have very good packing control and can compensate for the naive overlaps by choosing our scale sufficiently small.

In this construction, we fix $m=m_0(n,\rho) M$ as in Theorem \ref{thm:packing-control}.  The construction has a running dependence on $\rho$, which for Theorem \ref{thm:core-estimate} will be fixed as $\rho = \rho(n)$ in \eqref{eq_rho}, but in general may be chosen differently (as in Proposition \ref{prop:rect_with_bounds}).  However we can always assume $\rho \leq 1/20$.  For convenience, we can assume $\rho=2^{\alpha_0}$ for some $\alpha_0\in \mathbb Z$, and we use the following notation.

\begin{definition}
We write $r_i = \rho^i$.
\end{definition}

Recall that we are given a covering pair $(\cC, r_x)$.  We may refer to the $\{B_{r_y}(y)\}_{y \in \cC_+}$ as the \emph{original} balls. As explained in the outline, our construction is based on refining inductively on scales approximations of the support of $\mu$. Our refining needs to stop when we hit some of the ``original balls'' in the covering $\cC_+$. Here we define this stopping condition in detail.  

\begin{definition}\label{def:balls}
A ball $B_r(x)$ is a \emph{stop} ball if either $\mu \ton{B_r(x)} \leq M r^k$, or if there is some original ball $B_{r_y}(y)$ satisfying $x \in B_{r_y + 2r}(y)$ and $r < r_y \leq r/\rho$.
\end{definition}

Our key technical result is the following.  From this Theorem \ref{thm:main} follows more or less directly.

\begin{theorem}\label{thm:core-estimate}
There are constants $c_{key}(n)$, $\delta_0(n)$ so that the following holds: Let $M > 0$, $\mu$ be a non-negative Borel-regular measure in $\dR^n$ supported in $B_1$, and $(\cC, r_x)$ a covering pair for $\mu$ with $r_x \leq 1$.
Suppose we know
\begin{equation}\label{eq_sum_beta_bounds}
\sum_{\alpha \in \dZ \,\, : \,\, r_x < 2^\alpha \leq 2} \beta^k_{\mu, 2, \bar\eps_\beta}(x, 2^\alpha)^2 \leq \delta_0(n)^2 M \quad \text{for} \ \ \mu-a.e. \ x \in \cC.
\end{equation}

Then provided $\bar\eps_\beta \leq 2^{-k} \omega_k M/c_{key}(n)$, there is a closed subset $\cC' \subset \cC$ which admits the following properties:
\begin{enumerate}
\item[A)] Packing bound:
\begin{gather}\label{eq_C_+'_est_II}
\haus^k(\cC'_0) + \sum_{x \in\cC'_+} r_x^k \leq c_{key}(n),
\end{gather}
and Minkowski estimates
\begin{gather}\label{eq_mink_est_II}
r^{k-n} |B_r(\cC')| \leq c_{key}(n) \quad \forall 0 < r \leq 1,
\end{gather}

\item[B)] Upper measure estimates:
\begin{gather}\label{eq_main_measure_away_II}
\mu \left( B_1 \setminus B_{4r_x}(\cC') \right) \leq c_{key}(n) M ,
\end{gather}

\item[C)] Lower measure estimates:
\begin{gather}\label{eq_main_noncoll_II}
\mu(B_r(x)) \geq \frac{M}{c_{key}(n)} r^k \quad \forall 4 r_x < r \leq 1, \quad \forall x \in \cC' ,
\end{gather}


\item[D)] Fine-scale packing structure: $\cC'_0$ is closed, rectifiable, and admits bounds
\begin{gather}
\haus^k(\cC'_0 \cap B_r(x)) \leq c_{key}(n) r^k \quad \forall x \in B_1, 0 < r \leq 1.
\end{gather}

\end{enumerate}
\end{theorem}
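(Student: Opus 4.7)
The plan is to execute the corona-type decomposition outlined in Section \ref{ss:proof_outline}, where we build good and bad trees starting at $B_1(0)$ and concatenate them. Fix $m = m_0(n,\rho) M$ as in Theorem \ref{thm:packing-control}, so that Definition \ref{def:good-bad-ball} sorts balls into good ones (where the support $\mu$-effectively spans a $k$-plane) and bad ones (where $\mu$ is almost $(k-1)$-dimensional). The scale factor $\rho = 2^{\alpha_0}$ will be chosen small enough in \eqref{eq_rho} to defeat the double-counting error in the good trees by the small packing gain in the bad trees. Call a ball a \emph{stop} ball in the sense of Definition \ref{def:balls}; once we enter a stop ball we no longer refine.

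The good tree rooted at a good ball $B_{r_i}(g_0)$ with $L^2$-best plane $L_0$ is built inductively: at each scale $r_{i+1} = \rho r_i$ we Vitali-cover the $(r_{i+1}/40)$-neighborhood of the current best planes $L_{j,g}$ inside the live good balls, classifying the resulting balls as good, bad, or stop. The region discarded at each stage (the \emph{excess set} lying outside the tubular neighborhood of $L_{j,g}$) has $\mu$-mass controlled by the $\beta$-numbers via $\mu(B_{r_j}(g) \setminus B_{r_{j+1}/50}(L_{j,g})) \lesssim \rho^{-2} \beta^k_{\mu,2}(g, r_j)^2 r_j^k$. Using Theorem \ref{thm:tilting-control} to bound the tilts of successive $L^2$-best planes by $m^{-1/2}\beta$, we construct approximating maps
\begin{gather*}
\sigma_{j+1}(x) = \sum_{g \in \cG_{j+1}} \phi_{j+1,g}(x)\bigl(p_{L_{j+1,g}}(x - X_{j+1,g}) + X_{j+1,g}\bigr),
\end{gather*}
with $X_{j+1,g}$ the generalized $\mu$-center of mass of Definition \ref{def:gen-COM}, and iterate the Squash Lemma \ref{lem:squash} to produce bi-Lipschitz $k$-manifolds $T_j$ approximating $\spt\mu$ inside the tree. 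The summability hypothesis \eqref{eq_sum_beta_bounds} with $\delta_0(n)$ small gives a uniform bi-Lipschitz constant $e^{c(n)\delta_0^2}$, so $T_j \to T_\infty$, and the pairwise-disjoint balls $B_{r_j/10}(g)$ yield packing $\sum_{g\in\cG_j} r_j^k \lesssim \haus^k(T_\infty) \lesssim 1$. This packing controls the $\mu$-mass of excess sets and stop balls, producing the good-tree output: bounded measure on the non-leaf part and packing on the leaves (the bad balls emerging from the tree).

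The bad tree is built analogously from a bad ball, but now covers neighborhoods of the best $(k-1)$-plane from Lemma \ref{lem:ball-dichotomy}. The naive packing at each scale loses a factor of $\rho^{1-k} \cdot \rho^k = \rho$ from part A) of Theorem \ref{thm:packing-control}; summing over scales gives a total packing of $\haus^k\text{-content}$ at most $c(n)\rho \cdot (\text{number of refinement steps})$, which we force to be small by taking $\rho$ small. Thus the bad tree decomposes its root bad ball into bounded $\mu$-mass (from stop balls and excess) plus a collection of good-ball leaves with \emph{small} total $k$-packing.

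To produce $\cC'$ we chain trees: start with a good or bad tree at $B_1$, then at each leaf of that tree attach the opposite type of tree, then attach opposite-type trees at the next level of leaves, and so on. The \textbf{main obstacle}, and the reason the type-swapping is essential, is controlling the cumulative packing error across infinitely many scales: in a good tree the leaves only satisfy $\sum r^k \leq c(n)$, so a pure iteration would grow as $c(n)^N$; but every good-tree leaf is a bad ball from which we grow a bad tree whose own leaves come with a packing factor of $c(n)\rho$. Choosing $\rho = \rho(n)$ in \eqref{eq_rho} so that $c(n)\rho < 1/2$ makes the geometric series of packings converge, giving the uniform bound \eqref{eq_C_+'_est_II}. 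Setting $\cC'$ to be the closure of the intersection over all refinement levels of the union of live balls (together with original balls we have hit), the good-tree manifolds $T_\infty$ give the rectifiability and fine-scale Hausdorff bound in D), the union of all excess/stop-ball estimates gives B), the noncollapsing \eqref{eq_main_noncoll_II} follows because only big-mass balls survive refinement, and the Minkowski estimate \eqref{eq_mink_est_II} follows from the packing of the leaves at scale $r$ together with the uniform upper Ahlfors bound on $T_\infty$. The assumption $\bar\eps_\beta \leq 2^{-k}\omega_k M/c_{key}(n)$ ensures that every ball entering the construction has $\mu(B_r) > \bar\eps_\beta r^k$, so the $\beta$-numbers along the tree are genuine rather than truncated to zero.
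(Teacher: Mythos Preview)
Your outline is correct and follows the paper's approach essentially step for step: good/bad tree constructions (Sections \ref{section:good-tree}--\ref{section:bad-tree}), the alternating tree-chaining with geometric leaf packing (Theorem \ref{thm:tree-packing}), and the choice of $\rho$ in \eqref{eq_rho}. Two small points to tighten: first, the bad-tree packing is a geometric series $\sum_i (c_1\rho)^i \leq 2c_1\rho$, not linear in the number of refinement steps as you wrote; second, for part D) the paper explicitly notes that $\cC'_0$ can strictly contain $\overline{\cup_\cT \cC_0(\cT)}$ (points lying in infinitely many alternating leaves), so rectifiability and the upper-Ahlfors bound are obtained not directly from the $T_\infty$'s but by a short bootstrap (Section \ref{subsection:structure-of-C_0}): one shows $\eta = \haus^k\llcorner \cC'_0$ inherits $\beta$-bounds from $\mu$ via the lower mass estimate C), then reapplies the already-proven parts A)--C) together with Proposition \ref{prop:rect_with_bounds}.
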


\begin{remark}
This Theorem only requires the weaker hypothesis on $\cC$ that $\mu(\dR^n \setminus B_{r_x}(\cC)) = 0$, except in this case we would require \eqref{eq_sum_beta_bounds} to hold at \emph{every} $x \in \cC$. 

In fact the only real reason we define a covering pair by $\cC \supset \spt\mu$, instead of $\mu(\dR^n \setminus B_{r_x}(\cC)) = 0$, is to make sense of (weak) $L^1$ conditions as in Theorem \ref{thm:main}.  However, in the presence of $L^\infty$ bounds, like in Theorem \ref{thm:core-estimate}, the two notions are essentially the same: using monotonicity of $\beta$ one can always extend $\cC$ to cover $\spt\mu$, and still satisfy \eqref{eq_sum_beta_bounds} up to a factor of $c(k)$.
\end{remark}

Let us demonstrate how Theorem \ref{thm:main} follows from Theorem \ref{thm:core-estimate}.
\begin{proof}[Proof of Theorem  \ref{thm:main} given \ref{thm:core-estimate}]
We can of course assume $c_{key} = \delta_0^{-2}$, by enlarging $c_{key}$ or shrinking $\delta_0$ as necessary.

Define the set
\begin{gather}
A = \left\{ z \in B_1 : \int_{r_z}^2 \beta^k_{\mu, 2, \bar\eps_\beta}(z, r)^2 dr/r > M \right\}.
\end{gather}
Since $\mu$ is regular, and by \eqref{eq_main_integral_bounds}, there is a Borel set $U \supset A$ with $\mu(U) = \mu(A) \leq \Gamma$.

Define the measure
\begin{gather}
\mu' = \mu \llcorner (B_1 \setminus U).
\end{gather}
Then $\cC$ is trivially still a covering pair for $\mu'$.

By monotonicity of $\beta$ in remark \ref{rem:pointwise-integral}, and our choice of $\mu'$ (supported in $B_1$), we have 
\begin{gather}\label{eqn:cropped-mu-beta}
\int_{r_z/4}^4 \beta_{\mu', 2, \bar\eps_\beta}^k(z, r)^2 dr/r \leq c(k)M \quad \text{for $\mu'$-a.e. $z \in \cC$},
\end{gather}
and therefore, again by monotonicity of $\beta$, 
\begin{gather}\label{eqn:main-proof-int-to-sum}
\sum_{r_z/4 \leq 2^\alpha \leq 2} \beta_{\mu', 2, c(k)\bar\eps_\beta}^k(z, 2^\alpha)^2 \leq c(k)M \quad \text{for $\mu'$-a.e. $z \in \cC$}.
\end{gather}
Here of course $c(k) \geq 1$.  Note we have used the scale bounds of Remark \ref{rem:pointwise-integral} to drop the factor of $4$ above, which will be convenient.

We can suppose the $\eps$ of Theorem \ref{thm:main} is equal to $M$, as enlarging either only weakens the assumptions, and leaves the conclusions unchanged.

First suppose $\eps = M = 0$, so that necessarily $\mu'$ is supported in some $k$-plane $V$.  In this case Theorem \ref{thm:main} becomes trivial.  Set $\cC'_0 = \cC_0 \cap V \cap B_2$, and let $\cC'_+ \subset V\cap B_2$ be (the centers of) a Vitali subcover of
\begin{gather}
\{ B_{r_x/5}(x) : x \in \cC_+ \cap V\cap B_2 \}\, ,
\end{gather}
so that that balls $\{B_{r_x/5}(x)\}_{x \in \cC'_+}$ are disjoint, and
\begin{gather}
B_{r_x}(\cC'_+) \supset B_{r_x/5}(\cC_+ \cap V\cap B_2) \supset \cC_+ \cap V \cap B_2\, . 
\end{gather}

Necessarily $B_{r_x}(\cC') \supset \spt\mu'$, and the packing bounds of conclusion A) follow simply because $\cC' \subset V$, and $\{B_{r_x/5}(x) \cap V : x \in \cC'_+ \}$ are disjoint.  The measure estimate B) is immediate, the non-collapsing C) becomes vacuous, and conclusion D) follows since $\cC'_0 \subset V$.

If $\eps = M > 0$, then we can apply Theorem \ref{thm:core-estimate} to $\mu'$, with $c(k)c_{key}M$ in place of $M$, and covering pair $(\cC, r_x/4)$. 
\end{proof}
\vspace{1cm}

\section{Good tree}\label{section:good-tree}

We define precisely the good tree at $B_1(0)$.  As explained in the proof outline of Section \ref{ss:proof_outline} , the good tree is a succession of coverings at finer and finer scales, which decompose $B_1(0)$ into a part of controlled measure, a family of bad balls with packing estimates, and a subset of a Lipschitz manifold.

Suppose $B_1(0)$ is a good ball as in Definition \ref{def:good-bad-ball} for a non-negative Borel-regular measure $\mu$, with parameters $\rho$ and $m = m_0(n, \rho)M$.  We take $\cC$ a covering pair for $\mu$, with the property that
\begin{gather}\label{eqn:covering-good-hyp}
B_2(0) \cap B_{r_y}(y) \neq\emptyset \implies r_y \leq 1\quad \forall y \in \cC_+.
\end{gather}
In principle we are assuming $r_y \leq 1$, but for technical reasons it is better for tree-chaining (Section \ref{section:finishing}) to allow for big balls in $\cC$ that are far away.

We define inductively families of good balls $\cur{B_{r_i}(g)}_{g\in \cG_i}$, bad balls $\cur{B_{r_i}(b)}_{b\in \cB_i}$, and stop balls $\cur{B_{r_i}(s)}_{s\in \cS_i}$.  At our initial scale $r_0 = 1$, $B_1(0)$ is the only ball, and it is good by assumption.  So $\cG_0 = \{0\}$, and $\cB_0 = \cS_0 = \emptyset$.

Let us now discuss the remainder and excess sets, which are useful in the technical constructions.  In each good ball $B_{r_i}(g)$ we have an associated $L^2$-best $k$-plane $L^k_{ig} := V^k_\mu(g, 8r_i)$.  For technical reasons we take the plane associated with scale $8r_i$ rather than $r_i$.  We define the good ball excess set to be
\begin{gather}
E_{ig} = B_{r_i}(g) \setminus B_{r_{i+1}/50}(L_{ig}),
\end{gather}
and the total excess
\begin{gather}\label{eq:excess}
E_i = \bigcup_{g \in \cG_i} E_{ig}.
\end{gather}
The excess sets will have controlled measure.

Define inductively the remainder set to be the bad and stop balls at all previous scales:
\begin{gather}\label{eq:remainder}
R_i = \bigcup_{\ell=0}^i \ton{ B_{r_\ell}(\cS_\ell) \cup B_{r_\ell}(\cB_\ell) }
\end{gather}

We now move on to the inductive construction of our coverings.  Suppose we have defined the good/bad/stop balls down through scale $r_{i-1}$.  We define the $r_i$-balls as follows.  Let $J_i$ form a maximal $2r_i/5$-net in
\begin{gather}\label{eqn:good-to-cover2}
B_1 \cap \left[ \bigcup_{g\in \cG_{i-1}}  \ton{(B_{r_{i-1}}(g) \cap B_{r_i/40}(L_{i-1,g})} \right] \setminus R_{i-1} ,
\end{gather}
so the balls $\{B_{r_i}(z)\}_{z \in J_i}$ cover \eqref{eqn:good-to-cover2} and the balls $\{B_{r_i/5}(z)\}_{z \in J_i}$ are disjoint. Recalling the definition of stop balls in \ref{def:balls}, we define
\begin{align}
\cS_i &= \{z \in J_i : \text{$B_{r_i}(z)$ is a stop ball} \}, \\
\cG_i &= \{z \in J_i : \text{$B_{r_i}(z)$ is not stop, but is good} \}, \\
\cB_i &= \{z \in J_i : \text{$B_{r_i}(z)$ is not stop, but is bad}\}.
\end{align}
Evidently $J_i = \cS_i \cup \cG_i \cup \cB_i$.

Let $p_{ig}$ be the linear projection to $L_{ig}$, and $p^\perp_{ig}$ the linear projection to $L^\perp_{ig}$.  For each $i$ define the map
\begin{gather}
\sigma_i(x) = x - \sum_{g\in \cG_i} \phi_{ig}(x) p_{ig}^\perp(x - X_{ig}).
\end{gather}
Here $X_{ig}$ is the generalized $\mu$-center of mass of $B_{r_i}(g)$, and $\{\phi_{ig}\}_{g \in \cG_i}$ is a partition of unity subordinate to $\{B_{3r_i}(g)\}_{g \in \cG_i}$, which satisfies
\begin{gather}\label{eq_part_est}
 \spt\phi_{ig}\subset B_{3r_i}(g), \quad \sum_{g\in \cG_i} \phi_{ig} = 1 \text{ on } B_{5r_i/2}(g), \quad |D\phi_{ig}| \leq c(n)/r_i\, .
\end{gather}

We use maps $\sigma_i$ to build inductively a sequence of manifolds $T_i$.  First set $T_0 = L_{0,1}$, and then define
\begin{gather}
T_i = \sigma_i(T_{i-1}) .
\end{gather}
This completes the good tree inductive construction.

Let us make some elementary remarks concerning the good tree construction.
\begin{remark}\label{rem:vitaliness}
Every good/bad/stop ball is contained in $B_2$, and for any $i$, the balls
\begin{gather}
\{ B_{r_i/5}(g) : g \in \cG_i \} \cup \bigcup_{\ell=0}^i \{ B_{r_\ell/5}(x) : x \in \cB_\ell \cup \cS_\ell \}
\end{gather}
are pairwise disjoint.
\end{remark}

\begin{remark}\label{rem:hole-control}
For each $i$, $\sigma_i$ is the identity outside $B_{3r_i}(\cG_i)$.  In particular, since $\rho \leq 1/20$, we have
\begin{equation}\label{eqn:hole-control}
T_i = T_{i-1} \text{ inside } \cup_{\ell=0}^{i-1} B_{4r_\ell/5}(\cB_\ell \cup \cS_\ell) .
\end{equation}
\end{remark}

\begin{definition}\label{def:good-tree}
The construction defined above is said to be the \emph{good tree rooted at $B_1(0)$}, and may be written as $\cT = \cT(B_1)$.  Given such a tree $\cT$ we refer to the collection of all good/bad balls by
\begin{gather}
\cG(\cT) := \cup_i \cG_i, \quad \cB(\cT) := \cup_i \cB_i.
\end{gather}
In a slight abuse of notation, we let $r_g$ and $r_b$ be the associated radius functions for $\cG(\cT)$ and $\cB(\cT)$.  So for example, if $g \in \cG_i \subset \cG(\cT)$, then $r_g = r_i$.\\

For every stop ball $B_{r_i}(s)$ we have either $\mu \ton{B_{r_i}(s)} \leq M r_i^k$, or $s \in B_{r_y + 2r_i}(y_s)$ for some choice of $y_s \in \cC_+$ with $r_i<r_y\leq r_{i-1}$.  Define
\begin{gather}
\cC_+(\cT) = \bigcup_{i=0}^\infty \bigcup_{\substack{s \in \cS_i \\ \mu B_{r_i}(s) > M r_i^k }} y_s
\end{gather}
to be a choice of $y$'s arising in this way (so, at most one $y$ per stop ball).  Define
\begin{gather}
\cC_0(\cT) = \cap_i \overline{B_{r_i}(\cG_i)}
\end{gather}

In a good tree, we refer to the collection of all bad balls as the \emph{tree leaves}.  
\end{definition}

\vspace{2.5mm}

Let us make some remarks concerning the above definition.

\begin{remark}\label{rem:good-containment}
We have $\cC_+(\cT) \subset \cC_+$.  Since every good ball intersects $\spt \mu \subset \cC$, and from \eqref{eqn:avoids-small-good}, we have the inclusions $\cC_0(\cT) \subset \cC_0 \setminus B_{r_x}(\cC_+)$.  And since every good/bad ball is centered in $B_1$, from \eqref{eqn:inside-big-good} we have $\cC(\cT) \subset B_3$.
\end{remark}

\begin{remark}\label{rem:good-mass}
By construction we have the inclusion
\begin{gather}
\bigcup_{i=0}^\infty \bigcup_{\substack{s \in \cS_i \\ \mu \ton{B_{r_i}(s)} > M r_i^k}} B_{r_i}(s) \subset \bigcup_{y \in \cC_+(\cT)} B_{4r_y}(y),
\end{gather}
and consequently we also have the lower bound
\begin{gather}
\mu \ton{B_{4r_y}(y)} > \frac{M}{\rho^k} r_y^k \quad \forall y \in \cC_+(\cT).
\end{gather}
\end{remark}

The following is our main result about good trees in this section:\\
\begin{theorem}[Good Tree Structure]\label{thm:good-tree}
There is a $\delta_1(n,\rho)$ so that if $M > 0$, and
\begin{gather}
\sum_{\alpha \in \dZ \,\, : \,\, r_y < 2^{\alpha} \leq 16} \beta^k_{\mu, 2, \bar\eps_\beta}(y, 2^{\alpha})^2 \leq \delta^2 \leq \delta_1(n,\rho)^2 M \quad \text{for $\mu$-a.e. $y \in \cC$},
\end{gather}
then provided $\bar\eps_\beta \leq c(k)M$, we have for any $i \geq 0$ the packing estimate
\begin{gather}\label{eq_pack_est}
\#\cur{\cG_i} r_i^k + \sum_{\ell = 0}^i\#\cur{\cB_\ell \cup \cS_\ell}r_\ell^k \leq 50^k\, ,
\end{gather}
and for any $i\geq 0$ the measure estimate
\begin{align}\label{eq_measure_est}
&\mu \left[ B_1 \setminus \left( \bigcup_{g\in\cG_i } B_{r_i}(g) \cup \bigcup_{\substack{b\in \cB_\ell \\ \ell\leq i}} B_{r_\ell}(b) \cup \bigcup_{\substack{x \in \cC(\cT) \\ r_x \geq r_i}} B_{4r_x}(x) \right) \right] \leq c(k)M + c(k, \rho)\delta^2.
\end{align}

Moreover, for each $i$, the manifold $T_i\cap \B 2 0$ is a $e^{cM^{-1}\delta^2}$-biLipschitz to a $k$-dimensional disk, and satisfies
\begin{gather}\label{eqn:T-ball-inclusions}
B_{r_i}(\cG_i) \subset B_{2r_i}(T_i \cap B_1) \subset B_{4r_i}(\cG_i) \cup R_i
\end{gather}

There is a fixed Lipschitz manifold $T_\infty$, also $e^{cM^{-1}\delta^2}$-biLipschitz to a disk, so that $T_i \to T_\infty$ in $C^{0,\alpha}(B_1)$ (for any $\alpha < 1$) as $i \to \infty$.  Here $c = c(n, \rho)$.
\end{theorem}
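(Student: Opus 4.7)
The plan is to execute a Reifenberg-style interpolation adapted to the $L^2$ setting, building each $T_i$ as a small perturbation of $T_{i-1}$ controlled by the $\beta$-numbers, and then reading off the packing and measure estimates from the manifold structure.

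First, I establish tilting control: for good balls $g\in\cG_i$ and $g'\in\cG_{i+1}$ sitting inside a common $B_{8r_i}$-neighborhood, Theorem \ref{thm:tilting-control} gives
\begin{gather*}
d_G(L_{i,g}, L_{i+1,g'})^2 \leq \frac{c(n,\rho)}{M}\ton{\beta(g,8r_i)^2+\beta(g',8r_{i+1})^2},
\end{gather*}
and Proposition \ref{prop:COM-control} controls $d(X_{i,g}, L_{i-1,g''})$ by the same quantity. Together with the partition-of-unity estimates \eqref{eq_part_est}, these feed into the Squash Lemma \ref{lem:squash} to show $T_{i+1}=\sigma_{i+1}(T_i)$ is locally a graph over $T_i$ whose $C^1_{r_{i+1}}$-norm is bounded by $c(n,\rho)M^{-1/2}\delta_{i+1}$, where $\delta_i^2$ aggregates the ambient $\beta^2$ at scale $r_i$ and satisfies $\sum_i\delta_i^2\leq c(n)\delta^2$ by hypothesis. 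Iterating the Squash Lemma from $T_0=L_{0,1}$, the composition $\Sigma_i=\sigma_i\circ\cdots\circ\sigma_1$ is biLipschitz with constant $\prod_\ell(1+c(n,\rho)M^{-1}\delta_\ell^2)\leq e^{c(n,\rho)M^{-1}\delta^2}$, yielding $T_i$ that is $e^{cM^{-1}\delta^2}$-biLipschitz to a $k$-disk; the $C^0$-bound $|\sigma_i-\mathrm{id}|\leq c\,r_i\,M^{-1/2}\delta_i$ gives geometric-tail $C^{0,\alpha}$ convergence $T_i\to T_\infty$. The inclusions \eqref{eqn:T-ball-inclusions} follow because $\sigma_i$ moves points only inside $\cup_g B_{3r_i}(g)$ (Remark \ref{rem:hole-control}) and each generalized center of mass $X_{i,g}$ (which lies on $T_i$ by the formula for $\sigma_i$) sits within $r_i$ of $g$.

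With uniformly biLipschitz $T_i$ in hand, the packing \eqref{eq_pack_est} is immediate: each $B_{r_i/5}(g)$ with $g\in\cG_i$ intersects $T_i$ in a $k$-set of area $\geq c(n)r_i^k$ via \eqref{eqn:T-ball-inclusions}, and each $B_{r_\ell/5}(x)$ with $x\in\cB_\ell\cup\cS_\ell$, $\ell\leq i$, intersects $T_i=T_\ell$ in a $k$-set of area $\geq c(n)r_\ell^k$ by Remark \ref{rem:hole-control}; these balls are pairwise disjoint (Remark \ref{rem:vitaliness}) and $\haus^k(T_i\cap B_2)\leq c(n)$ by the biLipschitz bound, yielding the $50^k$-packing constant. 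For the measure estimate \eqref{eq_measure_est}, the complement of the cover lies in $\ton{\bigcup_{\ell<i}E_\ell}\cup\cur{\text{small-mass stop balls}}$: any big-mass stop ball at scale $\ell\leq i$ is contained in $B_{4r_y}(y)$ for some $y\in\cC(\cT)$ with $r_y\geq r_\ell\geq r_i$ by Definition \ref{def:balls}, hence already in the cover, while small-mass stop balls contribute $\leq M\sum_\ell\#\cS_\ell\,r_\ell^k\leq c(n)M$ via the packing.

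It remains to bound $\sum_{\ell<i}\mu(E_\ell)\leq c(k,\rho)\delta^2$. The $L^2$ definition of $\beta$ gives $\mu(E_{\ell,g})(r_{\ell+1}/50)^2\leq\int_{B_{r_\ell}(g)}d(\cdot,L_{\ell,g})^2\,d\mu\leq(8r_\ell)^{k+2}\beta(g,8r_\ell)^2$, hence $\mu(E_{\ell,g})\leq c(k,\rho)r_\ell^k\beta(g,8r_\ell)^2$; using the non-stop lower bound $r_\ell^k<M^{-1}\mu(B_{r_\ell}(g))$, $\beta$-monotonicity $\beta(g,8r_\ell)^2\leq c(k)\beta(y,16r_\ell)^2$ for $y\in B_{r_\ell}(g)$, and bounded overlap of $\cur{B_{r_\ell}(g)}_{g\in\cG_\ell}$ converts the excess sum into $c(n,k,\rho)M^{-1}\int\sum_\ell 1_{U_\ell}(y)\beta(y,16r_\ell)^2\,d\mu(y)$, and the pointwise hypothesis $\sum_\alpha\beta(y,2^\alpha)^2\leq\delta^2$ bounds the integrand by $c(n)\delta^2$ $\mu$-a.e. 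The hard part is managing the mass factor coming from this integration, since $\mu(B_1)$ may be infinite: the resolution is that mass concentrated in $\cap_\ell U_\ell$ lies in every $U_i$ and so never enters the complement, reducing the relevant integration to the ``escaping'' mass — which is essentially $\sum_\ell\mu(E_\ell)$ itself together with bad/stop mass — and this absorption, combined with the smallness $\delta^2\leq\delta_1(n,\rho)^2 M$, closes the estimate and produces the desired constant bound $c(k,\rho)\delta^2$.
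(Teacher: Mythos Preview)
Your outline for the tilting control, the squash/graphicality induction, the bi-Lipschitz bound on $T_i$, the convergence $T_i\to T_\infty$, the packing estimate via Remark~\ref{rem:vitaliness} and Remark~\ref{rem:hole-control}, and the treatment of small-mass versus big-mass stop balls are all correct and track the paper's argument closely.

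The genuine gap is in your excess bound $\sum_\ell \mu(E_\ell)\le c(k,\rho)\delta^2$. Your chain
\[
\mu(E_{\ell,g})\ \le\ c\,r_\ell^k\,\beta(g,8r_\ell)^2\ \le\ cM^{-1}\mu(B_{r_\ell}(g))\,\beta(g,8r_\ell)^2\ \le\ cM^{-1}\int_{B_{r_\ell}(g)}\beta(y,16r_\ell)^2\,d\mu(y)
\]
is fine, but after summing in $g$ and $\ell$ you are left with $cM^{-1}\delta^2\,\mu(B_1)$, and $\mu(B_1)$ may be infinite. Your proposed absorption does not rescue this: to restrict the integration to the ``escaping'' mass $B_1\setminus\cC_0(\cT)$ you would need $r_\ell^k\le M^{-1}\mu\bigl(B_{r_\ell}(g)\setminus\cC_0(\cT)\bigr)$, which can fail badly (indeed $\mu$ may be entirely concentrated on $\cC_0(\cT)$). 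And even if you could restrict, the escaping mass contains the bad balls, whose $\mu$-measure is completely uncontrolled in this construction---bad balls are not stop balls and carry no upper mass bound. So there is nothing finite to absorb against.

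The paper's resolution (Lemma~\ref{lem:excess-bound}) is to replace the factor $r_\ell^k$ not by $M^{-1}\mu(B_{r_\ell}(g))$ but by a piece of the \emph{manifold}: since $\haus^k(T_\ell\cap B_{r_\ell/5}(g))\ge c\,r_\ell^k$, one has
\[
\mu(E_{\ell,g})\ \le\ c\int_{T_\ell\cap B_{r_\ell/5}(g)}\beta(z,9r_\ell)^2\,d\haus^k(z),
\]
and then the disjointness of the $r_\ell/5$-balls and the uniform bi-Lipschitz maps $\tau_{\infty,\ell}$ push everything to a single integral over $T_\infty$, whose $\haus^k$-measure is finite by the very bi-Lipschitz bound you established. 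The pointwise $\beta$-hypothesis then extends to $z\in T_\infty$ via a nearby $y\in\cC$ (using radius control, as in Corollary~\ref{cor_good_y}), and Fubini closes the sum over $\ell$. This device---integrating over the approximating manifold rather than over $\mu$---is the missing idea.
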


\begin{remark}
Here, as always in this paper, we mean \emph{intrinsically} bi-Lipschitz, in the sense that the inverse mapping $T_\infty \to T_0$ is Lipschitz with respect to the intrinsic distance in $T_\infty$.
\end{remark}

In the language of Definition \ref{def:good-tree}, we have
\begin{corollary}\label{cor:good-tree}
Suppose the hypotheses of Theorem \ref{thm:good-tree}.  Then for the associated good-tree $\cT$ we have
\begin{itemize}
\item[A)] Tree-leaf/bad-ball packing:
\begin{gather}
\sum_{b \in \cB(\cT)} r_b^k \leq 50^k.
\end{gather}

\item[B)] Original ball packing:
\begin{gather}
\sum_{x \in \cC_+(\cT)} r_x^k \leq c(n, \rho) \quad \text{and} \quad r^{k-n}|B_r(\cC(\cT))| \leq c(n, \rho) \quad \forall 0 < r \leq 1.
\end{gather}

\item[C)] Upper measure estimates:
\begin{gather}
\mu \left[ B_1 \setminus \left( \bigcup_{b \in \cB(\cT)} B_{r_b}(b) \cup B_{4r_x}(\cC(\cT)) \right) \right] \leq c(k)M + c(k,\rho) \delta^2,
\end{gather}

\item[D)] Lower measure estimates:
\begin{gather}
\mu \ton{B_r(y)} > \frac{M}{c(n,\rho)} r^k \quad \forall y \in \cC(\cT), \,\, \forall 4r_y < r \leq 1.
\end{gather}

\item[E)] Fine-scale packing structure: $\cC_0(\cT)$ is a closed subset of $T_\infty$.
\end{itemize}
\end{corollary}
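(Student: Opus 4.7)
The plan is to derive each of (A)--(E) as a limit of the finite-scale estimates in Theorem \ref{thm:good-tree}, combined with the bookkeeping fixed in Definition \ref{def:good-tree} and Remark \ref{rem:good-mass}.

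Part (A) is immediate from \eqref{eq_pack_est}: the partial sums $\sum_{\ell\leq i}\#\cB_\ell\,r_\ell^k$ are uniformly bounded by $50^k$, so monotone convergence gives $\sum_{b\in\cB(\cT)} r_b^k \leq 50^k$. The discrete packing in (B) is similar: by Definition \ref{def:good-tree} each $y\in\cC_+(\cT)$ arises from a unique stop ball $B_{r_i}(s)$ with $r_y\leq r_i/\rho$, so $\sum_{y\in\cC_+(\cT)} r_y^k \leq \rho^{-k}\sum_{\ell}\#\cS_\ell\, r_\ell^k \leq \rho^{-k} 50^k$. For the Minkowski estimate I would split $\cC(\cT)=\cC_0(\cT)\cup\cC_+(\cT)$: using (E) together with the biLipschitz control on $T_\infty$ from Theorem \ref{thm:good-tree}, the $\cC_0(\cT)$ contribution is bounded by $|B_r(T_\infty\cap B_2)|\leq c(n,\rho)r^{n-k}$; the $\cC_+(\cT)$ contribution is a standard consequence of the discrete packing combined with the disjointness of the $B_{r_y/5}(y)$.

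For (C), I would apply Fatou's lemma to the indicator functions of the complements in \eqref{eq_measure_est}. By Definition \ref{def:good-tree}, $\bigcap_i\overline{B_{r_i}(\cG_i)}=\cC_0(\cT)$; the bad-ball union $\bigcup_{\ell\leq i}B_{r_\ell}(\cB_\ell)$ increases to $\bigcup_{b\in\cB(\cT)}B_{r_b}(b)$; and every $y\in\cC_+(\cT)$ is produced by a stop ball at some finite scale $i_0$ with $r_y\geq r_{i_0}$, so $y$ contributes to the third union in \eqref{eq_measure_est} for all $i\geq i_0$. Hence the $\liminf$ of the complements in \eqref{eq_measure_est} is contained in $B_1\setminus(\bigcup_b B_{r_b}(b)\cup B_{4r_x}(\cC(\cT)))$, and (C) follows. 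For (D), the key observation is that for every $y\in\cC(\cT)$ and every $r\in(4r_y,1]$ one can find a good ball $B_{r_i}(g)\subset B_r(y)$ at some scale $r_i\sim r$: if $y\in\cC_0(\cT)=\bigcap_i\overline{B_{r_i}(\cG_i)}$, every scale provides such a $g$; if $y\in\cC_+(\cT)$ comes from a stop ball $B_{r_{i_0}}(s)$ with $|s-y|\leq r_y+2r_{i_0}$, the same holds at every scale $i<i_0$. Since $B_{r_i}(g)$ is non-stop, $\mu B_{r_i}(g)>M r_i^k \geq (M/c(n,\rho))r^k$.

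Finally, (E) follows because $\cC_0(\cT)=\bigcap_i\overline{B_{r_i}(\cG_i)}$ is closed as an intersection of closed sets, while the inclusion $B_{r_i}(\cG_i)\subset B_{2r_i}(T_i\cap B_1)$ from \eqref{eqn:T-ball-inclusions}, together with the uniform $C^{0,\alpha}$ convergence $T_i\to T_\infty$, forces $d(z,T_\infty)\leq 2r_i + d_H(T_i\cap B_1,T_\infty\cap B_1)\to 0$ for every $z\in\cC_0(\cT)$, hence $z\in T_\infty$ by closedness of $T_\infty$. I expect the main obstacle to be (D), where matching an arbitrary scale $r$ to the discrete good-tree scales $r_i$ and tracing the stop ball's ancestor good ball while tracking the $\rho$-dependent constants requires careful bookkeeping.
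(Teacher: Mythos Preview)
Your overall strategy of passing to the limit in Theorem~\ref{thm:good-tree} is the right one and matches the paper's view of this corollary as a restatement, but two of your arguments have genuine gaps.

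For the Minkowski estimate in (B), you invoke ``disjointness of the $B_{r_y/5}(y)$'' for $y\in\cC_+(\cT)$, but this is nowhere established and is not true in general: the Vitali property in Remark~\ref{rem:vitaliness} applies to the stop balls $B_{r_\ell/5}(s)$, not to the original balls $B_{r_y/5}(y)$, and since $|s-y|$ can be as large as $r_y+2r_{i_s}\sim r_y$ while $r_{i_s}\sim\rho r_y$, distinct $y$'s can be arbitrarily close relative to their radii.  Discrete packing alone cannot give Minkowski bounds (take $y_i$ dense in $B_1$ with $r_{y_i}=2^{-i}$).  The correct route is to use the containment from Corollary~\ref{lem:good-containment}: at scale $r\in(r_i,r_{i-1}]$, every $y\in\cC_+(\cT)$ with $r_y\leq r_i$ lies in $B_{3r_{i-1}}(\cG_{i-1}\cup\cB_{i-1})$, so $B_r(\{y:r_y\leq r_i\})\subset B_{4r_{i-1}}(\cG_{i-1}\cup\cB_{i-1})$, and then the packing estimate \eqref{eq_pack_est} on $\cG_{i-1}\cup\cB_{i-1}$ controls the volume; the finitely many $y$ with $r_y>r_i$ are handled by the discrete packing.

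For (C), your Fatou argument has the wrong direction: you claim $\liminf_i A_i\subset A$ (where $A_i$ are the complements in \eqref{eq_measure_est} and $A$ is the target complement), but Fatou gives $\mu(\liminf A_i)\leq\liminf\mu(A_i)$, so you would need $A\subset\liminf A_i$.  That inclusion also fails: $z\notin\cC_0(\cT)$ only says $z\notin\overline{B_{r_{i_0}}(\cG_{i_0})}$ for \emph{some} $i_0$, and since the good-ball collections are not nested one cannot conclude $z\notin B_{r_i}(\cG_i)$ for all large $i$.  The clean argument bypasses Fatou entirely: by covering control \eqref{eqn:covering-control}, any $z\in B_1$ not in the full bad/stop/excess unions $B\cup S\cup E$ must lie in $B_{r_i}(\cG_i)$ for \emph{every} $i$, hence in $\cC_0(\cT)$; thus $B_1\setminus(\cC_0(\cT)\cup B\cup\bigcup_y B_{4r_y}(y))\subset S^{\mathrm{small}}\cup E$, and the measure of the right-hand side is bounded by $c(k)M+c(k,\rho)\delta^2$ via \eqref{eq_pack_est} and Lemma~\ref{lem:excess-bound}.
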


For the duration of this Section we assume the hypothesis of Theorem \ref{thm:good-tree}, and the good-tree construction.  Our main goal of this section will now be to prove Theorem \ref{thm:good-tree}.

\subsection{Inductive hypothesis}

The key properties we wish of our manifolds and coverings in the tree construction are the following.  We shall prove them inductively on the scale $i$.  Recall from \eqref{eq:excess} and \eqref{eq:remainder} the definitions of the excess and remainder sets.  We will want the following inductive control:
\begin{enumerate}
 \item ``covering control":
  \begin{equation}\label{eqn:covering-control}
   B_1 \subset \left( \cup_{g\in \cG_i} B_{r_i}(g) \right) \cup R_i \cup \left( \cup_{\ell \leq i-1} E_\ell \right),
  \end{equation}

\item ``radius control'': 
\begin{equation}\label{eqn:scale-control}
y \in \cC \text{ and } B_{2r_i}(\cG_i \cup \cB_i) \cap B_{r_y}(y) \neq\emptyset \implies r_y \leq r_i.
\end{equation}

\item ``graphicality": 
\begin{equation}\label{eqn:graph-control} \tag{$\star_i$}
\forall i, \ \forall g\in \cG_i \, , \ \ \left\{ \begin{array}{l}
 T_i \cap B_{2r_i}(g) = \graph_{\Omega_{ig} \subset L_{ig}} f_{ig} \\
 \abs{f_{ig}}_{C^1_{r_i}} \leq \Lambda(n, \rho)M^{-1/2} \delta \\
 B_{3r_i/2}(g) \cap L_{ig} \subset \Omega_{ig} \subset L_{ig} .\end{array}\right.
\end{equation}
(this last condition is really just to ensure $\Omega_{ig} \neq\emptyset$) 
\end{enumerate}
\vspace{.25cm}

Let us now concentrate on proving these properties.  \\

\subsection{Covering control}

The first two properties are the easiest.

\begin{lemma}[covering control]\label{lem:covering-control}
Conditions ``covering control'' and ``radius control'' hold for every $i$.
\end{lemma}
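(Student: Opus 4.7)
My plan is to prove both statements simultaneously by induction on the scale index $i$. The base case $i=0$ is immediate: covering holds because $\cG_0 = \{0\}$ and $r_0 = 1$ give $B_1 \subset B_{r_0}(\cG_0)$ with $R_0$ and the excess union both empty; radius control at $i=0$ asks that $B_2(0) \cap B_{r_y}(y) \neq \emptyset$ force $r_y \leq 1$, which is exactly the standing hypothesis \eqref{eqn:covering-good-hyp} on the covering pair.

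For the inductive step on covering control, take $x \in B_1$ and apply \eqref{eqn:covering-control} at level $i-1$. If $x$ lies in $R_{i-1}$ or in $E_\ell$ for some $\ell \leq i-2$, it already lies in the target set, since $R_{i-1} \subset R_i$. So I can assume $x \in B_{r_{i-1}}(g)$ for some $g \in \cG_{i-1}$ and $x \not\in R_{i-1}$. Either $x$ lies outside $B_{r_i/50}(L_{i-1,g})$, putting $x$ in $E_{i-1,g} \subset E_{i-1}$; or $x$ lies inside $B_{r_i/50}(L_{i-1,g}) \subset B_{r_i/40}(L_{i-1,g})$. In the latter case $x$ lies in the set \eqref{eqn:good-to-cover2} that $J_i$ is chosen to $2r_i/5$-net, so $x \in B_{r_i}(z)$ for some $z \in J_i = \cG_i \cup \cB_i \cup \cS_i$; depending on the type of $z$, we land in $B_{r_i}(\cG_i)$ or in the newly-added $R_i$-piece.

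For radius control at level $i$, suppose $z \in \cG_i \cup \cB_i$ and $y \in \cC$ with $|z-y| < r_y + 2r_i$. By construction $z \in B_{r_{i-1}}(g)$ for some $g \in \cG_{i-1}$, so by the triangle inequality (using $2r_i \leq r_{i-1}$)
\begin{gather*}
|g - y| \leq |g-z| + |z - y| < r_{i-1} + r_y + 2r_i \leq 2r_{i-1} + r_y,
\end{gather*}
i.e.\ $B_{2r_{i-1}}(g) \cap B_{r_y}(y) \neq \emptyset$. Inductive radius control then forces $r_y \leq r_{i-1} = r_i/\rho$. If $r_y > r_i$, then $r_i < r_y \leq r_i/\rho$ together with $z \in B_{r_y + 2r_i}(y)$ would make $B_{r_i}(z)$ a stop ball by Definition \ref{def:balls}, contradicting $z \in \cG_i \cup \cB_i$. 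Hence $r_y \leq r_i$.

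The main subtlety is the interaction between the radius-control bookkeeping and the stop-ball definition: one has to exploit precisely the window $r_i < r_y \leq r_i/\rho$ in Definition \ref{def:balls} to exclude large original balls at the new scale, while still inheriting the upper bound $r_y \leq r_{i-1}$ from the previous step. The covering statement is essentially tautological once one recognizes that $r_i/50 < r_i/40$ ensures the Vitali net of \eqref{eqn:good-to-cover2} captures everything the excess set does not.
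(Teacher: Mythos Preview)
Your proof is correct and follows essentially the same approach as the paper's own proof: induction on $i$, with the covering step handled by splitting into excess versus the region the net $J_i$ covers, and the radius step handled by pushing the intersection up to scale $i-1$ via the triangle inequality and then invoking the stop-ball window $r_i < r_y \leq r_i/\rho$. You are in fact slightly more explicit than the paper (you treat $\cB_i$ alongside $\cG_i$ and write out the triangle inequality), but the argument is the same.
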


\begin{proof}
We prove the first inclusion \eqref{eqn:covering-control}.  Trivially this holds at $i = 0$.  Suppose \eqref{eqn:covering-control} holds at scale $i-1$.  By construction, for each $g\in \cG_{i-1}$ we have
\begin{align}
B_{r_i/40}(L_{i-1, g}) \cap B_{r_{i-1}}(g) 
&\subseteq R_{i-1} \cup B_{r_i}(\cG_i\cup \cB_i \cup\cS_i) \\
&= \cup B_{r_i}(\cG_i) \cup R_i.
\end{align}

And therefore
\begin{gather}
\ton{\cup_{g\in \cG_{i-1}} B_{r_{i-1}}(g)} \subseteq \cup_{g\in \cG_{i}} B_{r_i}(g) \cup R_i \cup E_{i-1} .
\end{gather}

By our inductive hypothesis, we've therefore proven \eqref{eqn:covering-control} at scale $i$.

We prove ``radius control'' \eqref{eqn:scale-control}.  This holds at scale $0$ since we assume $r_y \leq 1$ for any $y \in \cC_+$ with $B_{r_y}(y) \cap B_2$.  Suppose \eqref{eqn:scale-control} holds down to scale $i-1$, and consider a $g \in \cG_i$ with $g \in B_{r_y + 2r_i}(y)$.  We can choose a $g' \in \cG_{i-1}$, so that $g \in B_{r_{i-1}}(g')$.

Since $2r_i < r_{i-1}$, by our inductive hypothesis $r_y \leq r_{i-1}$.  Therefore if $r_y > r_i$, by construction $B_{r_i}(g)$ would be a stop ball, a contradiction.
\end{proof}

Some key relations which follows from ``radius control'' are below:
\begin{corollary}\label{lem:good-containment}
For any $y \in \cC_+(\cT)$, with $r_y \leq r_i$, we have
\begin{gather}\label{eqn:inside-big-good}
B_{r_y}(y) \cap B_{2r_{i-1}}(\cG_{i-1} \cup \cB_{i-1}) \neq \emptyset.
\end{gather}

Conversely, if $y \in \cC_+$ and $r_y > r_i$, then
\begin{gather}\label{eqn:avoids-small-good}
B_{r_y}(y) \cap B_{2r_i}(\cG_i \cup \cB_i) = \emptyset.
\end{gather}

For any $y \in \cC(\cT)$, we have
\begin{gather}\label{eqn:lem-orig-good-disjoint}
B_{\frac{r_y}{2\rho^2}}(y) \cap B_{r_x}(x) = \emptyset \quad \forall x \in \cC \text{ such that } r_x > \rho^{-3} r_y.
\end{gather}
\end{corollary}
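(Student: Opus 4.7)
The three statements follow from the good-tree construction together with ``radius control'' (Lemma \ref{lem:covering-control}). Indeed \eqref{eqn:avoids-small-good} is precisely the contrapositive of \eqref{eqn:scale-control}. The plan for the other two is the same: chain through good-ball ancestors. The key observation is that each $g_\ell \in J_\ell$ was chosen from a net contained in $\bigcup_{g' \in \cG_{\ell-1}} B_{r_{\ell-1}}(g')$, so we can always pass to some $g_{\ell-1} \in \cG_{\ell-1}$ with $|g_\ell - g_{\ell-1}| < r_{\ell-1}$ and iterate.

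For \eqref{eqn:inside-big-good}: if $y \in \cC_+(\cT)$ arises from a stop ball $B_{r_k}(s)$ with $\mu(B_{r_k}(s)) > M r_k^k$, then by Definition \ref{def:balls} $s$ must be a stop ball of the second kind, so $|s - y| < r_y + 2r_k$ and $r_k < r_y \leq r_{k-1}$. The hypothesis $r_y \leq r_i$ forces $i \leq k - 1$. Chaining from $s$ down to an ancestor $g_{i-1} \in \cG_{i-1}$ yields
\[
|y - g_{i-1}| < (r_y + 2r_k) + \sum_{\ell=i-1}^{k-1} r_\ell,
\]
and a straightforward geometric-sum estimate (using $\rho \leq 1/20$) shows this is bounded by $r_y + 2r_{i-1}$, so $B_{r_y}(y)$ and $B_{2r_{i-1}}(g_{i-1})$ intersect.

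For \eqref{eqn:lem-orig-good-disjoint}: the case $y \in \cC_0(\cT)$ has $r_y = 0$, making the claim vacuous. Otherwise take $y = y_s \in \cC_+(\cT)$ as above, and given $x \in \cC$ with $r_x > \rho^{-3} r_y$, pick the unique index $j$ with $r_j < r_x \leq r_{j-1}$. Then $r_x > \rho^{-3} r_y > \rho^{-3} r_k = r_{k-3}$, forcing $j \leq k-3$. Radius control gives $|x - g_j| \geq r_x + 2r_j$ for any $g_j \in \cG_j \cup \cB_j$, while the ancestor chain yields
\[
|y - g_j| < (r_y + 2r_k) + \sum_{\ell=j}^{k-1} r_\ell \leq \tfrac{3}{2} r_j,
\]
using $r_y \leq r_{k-1} \leq \rho^2 r_j$ together with $\sum_\ell r_\ell < r_j/(1-\rho)$. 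Since also $r_y/(2\rho^2) \leq r_{k-1}/(2\rho^2) \leq r_j/2$, we conclude
\[
|x - y| \geq |x - g_j| - |y - g_j| \geq r_x + 2r_j - \tfrac{3}{2} r_j = r_x + \tfrac{1}{2} r_j \geq r_x + \tfrac{r_y}{2\rho^2},
\]
so $B_{r_y/(2\rho^2)}(y) \cap B_{r_x}(x) = \emptyset$.

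The main obstacle is choosing the ``probe'' scale $j$ correctly: the factor $\rho^{-3}$ in the hypothesis provides exactly three free scales of headroom, so that the chain cost $\sim r_j$ back to $\cG_j \cup \cB_j$ is beaten by the $2r_j$ of separation coming from radius control. A weaker hypothesis like $r_x > \rho^{-1} r_y$ would not leave enough room for the chain error.
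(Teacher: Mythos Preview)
Your proof is correct and follows essentially the same strategy as the paper: \eqref{eqn:avoids-small-good} is the contrapositive of radius control, \eqref{eqn:inside-big-good} comes from chaining through good-ball ancestors, and \eqref{eqn:lem-orig-good-disjoint} combines the two. The only real difference is organizational: the paper proves \eqref{eqn:lem-orig-good-disjoint} by invoking \eqref{eqn:inside-big-good} and \eqref{eqn:avoids-small-good} as black boxes at a scale $i$ chosen from $r_y$ (namely $\rho r_i \le r_y < r_i$, then working at level $i-2$), whereas you redo the ancestor-chain estimate directly at a scale $j$ chosen from $r_x$. Your explicit tracking of the geometric sums is in fact tighter than the paper's somewhat telegraphic argument, and your closing remark about needing three scales of headroom is exactly the point.
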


\begin{proof}
Relation \eqref{eqn:inside-big-good} is immediate from the construction, and \eqref{eqn:avoids-small-good} follows from ``radius control.''  We prove \eqref{eqn:lem-orig-good-disjoint}.  If $r_y > 0$ choose $r_i$ so that $\rho r_i \leq r_y < r_i$, otherwise take any $r_i > 0$.  By relation \eqref{eqn:inside-big-good}, we deduce
\begin{gather}
B_{\frac{r_y}{2\rho^2}}(y) \subset B_{2r_{i-2}}(\cG_{i-2} \cup \cB_{i-2}).
\end{gather}
Using \eqref{eqn:avoids-small-good}, we have $B_{r_y}(y) \cap B_{r_x}(x) = \emptyset$ for any $x \in \cC$ with $r_x \geq r_{i-2}$.  This proves \eqref{eqn:lem-orig-good-disjoint}.
\end{proof}

We note that Lemma \ref{lem:covering-control} also gives control over the $\beta$ numbers centered at $\cG_i$.
\begin{corollary}\label{cor_good_y}
For any $g\in \cG_i$, there is a $y \in \cC \cap B_{2r_i}(g)$ with $r_y \leq r_i$ and
\begin{gather}
\sum_{\alpha \in \dZ \,\, : \,\, r_y < 2^\alpha \leq 16} \beta^k_{\mu, 2, \bar\eps_\beta}(y, 2^\alpha)^2 \leq \delta^2.
\end{gather}
We have
\begin{gather}\label{eq_beta<delta}
\beta(g, 8 r_i)^2 \leq c(k) \beta(y, 11r_i)^2 \leq c(k) \delta^2 .
\end{gather}
\end{corollary}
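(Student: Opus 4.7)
The plan is to combine the $\mu$-a.e. $\beta$-sum hypothesis of Theorem \ref{thm:good-tree} with the fact that good balls carry positive $\mu$-mass, and then transfer the estimate from a nearby $y \in \cC$ to the center $g$ via the scale-monotonicity of $\beta_2$ recorded in Remark \ref{rem:pointwise-integral}.

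First I will exhibit $y$. Since $g \in \cG_i$ means $B_{r_i}(g)$ is not a stop ball, $\mu(B_{r_i}(g)) > M r_i^k > 0$ (we may assume $M > 0$, else there is nothing to prove). Because $\cC \supseteq \spt\mu$, this forces $\mu(\cC \cap B_{r_i}(g)) > 0$, so we may pick some $y \in \cC \cap B_{r_i}(g) \subseteq \cC \cap B_{2r_i}(g)$ satisfying
\[
\sum_{\alpha \in \dZ \,:\, r_y < 2^\alpha \leq 16} \beta^k_{\mu, 2, \bar\eps_\beta}(y, 2^\alpha)^2 \leq \delta^2.
\]
Such a $y$ automatically lies in $B_{2r_i}(\cG_i)$, so the ``radius control'' conclusion \eqref{eqn:scale-control} of Lemma \ref{lem:covering-control} forces $r_y \leq r_i$.

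For the $\beta$-estimate, the inclusion $B_{8r_i}(g) \subseteq B_{11r_i}(y)$ (from $|g - y| < r_i$) combined with $\mu(B_{11r_i}(y)) \geq \mu(B_{r_i}(g)) > M r_i^k$ gives $\mu(B_{11r_i}(y)) > \bar\eps_\beta (11r_i)^k$, provided the constant in $\bar\eps_\beta \leq c(k)M$ is chosen $\leq 11^{-k}$. Then Remark \ref{rem:pointwise-integral}(1) yields $\beta(g, 8r_i)^2 \leq (11/8)^{k+2}\, \beta(y, 11r_i)^2 \leq c(k)\, \beta(y, 11r_i)^2$. To bound the latter by $\delta^2$, I will pick a single dyadic scale: for $i \geq 1$, let $2^\alpha$ be the smallest dyadic value $\geq 11 r_i$, so $2^\alpha \leq 22 r_i \leq 22\rho \leq 22/20 < 16$; for $i = 0$ simply take $2^\alpha = 16 \geq 11$. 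In either case $r_y \leq r_i < 2^\alpha \leq 16$, so $\alpha$ appears in the hypothesis sum, and another application of Remark \ref{rem:pointwise-integral}(1) gives $\beta(y, 11r_i)^2 \leq c(k)\, \beta(y, 2^\alpha)^2 \leq c(k)\delta^2$.

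No substantive obstacle is anticipated: the argument is bookkeeping with dyadic scales and the $\bar\eps_\beta$ threshold. The only mild annoyance is that the base case $i = 0$ (where $11 r_0 = 11$ fits in the dyadic sum only at $2^\alpha = 16$) must be handled separately from the generic $i \geq 1$ case, but both are absorbed into the same $c(k)$.
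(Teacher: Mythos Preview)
Your proof is correct and follows essentially the same route as the paper, which simply records it as ``a direct corollary of the construction and Remark \ref{rem:pointwise-integral}''; you have merely written out the details---the existence of $y$ via positive mass and the $\mu$-a.e.\ hypothesis, the bound $r_y \leq r_i$ via radius control \eqref{eqn:scale-control}, and the transfer of the $\beta$-estimate via ball inclusion and monotonicity.
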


\begin{proof}
This is a direct corollary of the construction and Remark \ref{rem:pointwise-integral}. In particular, 
\end{proof}

\subsection{Tilting control in good balls}\label{subsection:good-tilting-map}

We can apply theorem \ref{thm:tilting-control} to obtain Reifenberg-type control between subsequent good balls' best planes.

\begin{lemma}\label{lem:good-ball-tilting}

Take $g' \in B_{2r_{i-1}}(g)$ for some $g'\in \cG_i$ and $g\in \cG_{i-1}$.  Then for $c = c(n,\rho)$ we have
\begin{gather}\label{eq_gt_1}
d_H(L_{ig'} \cap B_{8r_i}(g'), L_{i-1,g}\cap B_{8r_i}(g') ) \leq cM^{-1/2}(\beta(g', 8r_i) + \beta(g, 8r_{i-1})) r_i .
\end{gather}
And hence
\begin{align}
d_G(L_{ig'}, L_{i-1,g}) &\leq c M^{-1/2}(\beta(g', 8r_i) + \beta(g, 8r_{i-1})) \label{eq_gt_2} \\
d(X_{ig'}, L_{i-1,g}) &\leq cM^{-1/2} \beta(g, 8r_{i-1}) r_i \, .\label{eq_gt_3}
\end{align}

Similarly, if $g''\in \cG_i$ satisfies $|g''- g'| < 7 r_i$, then
\begin{align}
d_G(L_{ig'}, L_{ig''}) &\leq c M^{-1/2} (\beta(g', 8r_i) + 2\beta(g, 8r_{i-1}) + \beta(g'', 8r_{i}))\label{eq_gt_4} \\
d(X_{ig''}, L_{ig'}) &\leq c M^{-1/2} \beta(g', 8r_i) r_i\, .\label{eq_gt_5}
\end{align}

More coarsely, we have the following inequalities:
\begin{align}
d_G(L_{ig'}, L_{i-1,g}) \leq c M^{-1/2} \delta,& \quad d(X_{ig'}, L_{i-1,g}) \leq cM^{-1/2} \delta r_i\label{eq_gt_6} \\
d_G(L_{ig''}, L_{ig'}) \leq c M^{-1/2} \delta,& \quad d(X_{ig''}, L_{ig'}) \leq c M^{-1/2} \delta r_i \, ,\label{eq_gt_7}
\end{align}
where $c$ is always a constant $c=c(n,\rho)$.
\end{lemma}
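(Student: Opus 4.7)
The plan is to derive this lemma as a direct consequence of Theorem~\ref{thm:tilting-control} (tilting control for good balls) together with Proposition~\ref{prop:COM-control} (distance from the generalized center of mass to an $L^2$-best plane), after rescaling to the unit scale. The coarser bounds \eqref{eq_gt_6}, \eqref{eq_gt_7} will be immediate from Corollary~\ref{cor_good_y}, which says $\beta(g, 8r_i)^2 \leq c(k)\delta^2$ for any $g\in \cG_i$.

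For \eqref{eq_gt_1} and \eqref{eq_gt_2}, I would rescale via $\tilde\mu(A) = r_{i-1}^{-k}\mu(g + r_{i-1}A)$ so that $B_{r_{i-1}}(g)$ becomes the unit ball and $g'$ becomes $\tilde x = (g'-g)/r_{i-1} \in B_2 \subset B_7$. Since goodness, non-stop, and $\beta$ are all scale invariant, $B_\rho(\tilde x)$ is a good ball for $\tilde \mu$ and $\tilde\mu(B_{8}) \geq \tilde\mu(B_1) > M$, $\tilde\mu(B_{8\rho}(\tilde x)) \geq \tilde\mu(B_\rho(\tilde x)) > M\rho^k$. The assumption $\bar\eps_\beta \leq c(k)M$ (in particular $\bar\eps_\beta < 8^{-k}M$) then verifies the hypotheses of Theorem~\ref{thm:tilting-control}. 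Applying that theorem, rescaling back, and invoking $m = m_0(n,\rho)M$ gives \eqref{eq_gt_1}; \eqref{eq_gt_2} follows immediately from its Grassmannian form (or from \eqref{eqn:tilting-equivalence}). For \eqref{eq_gt_3}, I would apply Proposition~\ref{prop:COM-control} with $B_{r_i}(g') \subset B_{8r_{i-1}}(g)$: the non-stop condition gives $\mu B_{r_i}(g') > M r_i^k$, so
\begin{gather*}
d(X_{ig'}, L_{i-1,g})^2 \leq \frac{(8r_{i-1})^{k+2}}{\mu(B_{r_i}(g'))}\beta(g,8r_{i-1})^2 \leq c(k,\rho)M^{-1}\beta(g,8r_{i-1})^2 r_i^2,
\end{gather*}
which is \eqref{eq_gt_3}.

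For \eqref{eq_gt_4} and \eqref{eq_gt_5}, I would pick $g\in \cG_{i-1}$ with $g' \in B_{r_{i-1}}(g)$; since $|g''-g'|<7r_i < r_{i-1}$ (using $\rho \leq 1/20$) we have $g'' \in B_{2r_{i-1}}(g)$ as well, so \eqref{eq_gt_2} applies to both pairs $(g',g)$ and $(g'',g)$ and the triangle inequality gives \eqref{eq_gt_4}. The bound \eqref{eq_gt_5} follows from another direct application of Proposition~\ref{prop:COM-control}, this time with $B_{r_i}(g'') \subset B_{8r_i}(g')$ (valid since $|g''-g'|<7r_i$) and using $\mu B_{r_i}(g'') > Mr_i^k$. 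Finally, \eqref{eq_gt_6} and \eqref{eq_gt_7} follow by substituting the bound $\beta \leq c(k)^{1/2}\delta$ of Corollary~\ref{cor_good_y} into \eqref{eq_gt_2}--\eqref{eq_gt_5}.

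The main obstacle is not conceptual — everything reduces to careful bookkeeping — but rather in verifying that each ball on which we invoke $\beta$ or the generalized center of mass has sufficient $\mu$-mass for the relevant estimate to be non-trivial. This reduces throughout to the single condition $\bar\eps_\beta \leq c(k)M$ together with the fact that any non-stop good ball $B_{r_i}(g)$ has $\mu B_{r_i}(g) > Mr_i^k$, both of which hold by hypothesis.
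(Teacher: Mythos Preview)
Your proposal is correct and follows essentially the same route as the paper: apply Theorem~\ref{thm:tilting-control} at scale $r_{i-1}$ (with $m = m_0(n,\rho)M$) for \eqref{eq_gt_1}--\eqref{eq_gt_2}, use Proposition~\ref{prop:COM-control} for \eqref{eq_gt_3} and \eqref{eq_gt_5}, obtain \eqref{eq_gt_4} by the triangle inequality after applying \eqref{eq_gt_2} to both $(g',g)$ and $(g'',g)$, and deduce the coarse bounds from Corollary~\ref{cor_good_y}. Your attention to the mass lower bounds needed to make the $\beta$-estimates non-vacuous is exactly the right bookkeeping.
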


\begin{proof}
Apply Theorem \ref{thm:tilting-control} at scale $r_{i-1}$ to deduce first two equations \eqref{eq_gt_1} and \eqref{eq_gt_2}, recalling our choice of $m = m_0(n)M$. Equation \eqref{eq_gt_3} follows by Proposition \ref{prop:COM-control}. 
Precisely the same reasoning works with $g'$ replaced by $g''$, so \eqref{eq_gt_4} follows from the triangle inequality.  The estimate in \eqref{eq_gt_5} is again deduced from Proposition \ref{prop:COM-control}, since $B_{r_i}(g'') \subset B_{8r_i}(g')$.

Evidently, the coarse inequalities \eqref{eq_gt_6} and \eqref{eq_gt_7} are direct consequences of the previous estimates and the coarse estimate \eqref{eq_beta<delta}.
\end{proof}

With this control over the tilting of best planes we can prove good $C^1$ bounds on our maps $\sigma_i$ and manifolds $T_i$.  We prove two classes of estimates, following \cite{simon_reif}.  The ``coarse estimates'' give bounds on $\sigma_i$ in terms of the graphical bounds of $T_{i-1}$.  From this one can deduce slightly worse graph bounds of $T_i$ at scale $i$, however this technique cannot hope to give \eqref{eqn:graph-control} as the bounds would degenerate as $i \to \infty$.

The ``squash estimates'' say that provided $(\star_{i-1})$ holds, we get good graphical bounds at scale $i$ which are \emph{independent} of $\Lambda$.  This is the key to demonstrating \eqref{eqn:graph-control} inductively.  Then by going \emph{two} scales back, and linking the squash estimates with the coarse estimates, we get bounds on $\sigma_i$ which are independent of scale (the ``refined coarse estimates'').

\begin{lemma}[``coarse estimates'']\label{lem:coarse-estimates}
Take $g' \in \cG_i\cap B_{r_{i-1}}(g)$ for some $g\in \cG_{i-1}$.  Suppose
\begin{gather}
\left\{ \begin{array}{l}
 T_{i-1} \cap B_{2r_{i-1}}(g) = \graph_{\Omega_{i-1, g} \subset L_{i-1, g}} f \\
 |f|_{C^1_{r_{i-1}}} \leq \gamma . \end{array} \right.
\end{gather}
Then
\begin{align}
&\sup_{B_{3r_i}(g') \cap T_{i-1}} \left( r_i^{-1} |\sigma_i - Id| + |D^\perp_{T_{i-1}} (\sigma_i - Id)|  + |D^\top_{T_{i-1}}(\sigma_i - Id)|^{1/2} \right)\notag \\
&\quad \leq c(n, \rho) (\gamma + M^{-1/2} \beta(g, 8r_{i-1}) ) \\
&\quad \leq c_{cs}(n, \rho) (\gamma + M^{-1/2} \delta) .\notag
\end{align}

Here $D_{T_{i-1}}$ is the derivative operator on functions $T_{i-1} \to \R^n$, and $D^\top_{T_{i-1}}$, $D^\perp_{T_{i-1}}$ are the projections of $D_{T_{i-1}}$ to the tangent, normal bundles of $T_{i-1}$.
\end{lemma}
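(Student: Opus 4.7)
The map $\sigma_i - \mathrm{Id} = -\sum_{h\in\cG_i} \phi_{ih}(x)\, p_{ih}^\perp(x - X_{ih})$ is supported in $B_{3r_i}(\cG_i)$, so the first step is to reduce to a bounded sum: for $x\in B_{3r_i}(g')$, only those $h\in\cG_i$ with $|h-g'|< 6r_i$ contribute, and disjointness of the balls $\{B_{r_i/5}(h)\}$ (Remark \ref{rem:vitaliness}) bounds the number of such $h$ by $c(n)$.  For each such $h$ I would then invoke Lemma \ref{lem:good-ball-tilting}: combining \eqref{eq_gt_2}, \eqref{eq_gt_4} at scales $r_{i-1}$ and $r_i$ yields $d_G(L_{ih},L_{i-1,g}) \leq c(n,\rho)M^{-1/2}\delta$, and \eqref{eq_gt_3}, \eqref{eq_gt_5} applied with $B_{r_i}(h)\subset B_{8r_{i-1}}(g)$ give $d(X_{ih}, L_{i-1,g}) \leq c(n,\rho)M^{-1/2}\beta(g, 8r_{i-1})\, r_i \leq c(n,\rho)M^{-1/2}\delta\, r_i$.

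The crux of the $C^0$ estimate is to bound $|p_{ih}^\perp(x - X_{ih})|$ on $T_{i-1}\cap B_{3r_i}(g')$.  I would split
\[
p_{ih}^\perp(x - X_{ih}) = (p_{ih}^\perp - p_{i-1,g}^\perp)(x - X_{ih}) + p_{i-1,g}^\perp(x - X_{ih}).
\]
The first summand is controlled using \eqref{eqn:tilting-equivalence} and the Grassmannian tilt from Lemma \ref{lem:good-ball-tilting}, giving $\leq c(n,\rho)M^{-1/2}\delta\, r_i$.  For the second, since $x = z + f(z)$ with $z\in L_{i-1,g}$ and $|f|\leq \gamma r_{i-1}$, picking any basepoint $y_g\in L_{i-1,g}$ shows $|p_{i-1,g}^\perp(x - y_g)| = d(x,L_{i-1,g}) \leq \gamma r_{i-1} = \gamma\rho^{-1} r_i$ and $|p_{i-1,g}^\perp(X_{ih} - y_g)| = d(X_{ih},L_{i-1,g})\leq c(n,\rho)M^{-1/2}\delta\, r_i$.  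Altogether $|p_{ih}^\perp(x - X_{ih})| \leq c(n,\rho)(\gamma + M^{-1/2}\delta)\, r_i$, and summing against $\sum_h \phi_{ih}\leq 1$ gives the $C^0$ bound on $\sigma_i - \mathrm{Id}$.

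For the derivative bounds I would differentiate:
\[
D(\sigma_i - \mathrm{Id})[v] = -\sum_h D\phi_{ih}[v]\, p_{ih}^\perp(x - X_{ih}) - \sum_h \phi_{ih}(x)\, p_{ih}^\perp(v).
\]
Using $|D\phi_{ih}|\leq c(n)/r_i$, finite overlap, and the $C^0$ estimate above, the first sum is $\leq c(n,\rho)(\gamma + M^{-1/2}\delta)|v|$.  For the second sum with $v$ tangent to $T_{i-1}$, decompose $v = v^\parallel + v^\perp$ with respect to $L_{i-1,g}$; the graph bound gives $|v^\perp|\leq \gamma|v|$, while $|p_{ih}^\perp(v^\parallel)| = |(p_{ih}^\perp - p_{i-1,g}^\perp)(v^\parallel)| \leq c M^{-1/2}\delta|v|$, yielding the full $D^\perp$ bound $\leq c(n,\rho)(\gamma+M^{-1/2}\delta)$.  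The square-root $D^\top$ bound follows from a \emph{double projection} observation: every summand of $D(\sigma_i-\mathrm{Id})[v]$ lies in some $L_{ih}^\perp$, and the tangent plane of $T_{i-1}$ at $x$ sits within Grassmannian distance $\gamma + c(n,\rho)M^{-1/2}\delta$ of $L_{i-1,g}$, which is in turn within $c(n,\rho)M^{-1/2}\delta$ of $L_{ih}$; hence the $T_{i-1}$-tangent component of any vector in $L_{ih}^\perp$ is no larger than $c(n,\rho)(\gamma + M^{-1/2}\delta)$ times its norm, squaring the overall bound.

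The main obstacle is purely bookkeeping rather than conceptual: one must consistently keep track of three almost-parallel subspaces ($L_{i-1,g}$, $L_{ih}$, and $T_x T_{i-1}$), of three length scales ($r_i$, $r_{i-1}$, and the Hausdorff distances between the various planes), and of the fact that $f$ has $C^0$ size only $\gamma r_{i-1}$ rather than $\gamma r_i$.  The last discrepancy is absorbed into the $\rho$-dependence of the constant, and the double-projection argument is what makes the tangential half-power bound sharp.
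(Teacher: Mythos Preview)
Your proposal is correct and follows essentially the same route as the paper: reduce to $c(n)$ contributing centers via disjointness, split $p_{ih}^\perp(x-X_{ih})$ through the reference plane $L_{i-1,g}$, use the graphical representation $x=z+f(z)$ to control $p_{i-1,g}^\perp(x)$, invoke Lemma~\ref{lem:good-ball-tilting} for the tilts and center-of-mass distances, and obtain the tangential square via the double-projection observation that each summand lies in $L_{ih}^\perp$ while $T_xT_{i-1}$ is $(\gamma + cM^{-1/2}\delta)$-close to $L_{ih}$.  The only bookkeeping point worth tightening is that you mostly record the coarse bound $cM^{-1/2}\delta$ for the tilts $d_G(L_{ih},L_{i-1,g})$, whereas the lemma also asserts the sharper $c(n,\rho)(\gamma+M^{-1/2}\beta(g,8r_{i-1}))$; this follows by keeping the finer estimates \eqref{eq_gt_2}, \eqref{eq_gt_3} and noting $\beta(h,8r_i)\leq c(k,\rho)\beta(g,8r_{i-1})$ by monotonicity (Remark~\ref{rem:pointwise-integral}), and is what feeds into the refined estimates of Lemma~\ref{lem:final-coarse}.
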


\begin{proof}
Throughout the proof $c$ will denote a constant depending only on $n, \rho$.  Recall that
\begin{gather}
\sigma_i(x) - x = -\sum_{z\in \cG_i} \phi_{iz}(x) p_{iz}^\perp(x - X_{iz}) ,
\end{gather}
and consequently for any vector $V$ we have
\begin{gather}
D_V (\sigma_i(x) - x) = -\sum_{z\in \cG_i} (D_V \phi_{iz}(x)) p_{iz}^\perp(x - X_{iz}) - \sum_{z\in \cG_i} \phi_{iz}(x) p^\perp_{iz}(V) .
\end{gather} 

By assumption we know
\begin{gather}
B_{3 r_i}(g') \cap T_{i-1} = \graph_{\Omega_{i-1, g} \subset L_{i-1, g}} f\quad \text{ with } \quad |f|_{C^1_{r_{i-1}}} \leq \gamma .
\end{gather}

Now fix an $x \in B_{3r_i}(g') \cap T_{i-1}$, and write $x = (\zeta, f(\zeta))$ for $\zeta \in L_{i-1, g}$.  Suppose $\phi_{iz}(x) > 0$, so in particular $|g' - z| < 6r_i$.  We calculate that
\begin{align}
|p^\perp_{iz}(x - X_{iz})| 
&\leq |p^\perp_{iz} - p^\perp_{i-1, g}| |x - X_{iz}| + |p^\perp_{i-1, g}(x - \zeta)| + |p^\perp_{i-1, g}(\zeta - X_{iz})| \notag\\
&\leq c d_G(L_{iz}, L_{i-1, g}) r_i + \gamma r_{i-1} + d(X_{iz}, L_{i-1, g}) \\
&\leq c M^{-1/2} \left( \beta(z, 8r_i) + \beta(g, 8 r_{i-1}) \right) r_i + \gamma r_{i-1}\notag
\end{align}

There are at most $c(n)$ $z\in \cG_i$ such that $\phi_{iz}(x) > 0$.  We therefore obtain
\begin{align}
|\sigma_i(x) - x| &\leq c \gamma r_i + c M^{-1/2} \left( \beta(g, 8r_{i-1}) + \sum_{z \in \cG_i\cap B_{6r_i}(g')} \beta(z, 8r_i) \right) r_i \notag\\
&\leq c \gamma r_i + c M^{-1/2} \beta(g, 8r_{i-1}) r_i
\end{align}

We bound the gradient term.  Fix $V$ a unit vector in $T_x T_{i-1}$.  The first term of the gradient is bounded in precisely the same way, since $|D \phi_{ig}| \leq 10/r_i$.  The second term can be bounded like
\begin{align}
|p^\perp_{iz}(V)|
&\leq |p^\perp_{iz} - p^\perp_{i-1,g}| + |p_{i-1,g}^\perp(V)| \notag\\
&\leq 2 d_G(L_{iz}, L_{i-1,g}) + 2 d_G(L_{i-1,g}, T_x T_{i-1}) \\
&\leq c M^{-1/2} (\beta(z, 8r_i) + \beta(g, 8r_{i-1})) + c \gamma\notag .
\end{align}

The extra tangent power comes about because, given a unit vector $W \in T_x T_{i-1}$, and $Y$ arbitrary, we have
\begin{align}
|W \cdot p^\perp_{iz}(Y)| 
&\leq \left(|p^\perp_{iz} - p^\perp_{i-1, g}| + |p^\perp_{i-1, g}(W)|\right) |p_{iz}^\perp(Y)| \notag\\
&\leq \left( 2d_G(L_{iz}, L_{i-1, g}) + \gamma \right) |p_{iz}^\perp(Y)| \\
&\leq c \left[ M^{-1/2}\beta(g, 8r_{i-1}) + \gamma \right] |p_{iz}^\perp(Y)| \notag. \qedhere
\end{align}

\end{proof}
\vspace{.25cm}

\begin{lemma}[``squash estimates'']\label{lemma_squash_+}
Take $g' \in \cG_i\cap B_{r_{i-1}}(g)$ for some $g\in \cG_{i-1}$, and suppose $(\star_{i-1})$ holds.  Then provided $\delta \leq \delta_2(n, \rho, \Lambda) M^{1/2}$, we have 
\begin{gather}
\left\{ \begin{array}{l} T_i \cap B_{2r_i}(g') = \graph_{\Omega_{ig'} \subset L_{ig'}} f \\
 |f|_{C^1_{r_i}} \leq c(n, \rho)M^{-1/2} \beta(g, 8r_{i-1}) \leq c_{sq}(n,\rho)M^{-1/2} \delta \\
  B_{3/2}(g') \cap L_{ig'} \subset \Omega_{ig'} \subset L_{ig'}. \end{array} \right.
\end{gather}
\end{lemma}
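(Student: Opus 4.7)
The plan is to apply the Squash Lemma (Lemma \ref{lem:squash}) to $\sigma_i$ at scale $r_i$ centered at $g'$, with source plane $L := L_{i-1,g}$ (over which $T_{i-1}$ is graphical by the inductive hypothesis $(\star_{i-1})$), target plane $\tilde L := L_{ig'}$, and base point $m := X_{ig'}$. After rescaling by $r_i^{-1}$, the Squash hypotheses reduce to (i) graphicality of the rescaled $T_{i-1}$, (ii) closeness of $L_{i-1,g}$ to $L_{ig'}$ near $g'$, and (iii) smallness of the perturbation $e := \sigma_i - \big(X_{ig'} + p_{ig'}(\,\cdot\, - X_{ig'})\big)$.

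First I will recast $\sigma_i$ in Squash form. Using $\sum_{z}\phi_{iz} \equiv 1$ on $B_{5r_i/2}(g')$ (see \eqref{eq_part_est}), a direct algebraic manipulation gives
\begin{gather*}
e(x) = \sum_{z \in \cG_i}\phi_{iz}(x)\Big[(p_{ig'}^\perp - p_{iz}^\perp)(x - X_{ig'}) + p_{iz}^\perp(X_{iz} - X_{ig'})\Big] \quad \text{on } B_{5r_i/2}(g').
\end{gather*}
Only $\leq c(n)$ centers $z \in \cG_i \cap B_{6r_i}(g')$ contribute at each point, and for each such $z$, Lemma \ref{lem:good-ball-tilting} controls $d_G(L_{ig'}, L_{iz})$ while Proposition \ref{prop:COM-control} controls $d(X_{iz}, L_{iz})$ and $d(X_{ig'}, L_{iz})$. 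Combined with $\beta$-monotonicity (Remark \ref{rem:pointwise-integral}) applied to the inclusions $B_{8r_i}(g'), B_{8r_i}(z) \subset B_{8r_{i-1}}(g)$, every local $\beta$ at scale $r_i$ is dominated by $c(n,\rho)\beta(g, 8r_{i-1})$. The $|De|$ estimate uses $|D\phi_{iz}| \leq c/r_i$ balanced by bracketed terms of magnitude $O(r_i)$. Altogether $|e|/r_i + |De| \leq \epsilon := c(n,\rho)M^{-1/2}\beta(g, 8r_{i-1})$ on $B_{5r_i/2}(g')$.

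Next I will verify the remaining Squash hypotheses: (i) $d(X_{ig'}, L_{ig'}) \leq \epsilon r_i$ by Proposition \ref{prop:COM-control}; (ii) $d_H(L_{i-1,g} \cap B_{3r_i}(g'), L_{ig'} \cap B_{3r_i}(g')) \leq \epsilon r_i$ by combining \eqref{eq_gt_1} with Lemma \ref{lem:contain-to-bound}; (iii) the graphicality bound $|f|_{C^1_{r_i}} \leq 1$, which by $(\star_{i-1})$ and the scaling identity $|f|_{C^1_{r_i}} \leq (1+\rho^{-1})|f|_{C^1_{r_{i-1}}} \leq (1+\rho^{-1})\Lambda M^{-1/2}\delta$ is secured by taking $\delta_2(n,\rho,\Lambda) := \rho/(4\Lambda)$. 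Lemma \ref{lem:squash} then yields $\sigma_i(T_{i-1} \cap B_{5r_i/2}(g')) = \graph_{\Omega_{ig'}}f_{ig'}$ with $|f_{ig'}|_{C^1_{r_i}} \leq 8\epsilon$ and $B_{2r_i}(g') \cap L_{ig'} \subset \Omega_{ig'}$. The identification $T_i \cap B_{2r_i}(g') = \sigma_i(T_{i-1} \cap B_{5r_i/2}(g'))$ follows from Lemma \ref{lem:coarse-estimates}, which gives $|\sigma_i - \mathrm{Id}| \leq c(n,\rho)M^{-1/2}\delta r_i$, far smaller than $r_i/2$ for $\delta$ small.

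The main obstacle is securing the \emph{sharp} bound involving $\beta(g, 8r_{i-1})$ rather than the cruder $\delta$: this sharpness is essential for Theorem \ref{thm:good-tree}, where summing $|f_{ig'}|_{C^1_{r_i}}^2$ across scales must produce the bi-Lipschitz factor $e^{c(n,\rho)M^{-1}\delta^2}$, which is only feasible if each step contributes the local $\beta(g, 8r_{i-1})^2$ rather than the uniform $\delta^2$. Achieving this requires uniform application of $\beta$-monotonicity across every $z$-center in the sum defining $e$, absorbing the scale factor $\rho^{-(k+2)/2}$ into the $\rho$-dependent constants.
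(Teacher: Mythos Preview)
Your proof is correct and follows essentially the same approach as the paper: rewrite $\sigma_i$ in Squash-Lemma form about $X_{ig'}$ and $L_{ig'}$, bound the error $e$ term-by-term using the tilting estimates of Lemma~\ref{lem:good-ball-tilting} and the center-of-mass control of Proposition~\ref{prop:COM-control} (all pushed up to $\beta(g,8r_{i-1})$ via monotonicity), then invoke Lemma~\ref{lem:squash} with the graphical input from $(\star_{i-1})$ and finish via the coarse estimates. Your algebraic regrouping of $e$ places $p_{iz}^\perp$ on the center-of-mass difference rather than $p_{ig'}^\perp$ as in the paper, but this is an equivalent rearrangement and is bounded in the same way.
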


\begin{proof}
As before $c$ will denote a generic constant depending only on $n, \rho$.  For convenience define $p = p_{ig'}$, $L=L_{ig'}$ and $X=X_{ig'}$.  We have
\begin{align}
\sigma_i(x) 
&= X + p(x - X) + p^\perp(x - X) - \sum_{z\in \cG_i} \phi_{iz}(x)p_{iz}^\perp(x - X_{iz}) \notag\\
&= X + p(x - X) + e(x) .
\end{align}
For $x \in B_{5r_i/2}(g')$, since $\sum_{z\in \cG_i} \phi_{iz}(x) = 1$, we have
\begin{align}
e(x) 
&= \sum_{z\in \cG_i} \phi_{iz}(x) \left( p^\perp(x - X) - p_{iz}^\perp(x - X_{iz}) \right) \notag\\
&= \sum_{z\in \cG_i} \phi_{iz}(x) \left( p^\perp (X_{iz} - X) + (p^\perp - p_{iz}^\perp)(x - X_{iz}) \right) .
\end{align}

If $\phi_{iz}(x) > 0$, then $|g' - z| < 6 r_i$, and so by Lemma \ref{lem:good-ball-tilting} we have
\begin{align}
|p^\perp(X_{iz} - X)| \leq cM^{-1/2} \beta(g', 8r_i) r_i\, .
\end{align}
and
\begin{align}
|(p^\perp - p_{iz}^\perp)(x - X_{iz})| 
&\leq 2d_G(L, L_{iz}) |x - X_{iz}|\notag \\
&\leq c M^{-1/2} (\beta(g, 8 r_{i-1}) + \beta(g', 8 r_i) + \beta(z, 8r_i)) r_i .
\end{align}
Since there are at most $c(n)$ $z \in \cG_i$ for which $\phi_{iz}(x) > 0$, we obtain that
\begin{align}
|e(x)| &\leq c M^{-1/2} \left( \beta(g, 8 r_{i-1}) + \sum_{z \in B_{6r_i}(g')} \beta(z, 8r_i) \right) r_i \leq c M^{-1/2} \beta(g, 8r_{i-1}) r_i
\end{align}

For any unit vector $V$ we have
\begin{gather}
D_V e = \sum_{z\in \cG_i} (D_V \phi_{iz}) \left( p^\perp(x - X) - p_{iz}^\perp(x - X_{iz}) \right) - \sum_{z\in \cG_i} \phi_{iz}(x) (p^\perp - p_{iz}^\perp)(V) .
\end{gather}
Use the same estimates on the first term, with $|D\phi_{iz}| \leq 10/r_i$.  The second term we can estimate as
\begin{gather}
|(p^\perp - p_{iz}^\perp)(V)| \leq 2 d_G(L, L_{iz})  .
\end{gather}
We deduce
\begin{gather}
|e|_{C^1_{r_i}(B_{5r_i/2}(g'))} \leq c M^{-1/2} \beta(g, 8r_{i-1})
\end{gather}

By $(\star_{i-1})$ we have
\begin{gather}
B_{3r_i}(g') \cap T_{i-1} = \graph_{\Omega_{i-1,g} \subset L_{i-1, g}} f, \quad |f|_{C^1_{r_{i-1}}} \leq \Lambda \delta_2, \quad \Omega_{i-1, g} \supset B_{5r_{i-1}/2}(g) \cap L_{i-1, g},
\end{gather}
for some $g \in \cG_{i-1}$ with $g' \in B_{r_{i-1}}(g)$.  From Lemma \ref{lem:good-ball-tilting}, we have
\begin{gather}
d(X_{ig'}, L_{i-1, g}) \leq c \delta_2 r_i \quad\text{and}\quad d_H(L \cap B_{8r_i}(g'), L_{i-1, g} \cap B_{8r_i}(g')) \leq c \delta_2 r_i ,
\end{gather}
and because $B_{5r_i/2}(g') \subset B_{3r_{i-1}/2}(g)$ we have $\Omega_{i-1, g} \supset B_{5r_i/2}(g') \cap L_{i-1,g}$.


Apply the Squash Lemma \ref{lem:squash} at scale $r_i$ to deduce
\begin{gather}
\sigma_i(B_{5r_i/2}(g') \cap T_{i-1}) \cap B_{2r_i}(g') = \graph_{U \subset L} f, \quad |f|_{C^1_{r_i}} \leq 4 |e|_{C^1_{r_i}} \leq c M^{-1/2} \beta(g, 8r_{i-1}),
\end{gather}
where $B_{3r_i/2}(g') \cap L \subset U \subset L$.

By the coarse estimates we know $|\sigma_i(x) - x| \leq c(1+\Lambda) \delta_2 r_i$.  Thus if $c (1+\Lambda)\delta_2$ is universally small, we can replace $\sigma_i(B_{5r_i/2}(g') \cap T_{i-1}) \cap B_{2r_i}(g')$ in the above with $\sigma_i(T_{i-1}) \cap B_{2r_i}(g')$.
\end{proof}
\vspace{.25cm}

By plugging the squash estimates at scale $i-1$ into the coarse estimates at scale $i$, and using the relation
\begin{gather*}
|D_{T_{i-1}} \sigma_i| = \sqrt{ |D^\perp_{T_{i-1}}(\sigma_i - Id)|^2 + |D^\top_{T_{i-1}}\sigma_i|^2}
\end{gather*}
we obtain
\begin{lemma}[refined estimates]\label{lem:final-coarse}
Suppose $(\star)$ holds through $i-1$.  If $i \geq 2$, then given $g'\in \cG_i$, $g'' \in \cG_{i-1}$, and $g \in \cG_{i-2}$ so that $g' \in B_{r_{i-1}}(g'')$ and $g'' \in B_{r_{i-2}}(g)$, then we have the estimate
\begin{align}
&\sup_{B_{3r_i}(g') \cap T_{i-1}} \left( r_i^{-1} |\sigma_i - Id| + |D^\perp_{T_{i-1}} (\sigma_i - Id)| + |D^\top_{T_{i-1}}(\sigma_i - Id)|^{1/2} \right) \nonumber \\
&\quad \leq c(n, \rho) M^{-1/2} \beta(g, 8r_{i-2}) \label{eqn:coarse-estimate-D} \\
&\quad \leq c_{rcs}(n, \rho)M^{-1/2} \delta. \label{eqn:coarse-estimate-delta}
\end{align}
As a consequence, we have
\begin{gather}\label{eqn:final-course-concl3}
\sup_{x \in B_{3r_i}(g') \cap T_{i-1}} |D_{T_{i-1}}\sigma_i| \leq 1 + c(n,\rho) M^{-1} \beta(g, 8r_{i-2})^2 .
\end{gather}

If $i = 1$, then for any $g\in \cG_1$ we have
\begin{align}
&\sup_{B_{3r_1}(g) \cap T_{0}} \left( r_1^{-1} |\sigma_1 - Id| + |D^\perp_{T_{0}} (\sigma_1 - Id)|  + |D^\top_{T_0}(\sigma_1 - Id)|^{1/2} \right) \notag \\
&\quad \leq c(n, \rho)M^{-1/2} \beta(0, 8) \\
&\quad \leq c_{rcs}(n, \rho) M^{-1/2} \delta.\notag
\end{align}
\end{lemma}

The above estimates suffice to prove ``graph control'':
\begin{theorem}
We have \eqref{eqn:graph-control} for every $i \geq 0$.
\end{theorem}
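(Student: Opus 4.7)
The plan is to prove $(\star_i)$ by induction on $i$, with the constant $\Lambda(n,\rho)$ chosen to be precisely the constant $c_{sq}(n,\rho)$ appearing in the squash estimate Lemma~\ref{lemma_squash_+}. The key observation that makes the induction close is that the output graphicality constant in Lemma~\ref{lemma_squash_+} depends only on $n$ and $\rho$, \emph{not} on the input constant $\Lambda$ from $(\star_{i-1})$ — the role of $\Lambda$ is confined to the smallness threshold $\delta_2(n,\rho,\Lambda)$ on $\delta$.

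For the base case $i=0$, we have $\cG_0 = \{0\}$ and by definition $T_0 = L_{0,0}$, so $T_0 \cap B_2(0) = L_{0,0} \cap B_2(0)$ is trivially the graph of $f_{0,0} \equiv 0$ over $L_{0,0}$, with $\Omega_{0,0} = L_{0,0}$. Hence $(\star_0)$ holds with any choice of $\Lambda$.

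For the inductive step, assume $(\star_{i-1})$ holds with constant $\Lambda := c_{sq}(n,\rho)$. Fix any $g' \in \cG_i$; by construction and Lemma~\ref{lem:covering-control}, there is some $g \in \cG_{i-1}$ with $g' \in B_{r_{i-1}}(g)$, so that the input hypothesis of Lemma~\ref{lemma_squash_+} is satisfied. Applying that lemma — which requires only the smallness $\delta \leq \delta_2(n,\rho,\Lambda)M^{1/2}$, now a purely $(n,\rho)$-dependent threshold since $\Lambda$ is fixed — yields
\begin{gather}
T_i \cap B_{2r_i}(g') = \graph_{\Omega_{ig'} \subset L_{ig'}} f_{ig'}, \quad |f_{ig'}|_{C^1_{r_i}} \leq c_{sq}(n,\rho) M^{-1/2}\delta, \quad \Omega_{ig'} \supset B_{3r_i/2}(g') \cap L_{ig'}.
\end{gather}
Since $c_{sq}(n,\rho) = \Lambda$, this is exactly $(\star_i)$, completing the induction.

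The only thing to verify is the choice of smallness parameter $\delta_1(n,\rho)$ in the hypothesis of Theorem~\ref{thm:good-tree}: take $\delta_1(n,\rho)$ small enough that $\delta \leq \delta_1(n,\rho) M^{1/2}$ implies $\delta \leq \delta_2(n,\rho, c_{sq}(n,\rho)) M^{1/2}$, which is possible because the right-hand side is a fixed $(n,\rho)$-dependent threshold. The essential content of the argument has already been established in the squash and tilting lemmas; the induction here is really a formality recording that the squash estimate's scale-invariant output constant allows the graphicality bound to propagate across all scales without degeneration.
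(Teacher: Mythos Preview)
Your proof is correct and follows exactly the paper's approach: set $\Lambda = c_{sq}(n,\rho)$ and ensure $\delta_1 \leq \delta_2(n,\rho,c_{sq})$, so that the squash estimate's scale-independent output constant closes the induction. The paper's own proof is just a two-line sketch of this same argument, and your version spells out the base case and inductive step explicitly.
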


\begin{proof}
Take $\Lambda = c_{sq}$, the constant from the squash estimates.  Then ensure $\delta_1 \leq \delta_2(n, \rho, c_{sq})$.
\end{proof}

\begin{corollary}\label{cor:diffeomorphism}
Provided $\delta_1(n, \rho)$ is sufficiently small, $\sigma_i$ is a diffeomorphism onto its image, and $\sigma_i^{-1}$ satisfies the same coarse estimates (Lemma \ref{lem:final-coarse}) as $\sigma_i$.
\end{corollary}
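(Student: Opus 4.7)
The plan is to invoke the inverse function theorem using the refined coarse estimates of Lemma \ref{lem:final-coarse}, and then upgrade local invertibility to a global diffeomorphism via a direct separation argument based on the fact that $\sigma_i$ is the identity off the good balls.

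First I would record the key consequence of the refined estimates restricted to $T_{i-1}$: writing $\eta_i := \sigma_i - \mathrm{Id}$, we have on each $B_{3r_i}(g')\cap T_{i-1}$ (for $g'\in\cG_i$) the bounds
\begin{gather*}
r_i^{-1}|\eta_i| + |D^\perp_{T_{i-1}}\eta_i| \leq c_{rcs}\, M^{-1/2}\delta, \qquad |D^\top_{T_{i-1}}\eta_i| \leq c_{rcs}^2\, M^{-1}\delta^2.
\end{gather*}
Provided $\delta \leq \delta_1(n,\rho)$ with $c_{rcs}^2 M^{-1}\delta^2 \leq 1/2$, the tangential differential $D^\top_{T_{i-1}}\sigma_i = \mathrm{Id}_{T_x T_{i-1}} + D^\top_{T_{i-1}}\eta_i$ is invertible at every point of $T_{i-1}$, with operator-norm bound $\leq 2$ on its inverse. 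Combined with the already-proved graphical description $T_i\cap B_{2r_i}(g') = \graph_{\Omega_{ig'}} f_{ig'}$ from $(\star_i)$, the inverse function theorem (in charts given by $(\star_{i-1})$ for the source and $(\star_i)$ for the target) shows that $\sigma_i\colon T_{i-1}\to T_i$ is a local diffeomorphism.

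For global injectivity, recall from Remark \ref{rem:hole-control} that $\sigma_i = \mathrm{Id}$ off $\bigcup_{g'\in\cG_i}B_{3r_i}(g')$. Suppose $x,y\in T_{i-1}$ satisfy $\sigma_i(x)=\sigma_i(y)$. The $C^0$ bound $|\eta_i|\leq c_{rcs} M^{-1/2}\delta\, r_i$ forces $|x-y|\leq 2c_{rcs} M^{-1/2}\delta\, r_i$, so either $x=y$ outside the good-ball union, or both points lie in a common $B_{(3+c\delta)r_i}(g')$. In that ball, $T_{i-1}$ is graphical over $L_{i-1,g}$ with small slope by $(\star_{i-1})$, and the tangential bound above implies $\sigma_i|_{T_{i-1}\cap B_{4r_i}(g')}$ is a $(1+c M^{-1}\delta^2)$-bi-Lipschitz embedding with respect to intrinsic distance; shrinking $\delta_1$ further, this rules out the collision, so $x=y$.

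Finally, the estimates for $\sigma_i^{-1}$ follow by inverting the Neumann series. Pointwise on $T_{i-1}$, $(D^\top_{T_{i-1}}\sigma_i)^{-1} = \mathrm{Id} + \sum_{k\geq 1}(-D^\top_{T_{i-1}}\eta_i)^k$, giving $|(D^\top_{T_{i-1}}\sigma_i)^{-1}-\mathrm{Id}|\leq 2 c_{rcs}^2 M^{-1}\delta^2$; a similar geometric series argument, keeping track of the tangential/normal splitting relative to $T_i$ (which is within $C M^{-1/2}\delta$ of $T_{i-1}$ by the squash estimates of Lemma \ref{lemma_squash_+}), yields $|D^\perp_{T_i}\sigma_i^{-1}|\leq 2 c_{rcs} M^{-1/2}\delta$ and $r_i^{-1}|\sigma_i^{-1}-\mathrm{Id}|\leq 2c_{rcs} M^{-1/2}\delta$ on $T_i$. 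The main subtle point—and the only real obstacle—is the global injectivity step, where one must combine the $C^0$ smallness of $\eta_i$ with the local bi-Lipschitz estimate to prevent distinct pre-images across overlapping good balls; all the remaining bookkeeping is routine given the already-established graphical descriptions.
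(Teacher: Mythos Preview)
Your approach is essentially the same as the paper's: both use the graphical control $(\star_{i-1})$ together with the refined coarse estimates of Lemma \ref{lem:final-coarse} to see that $\sigma_i$ is a small $C^1$-perturbation of the identity on $T_{i-1}$, hence locally invertible with the same estimates for the inverse. The paper's argument is a three-line sketch (graphicality gives $C^1$-closeness to a plane, the coarse estimates give local invertibility, and $\sigma_i(T_{i-1}\cap B_{3r_i}(g'))\subset B_{5r_i}(g')$), while you carry out the details more carefully, in particular making explicit the global injectivity step via the $C^0$ smallness of $\eta_i$ combined with the local bi-Lipschitz bound---a point the paper leaves implicit.
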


\begin{proof}
Take $g' \in B_{r_{i-1}}(g)$.  Hypothesis \eqref{eqn:graph-control} implies $T_{i-1} \cap B_{5r_i}(g')$ is $C^1$ close to $L_{i-1,g}$.  The coarse estimates of Lemma \ref{lem:final-coarse} show the $\sigma_i$ is locally invertible, and $\sigma_i(T_{i-1}|_{B_{3r_i}(g')}) \subset B_{5r_i}(g')$.
\end{proof}
\vspace{.25cm}

\subsection{Properties of the approximating manifolds}\label{section:prop_T}

We show that the manifolds $T_i$ are uniformly bi-Lipschitz to $T_0$, and approach in the Hausdorff sense a limit $T_\infty$, sharing the same bi-Lipschitz bound.  For $i > j \geq 1$, define the maps
\begin{gather}
\tau_{i,j} = \sigma_i \circ \cdots \circ \sigma_j : T_{j-1} \to T_i .
\end{gather}
By Corollary \ref{cor:diffeomorphism} each $\tau_{i,j}$ is a diffeomorphism between $T_{j-1}$ and $T_i$.

\begin{lemma}\label{lem:lipschitz-bounds}
For every $i \geq j$, we have
\begin{gather}\label{eq_est_tau_unif}
|\tau_{i, j}(x) - x| \leq c_2 M^{-1/2} \delta r_j , 
\end{gather}
and
\begin{gather}\label{eq_grad_est}
|D\tau_{i,j}| \leq e^{c_2 M^{-1} \delta^2}, \quad |D\tau^{-1}_{i,j}| \leq e^{c_2 M^{-1} \delta^2} .
\end{gather}
Here $c_2 = c_2(n, \rho)$.

\end{lemma}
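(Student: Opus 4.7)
The plan is to prove both estimates by telescoping $\tau_{i,j}$ into its constituent factors $\sigma_\ell$ and invoking the refined coarse estimates of Lemma \ref{lem:final-coarse} at each scale, then summing.

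For the $C^0$ displacement bound, I would write
\begin{gather}
\tau_{i,j}(x) - x = \sum_{\ell=j}^{i} \bigl[\sigma_\ell(\tau_{\ell-1,j}(x)) - \tau_{\ell-1,j}(x)\bigr],
\end{gather}
with the convention $\tau_{j-1,j}=\mathrm{Id}$. By Remark \ref{rem:hole-control}, only the scales $\ell$ for which $\tau_{\ell-1,j}(x) \in B_{3r_\ell}(g'_\ell)$ for some $g'_\ell \in \cG_\ell$ contribute. For each such $\ell$, Lemma \ref{lem:final-coarse} together with the coarse bound \eqref{eq_beta<delta} yields $|\sigma_\ell - \mathrm{Id}| \leq c(n,\rho)M^{-1/2}\delta\, r_\ell$ at the relevant point. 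Summing the geometric series $\sum_{\ell \geq j} r_\ell = r_j/(1-\rho)$ gives $|\tau_{i,j}(x)-x| \leq c_2 M^{-1/2}\delta\, r_j$, which is \eqref{eq_est_tau_unif}.

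For the gradient estimate, the chain rule gives
\begin{gather}
|D\tau_{i,j}(x)| \leq \prod_{\ell=j}^{i} |D\sigma_\ell(\tau_{\ell-1,j}(x))| \leq \prod_{\ell=j}^{i}\bigl(1 + c(n,\rho)M^{-1}\beta(g_\ell, 8r_{\ell-2})^2\bigr),
\end{gather}
using the second conclusion of Lemma \ref{lem:final-coarse} (where $g_\ell \in \cG_{\ell-2}$ is the good-ball ancestor of the trajectory at scale $\ell$). Taking logarithms and using $\log(1+t)\leq t$, it suffices to bound $\sum_\ell \beta(g_\ell, 8r_{\ell-2})^2 \leq c(n,\rho)\delta^2$. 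To do this I apply Corollary \ref{cor_good_y} at each scale $\ell$ to obtain $y_\ell \in \cC \cap B_{2r_{\ell-2}}(g_\ell)$ with $r_{y_\ell}\leq r_{\ell-2}$ and summability of $\beta$-numbers at $y_\ell$; the monotonicity of $\beta_2$ in Remark \ref{rem:pointwise-integral} then controls $\beta(g_\ell, 8r_{\ell-2})^2$ by $c(k)\beta(y_\ell, 2^{\alpha_\ell})^2$ for the dyadic scale $2^{\alpha_\ell}$ just above $11 r_{\ell-2}$. Summability across scales is obtained by combining the per-point hypothesis \eqref{eq_sum_beta_bounds} at $y_\ell$ with the bounded multiplicity of the Vitali structure (Remark \ref{rem:vitaliness}): each $y \in \cC$ arises as $y_\ell$ for only a controlled number of scales, so rearranging the sum and applying \eqref{eq_sum_beta_bounds} once per $y$ yields the desired $\sum_\ell \beta(g_\ell, 8r_{\ell-2})^2 \leq c\delta^2$. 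This gives $|D\tau_{i,j}| \leq e^{c_2 M^{-1}\delta^2}$.

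For the inverse bound, Corollary \ref{cor:diffeomorphism} shows each $\sigma_\ell$ is a diffeomorphism whose inverse satisfies the same refined coarse estimates; applying the same chain-rule argument to $\tau_{i,j}^{-1} = \sigma_j^{-1}\circ\cdots\circ\sigma_i^{-1}$ yields the matching exponential bound. The main obstacle I anticipate is the summability step in the gradient estimate: the hypothesis of Theorem \ref{thm:good-tree} is a per-point summability on $\cC$, but the trajectory may invoke different reference points $y_\ell$ at different scales, so the pass from pointwise to trajectory-wise summability must be carried out carefully using the Vitali disjointness and the fact that any fixed $y \in \cC$ can lie in at most boundedly many of the dilated good balls $B_{2r_{\ell-2}}(g_\ell)$.
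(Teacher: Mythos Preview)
Your $C^0$ displacement argument is correct and matches the paper's proof exactly.

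The gradient argument has a genuine gap in the summability step. Your plan is to pick, at each scale $\ell$, a reference point $y_\ell \in \cC \cap B_{2r_{\ell-2}}(g_\ell)$ via Corollary~\ref{cor_good_y}, and then sum $\sum_\ell \beta(y_\ell, 11r_{\ell-2})^2$ by a bounded-multiplicity argument. But bounded multiplicity goes the wrong way: even granting that each fixed $y$ appears as some $y_\ell$ for only boundedly many $\ell$ (which is itself dubious---the balls $B_{2r_{\ell-2}}(g_\ell)$ along the trajectory are essentially nested, so a point near the trajectory can lie in \emph{all} of them, not just boundedly many), this still allows arbitrarily many \emph{distinct} $y$'s, one per scale. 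Grouping by distinct $y$ then gives a bound of order $(\text{number of distinct }y)\cdot \delta^2$, which can grow like $(i-j)\delta^2$ and is not uniform in $i$. Remark~\ref{rem:vitaliness} concerns disjointness of balls at fixed or comparable scales and says nothing about a nested family across all scales.

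The paper's fix is to use a \emph{single} reference point. Assuming $x_{i-1} \in B_{3r_i}(g'_i)$ for some $g'_i \in \cG_i$ (else the product terminates earlier), radius control \eqref{eqn:scale-control} furnishes one $y \in \cC \cap B_{2r_i}(g'_i)$ with $r_y < r_i$ satisfying \eqref{eq_sum_beta_bounds}. Then the already-proven displacement bound gives, for every $k \in \{j,\ldots,i\}$,
\[
|y - g_{k-2}| \leq |y - x_{i-1}| + |x_{i-1} - x_{k-1}| + |x_{k-1} - g'_k| + |g'_k - g_{k-2}| < 2r_{k-2},
\]
so by monotonicity of $\beta$ one has $\beta(g_{k-2}, 8r_{k-2})^2 \leq c(k)\beta(y, 10r_{k-2})^2$ for \emph{every} $k$ along the trajectory. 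Now the sum over $k$ is a genuine partial sum of the dyadic $\beta$-series at the single point $y$, bounded by $c\delta^2$ via \eqref{eq_sum_beta_bounds}. This is the missing idea: use the $C^0$ estimate you already have to propagate one finest-scale witness back up through all ancestors, rather than choosing a fresh witness at each scale.
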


\begin{proof}
The first estimate follows immediately from the coarse estimates: since $|\sigma_k(x) - x| \leq c_{rcs} M^{-1/2} \delta r_k$ for every $x \in T_{k-1}$, we have
\begin{gather}
\abs{\tau_{i,j}(x)-x}\leq c_{rcs} M^{-1/2} \delta \sum_{k=j}^{i-1} r_k\leq \frac{c_{rcs}}{1-\rho} M^{-1/2} \delta r_j \quad \forall x \in T_{i-1}.
\end{gather}

In order to prove the gradient estimates, let $x = x_{j-1} \in T_{j-1}$, and let $x_k=\tau_{k,j}(x)\in T_k$. By definition of $\tau_{ij}$, we have
\begin{gather}
 \abs{D_{T_{j-1}} \tau_{ij}(x)}\leq \prod_{k=j}^i \abs{D_{T_{k-1}} \sigma_k(x_{k-1})}\, .
\end{gather}
(Here, as usual, the gradient of $\sigma_k$ is computed on the tangent space of $T_{k-1}$.)

Let us bound each term in the above product.  If $x_{k-1} \not\in B_{3r_k}(\cG_k)$ then $|D\sigma_k(x_{k-1})| = 1$ and there is nothing to show.  Otherwise, choose $g'_k \in \cG_k$ so that $x_{k-1} \in B_{3r_{k}}(g'_{k})$.  By Lemma \ref{lem:final-coarse}, there is a $g_{k-2} \in \cG_{k-2}$ so that $g'_k \in B_{r_{k-1}+r_{k-2}}(g_{k-2})$, and
\begin{gather}
|D\sigma_k(x_{k-1})| \leq 1 + c M^{-1} \beta(g_{k-2}, 8r_{k-2})^2.
\end{gather}

We can assume that $x_{i-1} \in B_{3r_i}(\cG_i)$.  By assumption and ``radius control'' \eqref{eqn:scale-control}, we can find a $y \in \cC\cap B_{2r_i}(g_i')$ with $r_y < r_i$ and such that \eqref{eq_sum_beta_bounds} holds (see also Corollary \ref{cor_good_y}).  Using the refined squash estimates of Lemma \ref{lem:final-coarse}, we have
\begin{align}
|y - g_{k-2}| 
&\leq |y - x_{i-1}| + |x_{i-1} - x_{k-1}| + |x_{k-1} - g'_k| + |g'_k - g_{k-2}| \\
&< 4r_i + cM^{-1/2}\delta r_k + 3r_k + r_{k-1} + r_{k-2} \\
&\leq 2r_{k-2} .
\end{align}
From the monotonicity of $\beta$ given in Remark \ref{rem:pointwise-integral}, we deduce that
\begin{gather}
|D\sigma_k(x_{k-1})| \leq 1 + cM^{-1} \beta(y, 10 r_{k-2})^2 \quad \forall k = j, \ldots, i.
\end{gather}
Similarly, we have $|D\sigma^{-1}_k(x_k)| \leq 1 + c M^{-1} \beta(y, 10 r_{k-2})^2$ for each $k$.

Therefore
\begin{align}
\log |D_{T_{j-1}} \tau_{ij}(x)|
&\leq \sum_{k=j}^i \log \left(1 + cM^{-1} \beta(y, 10 r_{k-2})^2 \right) \\
&\leq c M^{-1} \sum_{k=j-2}^i \beta(y, 10 r_k)^2 \\
&\leq c M^{-1} \delta^2.
\end{align}
The bound for $\tau_{i,j}^{-1}$ follows in precisely the same manner.
\end{proof}

The above Lemma shows that the $\tau_{i,1}$ are uniformly Cauchy in $C^{0,\alpha}$, and therefore we have a map $\tau_\infty$ so that $\tau_{i, 1} \to \tau_\infty$ uniformly.  Using the gradient bounds above, we deduce $\tau_\infty$ is bi-Lipschitz wrt the intrinsic distance in $T_\infty$, with constants
\begin{gather}
e^{-c_2M^{-1}\delta^2} |x - y| \leq |\tau_\infty(x) - \tau_\infty(y)| \leq e^{c_2 M^{-1}\delta^2} |x - y| \, .
\end{gather}
Note that this in particular implies that $\tau_\infty$ is a Lipschitz mapping from $\R^k$ to $\R^n$ with Lipschitz constant $e^{c_2 M^{-1}\delta^2}$.

Define $T_\infty = \tau_\infty(T_0)$, so $T_\infty$ is bi-Lipschitz to $T_0$.  We immediately have that
\begin{equation}\label{eqn:manifold-volume-control}
e^{-kc_2 M^{-1} \delta^2} \haus^k(T_0 \cap B_3) \leq \haus^k (T_\infty \cap B_3) \leq e^{kc_2 M^{-1} \delta^2} \haus^k(T_0 \cap B_3) \, .
\end{equation}

\vspace{.25cm}

\subsection{Packing estimate}
The construction of the balls and the $T_i$ ensure that for every $i$, the collection of good $r_i$ balls, and bad/stop balls at scales $r_i, r_{i-1}, \ldots, r_1$ form a Vitali covering not only in $\dR^n$, but also \emph{in the manifold $T_i$}.  This allows us to obtain the following packing estimates:

\begin{lemma}\label{lem:cutting-holes}
We have
\begin{gather}
\#\cur{\cG_i} r_i^k + \sum_{\ell \leq i} \#\cur{\cB_\ell \cup \cS_\ell} r_\ell^k  \leq 30^k e^{c_3 M^{-1} \delta^2},
\end{gather}
provided $\delta_1(n, \rho)$ is sufficiently small.  Here $c_3 = c_3(n, \rho)$.
\end{lemma}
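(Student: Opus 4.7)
The strategy is to exploit the Vitali disjointness from Remark \ref{rem:vitaliness}: the collection
$$\mathcal{F}_i:=\{B_{r_i/5}(g)\}_{g\in\cG_i}\cup\bigcup_{\ell\leq i}\{B_{r_\ell/5}(x)\}_{x\in\cB_\ell\cup\cS_\ell}$$
is pairwise disjoint and, since all centers lie in $B_1$ and radii are bounded by $1/5$, is contained in $B_{6/5}(0)$. I will show that each member of $\mathcal{F}_i$ captures a definite amount of $k$-area of the approximating manifold $T_i$, then compare against an upper bound on $\haus^k(T_i\cap B_{6/5})$ coming from the bi-Lipschitz control of Lemma \ref{lem:lipschitz-bounds}.

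For a good center $g\in\cG_i$, the plan is to combine $(\star_i)$ with the estimate $d(g,L_{ig})\leq r_i/20$, valid for $\delta_1(n,\rho)$ small. This bound follows because $g$ was selected from a Vitali net inside $B_{r_i/40}(L_{i-1,g_{i-1}})$ for its parent $g_{i-1}\in\cG_{i-1}$, combined with the Hausdorff tilting estimate \eqref{eq_gt_1} of Lemma \ref{lem:good-ball-tilting}. The graph of $f_{ig}$ over the affine disk $B_{r_i/10}(p_{L_{ig}}(g))\cap L_{ig}\subset\Omega_{ig}$ then lies inside $B_{r_i/5}(g)$ and has $k$-area at least $\omega_k(r_i/10)^k$.

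For a bad or stop center $x\in\cB_\ell\cup\cS_\ell$ with $\ell\leq i$, I iterate Remark \ref{rem:hole-control} to get $T_i=T_\ell$ inside $B_{4r_\ell/5}(x)$. By construction, $x$ belongs to $B_{r_{\ell-1}}(g)\cap B_{r_\ell/40}(L_{\ell-1,g})$ for the parent $g\in\cG_{\ell-1}$, so $d(x,L_{\ell-1,g})\leq r_\ell/40$. By $(\star_{\ell-1})$, the graph of $f_{\ell-1,g}$ restricted to the $L_{\ell-1,g}$-disk of radius $r_\ell/20$ around $p_{L_{\ell-1,g}}(x)$ sits (once $\delta_1(n,\rho)$ is small enough that $\Lambda M^{-1/2}\delta\, r_{\ell-1}\leq r_\ell/80$) inside $B_{3r_\ell/20}(x)$ and has area at least $\omega_k(r_\ell/20)^k$. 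Applying $\sigma_\ell$, which by Lemma \ref{lem:lipschitz-bounds} and Corollary \ref{cor:diffeomorphism} is a diffeomorphism with $|\sigma_\ell-\mathrm{Id}|\leq c\,M^{-1/2}\delta\, r_\ell$ and Jacobian bounded below by $e^{-cM^{-1}\delta^2}$, pushes this piece into $T_\ell\cap B_{r_\ell/5}(x)=T_i\cap B_{r_\ell/5}(x)$ while preserving area up to the factor $e^{-cM^{-1}\delta^2}$.

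Summing the lower bounds $\omega_k(r/20)^k e^{-cM^{-1}\delta^2}$ over $\mathcal{F}_i$ and using the upper bound
$$\haus^k(T_i\cap B_{6/5})\leq e^{kc_2 M^{-1}\delta^2}\haus^k\bigl(T_0\cap B_{6/5+c_2 M^{-1/2}\delta}\bigr)\leq e^{kc_2 M^{-1}\delta^2}\omega_k(5/4)^k,$$
which comes from $T_i=\tau_{i,1}(T_0)$, $|\tau_{i,1}-\mathrm{Id}|\leq c_2 M^{-1/2}\delta$, and $|D\tau_{i,1}|\leq e^{c_2 M^{-1}\delta^2}$, then rearranging yields
$$\#\cG_i\,r_i^k+\sum_{\ell\leq i}\#(\cB_\ell\cup\cS_\ell)\,r_\ell^k\leq 25^k\,e^{c_3 M^{-1}\delta^2}\leq 30^k\,e^{c_3 M^{-1}\delta^2}$$
for a suitable $c_3=c_3(n,\rho)$. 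The main obstacle is the bad/stop case: the offset $|f_{\ell-1,g}|$ is controlled only at the \emph{parent} scale $r_{\ell-1}$, so squeezing the graph-piece through $\sigma_\ell$ into the much finer ball $B_{r_\ell/5}(x)$ forces $\delta_1(n,\rho)$ to be small in a $\rho$-dependent way, to compensate for the $\rho=r_\ell/r_{\ell-1}$ factor separating the two scales.
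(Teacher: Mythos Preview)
Your proof is correct and follows essentially the same strategy as the paper's: exploit the Vitali disjointness of Remark~\ref{rem:vitaliness}, show each ball captures a definite piece of the approximating manifold, and compare against the bi-Lipschitz area bound from Lemma~\ref{lem:lipschitz-bounds}. The only differences are bookkeeping: the paper works with $T_{i-1}$ throughout (rather than $T_i$) and bounds against $\haus^k(T_{i-1}\cap B_3)$ (rather than $B_{6/5}$); and in the bad/stop case the paper simply asserts $T_{\ell-1}\cap B_{r_\ell/5}(x)=T_{i-1}\cap B_{r_\ell/5}(x)$ via Remark~\ref{rem:hole-control}, whereas you are more explicit in bridging $T_{\ell-1}$ to $T_\ell$ by pushing forward through $\sigma_\ell$ using the $C^0$ and Jacobian bounds---this is in fact slightly more careful, since Remark~\ref{rem:hole-control} literally only freezes the manifolds from index $\ell$ onward.
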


\begin{proof}

We first demonstrate that each good/bad/stop $r_i/5$-ball intersects a definite amount of the $T_{i-1}$.  More precisely, let $x \in (\cB_i \cup \cG_i \cup \cS_i) \cap B_{r_{i-1}}(g)$ for some $g\in \cG_{i-1}$.  We have $\dist(x, L_{i-1, g}) < r_i/40$ by construction, and hence by graphicality we know
\begin{equation}\label{eqn:centers-close-to-T}
\dist(x, T_{i-1}) \leq \Lambda \delta_1 r_{i-1} + r_i/40 < r_i/30\, ,
\end{equation}
for $\delta_1$ sufficiently small (depending only on $n, \rho$).  Since $B_{r_{i-1}/5}(x) \subset B_{2r_{i-1}}(g)$, and we have graphical control over $T_{i-1}$ in this region, we deduce
\begin{gather}\label{e:cutting-holes:1}
\haus^k(T_{i-1} \cap B_{r_i/5}(x)) \geq \omega_k (r_i/10)^k ,
\end{gather}
again provided $\delta_1(n, \rho)$ is small.

Since the $T_i=T_\ell$ inside bad/stop balls of larger radii (Remark \ref{rem:hole-control}), we deduce from above that also any bad/stop $r_\ell/5$-ball cuts through a definite amount of $T_{i-1}$, for any $r_\ell \geq r_i$.  The collection of good $r_i$-balls, and the bad/stop balls of scale $r_i, r_{i-1}, \ldots, r_1$, form a Vitali collection (Remark \ref{rem:vitaliness}), and in particular the associated regions $T_{i-1} \cap B_{r_\ell/5}(x_\ell)$ form disjoint subsets of $T_{i-1}$.  Combining this with \eqref{e:cutting-holes:1} gives the volume estimate
\begin{align}
&\omega_k \#\{\cG_i\} (r_i/10)^k + \sum_{\ell=1}^i \omega_k \#\{\cB_\ell \cup \cS_\ell\} (r_i/10)^k \, ,\\
&\leq \haus^k(T_{i-1} \cap B_{r_i/5}(\cG_i)) + \sum_{\ell=1}^i \haus^k(T_{\ell-1} \cap B_{r_\ell/5}(\cB_\ell \cup \cS_\ell)) \, ,\\
&= \haus^k(T_{i-1} \cap B_{r_i/5}(\cG_i)) + \sum_{\ell=1}^i \haus^k(T_{i-1} \cap B_{r_\ell/5}(\cB_\ell \cup \cS_\ell))\, , \\
&\leq \haus^k(T_{i-1} \cap B_3)\, , \\
&\leq e^{kc_2M^{-1} \delta^2} \omega_k 30^k
\end{align}
The last inequality follows from \eqref{eqn:manifold-volume-control}.  This proves the Lemma.
\end{proof}

\vspace{.25cm}

\subsection{Bounding excess and Finishing the Proof}
In this section, we bound the measure of the excess set in our construction, and finish the proof of Theorem \ref{thm:good-tree}


\begin{lemma}\label{lem:excess-bound}
Define
\begin{gather}
 Z_i = B_2 \cap \bigcup_{\substack{ y \in \cC \\ r_y \leq r_i}} B_{r_y}(y) .
\end{gather}
We have for each $i$
\begin{gather}\label{eqn:individual-excess-bound}
\sum_{g\in \cG_i} \mu \ton{E_{ig}} \leq c(k,\rho) e^{c_3 M^{-1} \delta^2} \int_{T_\infty \cap B_{3r_i}(Z_i)} \beta(z, 10r_i)^2 d\haus^k(z) .
\end{gather}

Therefore,
\begin{gather}\label{eqn:total-excess-bound}
\sum_i \mu \ton{E_{i}} \leq c(k,\rho) e^{2c_3 M^{-1} \delta^2} \delta^2 .
\end{gather}
\end{lemma}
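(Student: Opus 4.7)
The plan is to prove the per-scale bound \eqref{eqn:individual-excess-bound} first, and then derive the total bound \eqref{eqn:total-excess-bound} by Fubini and a pointwise estimate.

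For \eqref{eqn:individual-excess-bound}, I would apply Chebyshev to $d(\cdot, L_{ig})^2$ on $B_{r_i}(g)$: since $L_{ig} = V^k_\mu(g, 8r_i)$ realizes $\beta(g, 8r_i)$, this yields $\mu(E_{ig}) \leq c(k,\rho)\, r_i^k\, \beta(g, 8r_i)^2$. The next step is to trade $r_i^k$ for an integral over $T_\infty$: the graphicality \eqref{eqn:graph-control} combined with the bi-Lipschitz bounds of Lemma \ref{lem:lipschitz-bounds} gives $\haus^k(T_\infty \cap B_{r_i/5}(g)) \geq c(k,\rho)\, e^{-c_3 M^{-1}\delta^2}\, r_i^k$. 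Since $g$ is a good ball, $\mu(B_{r_i}(g)) \geq m_0 M r_i^k$, so the mass hypothesis of Remark \ref{rem:pointwise-integral} applies and gives the pointwise comparison $\beta(g, 8r_i)^2 \leq c(k)\, \beta(z, 10r_i)^2$ for any $z \in B_{r_i/5}(g)$. Combining these three ingredients, summing over $g$ using disjointness of $\{B_{r_i/5}(g)\}_{g\in\cG_i}$ (Remark \ref{rem:vitaliness}), and the inclusion $B_{r_i/5}(\cG_i) \subset B_{3r_i}(Z_i)$ (from Corollary \ref{cor_good_y}, each $g \in \cG_i$ lies within $2r_i$ of some $y \in \cC$ with $r_y \leq r_i$), then yields \eqref{eqn:individual-excess-bound}.

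For \eqref{eqn:total-excess-bound}, I would sum in $i$ and apply Fubini; then the volume bound $\haus^k(T_\infty \cap B_3) \leq c(n)\, e^{c_3 M^{-1}\delta^2}$ from \eqref{eqn:manifold-volume-control} reduces the task to the pointwise estimate $\sum_{i \,:\, z \in B_{3r_i}(Z_i)} \beta(z, 10r_i)^2 \leq c(k,\rho)\,\delta^2$ for $z \in T_\infty$. The key observation is that $Z_i$, hence $B_{3r_i}(Z_i)$, is monotonically decreasing in $i$ (the constraint $r_y \leq r_i$ tightens as $i$ grows), so the set of admissible $i$ is a downward-closed interval $\{i \leq i_{\max}(z)\}$. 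Picking any $y_z \in \cC$ witnessing $z \in B_{3r_{i_{\max}}}(Z_{i_{\max}})$, we have $r_{y_z} \leq r_{i_{\max}} \leq r_i$ and $|z - y_z| \leq 4 r_{i_{\max}} \leq 4 r_i$ for \emph{every} admissible $i$; monotonicity of $\beta$ (Remark \ref{rem:pointwise-integral}) then gives $\beta(z, 10r_i)^2 \leq c(k)\, \beta(y_z, 2^{\alpha_i})^2$ with $2^{\alpha_i}$ comparable to $r_i$ and the $\alpha_i$ distinct across $i$. Applying the hypothesis of Theorem \ref{thm:good-tree} at $y_z$ bounds the sum by $c(k)\,\delta^2$. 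The main technical subtlety is that this hypothesis is assumed only for $\mu$-a.e.\ $y \in \cC$, while the argument requires it at the specific $y_z$; this is handled via lower semi-continuity of $\sum_\alpha \beta(\cdot, 2^\alpha)^2$ (from Lemma \ref{thm:lsc}), which extends the bound from the $\mu$-full-measure set to its closure, and by restricting $\cC$ to this closed set if necessary, without affecting the covering structure.
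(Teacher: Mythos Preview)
Your proposal is correct and follows essentially the same route as the paper: Chebyshev on each $E_{ig}$, replace $r_i^k$ by a piece of $\haus^k$ on the approximating manifold, sum using disjointness and the containment $B_{r_i/5}(\cG_i)\subset B_{3r_i}(Z_i)$, then Fubini and a pointwise $\beta$-sum estimate obtained by comparing to a single $y\in\cC$ near $z$ at the finest relevant scale. The only cosmetic difference is that the paper first integrates over $T_i$ and then changes variables to $T_\infty$ via $\tau_{\infty,i}$ (using the displacement and Jacobian bounds of Lemma~\ref{lem:lipschitz-bounds}), whereas you bypass this by directly lower-bounding $\haus^k(T_\infty\cap B_{r_i/5}(g))$.
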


\begin{proof}
For each $g\in \cG_i$, by definition of the $L^2$-best plane we have
\begin{gather}
\mu \ton{E_{ig}} \leq c(k,\rho) r_i^k \beta(g, 8r_i)^2 .
\end{gather}

As in equation \eqref{e:cutting-holes:1}, we have $d(g, T_i) \leq r_i/30$ and $\haus^k(T_i \cap B_{r_i/5}(g)) \geq \omega_k (r_i/10)^k$, provided $\delta_1(n, \rho)$ is sufficiently small.  Therefore we deduce that
\begin{gather}\label{eqn:individual-excess-integral}
\mu \ton{E_{ig}} \leq c \int_{T_i \cap B_{r_i/5}(g)} \beta(z, 9r_i)^2 d\haus^k(z).
\end{gather}

Since the $r_i/5$-balls are disjoint, we can control the sum over $\cG_i$ by an integral over $T_\infty$.  Using ``radius control'' and the fact that any good ball contains some mass, we must have
\begin{gather}
B_{r_i}(g) \cap Z_i \neq \emptyset \quad\forall g\in \cG_i.
\end{gather}
We therefore calculate
\begin{align}
\sum_{g \in \cG_i} \mu \ton{E_{ig}}
&\leq c \int_{T_i \cap B_{2r_i}(Z_i)} \beta(z, 9r_i)^2 d\haus^k(z)\, , \label{eqn:bound-excess-1} \\
&\leq c e^{c_3M^{-1}\delta^2} \int_{T_\infty \cap B_{5r_i/2}(Z_i)} \beta(\tau^{-1}_{\infty,i}(z), 9r_i)^2 d\haus^k \, ,\label{eqn:bound-excess-2} \\
&\leq c(k,\rho) e^{c_3M^{-1} \delta^2} \int_{T_\infty \cap B_{3r_i}(Z_i)} \beta(z, 10r_i)^2 d\haus^k \, . \label{eqn:bound-excess-3}
\end{align}
In inequalities \eqref{eqn:bound-excess-2}, \eqref{eqn:bound-excess-3} we used that
\begin{gather}
|\tau^{-1}_{\infty, i} - Id| \leq c(n,\rho) \delta_1 r_i \leq r_i/2,
\end{gather}
ensuring $\delta_1(n,\rho)$ is sufficiently small.  In \eqref{eqn:bound-excess-2} we also used
\begin{gather}
|J\tau_{\infty, i}^{-1}| \leq e^{c_3M^{-1} \delta^2}.
\end{gather}
This establishes relation \eqref{eqn:individual-excess-bound}.

We prove relation \eqref{eqn:total-excess-bound}.  Suppose $z \in B_{3r_j}(Z_j)$.  Then we can find a $y \in \cC$ with $r_y \leq r_j$, and $|z - y| < 4r_j$, and for which \eqref{eq_sum_beta_bounds} holds.  We deduce 
\begin{gather}
\sum_{i \geq j} \beta(z, 10r_i)^2 \leq c(k) \sum_{i \geq j} \beta(y, 16r_i)^2 \leq c(k)\delta^2.
\end{gather}
Since $j$ and $z$ were arbitrary, we have
\begin{gather}
\sum_i 1_{B_{3r_i}(Z_i)}(z) \beta(z, 10r_i)^2 \leq c(k) \delta^2 \quad \forall z \in B_2.
\end{gather}

We now use Fubini's theorem and estimate \eqref{eqn:individual-excess-bound} to deduce 
\begin{align}
\sum_i \mu \ton{E_i}
&\leq  \sum_i \sum_{g \in \cG_i} \mu \ton{E_{ig}} \, , \\
&\leq c(k,\rho) e^{c_3 M^{-1} \delta^2} \sum_i \int_{T_\infty \cap B_{3r_i}(Z_i)} \beta(z, 10r_i)^2 d\haus^k \, ,\\
&= c  e^{c_3 M^{-1} \delta^2}\int_{T_\infty} \sum_i 1_{B_{3r_i}(Z_i)}(z) \beta(z, 10r_i)^2 d\haus^k \, ,\\
&\leq c  e^{c_3 M^{-1} \delta^2}\delta^2 \haus^k(T_\infty \cap B_5) \, , \\
&\leq c(k,\rho) \delta^2 e^{2c_3M^{-1} \delta^2} 50^k .
\end{align}
This completes the proof of Lemma \ref{lem:excess-bound}.
\end{proof}

To finish the proof of Theorem \ref{thm:good-tree} now ensure that $\delta_1$ small enough so that $30^k e^{c_3 \delta_1^2} \leq 50^k$.  The inclusions \eqref{eqn:T-ball-inclusions} follow from ``covering control'' and ``graphical control,'' ensuring $\delta_1$ is sufficiently small.  To see the bound \eqref{eq_measure_est}, observe that by ``covering control'' and Remark \ref{rem:good-mass} we have the inclusion
\begin{gather}
B_1 \setminus \left( \bigcup_{g\in\cG_i } B_{r_i}(g) \cup \bigcup_{\substack{b\in \cB_\ell \\ \ell\leq i}} B_{r_\ell}(b) \cup \bigcup_{\substack{x \in \cC(\cT) \\ r_x \geq r_i}} B_{4r_x}(x) \right) \subset  \bigcup_{\substack{s \in \cS_l \\ \mu(B_{r_l}(s)) \leq M r_l^k \\ l \leq i }} B_{r_l}(s) \cup \bigcup_{l \leq i-1} E_l
\end{gather}
Combining the above with Lemma \ref{lem:cutting-holes} and Lemma \ref{lem:excess-bound} gives \eqref{eq_measure_est}.  This completes the proof of Theorem \ref{thm:good-tree}.  Corollary \ref{cor:good-tree} is simply a restatement of Theorem \ref{thm:good-tree}.

\vspace{1cm}

\section{Bad tree}\label{section:bad-tree}

We define precisely the bad tree construction at $B_1(0)$.  As explained in the proof outline of Section \ref{ss:proof_outline}, the bad tree is essentially similar to the good tree construction, except with good and bad balls swapped, and best $(k-1)$-planes instead of best $k$-planes.  Since we are interested in $k$-control this simplifies many of the technical details.  The resulting bad tree decomposes $B_1(0)$ into a part of controlled measure, a family of good balls with packing estimates, and a subset with $\haus^k$-measure $0$.

Suppose $B_1(0)$ is a bad ball for a non-negative Borel-regular measure $\mu$, with parameters $\rho$ and $m = m_0(n,\rho)M$ as described in Section \ref{section:construction}.  We take $\cC$ a covering pair for $\mu$, with the property that
\begin{gather}\label{eqn:covering-bad-hyp}
B_2(0) \cap B_{r_y}(y) \neq\emptyset \implies r_y \leq 1\quad \forall y \in \cC_+.
\end{gather}
As before, in principle this is the assumption $r_y \leq 1$, but for technical reasons it is preferable to state it this way.

We define inductively families of bad balls $\cur{B_{r_i}(b)}_{b\in \cB_i}$, good balls $\cur{B_{r_i}(g)}_{g\in \cG_i}$, and stop balls $\cur{B_{r_i}(s)}_{s\in \cS_i}$.  At our initial scale $r_0 = 1$, $B_1(0)$ is the only ball, and it is bad by assumption.  So $\cB_0 = \{0\}$, and $\cG_0 = \cS_0 = \emptyset$.

In each bad ball $B_{r_i}(b)$, we have an associated $(k-1)$-plane $W^{k-1}_{ib}$ from Lemma \ref{lem:ball-dichotomy}.  Just for clarity, $\mu$ may be quite badly approximated by $W^{k-1}_{ib}$ in this context, but this will not be relevant since we have a dimension drop.  We define the bad ball excess set to be 
\begin{gather}
E_{ib} = B_{r_i}(b) \setminus B_{2r_{i+1}}(W_{ib}),
\end{gather}
and the total excess
\begin{gather}
E_i = \bigcup_{b \in \cB_i} E_{ib}.
\end{gather}
The excess sets will have controlled measure.

Define inductively the remainder set to be the good and stop balls at all previous scales:
\begin{gather}
R_i = \bigcup_{\ell=0}^i \ton{ B_{r_\ell}(\cS_\ell) \cup B_{r_\ell}(\cG_\ell) }
\end{gather}

Suppose we have defined the good/bad/stop balls down through scale $r_{i-1}$.  We define the $r_i$-balls as follows.  Let $J_i$ form a maximal $2r_i/5$-net in
\begin{gather}\label{eqn:bad-cover-step}
B_1 \cap \left[ \bigcup_{b \in \cB_{i-1}} (B_{r_{i-1}}(b) \cap B_{5r_i}(W_{i-1,b}) ) \right] \setminus R_{i-1}.
\end{gather}
so the balls $\{B_{r_i}(z)\}_{z \in J_i}$ cover \eqref{eqn:bad-cover-step}, and the balls $\{B_{r_i/5}(z)\}_{z \in J_i}$ are disjoint.

Now define
\begin{align}
\cS_i &= \{z \in J_i : \text{$B_{r_i}(z)$ is a stop ball} \}, \\
\cG_i &= \{z \in J_i : \text{$B_{r_i}(z)$ is not stop, but is good} \}, \\
\cB_i &= \{z \in J_i : \text{$B_{r_i}(z)$ is not stop, but is bad}\}.
\end{align}
Evidently $J_i = \cS_i \cup \cG_i \cup \cB_i$.  This completes the bad tree construction.

\begin{definition}\label{def:bad-tree}
The construction defined above is said to be the \emph{bad tree rooted at $B_1(0)$}, and may be written as $\cT = \cT(B_1)$.  Given such a tree $\cT$ we refer to the collection of all good/bad balls by
\begin{gather}
\cG(\cT) := \cup_i \cG_i, \quad \cB(\cT) := \cup_i \cB_i.
\end{gather}
As before we define the associated radius functions $r_g$, $r_b$ for $\cG(\cT)$ and $\cB(\cT)$.

For every stop ball $B_{r_i}(s)$ we have either $\mu \ton{B_{r_i}(s)} \leq M r_i^k$, or $s \in B_{r_y + 2r_i}(y_s)$ for some choice of $y_s \in \cC_+$ with $r_i < r_{y_s} \leq r_{i-1}$.  Define
\begin{gather}
\cC_+(\cT) = \bigcup_{i=0}^\infty \bigcup_{\substack{s \in \cS_i \\ \mu \ton{B_{r_i}(s)} > M r_i^k }} y_s
\end{gather}
to be a choice of $y$'s arising in this way (so, at most one $y$ per stop ball).  Define also
\begin{gather}
\cC_0(\cT) = \cap_i \overline{B_{r_i}(\cB_i)}\, .
\end{gather}
In a bad tree, we refer to the collection of all good balls as the \emph{tree leaves}.
\end{definition}

\begin{remark}\label{rem:bad-radius-control}
As in the good ball construction, we have
\begin{gather}\label{eqn:scale-control-bad}
y \in \cC \text{ and } B_{2r_i}(\cG_i \cup \cB_i) \cap B_{r_y}(y) \neq\emptyset \implies r_y \leq r_i .
\end{gather}

In particular, Corollary \ref{lem:good-containment} and Remark \ref{rem:good-containment} are still valid.
\end{remark}

\begin{remark}\label{rem:bad-mass}
Remark \ref{rem:good-mass} continues to hold for bad trees. In other words, we have the inclusion
\begin{gather}
\bigcup_{i=0}^\infty \bigcup_{\substack{s \in \cS_i \\ \mu \ton{B_{r_i}(s)} > M r_i^k}} B_{r_i}(s) \subset \bigcup_{y \in \cC_+(\cT)} B_{4r_y}(y).
\end{gather}
And therefore we also have the lower bound
\begin{gather}
\mu \ton{B_{4r_y}(y)} > \frac{M}{\rho^k} r_y^k \quad \forall y \in \cC_+(\cT).
\end{gather}
\end{remark}

The following is our main theorem in this section:\\
\begin{theorem}[Bad Tree Structure]\label{thm:bad-tree}
Provided $c_1(n)\rho \leq 1/2$, where $c_1(n)$ as in Theorem \ref{thm:packing-control}, then for any $i \geq 0$, we have the packing estimate
\begin{gather}\label{eqn:bad-packing}
\sum_{\ell = 1}^i \#\cur{\cG_\ell \cup \cB_\ell \cup \cS_\ell} r_\ell^k \leq 2 c_1(n) \rho,
\end{gather}
and the measure estimate
\begin{gather}\label{eqn:bad-total-measure}
\mu \left[ B_1 \setminus \left( \bigcup_{b\in \cB_i } B_{r_i}(b) \cup \bigcup_{\substack{g\in \cG_\ell \\ \ell\leq i}} B_{r_\ell}(g) \cup \bigcup_{\substack{x \in \cC_+(\cT) \\ r_x \geq r_i}} B_{4r_x}(x) \right) \right] \leq 3M.
\end{gather}
Recall that we have chosen $m = m_0(n,\rho)M$ as in Theorem \ref{thm:packing-control}.
\end{theorem}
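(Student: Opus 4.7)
My plan is to deduce both assertions from a simple geometric-series recursion driven by the fact that each bad ball is essentially concentrated within a $5r_{i+1}$-neighborhood of a $(k-1)$-plane, together with the bad-ball packing bound of Theorem \ref{thm:packing-control}.

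\textbf{Step 1: Packing recursion.} I will first establish the one-step estimate
\begin{gather*}
\#\{J_i \cap B_{r_{i-1}}(b)\}\cdot r_i^k \;\leq\; c_1(n)\rho\cdot r_{i-1}^k \qquad \forall\, b\in \cB_{i-1}.
\end{gather*}
By the definition of $J_i$, the collection $\{B_{r_i/5}(z)\}_{z \in J_i \cap B_{r_{i-1}}(b)}$ is a disjoint family whose centers lie in $B_{5r_i}(W_{i-1,b})\cap B_{r_{i-1}}(b)$; rescaling Theorem \ref{thm:packing-control}(A) from unit scale to $B_{r_{i-1}}(b)$ (so that the intrinsic scale drop is $\rho=r_i/r_{i-1}$) gives the stated bound. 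Summing over $b\in \cB_{i-1}$ and using $\cB_{i-1}\subseteq J_{i-1}$,
\begin{gather*}
\#\{J_i\}\cdot r_i^k \;\leq\; c_1(n)\rho\cdot \#\{\cB_{i-1}\}\cdot r_{i-1}^k \;\leq\; c_1(n)\rho\cdot \#\{J_{i-1}\}\cdot r_{i-1}^k.
\end{gather*}
Iterating from $\#\{J_0\}r_0^k = 1$ yields $\#\{J_i\}r_i^k \leq (c_1(n)\rho)^i$, and summing the geometric series under the standing hypothesis $c_1(n)\rho\leq 1/2$ gives $\sum_{\ell=1}^{i}\#\{J_\ell\}r_\ell^k \leq 2c_1(n)\rho$, which is \eqref{eqn:bad-packing}.

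\textbf{Step 2: Inductive covering of $B_1$.} I claim that for every $i\geq 0$,
\begin{gather*}
B_1 \;\subseteq\; \bigcup_{b\in\cB_i} B_{r_i}(b) \;\cup\; R_i \;\cup\; \bigcup_{\ell=0}^{i-1} E_\ell .
\end{gather*}
The base case $i=0$ is trivial since $\cB_0=\{0\}$. For the inductive step, given a bad ball $B_{r_i}(b)\subseteq B_{r_{i-1}}(b')$, note that the maximal $2r_{i+1}/5$-net defining $J_{i+1}$ covers all of $B_{r_i}(b)\cap B_{5r_{i+1}}(W_{ib})\setminus R_i$ by balls of radius $r_{i+1}$, while the remaining portion of $B_{r_i}(b)$ lies inside $E_{ib} = B_{r_i}(b)\setminus B_{2r_{i+1}}(W_{ib})$ (absorbing $R_i$ into itself). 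This gives the containment $B_{r_i}(b)\subseteq B_{r_{i+1}}(J_{i+1})\cup R_i \cup E_{ib}$, and combining with the inductive hypothesis and $R_i\subseteq R_{i+1}$, $B_{r_{i+1}}(J_{i+1})\subseteq B_{r_{i+1}}(\cB_{i+1})\cup R_{i+1}$, closes the induction.

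\textbf{Step 3: Bounding the excess and stop-ball measure.} By Theorem \ref{thm:packing-control}(B) applied in each $B_{r_\ell}(b)$ (with $\eps = M$, $m=m_0(n,\rho)M$), we get $\mu(E_{\ell b}) \leq (M/2) r_\ell^k$, so summing and using the packing bound,
\begin{gather*}
\sum_{\ell=0}^{\infty}\mu(E_\ell) \;\leq\; \frac{M}{2}\Bigl(1 + \sum_{\ell\geq 1}\#\{\cB_\ell\}r_\ell^k\Bigr) \;\leq\; \frac{M}{2}(1+2c_1(n)\rho)\;\leq\; M .
\end{gather*}
Among stop balls in $R_i$, those with $\mu B_{r_\ell}(s)\leq M r_\ell^k$ contribute total measure at most $M\cdot \sum_{\ell}\#\{\cS_\ell\}r_\ell^k \leq M\cdot 2c_1(n)\rho \leq M$ by the packing estimate. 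The remaining stop balls are by construction contained in $\bigcup_{y\in\cC_+(\cT),\,r_y\geq r_i}B_{4r_y}(y)$ (using $r<r_y\leq r/\rho$ so $B_{r_y+2r_\ell}(y)\subseteq B_{4r_y}(y)$).

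\textbf{Step 4: Conclusion.} Combining the covering of Step 2 with the measure bounds of Step 3, the set
\begin{gather*}
B_1 \setminus \Bigl(\bigcup_{b\in\cB_i}B_{r_i}(b)\cup \bigcup_{\ell\leq i}B_{r_\ell}(\cG_\ell)\cup \bigcup_{y\in\cC_+(\cT),\,r_y\geq r_i}B_{4r_y}(y)\Bigr)
\end{gather*}
is contained in $\bigcup_{\ell<i}E_\ell$ together with the small-mass stop balls, whose total $\mu$-measure is at most $M+M = 2M \leq 3M$. The main obstacle is purely bookkeeping; unlike the good tree, there is no Reifenberg-type manifold construction, no tilting control, and no Vitali argument running through a submanifold, because the sharp dimension drop from $k$ to $k-1$ yields an honest $\rho$-factor contraction at every step, making the full geometric series converge with the explicit constant $2c_1(n)\rho$.
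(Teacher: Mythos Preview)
Your proof is correct and follows essentially the same approach as the paper: the one-step packing recursion via Theorem \ref{thm:packing-control}(A), the resulting geometric series, the inductive covering identity, and the bound on excess via Theorem \ref{thm:packing-control}(B) all match the paper's argument, with your version simply supplying more of the bookkeeping details (and in fact obtaining the slightly sharper $2M$ in place of $3M$).
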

\begin{remark}
Note that in comparison to the Good Tree Structure theorem we have both smallness on the estimates, and in \eqref{eqn:bad-packing} we have that the sum of all balls on all scales have a small packing estimate, not just the balls in the final covering.  This is due to our ability to approximate the support by a subspace of dimension strictly less than $k$.
\end{remark}

In the language of Definition \ref{def:bad-tree}, Theorem \ref{thm:bad-tree} says the following.
\begin{corollary}\label{cor:bad-tree}
Suppose the hypotheses of Theorem \ref{thm:bad-tree}.  Then we have
\begin{itemize}
\item[A)] Tree-leaf/good-ball packing, with \emph{small} constant:
\begin{gather}
\sum_{g \in \cG(\cT)} r_g^k \leq 2 c_1 \rho .
\end{gather}

\item[B)] Original ball packing:
\begin{gather}
\sum_{x \in \cC_+(\cT)} r_x^k \leq c(n, \rho), \quad \text{and} \quad r^{k-n}|B_r(\cC(\cT))| \leq c(n, \rho) \quad \forall 0 < r \leq 1.
\end{gather}

\item[C)] Upper measure estimates:
\begin{gather}
\mu \left[ B_1 \setminus \left( \bigcup_{g \in \cG(\cT)} B_{r_g}(g) \cup B_{4r_x}(\cC(\cT)) \right) \right] \leq 3M, 
\end{gather}

\item[D)] Lower measure estimates:
\begin{gather}
\mu \ton{B_r(y)} > \frac{M}{c(n,\rho)} r^k \quad \forall y \in \cC(\cT), \,\, \forall 4r_y < r \leq 1.
\end{gather}

\item[E)] Fine-scale packing structure: $\cC_0(\cT)$ is closed, with $\haus^k(\cC_0(\cT)) = 0$.
\end{itemize}
\end{corollary}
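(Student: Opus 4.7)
The plan is to deduce each of (A)--(E) from Theorem~\ref{thm:bad-tree} by suitable summation and limiting arguments. Part (A) follows at once by letting $i \to \infty$ in \eqref{eqn:bad-packing}: each $g \in \cG(\cT)$ appears at a unique scale, so $\sum_{g \in \cG(\cT)} r_g^k \leq 2 c_1 \rho$. Part (D) uses Remark~\ref{rem:bad-mass} together with the tree's ancestor structure: for $y \in \cC(\cT)$ and $4 r_y < r \leq 1$, choose an ancestor bad ball $B_{r_j}(b^{(j)})$ at a scale $r_j$ comparable to $r$ and lying inside $B_r(y)$; since $b^{(j)} \in \cB_j$ is not a stop ball, $\mu B_{r_j}(b^{(j)}) > M r_j^k$, giving the claimed lower bound up to a factor $c(n,\rho)$. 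For part (E), each $\overline{B_{r_i}(\cB_i)} = \bigcup_{b \in \cB_i} \overline{B_{r_i}(b)}$ is a finite union of closed balls and hence closed, so $\cC_0(\cT) = \bigcap_i \overline{B_{r_i}(\cB_i)}$ is closed. Summing \eqref{eqn:bad-packing} over all scales yields $\sum_i \#\cB_i \, r_i^k < \infty$, so $\#\cB_i \, r_i^k \to 0$; covering $\cC_0(\cT)$ by $\#\cB_i$ balls of diameter $2 r_i$ then gives $\haus^k_{2r_i}(\cC_0(\cT)) \leq \omega_k \#\cB_i r_i^k \to 0$, so $\haus^k(\cC_0(\cT)) = 0$.

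The main technical point is (C). Let $B := B_1 \setminus \bigl( \bigcup_{g \in \cG(\cT)} B_{r_g}(g) \cup B_{4 r_x}(\cC(\cT)) \bigr)$ and let $A_i$ denote the set on the left-hand side of \eqref{eqn:bad-total-measure}, so $\mu(A_i) \leq 3M$ for every $i$. Since $r_x = 0$ on $\cC_0(\cT)$, the set $B_{4 r_x}(\cC(\cT))$ contains $\cC_0(\cT)$ itself, so every point of $B$ avoids $\cC_0(\cT)$. I will show that $\cC_0(\cT)$ coincides with the set of accumulation points of $\bigcup_\ell \cB_\ell$: if $b_k \in \cB_{i_k}$ with $b_k \to x$ and $i_k \to \infty$, then tracing the parent chain in the tree produces, for every fixed $j$, an ancestor $b_k^{(j)} \in \cB_j$ with $|b_k - b_k^{(j)}| \leq r_j/(1-\rho)$, and extracting a convergent subsequence places $x$ inside every $\overline{B_{r_j'}(\cB_{j'})}$ after a mild scale adjustment. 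Contrapositively, for $x \in B$ no sequence of bad-ball centers at diverging scales accumulates at $x$, so $x \notin B_{r_i}(\cB_i)$ for all sufficiently large $i$; combined with $x$ avoiding every good ball and every $B_{4r_y}(y)$ with $y \in \cC_+(\cT)$, this gives $x \in A_i$ eventually, i.e.\ $B \subset \liminf_i A_i$. Fatou then yields $\mu(B) \leq \liminf_i \mu(A_i) \leq 3M$.

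Finally for (B), each $y \in \cC_+(\cT)$ comes from a unique high-mass stop ball at some scale $j(y)$ with $r_y \in (r_{j(y)}, r_{j(y)-1}]$, so $\sum_{y \in \cC_+(\cT)} r_y^k \leq \rho^{-k} \sum_j \#\cS_j \, r_j^k \leq 2 c_1 \rho^{1-k}$ by \eqref{eqn:bad-packing}. For the Minkowski estimate $r^{k-n} |B_r(\cC(\cT))| \leq c(n,\rho)$, split into $B_r(\cC_0(\cT))$ and $B_r(\cC_+(\cT))$: for fixed $r$, choose $i$ with $r_i \sim r$; then $\cC_0(\cT) \subset B_{r_i}(\cB_i)$ gives $|B_r(\cC_0(\cT))| \leq c(n) r^n \cdot \#\cB_i \leq c(n,\rho) r^{n-k}$ by \eqref{eqn:bad-packing}. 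For $\cC_+(\cT)$, the disjointness of the Vitali subballs $\{B_{r_{j(y)}/5}(s_y)\}$ across scales bounds $\#\{y \in \cC_+(\cT) \cap B_{2r}(x) : r_y \in [r,2r]\}$ by $c(n,\rho)$ for every $x$, and dyadic summation together with $\sum_y r_y^k < \infty$ delivers the required $r^{n-k}$ bound. The main obstacle is the accumulation-point characterization of $\cC_0(\cT)$ used in (C), which requires careful bookkeeping of ancestor chains in the bad tree across diverging scales.
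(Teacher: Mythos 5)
Your proposal is the same in spirit as the paper's proof, which simply declares the corollary ``essentially a restatement'' of Theorem~\ref{thm:bad-tree}; parts (A), (B), (D), (E) are correct as you sketch them (limits of \eqref{eqn:bad-packing}, the ancestor chains, the Vitali disjointness). The one point that needs care is the Fatou step in (C), and the phrase ``after a mild scale adjustment'' names rather than closes the issue. Your argument needs: if $x \notin \cC_0(\cT)$ then $x \notin B_{r_i}(\cB_i)$ for all large $i$. But the ancestor-chain estimate you derive only shows that a point lying in $B_{r_{i_k}}(\cB_{i_k})$ along a subsequence $i_k \to \infty$ satisfies $d(x,\cB_j) \leq r_j/(1-\rho)$ for every $j$; since $r_j/(1-\rho) > r_j$ strictly, this does \emph{not} place $x$ in $\cC_0(\cT) = \bigcap_j \overline{B_{r_j}(\cB_j)}$. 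So the contrapositive you invoke does not follow, and the excluded set $\bigcap_j \overline{B_{r_j/(1-\rho)}(\cB_j)} \setminus \cC_0(\cT)$ could in principle carry positive $\mu$-mass (you cannot appeal to $\haus^k$-nullity, since $\mu \ll \haus^k$ is not assumed).

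This defect is inherited from the source: the same unjustified nesting appears in the paper's proof of Lemma~\ref{lem:original-packing}, which implicitly claims $B_{c r_{i-1}}(\cQ_{i-1}) \subset B_{r_A + o(1)}(\cQ_A)$ when the true bound is $r_A/(1-\rho) + o(1)$. The clean repair, which you should state, is to redefine $\cC_0(\cT) := \bigcap_j \overline{B_{2 r_j}(\cB_j)}$. These sets \emph{are} nested (since $2r_i + r_{i-1} \leq 2r_{i-1}$ for $\rho \leq 1/2$, and $\cB_i \subset B_{r_{i-1}}(\cB_{i-1})$), so $x \notin \cC_0(\cT)$ gives $x \notin \overline{B_{2r_{i_0}}(\cB_{i_0})} \supset B_{r_i}(\cB_i)$ for all $i \geq i_0$, making the Fatou inclusion exact. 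The enlarged $\cC_0(\cT)$ is still closed, still has $\haus^k(\cC_0(\cT))=0$ by the same covering count, and still satisfies (D) with a marginally different constant, so nothing else in the corollary (or its downstream use in Theorem~\ref{thm:core-estimate}) is affected.
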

\vspace{.25cm}

\begin{proof}[Proof of Theorem \ref{thm:bad-tree} and Corollary \ref{cor:bad-tree}]
By Theorem \ref{thm:packing-control} applied to each scale $r_\ell$, $\ell = 0, \ldots, i-1$, we have
\begin{gather}\label{eqn:single-scale-bad-packing}
\sum_{\cG_i \cup \cB_i \cup \cS_i} r_i^k \leq c_1(n) \rho \sum_{\cB_{i-1}} r_{i-1}^k \leq (c_1\rho)^2 \sum_{\cB_{i-2}} r_{i-2}^k \leq \ldots \leq (c_1 \rho)^i, 
\end{gather}
and therefore we obtain the estimate
\begin{gather}
\sum_{1 \leq \ell \leq i} \#\cur{\cG_\ell \cup \cB_\ell \cup \cS_\ell} r_\ell^k  \leq \sum_{\ell=1}^i (c_1(n) \rho)^\ell \leq 2c_1(n) \rho.
\end{gather}
The last inequality follows since $c_1 \rho \leq 1/2$.  This proves the first packing estimate of \eqref{eqn:bad-packing}

Equation \eqref{eqn:bad-total-measure} follows from the packing estimate \eqref{eqn:bad-packing}, and our choice of $m$: in any bad ball $B_{r_i}(b)$ we have 
\begin{gather}
\mu (E_{i b}) \leq M r_i^k \, ,
\end{gather}
and by construction we have for any $i$:
\begin{gather}
B_1 \subset B_{r_i}(\cB_i) \cup E_0 \cup \bigcup_{\ell = 1}^i \left( B_{r_\ell}(\cG_\ell \cup \cS_\ell) \cup E_\ell \right) .
\end{gather}

Corollary \ref{cor:bad-tree} is essentially a restatement of Theorem \ref{thm:bad-tree}.  We emphasize that the reason we have small packing is because there are no good balls at scale $0$.  The estimate $\haus^k(\cC_0(\cT)) = 0$ follows from \eqref{eqn:bad-packing} (or \eqref{eqn:single-scale-bad-packing}).
\end{proof}

\vspace{.5cm}

\section{Finishing the proof}\label{section:finishing}

We wish to obtain the decomposition of Theorem \ref{thm:core-estimate}, which consists of packing estimates inside the original balls ($\cC_+$) or sets ($\cC_0$), and measure estimates away from these.  In any single good or bad tree, this decomposition is achieved away from its leaves, which unfortunately are unavoidable.  As described in the proof outline in Section \ref{ss:proof_outline}, we implement the following construction: for argument's sake suppose $B_1$ is good, so we build a good tree downwards from $B_1$; at any (necessarily bad) leaf of this tree, we build a bad tree; at any (necessarily good) leaf in any of these bad trees, we build a good tree; repeat, etc...

In this way we can continue refining our decomposition, progressing down in scale either infinitely far, thus hitting some part of $\cC_0$, or until we have no leaves in any tree.  By Theorems \ref{thm:good-tree} and \ref{thm:bad-tree} applied to each tree, it is essentially enough to show we have packing on \emph{all} leaves among \emph{all} trees, which is the content of Theorem \ref{thm:tree-packing}.

We make the above idea rigorous in the following construction.  Let us fix $\rho(n) < 1/20$ so that
\begin{gather}\label{eq_rho}
 2 c_1(n) \rho 50^k \leq 1/2\, , 
\end{gather}
where $c_1$ is as in Theorem \ref{thm:packing-control}. Let us also fix
\begin{gather}
 m = m_0(n,\rho) M
\end{gather}
as in Theorem \ref{thm:packing-control}, and then $\delta_0 = \delta_1(n, \rho) 16^{-k-2}$ as in Theorem \ref{thm:good-tree}.

Let us inductively define for each $t = 0, 1, 2, \ldots$ a family of balls $\{B_{r_f}(f)\}_{f \in \cF_t}$, where $r_f$ is a radius function which varies with $f \in \cF_t$.  We will see in practice that each ball $B_{r_f}(f)$ is a tree leaf arising from some tree construction at a previous stage in the induction.  For each $t$, these leaves $\cF_t$ will be either all good balls or all bad balls, but none of them will be stop balls.  Moreover, for $t \geq 1$ each leaf of $\cF_t$ will be a leaf in some good or bad tree.  Let us caution the reader that stage index $t$ is only an \emph{upper bound} on the scale of the leaves in $\cF_t$, e.g. if $f\in \cF_t$ then $r_f\leq r_t$.

At stage $t = 0$, $B_1(0)$ is our only leaf.  So $\cF_0 = \{0\}$, and $r_{f = 0} = 1$.  Trivially all the leaves of $\cF_0$ are good or bad balls, since we can assume $B_1(0)$ is not a stop ball.

Suppose we have defined leaves down to stage $t-1$.  The leaves $\cF_{t-1}$ are (by inductive hypothesis) not stop, and either all good or all bad.  Let us assume first they are all good.  Then for each $f \in \cF_{t-1}$, build a good tree $\cT_{G, f} := \cT(B_{r_f}(f))$, with parameters $\rho$ and $m$ fixed as above.  Then we set $\cF_t$ to be the collection of bad leaves of these good trees:
\begin{gather}
\cF_t = \bigcup_{f \in \cF_{t-1}} \cB(\cT_{G, f}) ,
\end{gather}
and we let the radius function $r_f$ be the associated bad-ball radius function in the trees:
\begin{gather}
r_f|_{\cF_t} = r_b.
\end{gather}

Similarly, let us assume all the leaves in $\cF_{t-1}$ are bad.  Then for each $f \in \cF_{t-1}$, we build a bad tree $\cT_{B, f} := \cT(B_{r_f}(f))$, with parameters $\rho$, $m$, and define
\begin{gather}
\cF_t = \bigcup_{f \in \cF_{t-1}} \cG(\cT_{B, f}), \quad r_f|_{\cF_t} = r_g.
\end{gather}

In either case, we have that the $\cF_t$ are either all good or all bad (but not stop), and when $t \geq 1$ they arise as leaves in some good or bad tree.

We just need to check the conditions of the good/bad tree constructions are satisfied: the conditions \eqref{eqn:covering-good-hyp}, \eqref{eqn:covering-bad-hyp} on the covering pair $\cC$ are satisfied either by assumption (when $t = 1$, we are building a tree at $B_1$), or by Corollary \ref{rem:good-containment} and Remark \ref{rem:bad-radius-control}, since when $t \geq 1$ every leaf sits on some tree.  We are therefore justified in building each good/bad tree.

This completes the tree chaining construction.  Our key Theorem is the following leaf-packing estimate:
\begin{theorem}[Leaf Packing]\label{thm:tree-packing}
We have the packing bound
\begin{gather}\label{eqn:tree-packing}
\sum_{t = 0}^\infty \sum_{f \in \cF_t} r_f^k \leq c(k).
\end{gather}

And if $f \in \cF_t$ then $r_f \leq \rho^t$.
\end{theorem}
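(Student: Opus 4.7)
The plan is to set $S_t := \sum_{f \in \cF_t} r_f^k$ and establish a two-step geometric decay by exploiting the alternating good/bad structure of the tree-chaining construction.

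First, I would verify the radius bound $r_f \leq \rho^t$ by induction on $t$. The base case $t=0$ is immediate. For the inductive step, observe that in a good tree rooted at a (good) ball of radius $r_0$ one has $\cB_0 = \emptyset$, so bad-ball leaves appear only at scales $\rho^i r_0$ with $i \geq 1$; symmetrically, in a bad tree rooted at a (bad) ball of radius $r_0$ one has $\cG_0 = \emptyset$, so good-ball leaves appear only at scales $\rho^i r_0$ with $i \geq 1$. Either way, any $f \in \cF_t$ has radius at most $\rho$ times the radius of its parent in $\cF_{t-1}$, giving $r_f \leq \rho \cdot \rho^{t-1} = \rho^t$.

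For the packing bound, I use that by construction $\cF_t$ and $\cF_{t+1}$ have opposite type. When $\cF_{t-1}$ is all good, we build a good tree at each $f \in \cF_{t-1}$; the bad-leaf packing of Corollary \ref{cor:good-tree}(A), rescaled to a root of radius $r_f$, gives $\sum_{f' \in \cB(\cT_{G,f})} r_{f'}^k \leq 50^k r_f^k$, and summing over $f$ yields
\begin{equation}
S_t \leq 50^k\, S_{t-1}.
\end{equation}
When $\cF_{t-1}$ is all bad, the good-leaf packing of Corollary \ref{cor:bad-tree}(A) gives instead the much smaller estimate $S_t \leq 2c_1\rho\, S_{t-1}$. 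Composing these two bounds over consecutive stages,
\begin{equation}
S_{t+2} \;\leq\; 50^k \cdot 2c_1\rho \cdot S_t \;\leq\; \tfrac{1}{2}\, S_t,
\end{equation}
where the last inequality is precisely the condition \eqref{eq_rho} built into our choice of $\rho = \rho(n)$. A geometric series then gives $\sum_{t=0}^\infty S_t \leq 2(S_0 + S_1) \leq 2(1 + 50^k) =: c(k)$.

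There is no real obstacle left: the hard conceptual point has already been packaged into Corollaries \ref{cor:good-tree} and \ref{cor:bad-tree} and into the calibration \eqref{eq_rho}. One cannot hope for single-step contraction, because good-tree packing only bounds bad leaves by a large constant $50^k$; what saves us is that every good-tree step is followed by a bad-tree step, and the dimension drop available in bad balls furnishes the small packing constant $2c_1\rho$ needed to absorb the $50^k$ loss. The only care required in the write-up is to handle the two parities of the initial ball $B_1(0)$ (good or bad) uniformly, which is done by the above pairing argument regardless of whether $\cF_0$ is good or bad.
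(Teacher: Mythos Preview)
Your proposal is correct and follows essentially the same approach as the paper: both arguments alternate the good-tree bound $S_t \leq 50^k S_{t-1}$ with the bad-tree bound $S_t \leq 2c_1\rho\, S_{t-1}$, use the calibration \eqref{eq_rho} to get two-step geometric decay, and prove the radius bound by noting that leaves in any tree drop at least one scale. The only cosmetic difference is that you sum as $\sum_t S_t \leq 2(S_0+S_1)$, while the paper writes $S_t \leq c(k)(2c_1\rho\cdot 50^k)^{t/2} \leq c(k)2^{-t/2}$ and sums directly.
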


\begin{proof}
Suppose, for some fixed $t \geq 1$, all the $\cF_t$ are good.  Then each $f \in \cF_t$ is a leaf of a bad tree rooted at $\cF_{t-1}$.  Therefore, by our choice of $m$, we can apply Theorem \ref{thm:bad-tree} (or more precisely, Corollary \ref{cor:bad-tree} part A) to each bad tree in $\cF_{t-1}$ to obtain
\begin{gather}\label{eqn:good-leaf-packing}
\sum_{f \in \cF_t} r_f^k \leq 2c_1\rho \sum_{f \in \cF_{t-1}} r_f^k.
\end{gather}

Suppose now all the $\cF_t$ are bad.  Then each leaf in $\cF_t$ lies in some good tree rooted at $\cF_{t-1}$.  By our choice of $\delta_0$, and because $\mu$ is supported in $B_1$, we can apply Theorem \ref{thm:good-tree} (more precisely, Corollary \ref{cor:good-tree} part A) to obtain
\begin{gather}\label{eqn:bad-leaf-packing}
\sum_{f \in \cF_t} r_f^k \leq 50^k \sum_{f \in \cF_{t-1}} r_f^k.
\end{gather}

Since by construction the leaves change type from one stage to the next, we can alternatively iterate estimates \eqref{eqn:good-leaf-packing} and \eqref{eqn:bad-leaf-packing} to obtain a packing bound on the leaves at a given stage:
\begin{align}
\sum_{f \in \cF_t} r_f^k &\leq (2c_1 \rho 50^k) \sum_{f \in \cF_{t-2}} r_f^k \leq (2c_1 \rho 50^k)^2 \sum_{f \in \cF_{t-4}} r_f^k\leq \cdots \notag\\
&\leq (2c_1 \rho 50^k)^{t/2}\sum_{f \in \cF_{0}} r_f^k \leq c(k) (2c_1 \rho 50^k)^{t/2} \leq c(k) 2^{-t/2}.
\end{align}
where in the last inequality we used our choice of $\rho$.  The packing estimate \eqref{eqn:tree-packing} is now immediate.

The last assertion is a basic consequence of our tree constructions.  In any tree rooted at $B_r(x)$, any leaf has radius $\leq \rho r$.  Therefore
\begin{gather}
\max_{f \in \cF_t} r_f \leq \rho \max_{f \in \cF_{t-1}} r_f \leq \rho^t.
\end{gather}
\end{proof}

\subsection{Measure and packing decomposition}

We now define the subset $\cC' \subset \cC$, and demonstrate $\cC'$ satisfies the required packing and measure bounds of Theorem \ref{thm:core-estimate}.  Unfortunately, we cannot use the $\cC(\cT)$s by themselves to build our decomposition $\cC'$.  The problem is the following:

In any single tree $\cT$, we have measure control only away from $\cC(\cT)$ and the leaves.  If in some region $A \subset B_1^n(0)$ we must alternate the good-/bad-tree construction infinitely-many times, then in \emph{every} tree this region $A$ lies inside the leaves.  In other words, no single tree will see $A$ in its measure estimate, and we expect strict inclusion
\begin{gather}
\overline{\cup_{\cT} \cC_0(\cT)} \subsetneq \cC'_0.
\end{gather}
To capture regions like $A$, we must consider global collections of balls.  (Of course, any $A$ like this will have $\haus^k$-measure $0$, but could still have very large $\mu$-measure.)

To handle this issue, we consider the collection $\cQ_i$ of \emph{all} good and bad $r_i$-balls, contained in \emph{any} tree.  This ``horizontal slice'' of the tree-structures will see all the regions with big mass, and makes the global Minkowski bounds of $\cC'$ easier to handle.  Proving the correct measure and packing bounds for $\cQ_i$ is straightforward consequence of the good-/bad-tree structure Theorems \ref{thm:good-tree}, \ref{thm:bad-tree}, and the leaf packing Theorem \ref{thm:tree-packing}.

Precisely, $\cQ_i$ is defined as follows:
\begin{align}
\cQ_i 
&= \left\{ g \in \cG(\cT(B_{r_f}(f))) : f \in \bigcup_{t=0}^i \cF_t, \text{ and } r_g = r_i \right\} \\
&\quad \cup \left\{ b \in \cB(\cT(B_{r_f}(f))) : f \in \bigcup_{t=0}^i \cF_t , \text{ and } r_b = r_i \right\}.
\end{align}
From our tree constructions, it is clear that for each fixed $A$, we have
\begin{gather}
\cQ_A \subset B_{r_i}(\cQ_i) \quad \forall i \leq A.
\end{gather}

We now define the subcollection $\cC'$.  We set $\cC_+'$ to be all big-mass stop balls among all trees (recall Definitions \ref{def:good-tree} and \ref{def:bad-tree}):
\begin{gather}
\cC'_+ = \bigcup_{t = 0}^\infty \bigcup_{f \in \cF_t} \cC_+(\cT(B_{r_f}(f))).
\end{gather}
We let $\cC'_0$ be the region contained in every $r_i$-collection of good/bad balls:
\begin{gather}
\cC'_0 = \bigcap_{i=0}^\infty \overline{B_{r_i}(\cQ_i)}.
\end{gather}

Packing in individual trees, combined with the leaf packing \eqref{eqn:tree-packing}, gives us the following
\begin{lemma}\label{lem:all-good-packing}
For each $i$ we have the packing estimate
\begin{gather}\label{eqn:Q-packing}
\#\{\cQ_i\} r_i^k \leq c(k),
\end{gather}
and measure estimate
\begin{gather}
\mu \left[ B_1 \setminus \left( B_{4r_x}(\cC'_+) \cup B_{r_i}(\cQ_i) \right) \right] \leq c(n) M
\end{gather}
\end{lemma}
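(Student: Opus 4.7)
The plan is to assemble the single-tree measure and packing estimates of Theorem~\ref{thm:good-tree} and Theorem~\ref{thm:bad-tree} by aggregating over the stages of the chaining construction, with the total aggregation controlled by the leaf-packing bound of Theorem~\ref{thm:tree-packing}.

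For the packing bound, observe that each $q\in\cQ_i$ belongs to a unique tree $\cT_f$ rooted at some stage-$t$ leaf $f\in\cF_t$; since a tree of root-radius $r_f\leq\rho^t$ produces balls only at scales $\leq r_f$, only stages with $t\leq i$ can contribute. Scaling \eqref{eq_pack_est} (good trees) or \eqref{eqn:bad-packing} (bad trees) from $B_{r_f}(f)$ to the unit ball shows that the number of scale-$r_i$ good or bad balls in $\cT_f$ is at most $c(n)(r_f/r_i)^k$. Summing,
\[
\#\{\cQ_i\}\,r_i^k \;\leq\; c(n)\sum_{t=0}^{i}\sum_{f\in\cF_t} r_f^k \;\leq\; c(k),
\]
where the last inequality is Theorem~\ref{thm:tree-packing}.

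For the measure bound, I induct on the stage $t\geq 0$, proving
\[
\mu\!\left[B_1\setminus\Bigl(B_{r_i}(\cQ_i)\cup B_{4r_x}(\cC'_+)\cup\!\!\bigcup_{f\in\cF_t,\,r_f\geq r_i}\!\!B_{r_f}(f)\Bigr)\right] \;\leq\; c(n) M \sum_{s=0}^{t-1}\sum_{f\in\cF_s} r_f^k.
\]
The base case $t=0$ is vacuous since $\cF_0=\{0\}$ with $r_0=1$. For the inductive step, apply the scaled single-tree estimate \eqref{eq_measure_est} or \eqref{eqn:bad-total-measure} inside each $B_{r_f}(f)$ with $f\in\cF_t$ and $r_f\geq r_i$: the good or bad balls of $\cT_f$ at original scale $r_i$ sit in $B_{r_i}(\cQ_i)$; the large-mass stop balls of $\cT_f$ sit in $B_{4r_x}(\cC'_+)$; the remaining oversized leaves of $\cT_f$ are exactly the elements of $\cF_{t+1}\cap\{r\geq r_i\}$ arising from $\cT_f$; and the residual error is $\leq c(n) M r_f^k$, using $\delta^2\leq\delta_0^2 M$ to absorb the $c(k,\rho)\delta^2$ term from \eqref{eq_measure_est}. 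Summing over $f\in\cF_t$ advances the induction, and Theorem~\ref{thm:tree-packing} bounds the total accumulated error by $c(n)M$. Finally, since $r_f\leq\rho^t\to 0$, for $t$ sufficiently large $\{f\in\cF_t:r_f\geq r_i\}=\emptyset$, yielding the claimed bound on $\mu\!\left[B_1\setminus(B_{r_i}(\cQ_i)\cup B_{4r_x}(\cC'_+))\right]$.

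The only delicate point is the matching of the local decomposition of each tree with the global objects $\cQ_i$, $\cC'_+$, and $\cF_{t+1}$; in particular, verifying that any good or bad ball of radius $<r_i$ created in a tree (and hence not in $\cQ_i$) is already contained in the $r_i$-neighborhood of an ancestor good/bad ball in $\cQ_i$, which follows from the covering property \eqref{eqn:covering-control} and its bad-tree analogue applied at the scale index matching $r_i$.
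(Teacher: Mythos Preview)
Your proof is correct and follows essentially the same strategy as the paper: both arguments sum the single-tree packing and measure estimates (Theorems~\ref{thm:good-tree} and~\ref{thm:bad-tree}) over all trees rooted in the various $\cF_t$, and control the total via the leaf-packing bound of Theorem~\ref{thm:tree-packing}. Your explicit induction on $t$ is simply a reformulation of the paper's direct telescoping sum over $t=0,\dots,i-1$.

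One remark: your final ``delicate point'' paragraph is in fact unnecessary given the way you have set up the induction. Because you only recurse into leaves $f\in\cF_t$ with $r_f\geq r_i$ and apply \eqref{eq_measure_est} or \eqref{eqn:bad-total-measure} at the internal scale corresponding to the global scale $r_i$, those estimates only ever remove (i) good/bad balls at scale exactly $r_i$ (absorbed into $\cQ_i$), (ii) leaves at scales $\geq r_i$ (passed to $\cF_{t+1}\cap\{r_f\geq r_i\}$), and (iii) large-mass stop balls (absorbed into $\cC'_+$). Good or bad balls of radius strictly less than $r_i$ never appear in the removed set at that scale; they simply sit inside the residual region whose measure is already bounded by $c(n)Mr_f^k$. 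So the induction closes without any appeal to an ``ancestor in $\cQ_i$'' containment. The paper, by contrast, telescopes over \emph{all} leaves in $\cF_{t+1}$ and then at the final stage invokes the containment $\cQ_A\subset B_{r_i}(\cQ_i)$ stated just before the lemma; your organization sidesteps that step.
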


\begin{proof}
We first prove the packing estimate.  Each good/bad $r_i$-ball lives in some good/bad tree, rooted in $\cF_t$ with $t \leq i$.  We can therefore apply good-/bad-tree packing estimates \eqref{eq_pack_est} and \eqref{eqn:bad-packing} to each such tree, and then use leaf-packing \eqref{eqn:tree-packing} to deduce
\begin{gather}
\#\{\cQ_i\} r_i^k \leq c(k) \sum_{t=0}^i \sum_{f \in \cF_t} r_f^k \leq c(k).
\end{gather}

We prove the measure estimate.  By construction, the leaves $\cF_{t+1}$ include the leaves of any tree rooted at $\cF_t$.  Therefore, given any leaf $f \in \cF_t$, we can apply the good/bad-tree measure estimates \eqref{eq_measure_est} and \eqref{eqn:bad-total-measure} to deduce
\begin{gather}
\mu \left[ B_{r_f}(f) \setminus \left( B_{4r_x}(\cC'_+) \cup B_{r_i}(\cQ_i) \cup \bigcup_{f' \in \cF_{t+1}} B_{r_{f'}}(f') \right) \right] \leq c(n)(M + \delta_0^2M) r_f^k.
\end{gather}

Since $\cQ_i$ contains every leaf of scale $r_i$, and $r_f \leq r_i$ when $f \in \cF_i$, we deduce
\begin{align}
&\mu \left[ B_1 \setminus \left( B_{4r_x}(\cC') \cup B_{r_i}(\cQ_i) \right)  \right] \\
&\leq \sum_{t=0}^{i-1} \sum_{f \in \cF_t} \mu \left[ B_{r_f}(f) \setminus \left( B_{4r_x}(\cC'_+) \cup B_{r_i}(\cQ_i) \cup \bigcup_{f' \in \cF_{t+1}} B_{r_{f'}}(f') \right) \right] \\
&\leq c(n) (M + \delta_0^2 M) \sum_{t=0}^{i-1} \sum_{f \in \cF_t} r_f^k \\
&\leq c(n) M.
\end{align}
\end{proof}

Taking $i\to\infty$, from our definition of $\cC'$ we have that Lemma \ref{lem:all-good-packing} implies Theorem \ref{thm:core-estimate} part B).  We now show $\cC'$ admits the correct closure, packing, and non-collapsing properties.  We require first a Lemma on the comparability between balls in $\cC'_+$, and balls in $\cQ_i$.
\begin{lemma}\label{lem:good-bad-containment}
For any $y \in \cC'_+$, with $r_y \leq r_{i-1}$, we have
\begin{gather}\label{eqn:inside-big-good-bad}
B_{r_y}(y) \cap B_{2r_{i-1}}(\cQ_{i-1}) \neq \emptyset.
\end{gather}

Conversely, if $y \in \cC_+$ and $r_y > r_i$, then
\begin{gather}\label{eqn:avoids-small-good-bad}
B_{r_y}(y) \cap B_{2r_i}(\cQ_i) = \emptyset.
\end{gather}

For any $y \in \cC'$, we have
\begin{gather}\label{eqn:lem-orig-disjoint}
B_{100 r_y}(y) \cap B_{r_x}(x) = \emptyset \quad \forall x \in \cC \text{ such that } r_x > c(n) r_y.
\end{gather}
\end{lemma}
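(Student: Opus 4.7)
The plan is to lift the single-tree assertions of Corollary \ref{lem:good-containment} to the global collection $\cQ_i$. The key combinatorial observation is that $\cQ_i$ is, by construction, the union over every tree $\cT$ built during the chaining of its good/bad balls of absolute scale $r_i$; moreover, each leaf $f \in \cF_t$ of a tree appears, at exactly scale $r_f$, as a good or bad ball in the parent tree that produced it. Consequently an ``ancestor trace'' performed inside a single tree can be concatenated into a global ancestor chain passing through $\cQ_\bullet$ at every absolute level from the bottom up to $0$.

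Assertion (2) is then an immediate consequence of radius control \eqref{eqn:scale-control} (and its bad-tree counterpart, Remark \ref{rem:bad-radius-control}) applied in each tree $\cT$ contributing to $\cQ_i$: if $r_y > r_i$, then $B_{r_y}(y)$ is disjoint from $B_{2r_i}(\cG_i(\cT) \cup \cB_i(\cT))$ for every such $\cT$, and hence from their union $B_{2r_i}(\cQ_i)$.

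For assertion (1), let $y \in \cC'_+$, so that $y = y_s$ arises from a big-mass stop ball $B_{r_s}(s)$ in some tree $\cT$; by Definition \ref{def:balls} we have $r_s = \rho^\ell$, $r_s < r_y \leq r_s/\rho$, and $|y - s| < r_y + 2 r_s$. The hypothesis $r_y \leq r_i$ forces $\ell \geq i+1$. By the $r_\ell$-net step of the tree construction, there exists $c^{(\ell-1)} \in \cG_{\ell-1}(\cT) \cup \cB_{\ell-1}(\cT) \subset \cQ_{\ell-1}$ with $|s - c^{(\ell-1)}| < r_{\ell-1}$. We then trace ancestors upward in scale: each $c^{(k)}$ in $\cT$ at a level strictly above the root of $\cT$ has a parent $c^{(k-1)} \in \cQ_{k-1}$ with $|c^{(k)} - c^{(k-1)}| < r_{k-1}$; upon reaching the root of $\cT$, that root coincides, as a ball of the same scale, with its appearance in the parent tree (by the $\cF_t$-construction), and the trace continues there. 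Since the whole chaining is ultimately rooted at $B_1(0)$ at absolute level $0$, this iteration can be continued down to any $k \geq 0$. Telescoping the chain to $k = i-1$ gives
\begin{gather}
|s - c^{(i-1)}| \leq \sum_{k=i-1}^{\ell-1} r_k \leq \frac{r_{i-1}}{1 - \rho},
\end{gather}
and combining with $|y - s| < r_y + 2 r_s \leq r_y + 2\rho r_{i-1}$ shows, for $\rho \leq 1/20$, that $|y - c^{(i-1)}| < r_y + 2 r_{i-1}$. Hence any point on the segment joining $y$ and $c^{(i-1)}$ sufficiently close to $y$ lies in $B_{r_y}(y) \cap B_{2 r_{i-1}}(c^{(i-1)})$.

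For assertion (3), the case $y \in \cC'_0$ is vacuous since $r_y = 0$ and $B_{0}(y) = \emptyset$. When $y \in \cC'_+$, pick $i$ with $\rho r_i \leq r_y < r_i$ and iterate the ancestor trace one further step to produce $c^{(i-2)} \in \cQ_{i-2}$ with $|y - c^{(i-2)}| < 2 r_{i-2}$ (absorbing the geometric sum and the $|y - s|$ contribution using $r_y < \rho^2 r_{i-2}$). Assertion (2) applied at index $i-2$ gives $|x - c^{(i-2)}| \geq r_x + 2 r_{i-2}$ for every $x \in \cC$ with $r_x > r_{i-2}$, and the triangle inequality yields $|y - x| \geq r_x + 0.9\, r_{i-2}$; since $100 r_y < 100 \rho^2 r_{i-2} < 0.9\, r_{i-2}$ for $\rho \leq 1/20$, we conclude $B_{100 r_y}(y) \cap B_{r_x}(x) = \emptyset$, and the choice $c(n) = \rho^{-3}$ makes $r_x > c(n) r_y$ imply $r_x > r_{i-2}$. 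The main technical point throughout is keeping the absolute-scale indices monotone and gap-free when the ancestor chain in part (1) crosses from one tree to its parent; this is precisely where the $\cF_t$-construction, and the identification of each leaf with a ball of exactly the same absolute scale in its parent tree, play the essential role.
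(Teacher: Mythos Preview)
Your argument is correct and follows essentially the same route as the paper: use radius control tree-by-tree for \eqref{eqn:avoids-small-good-bad}, locate the generating stop ball and trace back through good/bad ancestors for \eqref{eqn:inside-big-good-bad}, and combine the two for \eqref{eqn:lem-orig-disjoint}. Your explicit ancestor chain up to level $i-1$ is in fact a more complete version of the paper's argument, which only records the single step from the stop ball to its immediate good/bad parent (at the stop ball's own scale) and tacitly relies on the already-stated containment $\cQ_A \subset B_{r_i}(\cQ_i)$ to reach the general index.
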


\begin{proof}
Given $y \in \cC'_+$, then $y \in \cC_+(\cT)$ for some good or bad tree.  By construction, there is a stop ball $B_{r_i}(s)$ in this tree with $r_i < r_y \leq r_{i-1}$ and $s \in B_{r_y + 2r_i}(y)$.  But then $s \in B_{r_{i-1}}(q)$ for some good or bad ball $B_{r_{i-1}}(q)$, since trees are never rooted in stop balls.  We deduce
\begin{gather}
|y - q| \leq |y - s| + |s - q| < (r_y + 2r_i) + r_{i-1} < r_y + 2r_{i-1}.
\end{gather}

Conversely, if $y \in \cC_+$, then using ``radius control'' in any tree (see inductive hypothesis \eqref{eqn:scale-control} and Remark \ref{rem:bad-radius-control}), we deduce
\begin{gather}
B_{r_y}(y) \cap B_{2r_i}(\cQ_i) \neq\emptyset \implies r_y \leq r_i.
\end{gather}

The last statement is a direct consequence of the first two, recalling that $\rho = \rho(n) < 1/10$.
\end{proof}

\begin{lemma}\label{lem:original-packing}
We have that $\cC'_0 \subset \cC_0$, $\cC'_+ \subset \cC_+$, both $\cC'_0$ and $\cC'$ are closed, and $\cC'$ satisfies the packing estimate
\begin{gather}\label{eqn:lem-orig-packing}
\sum_{x \in \cC'_+} r_x^k \leq c(n), \quad\text{and}\quad |B_r(\cC')| \leq c(n) r^{n-k} \quad \forall 0 < r \leq 1.
\end{gather}
\end{lemma}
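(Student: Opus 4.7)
The plan is to establish the claims of Lemma \ref{lem:original-packing} in the following order: the set inclusions and closedness of $\cC'_0$ first, then the packing bound on $\cC'_+$, then closedness of $\cC'$ (which uses the packing), and finally the Minkowski estimate (which uses the packing as well).

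\textbf{Step 1.} The inclusion $\cC'_+ \subset \cC_+$ is immediate from the construction, since each element of $\cC'_+$ is a chosen $y_s \in \cC_+$ associated to some stop ball. For $\cC'_0 \subset \cC_0$, I take $x \in \cC'_0 = \bigcap_i \overline{B_{r_i}(\cQ_i)}$ and pick $q_i \in \cQ_i$ with $|x - q_i| \to 0$; since each $q_i$ is good or bad (not stop), we have $\mu(B_{r_i}(q_i)) > Mr_i^k > 0$, so $q_i \in \spt\mu \subset \cC$, and therefore $x \in \overline{\cC} = \cC$. To exclude $r_x > 0$, I invoke \eqref{eqn:avoids-small-good-bad}: for $i$ with $r_i < r_x$, the intersection $B_{r_x}(x) \cap B_{2r_i}(\cQ_i)$ is empty, contradicting $x \in \overline{B_{r_i}(\cQ_i)}$. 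The set $\cC'_0$ is closed as a countable intersection of closed sets.

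\textbf{Step 2.} For each tree $\cT(B_{r_f}(f))$ in the chaining construction, every $y \in \cC_+(\cT)$ arises from a stop ball $B_{r_s}(s)$ of the tree with $r_y \leq r_s/\rho$. The stop-ball packing contained in Theorems \ref{thm:good-tree} and \ref{thm:bad-tree}, after rescaling from $B_1$ to $B_{r_f}(f)$, then gives $\sum_{y \in \cC_+(\cT)} r_y^k \leq c(n,\rho) r_f^k$. Summing over all trees in all stages of the chaining construction and applying the leaf packing of Theorem \ref{thm:tree-packing},
\begin{gather*}
\sum_{x \in \cC'_+} r_x^k \;\leq\; \sum_{t=0}^\infty \sum_{f \in \cF_t} c(n,\rho) r_f^k \;\leq\; c(n,\rho)\cdot c(k) \;\leq\; c(n).
\end{gather*}

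\textbf{Step 3.} Suppose $y_j \in \cC'$ with $y_j \to y_\infty$. If infinitely many $y_j \in \cC'_0$ then $y_\infty \in \cC'_0$, so assume $y_j \in \cC'_+$ eventually. When $\liminf r_{y_j} > 0$, the packing bound of Step 2 ensures $\{y \in \cC'_+ : r_y \geq r_*\}$ is finite inside any bounded region for any fixed $r_* > 0$, so the sequence is eventually constant and $y_\infty \in \cC'_+$. In the delicate remaining case $r_{y_j} \to 0$, \eqref{eqn:inside-big-good-bad} places each $y_j$ in $\overline{B_{3r_{i-1}}(\cQ_{i-1})}$ whenever $r_{y_j} \leq r_i$. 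The key auxiliary fact is that every $q \in \cQ_{i+1}$ lies within $c(n,\rho) r_i$ of some element of $\cQ_i$: within a tree whose root has scale $\geq r_i$, one simply follows the unique parent chain of $q$ to its (necessarily good or bad) $r_i$-ancestor; when the root scale is smaller, one hops across the sequence of leaf-roots $f^{(0)}, f^{(1)}, \ldots$ in successively older trees using the geometric estimate $r_{f^{(k-1)}} \leq \rho r_{f^{(k)}}$, the total accumulated distance summing to at most $r_i/(1-\rho)$. Iterating this identification forces $y_\infty$ into $\cC'_0$ (equivalently, $\bigcap_i \overline{B_{c r_i}(\cQ_i)} = \cC'_0$ for the absolute constant $c(n,\rho)$ that the chain produces), giving closedness.

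\textbf{Step 4.} For the Minkowski estimate, fix $r \in (0,1]$ and choose $i$ with $\rho r_i < r \leq r_i$. Split $B_r(\cC') \subset B_r(\cC'_0) \cup B_r(\{y \in \cC'_+ : r_y \leq r_i\}) \cup B_r(\{y \in \cC'_+ : r_y > r_i\})$. The first set sits inside $B_{2r_i}(\cQ_i)$ by the definition of $\cC'_0$; the second, via \eqref{eqn:inside-big-good-bad}, sits inside $B_{4 r_{i-1}}(\cQ_{i-1})$; each has Lebesgue measure bounded by $c(n)\cdot\#\cQ_i \cdot r_i^n \leq c(n) r_i^{n-k} \leq c(n) r^{n-k}$ using the packing $\#\cQ_i r_i^k \leq c(n)$ from Lemma \ref{lem:all-good-packing}. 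For the third set, Step 2 gives at most $c(n)/r_i^k$ points, each contributing Lebesgue volume $c(n)r^n$, for a total of $c(n) r^n/r_i^k \leq c(n) r^{n-k}$. The \emph{main obstacle} is Step 3: the ancestor chain produces neighborhoods $\overline{B_{c r_i}(\cQ_i)}$ with $c = c(n,\rho) > 1$, while the definition of $\cC'_0$ uses constant $1$; reconciling these two constants across arbitrary inter-tree hops is the subtle part of the argument.
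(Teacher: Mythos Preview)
Your overall strategy matches the paper's proof exactly: same ordering of steps, same use of the leaf packing (Theorem~\ref{thm:tree-packing}) to sum the per-tree stop-ball packing, same Minkowski splitting via Lemma~\ref{lem:good-bad-containment}, and the same reduction of closedness to ``limit points of $\cC'_+$ with $r_{y_j}\to 0$ land in $\cC'_0$.''  Two small cosmetic points: in Step~1, $\mu(B_{r_i}(q_i))>0$ only gives $d(q_i,\spt\mu)<r_i$, not $q_i\in\spt\mu$, but this is enough; in Step~4 your three-piece split and the paper's two-piece split are the same up to an index shift.

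The place where your proposal has a genuine gap is the parenthetical assertion in Step~3 that $\bigcap_i \overline{B_{c r_i}(\cQ_i)} = \cC'_0$ for the constant $c=c(n,\rho)$ produced by the ancestor chain.  This equality is \emph{false} in general.  The ancestor chain gives $|q_A - q_i| < r_i + r_{i+1} + \cdots + r_{A-1} < r_i/(1-\rho)$, and this bound is sharp: one can arrange the centers so that each child sits arbitrarily close to the boundary of its parent ball, producing a drifting sequence $q_A$ whose limit lies at distance exactly $r_i/(1-\rho)>r_i$ from $\cQ_i$.  (The paper's own proof glosses over this same point, writing $\cQ_A \subset B_{r_i}(\cQ_i)$ when the honest bound is $B_{r_i/(1-\rho)}(\cQ_i)$.)  So the argument as written only shows that limit points of $\cC'_+$ lie in the slightly larger set $\widetilde{\cC'_0}:=\bigcap_i \overline{B_{r_i/(1-\rho)}(\cQ_i)}$, and $\cC'$ as literally defined need not be closed.

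The fix is painless: simply observe that $\widetilde{\cC'_0}$ enjoys every property you need of $\cC'_0$.  It is closed, it is contained in $\cC_0$ (your Step~1 argument works verbatim, and \eqref{eqn:avoids-small-good-bad} still excludes any $x$ with $r_x>0$ since $r_x/r_i\to\infty$), and it satisfies the same Minkowski bound (your Step~4 only uses $\cC'_0\subset\overline{B_{cr_i}(\cQ_i)}$ anyway).  So either redefine $\cC'_0$ with the constant $1/(1-\rho)$ in place of $1$, or replace $\cC'$ by its closure; nothing downstream changes.
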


\begin{proof}
Let us first prove the packing estimates \eqref{eqn:lem-orig-packing}.  Applying Corollaries \ref{cor:good-tree}, \ref{cor:bad-tree} parts B) to each good/bad tree, and recalling that $\rho = \rho(n)$, we have
\begin{align}
\sum_{x \in \cC'_+} r_x^k \leq c(n) \sum_{t=0}^\infty \sum_{f \in \cF_t} r_f^k \leq c(n).
\end{align}

Take any $r \in (0, 1]$, and choose $i$ so that $r_i < r \leq r_{i-1}$.  From Lemma \ref{lem:good-bad-containment}, we know
\begin{gather}
\bigcup_{y \in \cC'_+ : r_y \leq r_i} B_r(y) \subset B_{4r_{i-1}}(\cQ_{i-1}).
\end{gather}
Therefore
\begin{align}
r^{k-n}|B_r(\cC')|
&\leq r^{k-n} |B_r(\cC'_+)| + r^{k-n}|B_r(\cC'_0)| \\
&\leq \omega_n \sum_{y \in \cC'_+ : r_y > r_i} r^k + 2 r^{k-n} |B_{4r_{i-1}}(\cQ_{i-1})| \\
&\leq \omega_n \rho^{-k} \sum_{y \in \cC'_+ : r_y > r_i} r_y^k + 2 r_i^{k-n} \#\{\cQ_{i-1}\}\omega_n (4r_{i-1})^n \\
&\leq \omega_n \rho^{-k} c(n) + \omega_n (4/\rho)^n c(n).
\end{align}

We prove the other statements.  By Lemma \ref{lem:good-bad-containment} we have that $\cC'_0 \subset \spt\mu \setminus B_{r_x}(\cC_+)$, and therefore $\cC'_0 \subset \cC_0$.  The inclusion $\cC'_+ \subset \cC_+$ is by construction.

$\cC'_0$ is closed by construction.  To prove $\cC'$ is closed, it will suffice to show that any limit point of $\cC'_+$ must lie in $\cC'_0$.  Let $\{y_i\}_i \subset \cC'_+$ be a sequence converging to some $y$.  By packing estimate \eqref{eqn:lem-orig-packing}, necessarily $r_{y_i} \to 0$.  After relabeling we can assume $r_{y_i} < r_i$.

We deduce that, for any fixed $A$, we have
\begin{gather}
y_i \in B_{5r_{i-1}}(\cQ_{i-1}) \subset B_{r_A + o(1)}(\cQ_A) \quad \text{as } i \to \infty.
\end{gather}
This proves that $y \in \cC'_0$.
\end{proof}
\vspace{.2cm}

\begin{lemma}\label{lem:orig-lower-mass}
We have the lower bound
\begin{gather}\label{eqn:lem-orig-mass}
\mu \ton{B_r(y)} > \frac{M}{c(n)} r^k \quad \forall y \in \cC', \,\, \forall 4r_y \leq r \leq 1.
\end{gather}
\end{lemma}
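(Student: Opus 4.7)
The strategy is to reduce both cases $y \in \cC'_0$ and $y \in \cC'_+$ to producing, for every admissible $r$, a non-stop good/bad ball $B_{r_i}(q)$ with $q \in \cQ_i$, radius $r_i$ comparable to $r$ (up to a factor of $c(n)$), and satisfying $B_{r_i}(q) \subset B_r(y)$. Once such a $q$ is in hand, the non-stopness condition from Definition \ref{def:balls} gives $\mu B_r(y) \geq \mu B_{r_i}(q) > M r_i^k$, and the comparability of $r_i$ with $r$ yields \eqref{eqn:lem-orig-mass}.

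For $y \in \cC'_0$ one has $r_y = 0$, so the relevant range is $0 < r \leq 1$. From the definition $\cC'_0 = \bigcap_i \overline{B_{r_i}(\cQ_i)}$ one can pick, for each $i$, a point $q_i \in \cQ_i$ with $|y - q_i| \leq r_i$, and hence $B_{r_i}(q_i) \subset B_{2r_i}(y)$. Given any $r \in (0,1]$ I would choose the unique $i$ with $2r_i \leq r < 2r_{i-1}$, so that $r_i > \rho r/2$; the lower bound $\mu B_{r_i}(q_i) > M r_i^k$ then produces $\mu B_r(y) > M (\rho/2)^k r^k$, as required.

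For $y \in \cC'_+$ the argument splits into two overlapping regimes determined by the size of $r$ relative to $r_y$. In the small-scale regime, where $4r_y \leq r \leq c(n) r_y$ for an appropriate universal constant, I would invoke Remark \ref{rem:good-mass} (or Remark \ref{rem:bad-mass}, according to whether $y$ arose in a good or bad tree's $\cC_+$-collection) to get directly $\mu B_r(y) \geq \mu B_{4r_y}(y) > (M/\rho^k) r_y^k$, which is already $\geq (M/c(n)) r^k$ throughout this range since $r_y$ and $r$ are comparable. In the large-scale regime $r \geq c(n) r_y$, I would choose $i$ so that $r_{i-1}$ lies in the interval $[r_y/\rho, (r - r_y)/3]$, which is non-empty by the lower bound on $r$. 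This choice ensures $r_i = \rho r_{i-1} \geq r_y$, so Lemma \ref{lem:good-bad-containment} produces $q \in \cQ_{i-1}$ with $|y - q| < r_y + 2r_{i-1} \leq r - r_{i-1}$; thus $B_{r_{i-1}}(q) \subset B_r(y)$, and non-stopness gives $\mu B_r(y) > M r_{i-1}^k \geq (M/c(n)) r^k$.

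The only real obstacle is the bookkeeping of scale ranges so that the two regimes for $y \in \cC'_+$ overlap cleanly and both produce estimates with a single universal constant; since $\rho = \rho(n)$ is fixed in \eqref{eq_rho}, all of the various factors of $\rho$ that appear collapse into one dimensional constant $c(n)$, and the two cases then combine into the uniform lower bound \eqref{eqn:lem-orig-mass}.
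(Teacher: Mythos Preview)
Your proposal is correct and follows essentially the same route as the paper: both arguments locate, for each admissible $r$, a non-stop ball $B_{r_i}(q)$ with $q \in \cQ_i$ (or $\cQ_{i-1}$) contained in $B_r(y)$, using the nested definition of $\cC'_0$ and Lemma~\ref{lem:good-bad-containment} for $\cC'_+$, and both fall back on Remarks~\ref{rem:good-mass}/\ref{rem:bad-mass} at the smallest scales. The paper simply records $\mu B_{4r_i}(y) > M r_i^k$ for all $i \leq A-1$ (where $r_{A+1} \leq r_y < r_A$) and interpolates, which is a slightly more compact version of your two-regime split.

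One small point to tighten: in your large-scale regime you should specify that you take the \emph{largest} $r_{i-1}$ in the interval $[r_y/\rho,\,(r-r_y)/3]$; merely asserting the interval is non-empty does not by itself guarantee it contains a power of $\rho$, nor that the chosen $r_{i-1}$ is comparable to $r$. Taking $r_{i-1}$ maximal gives $r_{i-1} > \rho(r-r_y)/3 \geq c(n)^{-1} r$, which is exactly what you need for the last inequality.
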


\begin{proof}
Take $y \in \cC'_+$, and choose $A$ so that $r_{A+1} \leq r_y < r_A$.  By Lemma \ref{lem:good-bad-containment} we have for every $i \leq A-1$ the existence of a $q \in \cQ_i$ so that
\begin{gather}
B_{4r_i}(y) \supset B_{r_i}(q).
\end{gather}
Using the definition of a good/bad ball we therefore have
\begin{gather}
\mu \ton{B_{4r_i}(y)} > M r_i^k \quad \forall i \leq A-1.
\end{gather}
Since $\mu \ton{B_{4r_y}(y)} > \frac{M}{\rho^k} r_y^k$ (Remarks \ref{rem:good-mass}, \ref{rem:bad-mass}), this proves \eqref{eqn:lem-orig-mass} with $c = c(n,\rho) = c(n)$.

Given $y \in \cC'_0$, then by construction the above reasoning works for any $i$.
\end{proof}

\subsection{Structure of \texorpdfstring{$\cC'_0$}{C0}} \label{subsection:structure-of-C_0}

We have established Theorem \ref{thm:core-estimate} parts A)-C), and that $\cC'_0$ is closed.  To show upper-Ahlfors-regularity and rectifiability of $\cC'_0$ (conclusion D) we demonstrate that the measure $\eta = \haus^k \llcorner \cC'_0$ satisfies
\begin{equation}\label{eqn:set-D-by-mu}
\sum_{\alpha \in \dZ \,\, : \,\, 2^\alpha \leq 2} \beta^k_{\eta, 2, 0}(z, 2^\alpha)^2 \leq c(n) \delta_0^2\quad \forall z \in B_1 .
\end{equation}
We then can apply our own Theorem \ref{thm:core-estimate} parts A), B) to the measure $\eta$ to deduce upper-Ahlfors-regularity, and the Proposition \ref{prop:rect_with_bounds}, which will obviously be proven independently, to deduce rectifiability.

To this end, we first apply Lemma \ref{lem:density-to-ineq} with $A_1 = \cC'_0\cap U$, $A_2 = U$ to deduce
\begin{gather}
\eta(U) \leq \frac{c(k)}{M} \mu(U) \quad \forall \text{ open } U.
\end{gather}
Now by Fubini we deduce that for any $B_r(x)$, 
\begin{align}
\beta_{\eta, 2, 0}^k(x, r)^2 
&\leq r^{-k-2} \int_{B_{r}(x)} d(z, V^k_\mu(x, 2r))^2 d\eta \\
&= r^{-k-2} \int_0^r \eta\{ z \in B_r(x) : d(z, V_\mu(x, 2r))^2 > s\} ds \\
&\leq \frac{c(k)}{M} r^{-k-2} \int_0^r \mu\{z \in B_r(x) : d(z, V_\mu(x, 2r))^2 > s\} ds \\
&\leq \frac{c(n)}{M} \beta^k_{\mu, 2, \bar\eps_\beta}(x, 2r)^2\, ,
\end{align}
provided $\mu(B_{2r}(x)) > \bar\eps_\beta (2r)^k$.  But using C), if $\eta(B_{r}(x)) > 0$, then we must have some $z \in \cC'_0 \cap B_{r}(x)$, and therefore
\begin{gather}
\mu(B_{2r}(x)) \geq \mu(B_{r}(z)) \geq \frac{M}{c_{key}} \omega_k r^k \geq \bar\eps_\beta (2r)^k.
\end{gather}
We can sum over $\alpha$ to find
\begin{gather}
\sum_{\alpha \in \dZ \,\, : \,\, 2^\alpha \leq 2} \beta^k_{\eta, 2, 0}(x, 2^\alpha)^2 \leq \frac{c(n)}{M} \sum_{\alpha \in \dZ \,\, : \,\, 2^\alpha \leq 4} \beta^2_{\mu, 2, \bar\eps_\beta}(x, 2^\alpha)^2 \leq \frac{c(n)}{M} c(k) \delta^2_0 M.
\end{gather}
In the last inequality we used our assumption \eqref{eq_sum_beta_bounds}, and that $\mu$ is supported in $B_1$.  This proves the required estimate \eqref{eqn:set-D-by-mu}.

We use \eqref{eq_sum_beta_bounds} to prove upper-Ahlfors-regularity and rectifiability.  First, observe that
\begin{gather}\label{eqn:C_0-density}
2^{-k} \leq \Theta^{*, k}(\eta, x) \leq 1 \quad \text{for $\eta$-a.e. $x$},
\end{gather}
since $\haus^k(\cC'_0) < \infty$.

To prove upper-Ahlfors-regularity we essentially reprove Corollary \ref{cor:main_upperahlfors_1} part U, using Theorem \ref{thm:core-estimate} in place of Theorem \ref{thm:main}.  Fix any $B_r(x)$.  Define the covering pair $(\cS, s_z)$ by setting
\begin{gather}
\cS_+ = \{ z \in B_r(x) \cap \cC_0' : \Theta^{*, k}(\eta, z) \leq 1 \} , \quad \cS_0 = B_r(x) \setminus \cS_+.
\end{gather}
By \eqref{eqn:C_0-density} $\eta(\cS_0) = 0$, and for each $z \in \cS_+$, we can choose an $s_z < r$ for which $\eta(B_{4s_z}(z)) \leq 2\omega_k (4s_z)^k$.

Apply Theorem \ref{thm:core-estimate} parts A), B) to $\eta \llcorner B_r(x)$ at scale $B_r(x)$, and the covering pair $(\cS, s_z)$.  Then there is a subset $\cS'$ so that
\begin{align}
\eta(B_r(x)) \leq c(n) r^k + \sum_{z \in \cS'_+} \eta(B_{4r_z}(z)) \leq c(n) r^k + c(k) \sum_{z \in \cS'_+} s_z^k \leq c(n) r^k.
\end{align}

We deduce
\begin{gather}\label{eqn:C_0-upper}
\eta(B_r(x)) \equiv \haus^k(\cC'_0 \cap B_r(x)) \leq c(n) r^k \quad \forall x\in B_1, 0 < r < 1.
\end{gather}

Finally, we apply Proposition \ref{prop:rect_with_bounds}, which will be proved independently, to deduce $\cC'_0$ is rectifiable.  In fact Lemma \ref{lemma_rect_manifold} shows $\cC'_0$ is a kind of ``uniformly rectifiable,'' in the sense that whenever $\haus^k(\cC'_0 \cap B_r(x)) > 2^{-k-1}\omega_k r^k$, there is some bi-Lipschitz $k$-manifold $T$ so that
\begin{gather}
\haus^k(\cC'_0 \cap T) \geq \frac{1}{2} \haus^k(\cC'_0 \cap B_r(x)).
\end{gather}

This finishes the proof of Theorem \ref{thm:core-estimate}.

\vspace{1cm}

\section{Packing/Measure estimates and density}\label{sec:density}

In general we cannot do better than a combination of measure and packing estimate.  At the one extreme, $\mu$ could have infinite mass concentrated across a $k$-plane.  At the other, $\mu$ may have no $k$-dimensional structure whatsoever, as when $\mu = \leb^n$.

However in the presence of density bounds, we can deduce purely Hausdorff or purely measure estimates.  Lower bounds on upper density give global Hausdorff bounds, and upper bounds on lower density give global measure bounds.  Density enforces some kind of $k$-dimensional structure locally.

We focus this section on proving Theorems \ref{thm:main_density}, \ref{thm:main_upperahlfors}, and their Corollaries.  Theorem \ref{thm:main_density} is a more-or-less direct consequence of Theorem \ref{thm:main}.  However, Theorem \ref{thm:main_upperahlfors} is more involved.  As the proof is vastly simpler, and more instructive, let us give first the special case of Corollary \ref{cor:main_density_1}.

\begin{proof}[Proof of Corollary \ref{cor:main_density_1}]
Part L).  Apply Theorem \ref{thm:main} with $\eps = \bar\eps_\beta/c(k)$ to find a closed set $K_0$ so that
\begin{gather}
\haus^k(K_0) \leq c(n), \quad \mu(B_1 \setminus K_0) \leq c(n)(\eps + M) + \Gamma.
\end{gather}

Define
\begin{gather}
K_1 = \{z \in B_1 \setminus K_0 : \Theta^{*, k}(\mu, x) \geq a \} ,
\end{gather}
and then use Lemma \ref{lem:density-to-ineq} with $A_1 = K_1$, $A_2 = B_1 \setminus K_0$ to deduce
\begin{gather}
a \haus^k(K_1) \leq \mu(B_1 \setminus K_0) \leq c(n)(\eps + M) + \Gamma.
\end{gather}
Now set $\cC'_0 = K_0 \cup K_1$.

Part U).  Let $\cS$ be the covering pair defined by
\begin{gather}
\cS_+ = \{z \in B_1 : \Theta_*^k(\mu, z) \leq b \}, \quad \cS_0 = \cC \setminus \cS_+.
\end{gather}
For $x \in \cS_+$, choose $s_x \leq 1$ so that $\mu(B_{s_x}(x)) \leq 2\omega_k b s_x^k$.  By construction, $\mu(\cS_0) = 0$, and $\cS$ is closed.

Apply Theorem \ref{thm:main} with $\eps = \bar\eps_\beta/c(k)$ to the covering pair $(\cS, s_x)$, to obtain a $\cS'$.  We then have
\begin{align}
\mu(B_1) 
&\leq c(n)(M + \eps) + \Gamma + \mu(B_{s_x}(\cS'_+)) \\
&\leq c(n)(M + \eps) + \Gamma + c(k) b \sum_{x \in \cS'_+} s_x^k \\
&\leq c(n)(M + \eps + b) + \Gamma.
\end{align}
\end{proof}


There are a couple complications to proving Theorem \ref{thm:main_upperahlfors} in general.  First, in any given ball $B_r(x)$, condition \eqref{eqn:upperahlfors-hyp} only gives information on points $y$ with $r_y \leq 2r$.  We need to find a good ``minimal'' subset $\cU \subset \cC$, so that in \emph{any} ball centered in $\cU$ we have $\beta$-bounds down to a uniform, comparable scale.  This is the content of Lemma \ref{lem:uniform-covering}.

Second, to prove part L), we cannot use the approach of Corollary \ref{cor:main_density_1} as proven above, since this can only give a packing estimate at a single scale.  Nor can we apply Theorem \ref{thm:main_density} at every scale, since this will give us a \emph{different} packing for each scale.  We must choose a \emph{single} collection of balls, and demonstrate packing bounds at all scales.

Our strategy to prove part L), in general terms, is to find some discrete approximation $\mu_{dis}$ of $\mu$ on the regions of big mass, prove $\beta$-bounds on $\mu_{dis}$ in terms of the $\beta$-bounds of $\mu$, and then use Theorem \ref{thm:main} to bound $\mu_{dis}$.  The main headache is that we can only get and use meaningful information on the $\beta$-numbers of $\mu_{dis}$ if: 1) the balls associated to $\mu_{dis}$ are centered at $\mu$-centers-of-mass; 2) the balls have some disjointness control; and 3) the balls have lower $\mu$-mass bounds.

These issues necessitate the use of an \emph{intermediary} packing measure $\nu$, which controls the original packing measure $\mu_{dis}$, but which admits the right properties 1)-3).  The following Subsection deals with this process.


\subsection{Discretizing a measure}

We use the following specialization of packing measure as an intermediary.
\begin{definition}
We say $\nu$ is \emph{($k$-dimensional) disjoint packing measure} if there is some collection of disjoint balls $\{B_{r_p}(x_p)\}_p$ so that $\nu$ has the form
\begin{gather}
\nu = \sum_p r_p^k \delta_{x_p} .
\end{gather}
Here, as always, $\delta_{x_p}$ is the Dirac delta at $x_p$.
\end{definition}

A key utility of a disjoint packing measure for us is the following Theorem, what is essentially a baby version of discrete Reifenberg Theorem \ref{thm:main_discrete}.
\begin{theorem}\label{thm:disjoint-discrete}
Let $\nu$ be a $k$-dimensional disjoint packing measure supported in $B_1$, and suppose
\begin{gather}
\nu \left\{ z \in B_1 : \int_0^1 \beta^k_{\nu, 2, \bar\eps_\beta}(z, r)^2 \frac{dr}{r} > M \right\} \leq \Gamma\, .
\end{gather}

Then provided $\bar\eps_\beta \leq c(k) \max\{M, \eps\}$, we have
\begin{gather}
\nu(B_1) \leq c(n)(1 + \eps + M) + \Gamma\, .
\end{gather}
\end{theorem}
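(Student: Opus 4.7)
The plan is to apply the Main Theorem \ref{thm:main} directly to $\nu$ with the tautological discrete covering pair, and then convert the resulting packing estimate into a measure estimate using the disjointness of the defining balls. Concretely, I would set $\cC_+ = \{x_p\}_p$, $\cC_0 = \emptyset$, and $r_{x_p} = r_p$; after the harmless reduction $r_p \leq 1$ (which is the only regime in which the theorem will be invoked in this section), $(\cC, r_x)$ is a valid covering pair for $\nu$, since $\cC \supseteq \spt\nu$ trivially. The hypothesis on $\nu$ immediately implies
\begin{equation*}
\nu\Bigl\{z \in B_1 : \int_{r_z}^{2} \beta^k_{\nu,2,\bar\eps_\beta}(z,r)^2 \tfrac{dr}{r} > M \Bigr\} \leq \Gamma,
\end{equation*}
so Theorem \ref{thm:main} applies and yields a closed subset $\cC' \subset \cC$ satisfying the packing bound $\sum_{x \in \cC'_+} r_x^k \leq c(n)$ and the measure bound $\nu(B_1 \setminus B_{r_x}(\cC')) \leq c(n)(\eps + M) + \Gamma$.

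The crucial observation is that disjointness of the defining balls forces $\nu(B_{r_q}(x_q)) = r_q^k$ for every $q$: if some $x_p$ with $p \neq q$ were to lie in $B_{r_q}(x_q)$, then $|x_p - x_q| < r_q$, while disjointness of $B_{r_p}(x_p)$ from $B_{r_q}(x_q)$ gives $|x_p - x_q| \geq r_p + r_q > r_q$, a contradiction. Hence the only support point of $\nu$ inside $B_{r_q}(x_q)$ is $x_q$ itself. With this in hand, the neighborhood of $\cC'_+$ carries only discrete packing-controlled mass:
\begin{equation*}
\nu(B_{r_x}(\cC')) = \nu\Bigl(\bigcup_{x \in \cC'_+} B_{r_x}(x)\Bigr) \leq \sum_{x \in \cC'_+} \nu(B_{r_x}(x)) = \sum_{x \in \cC'_+} r_x^k \leq c(n).
\end{equation*}
Adding the two pieces gives $\nu(B_1) \leq c(n)(1 + \eps + M) + \Gamma$, as claimed.

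There is essentially no hard step here: the result is a clean corollary of Theorem \ref{thm:main}, once one exploits the fact that the ball around each support point sees exactly its own mass $r_p^k$. The mildest technical wrinkle concerns the regime $\max\{M,\eps\} > 1$: in that case the noncollapsing conclusion C) of Theorem \ref{thm:main}, which requires $\nu(B_{r_x}(x)) \geq \max\{M,\eps\} r_x^k$ for $x \in \cC'_+$, is incompatible with $\nu(B_{r_x}(x)) = r_x^k$, so $\cC'_+$ (and hence $\cC'$, since $\cC_0 = \emptyset$) must be empty. But then $B_{r_x}(\cC') = \emptyset$ and the measure estimate B') alone yields $\nu(B_1) \leq c(n)(\eps + M) + \Gamma$, which is stronger than needed. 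Thus the combined bound holds uniformly, completing the proof.
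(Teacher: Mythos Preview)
Your proof is correct and follows essentially the same approach as the paper: set up the tautological discrete covering pair, apply Theorem~\ref{thm:main}, and use disjointness to convert the packing bound on $\cC'_+$ into a $\nu$-mass bound. Your extra paragraph about the case $\max\{M,\eps\}>1$ is harmless but unnecessary, since the argument only uses conclusions A) and B) of Theorem~\ref{thm:main}; part C) plays no role, so whether or not $\cC'_+$ is forced to be empty is irrelevant to the estimate.
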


\begin{proof}
By definition we can find a covering pair $(\cS, r_x)$ so that $\cS_0 = \emptyset$, and
\begin{gather}
\nu = \sum_{x \in \cS_+} r_x^k \delta_x\, .
\end{gather}
Trivially $\cS$ is a covering pair for $\nu$.

Apply Theorem \ref{thm:main} to obtain a $\cS' \subset \cS$.  Using Theorem \ref{thm:main} parts A), B), and the disjointness hypothesis, we have
\begin{align}
\nu(B_1)
&\leq c(n)(\eps + M) + \Gamma + \nu(B_{r_x}(\cS'))\, , \\
&= c(n)(\eps + M) + \Gamma + \sum_{x \in \cS'} r_x^k\, , \\
&\leq c(n)(1 + \eps + M) + \Gamma\, .
\end{align}
Note we see above that in the context of a packing measure, the packing control on $\cS'$ automatically gives rise to a measure estimate on $\cS'$.
\end{proof}

The notation $\cU$ introduced below is purely a technical issue.  Since we a priori only know $\beta$-number information down to scale $r_x$, in a random ball $B_{r_x}(x)$ we may not have $\beta$-number bounds down to a uniform, comparable scale.  It turns out one can always find a good subset $\cU(\cC) \subset \cC$ with the properties we desire.
\begin{lemma}\label{lem:uniform-covering}
Given a covering pair $(\cC, r_x)$ with $r_x \leq 1$, there is a subset $\cU(\cC, r_x) \subset \cC$ which admits the following properties: 

\begin{enumerate}
\item[A)] if $x \in \cU$, and $r \geq r_x$, then $y \in \cC \cap B_r(x) \implies r_y \leq 2r $.

\item[B)] $B_{5r_x}(\cU) \supset B_{r_x}(\cC)$, and $B_{5r_x}(\cU_+) \supset B_{r_x}(\cC_+)$.
\end{enumerate}

\end{lemma}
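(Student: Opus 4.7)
The plan is to define
\[
\cU := \{x \in \cC : \text{no } y \in \cC \text{ satisfies both } r_y > 2 r_x \text{ and } |y - x| < r_y/2\},
\]
i.e., the points of $\cC$ that are not ``dominated'' by a larger-radius neighbor. Property (A) should then be essentially tautological: if $x \in \cU$ and some $y \in \cC \cap B_r(x)$ with $r \geq r_x$ had $r_y > 2r$, then automatically $r_y > 2 r_x$ and $|y - x| < r \leq r_y/2$, so $y$ would dominate $x$, contradicting $x \in \cU$.

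For property (B), I would iterate the dominator relation. Given $z \in B_{r_x}(\cC)$, pick $x_0 \in \cC$ with $|z - x_0| \leq r_{x_0}$ (take $x_0 = z$, $r_{x_0} = 0$ if $z \in \cC_0$). If $x_0 \notin \cU$, select a dominator $x_1 \in \cC$ with $r_{x_1} > 2 r_{x_0}$ and $|x_1 - x_0| < r_{x_1}/2$; iterate to build $x_0, x_1, x_2, \dots$. Since the radii strictly more than double at each step while remaining $\leq 1$, the chain must terminate at some $x_N \in \cU$ after at most $O(\log(1/r_{x_1}))$ steps. The inequality $r_{x_j} < r_{x_N}/2^{N-j}$ together with a telescoping sum yields
\[
|x_0 - x_N| \leq \sum_{j=1}^{N} \tfrac{1}{2} r_{x_j} < r_{x_N}(1 - 2^{-N}), \qquad r_{x_0} < r_{x_N}/2^{N},
\]
from which $|z - x_N| \leq r_{x_0} + |x_0 - x_N| < r_{x_N}$. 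Hence $z \in B_{r_{x_N}}(x_N) \subset B_{5 r_x}(\cU)$, proving the first inclusion. Moreover, if $z \in B_{r_x}(\cC_+)$ then $r_{x_0} > 0$ forces $r_{x_N} > 0$, so $x_N \in \cU_+$ and the same estimate gives $z \in B_{r_x}(\cU_+)$.

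There are no real analytic obstacles. The only point requiring explicit verification is termination of the chain, which follows from the strict doubling of radii together with $r_y \leq 1$ (and, when $r_{x_0} = 0$, the observation that $r_{x_1}$ must already be strictly positive so that the doubling argument applies from index $1$ onward). In fact my estimate achieves constant $1$ rather than $5$ in the first inclusion of (B), so the $5$ in the statement is generous slack, presumably kept to allow for minor modifications in downstream applications.
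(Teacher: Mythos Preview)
Your proof is correct and follows essentially the same strategy as the paper: define $\cU$ as the points of $\cC$ that are not ``dominated'' by a larger-radius neighbor, verify (A) directly from the definition, and prove (B) by iterating the dominator relation into a finite chain whose radii grow geometrically. The paper implements this via dyadic bins $W_\alpha = \{z : 2^{-\alpha-1} < r_z \leq 2^{-\alpha}\}$, setting $K_\alpha = W_\alpha \setminus B_{r_y}(\cup_{\beta<\alpha} W_\beta)$ and $\cU_+ = \cup_\alpha K_\alpha$; your definition instead removes $x$ whenever some $y$ has $r_y > 2r_x$ and $|x-y| < r_y/2$. The two choices of $\cU$ are not literally the same set, but both satisfy (A) and (B), and your version yields the sharper inclusion $B_{r_x}(\cU) \supset B_{r_x}(\cC)$ rather than the factor $5$ the paper records. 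The paper's dyadic formulation is closer to a standard Whitney-type construction; your formulation is more direct and avoids the binning altogether.
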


\begin{remark}
Property A) trivially holds for any $y \not\in B_{r_x}(\cC_+)$, and $r > 0$.
\end{remark}

\begin{proof}
Let us write $r_\alpha = 2^{-\alpha}$.  For each integer $\alpha \geq 0$, define
\begin{gather}
W_\alpha = \{ z \in \cC : r_{\alpha + 1} < r_z \leq r_\alpha \}.
\end{gather}
By assumption $\cup_\alpha W_\alpha = \cC_+$.

Let $K_0 = W_0$, and for each integer $\alpha \geq 1$, let
\begin{gather}
K_\alpha = W_\alpha \setminus B_{r_x}(\cup_{\beta \leq \alpha-1} W_\beta ).
\end{gather}

Now let
\begin{gather}
\cU_+ = \cup_\alpha K_\alpha, \quad \cU_0 = \cC_0 \setminus B_{r_x}(\cC_+).
\end{gather}

We claim that if $x \in \cU$, and $r \geq r_x$, then
\begin{gather}
y \in \cC \cap B_r(x) \implies r_y \leq 2 r.
\end{gather}
If $x \in \cU_0$, then this follows trivially, since $x \not \in B_{r_y}(y)$ when $r_y > 0$.  Otherwise, $x \in K_\alpha$, and if $r_y > 2r_x$ then $y \in W_\beta$ for some $\beta < \alpha$.  But then, by construction $x \not\in B_{r_y}(y)$, and so $r \geq r_y$.  This proves the claim.

We now claim that $B_{5 r_x}(\cU) \supset B_{r_x}(\cC)$.  Let us prove first that
\begin{gather}\label{eqn:W-inside-K}
W_\alpha \subset B_{4r_x}(\cup_{\beta \leq \alpha} K_\beta).
\end{gather}
Take $x_1 \in W_\alpha$, and write $\alpha_1 = \alpha$.  If $x_1 \in K_{\alpha_1}$ we are done.  Otherwise, by construction, there is an $x_2 \in W_{\alpha_2}$, with $\alpha_2 < \alpha_1$, so that $x_1 \in B_{r_{\alpha_2}}(x_2)$.  If $x_2 \not\in K_{\alpha_2}$, then we can find an $x_3$ as we did $x_2$.  We can thus build a sequence $x_1, x_2, \ldots$ satisfying
\begin{gather}
x_i \in W_{\alpha_i} \setminus K_{\alpha_i}, \quad x_{i-1} \in B_{r_{\alpha_i}}(x_i), \quad \alpha_i < \alpha_{i-1}.
\end{gather}

Since $W_0 = K_0$, this must terminate at some $x_N \in K_{\alpha_N}$, with $x_{N-1} \in B_{\alpha_N}(x_N)$.  We therefore calculate
\begin{align}
|x_1 - x_N| &< r_{\alpha_2} + \ldots + r_{\alpha_N} \\
&\leq 2 r_{\alpha_N} \\
&\leq 4 r_{x_N}.
\end{align}
In other words, $x_1 \in B_{4r_x}(K_{\alpha_N})$.  This proves \eqref{eqn:W-inside-K}, and in particular
\begin{gather}
B_{r_x}(W_\alpha) \subset B_{5r_x}(\cup_{\beta \leq \alpha} K_\beta).
\end{gather}

We deduce $B_{r_x}(\cC_+) \subset B_{5r_x}(\cU_+)$.  We then have
\begin{align}
\cC_0 \subset \cU_0 \cup B_{r_x}(\cC_+) \subset \cU_0 \cup B_{5r_x}(\cU_+).
\end{align}
This proves the second claim.
\end{proof}
\vspace{.25cm}

We can now state the main Lemma.  It says that if you have the right disjointness to your packing measure, then $\beta$-bounds of $\mu$ controls the packing measure.  Due to the center of mass issues we must use an intermediary $\nu$.  Recall the packing measure $\mu_{\cC'}$ (Defintion \ref{def:mu_C}) of a covering set is defined as 
\begin{gather*}
\mu_{\cC'} := \haus^k \llcorner \cC'_0 + \sum_{x \in \cC'_+} r_x^k \delta_x.
\end{gather*}

\begin{lemma}\label{lem:intermediary-packing}
Let $\mu$ be a non-negative Borel-regular measure in $B_1$, and let $(\cC, r_x)$ be a covering pair for $\mu$ with $\cC = \cC_+$ and $r_x \leq 1$.  Suppose $\cC' := \cC'_+ \subset \cU(\cC)$ is such that the balls $\{B_{2r_x}(x)\}_{x \in \cC'_+}$ are disjoint, and
\begin{gather}\label{eqn:intermediary-hyp}
\mu(B_{r_x}(x)) \geq a r_x^k \quad \forall x \in \cC' = \cC'_+.
\end{gather}

Then there is $k$-dimensional disjoint packing measure $\nu$, supported in $\overline{B_1}$, so that
\begin{enumerate}
\item[A)] for any $x \in \cC'$, $r \geq r_x$, we have $\mu_{\cC'}(B_r(x)) \leq \nu(B_{2r}(x))$.

\item[B)] for any $x \in B_1$, $0 < r < 1$, $\eps > 0$, we have
\begin{align}
&\nu \left( z \in B_r(x) : \int_0^{2r} \beta^k_{\nu,2,c(k)\eps/a}(z, s)^2 \frac{ds}{s} >  \frac{c(k)M}{a} \right) \\
&\quad \leq \frac{1}{a} \mu\left( z \in B_{4r}(x) : \int_{8r_z}^{32r} \beta^k_{\mu, 2, \eps}(z, s)^2 \frac{ds}{s} > M \right).
\end{align}
\end{enumerate}
\end{lemma}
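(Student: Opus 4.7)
The strategy is to take $\nu$ to be the packing measure whose atoms sit at the generalized $\mu$-centers of mass of the balls $B_{r_y}(y)$, $y \in \cC'$.  Concretely, let $X_y$ denote the generalized $\mu$-center of mass of $B_{r_y}(y)$ (well-defined by Proposition \ref{prop:gen-COM}) and set
\begin{gather*}
\nu := \sum_{y \in \cC'} r_y^k\, \delta_{X_y}.
\end{gather*}
Since $X_y \in \overline{B_{r_y}(y)}$, we have $B_{r_y}(X_y) \subset B_{2r_y}(y)$, so the hypothesized disjointness of $\{B_{2r_y}(y)\}_{y \in \cC'}$ turns $\nu$ into a $k$-dimensional disjoint packing measure supported in $\overline{B_1}$.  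The packing inclusion (A) will follow by showing that if $y \in \cC' \cap B_r(x)$ with $r \geq r_x$, disjointness of $\{B_{2r_z}(z)\}_{z \in \cC'}$ forces $2(r_x + r_y) \leq |x-y| < r$ and hence $r_y < r/2$, after which $|X_y - x| \leq r_y + |y - x| < 2r$.

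The heart of the matter is a pointwise $\beta$-comparison of the form
\begin{gather*}
\beta^k_{\nu, 2, \bar\eps_\nu}(X_{y_0}, s)^2 \leq \frac{c(k)}{a}\,\beta^k_{\mu,2,\eps}(y_0, Cs)^2, \qquad \bar\eps_\nu = c(k)\eps/a,
\end{gather*}
valid for $s \geq r_{y_0}$; for $s < r_{y_0}$ the disjointness of $\{B_{r_y}(X_y)\}$ isolates $X_{y_0}$ inside $B_s(X_{y_0})$, so the left side vanishes.  To establish the bound, I choose $V = V^k_\mu(y_0, Cs)$ with $C$ a fixed geometric constant; the defining property of the generalized center of mass together with Jensen's inequality gives $d(X_y, V)^2 \leq \frac{1}{\mu(B_{r_y}(y))}\int_{B_{r_y}(y)} d(w, V)^2\, d\mu(w)$ for every $y \in \cC'$ with $X_y \in B_s(X_{y_0})$.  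Combining this with the lower mass bound $\mu(B_{r_y}(y)) \geq a r_y^k$, the disjointness of $\{B_{r_y}(y)\}$, and the containment $B_{r_y}(y) \subset B_{Cs}(y_0)$ (which comes from $r_y < s$ and $|y - y_0| \leq r_y + s + r_{y_0}$), summing $r_y^k d(X_y, V)^2$ against the best-plane $L^2$-integral reproduces $\beta^k_\mu(y_0, Cs)^2$ up to the factor $(Cs)^{k+2}/a$.  The identical containment yields $\nu(B_s(X_{y_0})) \leq \mu(B_{Cs}(y_0))/a$, which is what calibrates the truncation $\bar\eps_\nu$.

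Integrating this bound against $ds/s$ over $(0, 2r]$ and substituting $s' = Cs$ converts $\int_0^{2r} \beta^k_\nu(X_{y_0}, s)^2\,ds/s$ into (a constant times) $a^{-1}\int_{Cr_{y_0}}^{2Cr} \beta^k_\mu(y_0, s)^2\,ds/s$; so if $X_{y_0}$ lies in the LHS set of (B), then $\beta_\mu$ centered at $y_0$ has large summed integral.  To turn this into the claimed $\mu(B)$ bound, I use $r_{y_0}^k \leq a^{-1}\mu(B_{r_{y_0}}(y_0))$ together with disjointness of the collection $\{B_{r_{y_0}}(y_0)\}$, all contained in $B_{4r}(x)$, so that
\begin{gather*}
\nu(A) \;=\; \sum_{y_0} r_{y_0}^k \;\leq\; a^{-1}\sum_{y_0}\mu(B_{r_{y_0}}(y_0)) \;\leq\; a^{-1}\mu(\tilde B),
\end{gather*}
where $\tilde B$ is the $\mu$-measurable set of $z$'s inheriting the integral largeness from the nearby $y_0$.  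To identify $\tilde B$ with (a subset of) the set in (B), I would transfer the integral bound from $y_0$ to any $z \in B_{r_{y_0}}(y_0)$ via the monotonicity of $\beta$ in Remark \ref{rem:pointwise-integral}, and use Lemma \ref{lem:uniform-covering}(A) to bound $r_z \leq 2 r_{y_0}$, absorbing the multiplicative slack from these two estimates into the deliberately generous constants ``$50 r_z$'' and ``$32 r$'' that appear in the statement.

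The main obstacle is precisely this final bookkeeping of constants: the natural pointwise comparison produces an integration window of the shape $\int_{C r_{y_0}}^{2Cr}$ at $y_0$, and one must carefully propagate $C$ through the change of variable, through the $\beta$-monotonicity passage from $\beta_\mu(y_0,\cdot)$ to $\beta_\mu(z,\cdot)$, and through the $\cU$-bound $r_z \leq 2 r_{y_0}$, so that after dilations the resulting lower endpoint is genuinely $\leq 50 r_z$ and the upper endpoint $\leq 32 r$.  No single step is delicate, but matching the slack is the real work — which is why the ``$50$'' in the statement is chosen as a comfortable overestimate rather than an optimal constant.
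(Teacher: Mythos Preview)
Your proposal is correct and follows essentially the same route as the paper: define $\nu$ at the generalized $\mu$-centers of mass, use the disjointness of $\{B_{2r_y}(y)\}$ to make $\nu$ a disjoint packing measure, establish the pointwise comparison $\beta_\nu^2 \lesssim a^{-1}\beta_\mu^2$ via Jensen's inequality and the lower mass bound, and then integrate in $ds/s$ while using the $\cU$-property $r_z \leq 2r_{y_0}$ and $\beta$-monotonicity to land inside the stated window $[50r_z, 32r]$.  The only cosmetic difference is that the paper states the pointwise $\beta$-comparison at a common center (and handles the singleton case $\beta_\nu = 0$ separately), whereas you compare $\beta_\nu$ at $X_{y_0}$ against $\beta_\mu$ at $y_0$; since $|X_{y_0}-y_0|\leq r_{y_0}$ these are equivalent up to the same $c(k)$ slack you already budget for.
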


\begin{proof}
Given $y \in \cC'$, let $Y \in \overline{B_{r_y}(y) \cap B_1}$ be the generalized $\mu$-center of mass of $B_{r_y}(y)$.  Define
\begin{gather}
\nu = \sum_{y \in \cC'} r_y^k \delta_{Y}.
\end{gather}
By assumption the balls $\{B_{r_y}(Y)\}_{y \in \cC'}$ are disjoint.

Let us prove conclusion A).  Given any $y \in B_r(x) \cap \cC'$, by disjointness and our choice of $r$ we know $r > r_y$.  Since the generalized $\mu$-center of mass $Y \in \overline{B_{r_y}(y)}$, we deduce
\begin{align}
\mu_{\cC'}(B_r(x)) = \sum_{y \in B_r(x) \cap \cC'} r_y^k  \leq \sum_{Y \in B_{2r}(x) \cap \spt\nu} r_y^k  = \nu(B_{2r}(x)).
\end{align}

We now work towards conclusion B).  We claim that for each $x \in \spt\nu$, and $r \geq r_x$, we have
\begin{gather}\label{eqn:intermediary-ineq}
\nu(B_r(x)) \leq \frac{1}{a} \mu(B_{4r}(x)) \quad\text{and} \quad \beta^k_{\nu, 2, 4^k \eps/a}(x, r)^2 \leq \frac{c(k)}{a} \beta^k_{\mu, 2, \eps}(x, 4r)^2.
\end{gather}

Let us prove \eqref{eqn:intermediary-ineq}.  If $Y \in B_r(x) \cap \spt\nu$, then by disjointness $r_y < r$.  We deduce from hypothesis \eqref{eqn:intermediary-hyp}
\begin{align}
\nu(B_r(x)) = \sum_{Y \in B_r(x) \cap \spt\nu} r_y^k \leq \frac{1}{a} \sum_{y \in B_{2r}(x) \cap \cC'} \mu(B_{r_y}(y)) \leq \frac{1}{a} \mu(B_{4r}(x)).
\end{align}

Let $V = V_\mu^k(x, 4r)$.  The following calculation (specifically, inequality \eqref{eqn:intermediary-com-beta}) is the key place we use that $Y$ is the generalized $\mu$-center of mass.  This inequality \eqref{eqn:intermediary-com-beta} follows by Proposition \ref{prop:gen-COM}, and of course we only need to sum over $y$ with $\mu(B_{r_y}(y)) < \infty$.  We calculate 
\begin{align}
r^{k+2} \beta_\nu^k(x, r)^2
&\leq \sum_{Y \in B_r(x) \cap \spt\nu} r_y^k d(Y, V)^2\, , \\
&\leq \sum_{y \in B_{2r}(x) \cap \cC'} \frac{r_y^k}{\mu(B_{r_y}(y))} \int_{B_{r_y}(y)} d(z, V)^2 d\mu(z)\, , \label{eqn:intermediary-com-beta} \\
&\leq \frac{1}{a} \int_{B_{4r}(x)} d(z, V)^2 d\mu(z) \, ,\\
&= \frac{4^{k+2}}{a} r^{k+2} \beta_{\mu,2,\eps}^k(x, 4r)^2\, ,
\end{align}
provided $\mu(B_{4r}(x)) > \eps (4r)^k$, which certainly holds if $\nu(B_r(x)) > 4^k \eps/a r^k$.  This proves \eqref{eqn:intermediary-ineq}.

Equation \eqref{eqn:intermediary-ineq} is a pointwise $\beta$-number bound at a single scale.  Towards conclusion B), we prove a pointwise inequality on the summed-$\beta$-numbers.  Let us fix an $x \in B_1$, and $r > 0$ arbitrary.

First, if $B_r(x) \cap \spt\nu$ is a singleton, then by definition $\beta_\nu(x, r) = 0$.  Therefore, by disjointness, we can assume $r > r_y$ for every $Y \in B_r(x) \cap \spt\nu$.  Here of course $Y$ is the generalized $\mu$-center of mass for $B_{r_y}(y)$, where $y \in \cC'$.

Otherwise, given $z \in B_{r_y}(y)$ with $y \in \cC'$, we have by \eqref{eqn:intermediary-ineq} and our choice of $\cC' \subset \cU$:
\begin{align}
\int_{8r_z}^{32r} \beta^k_{\mu, 2, \eps}(z, s)^2 ds/s
&\geq \int_{4r_y}^{32r} \beta^k_{\mu, 2, \eps}(z, s)^2 ds/s\, ,  \\
&\geq \frac{a}{c(k)} \int_{r_y}^{8r} \beta^k_{\nu, 2, 4^k\eps/a}(z, s)^2 ds/s\, , \\
&\geq \frac{a}{c(k)} \int_{0}^{2r} \beta^k_{\nu,2,c(k)\eps/a}(Y, s)^2 ds/s .
\end{align}
In particular, suppressing the $\eps$ notation, if $\int_0^{2r} \beta_\nu(Y, s)^2 ds/s > \frac{c(k) M}{a}$, then
\begin{gather}
\mu \left( z \in B_{r_y}(y) : \int_{8r_z}^{32r} \beta_\mu(z, s)^2 ds/s > M \right) \geq \mu(B_{r_y}(y)) \geq  a r_y^k.
\end{gather}

We therefore calculate, 
\begin{align}
&\nu \left( z \in B_r(x) : \int_0^{2r} \beta_\nu(z, s)^2 ds/s > \frac{c(k)M}{a} \right) \\
&= \sum \left\{ r_y^k :  Y \in B_r(x) \cap \spt\nu \,\,\text{ and } \int_0^{2r} \beta_\nu(Y, s)^2 ds/s > \frac{c(k)M}{a}  \right\}\\
&\leq \frac{1}{a} \sum_{y \in B_{2r}(x)} \mu \left( z \in B_{r_y}(y) : \int_{8r_z}^{32r} \beta_\mu(z, s)^2 ds/s > M \right) \\
&\leq \frac{1}{a} \mu \left( z \in B_{4r}(x) : \int_{8r_z}^{32r} \beta_\mu(z, s)^2 ds/s > M \right).
\end{align}
This completes the proof of Lemma \ref{lem:intermediary-packing}.
\end{proof}

The main Theorem of this subsection is the following.  It follows easily from Lemma \ref{lem:intermediary-packing}.
\begin{theorem}\label{thm:discretized-packing}
Let $\mu$ be a non-negative Borel-regular measure in $B_1$, and let $(\cC, r_x)$ be a covering pair for $\mu$ with $\cC = \cC_+$ and $r_x \leq 1$.  Suppose $\cC' := \cC'_+ \subset \cU(\cC)$ is such that the balls $\{B_{2r_x}(x)\}_{x \in \cC'}$ are disjoint, and
\begin{gather}
\mu(B_{r_x}(x)) \geq a r_x^k \quad \forall x \in \cC' = \cC'_+.
\end{gather}

Take $x \in \cC'$ and $r \geq r_x$.  If
\begin{gather}\label{eqn:disc-packing-hyp}
\mu\left\{ z \in B_{16r}(x) : \int_{8r_z}^{64r} \beta^k_{\mu, 2, \bar\eps_\beta}(z, r) dr/r > M \right\} \leq \Gamma r^k,
\end{gather}
and $\bar\eps_\beta \leq c(k) \max\{M, \eps\}$, then
\begin{gather}
\mu_{\cC'}(B_r(x)) \equiv \sum_{y \in \cC' \cap B_r(x)} r_y^k \leq \left( c(n) + \frac{c(n)(\eps + M) + \Gamma}{a} \right) r^k.
\end{gather}
\end{theorem}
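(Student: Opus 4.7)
The strategy is to chain together Lemma \ref{lem:intermediary-packing} (which trades the original measure $\mu$ for a disjoint packing measure $\nu$ while controlling $\beta$-numbers) with Theorem \ref{thm:disjoint-discrete} (which bounds the total mass of a disjoint packing measure from $\beta$-control), after rescaling to the ball $B_{2r}(x)$. Since both of these tools are already in hand, the argument is essentially bookkeeping of constants and truncation thresholds.

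First I would invoke Lemma \ref{lem:intermediary-packing} to produce the $k$-dimensional disjoint packing measure $\nu$ associated to $\cC'$. By conclusion A of that Lemma we have immediately
\begin{gather}
\mu_{\cC'}(B_r(x)) \;\leq\; \nu(B_{2r}(x)),
\end{gather}
so it is enough to bound the right-hand side. I would then restrict $\nu$ to $\overline{B_{2r}(x)}$ and rescale to the unit ball: setting $\tilde\nu(A) := (2r)^{-k}\nu(x + 2rA) \llcorner \overline{B_{1/2}(0)}$, the measure $\tilde\nu$ is a disjoint packing measure supported in $B_1(0)$, with $\nu(B_{2r}(x)) = (2r)^k\tilde\nu(B_1)$.

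Next I would verify the hypothesis of Theorem \ref{thm:disjoint-discrete} for $\tilde\nu$. Applying Lemma \ref{lem:intermediary-packing} part B with the choice $x' = x$, $r' = 2r$ gives
\begin{gather}
\nu\Big\{ z \in B_{2r}(x) : \int_0^{4r} \beta^k_{\nu,2,c(k)\eps/a}(z,s)^2 \tfrac{ds}{s} > \tfrac{c(k)M}{a} \Big\}
\;\leq\; \tfrac{1}{a}\,\mu\Big\{ z \in B_{8r}(x) : \int_{50 r_z}^{64r} \beta^k_{\mu,2,\eps}(z,s)^2 \tfrac{ds}{s} > M \Big\}.
\end{gather}
Since $B_{8r}(x) \subset B_{16r}(x)$, the hypothesis \eqref{eqn:disc-packing-hyp} bounds the right side by $\Gamma r^k / a$. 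After rescaling, this reads $\tilde\nu\{z \in B_1 : \int_0^1 \beta^k_{\tilde\nu,2,c(k)\eps/a}(z,s)^2 \tfrac{ds}{s} > c(k)M/a\} \leq \Gamma/(2^k a)$. Note that $\beta_{\nu\llcorner B_{2r}(x)} \leq \beta_\nu$ (the truncation threshold only helps, and smaller support can only decrease the $L^2$-distance integral), so this upper bound transfers automatically to $\tilde\nu$.

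Finally I would apply Theorem \ref{thm:disjoint-discrete} to $\tilde\nu$ with parameters $\tilde M = c(k)M/a$, $\tilde\Gamma = \Gamma/(2^k a)$, and $\tilde\eps = \eps/a$; the required condition $c(k)\eps/a \leq c(k)\max\{\tilde M,\tilde\eps\}$ is automatic from our choices and the hypothesis $\bar\eps_\beta \leq c(k)\max\{M,\eps\}$. Theorem \ref{thm:disjoint-discrete} then yields
\begin{gather}
\tilde\nu(B_1) \;\leq\; c(n)\Big(1 + \tfrac{\eps}{a} + \tfrac{c(k)M}{a}\Big) + \tfrac{\Gamma}{2^k a},
\end{gather}
and unscaling and combining with the reduction from Lemma \ref{lem:intermediary-packing} part A gives the claimed bound on $\mu_{\cC'}(B_r(x))$. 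The only technical point worth watching is the truncation threshold in the passage between $\mu$ and $\nu$, but since all relevant constants depend only on $n$ and $k$ (and the $c(k)$'s in Lemma \ref{lem:intermediary-packing} and Theorem \ref{thm:disjoint-discrete} can be harmonized), no genuine obstacle arises.
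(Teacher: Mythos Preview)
Your approach is correct and essentially identical to the paper's: invoke Lemma \ref{lem:intermediary-packing} to pass to the disjoint packing measure $\nu$, then feed the resulting weak-$L^1$ $\beta$-bound on $\nu$ into Theorem \ref{thm:disjoint-discrete}. The paper's proof is terser and applies Lemma \ref{lem:intermediary-packing} part B and Theorem \ref{thm:disjoint-discrete} directly at scale $B_r(x)$ rather than $B_{2r}(x)$; your choice of scale $2r$ is arguably cleaner since Lemma \ref{lem:intermediary-packing} part A gives $\mu_{\cC'}(B_r(x)) \leq \nu(B_{2r}(x))$, so one genuinely needs a bound on $\nu(B_{2r}(x))$. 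One small typo: in your definition of $\tilde\nu$ the restriction should be to $\overline{B_1(0)}$, not $\overline{B_{1/2}(0)}$, to match your claimed identity $\nu(B_{2r}(x)) = (2r)^k\tilde\nu(B_1)$.
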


\begin{proof}
Let $\nu$ be the disjoint discrete packing measure of Lemma \ref{lem:intermediary-packing}.  Whenever \eqref{eqn:disc-packing-hyp} holds, then by Lemma \ref{lem:intermediary-packing} we have
\begin{gather}
\nu\left( z \in B_r(x) : \int_0^{2r} \beta_{\nu, 2, c(k)\bar\eps_\beta/a}(z, r)^2 \frac{ds}{s} > \frac{c(k)M}{a} \right) \leq \frac{\Gamma}{a} r^k.
\end{gather}

Using Theorem \ref{thm:disjoint-discrete} at scale $B_r(x)$, provided $\bar\eps_\beta \leq c(k) \max\{M, \eps\}$, we have
\begin{gather}
\frac{\nu(B_r(x))}{r^k} \leq c(n) \left(1 + \frac{\eps}{a} + \frac{c(k)M}{a}\right) + \frac{\Gamma}{a}.
\end{gather}
\end{proof}

\begin{remark}
Let us remind the reader that \eqref{eqn:disc-packing-hyp} is a \emph{weaker} assumption than the corresponding estimate without the factor of $8$.
\end{remark}

\vspace{.25cm}

\subsection{Proofs of Theorems}

We are now ready to prove the Theorems involving upper and lower density.  Corollaries \ref{cor:main_density_1}, \ref{cor:main_density_2}, \ref{cor:main_upperahlfors_1}, \ref{cor:main_upperahlfors_2} are immediate from Theorems \ref{thm:main_density}, \ref{thm:main_upperahlfors}, and we will not explicitly prove them.

\begin{proof}[Proof of Theorems \ref{thm:main_density}, \ref{thm:main_upperahlfors}, part L)]
Let us first handle the $\cC'_+$.  Define the covering pair $(\cS, s_x)$ by setting
\begin{gather}
\cS_+ = \{ x \in \cC_+ : \Theta^{*, k}(\mu, \cC, x) \geq a \} , \quad \cS_0 = \cC \setminus \cS_+.
\end{gather}
Choose $s_x \in [r_x/50, r_x/25]$ so that $\mu(B_{s_x}(x)) \geq \frac{\omega_k a}{2} s_x^k$.  Clearly $\cS$ is a covering pair for $\mu$, and by assumption $\mu(\cC_+ \setminus \cS_+) = 0$.

Define $\cC'_+$ to be (the centers of) a Vitali subcover of
\begin{gather}
\{ B_{5s_x}(x) : x \in \cU(\cS)_+ \}.
\end{gather}
In other words, the balls $\{B_{5s_x}(x)\}_{x \in \cC'_+}$ are disjoint, while
\begin{gather}
B_{r_x}(\cC'_+) \supset B_{25 s_x}(\cC'_+) \supset B_{5s_x}(\cU(\cS)_+) \supset \cS_+
\end{gather}
In particular, $\mu(\cC_+ \setminus B_{r_x}(\cC'_+)) = 0$.  Clearly $\cC_+'$ is independent of $\eps$.

We demonstrate that $\cC'_+$ admits packing bounds at scale $B_1$ (in the case of Theorem \ref{thm:main_density}), or at all scales (in the case Theorem \ref{thm:main_upperahlfors}).  First observe that $\cC'_+ \subset \cU(\cS)$ and $\mu$ as defined satisfy all hypotheses of Theorem \ref{thm:discretized-packing} except \eqref{eqn:disc-packing-hyp}.  Fix also $\eps = \bar\eps_\beta/c(k)$ as required by Theorem \ref{thm:discretized-packing}. 

Since $r_x \leq 50 s_x$, and $\mu$ is supported in $B_1$ the hypothesis of Theorem \ref{thm:main_density} implies \eqref{eqn:disc-packing-hyp} holds at scale $B_1$, except with $c(k)M$ in place of $M$, and $c(k)\Gamma$ in place of $\Gamma$.  We can therefore apply Theorem \ref{thm:discretized-packing}, recalling our choice $\eps = \bar\eps_\beta/c(k)$, to deduce
\begin{gather}
\sum_{x \in \cC'_+} r_x^k \leq 50^k \sum_{x \in \cC'_+} s_x^k \leq c(n) + \frac{c(n)(\bar\eps_\beta + M) + c(k)\Gamma}{a}.
\end{gather}

On the other hand, using again $r_x \leq 50s_x$, the hypothesis of Theorem \ref{thm:main_upperahlfors} implies \eqref{eqn:disc-packing-hyp} holds at any scale $B_r(x)$, except with $\Gamma = M$.  In this case we can apply Theorem \ref{thm:discretized-packing} at any $B_r(x)$ to deduce
\begin{gather}
\sum_{x \in B_r(z) \cap \cC'_+} r_x^k \leq \left( c(n) + \frac{c(n)(\bar\eps_\beta + M)}{a}\right) r^k.
\end{gather}
This proves the required packing estimate for $\cC'_+$.

We now handle $\cC'_0$.  Fix $\delta > 0$, and define the covering pair
\begin{gather}
\cS_+ = \{ x \in \cC_0 : \Theta^{*, k}(\mu, \cC, x) \geq a \} , \quad \cS_0 = \cS \setminus \cS_+.
\end{gather}
Choose $s_x < \delta$ so that $\mu(B_{s_x}(x)) \geq \frac{\omega_k a}{2} s_x^k$.

Define $\cC'_0 = \cS_+ \cap \cC_0$, which is clearly independent of $\delta$ and $\eps$.  By assumption $\mu(\cC_0 \setminus \cS_+) = \mu(\cC_0 \setminus \cC'_0) = 0$.  By the same reasoning as above, we can find a Vitali subcover $\cS'$ of
\begin{gather}
\{B_{5s_x}(x) : x \in \cU(\cS) \},
\end{gather}
so that $\{B_{5s_x}(x)\}_{x \in \cS'_+}$ are disjoint, and
\begin{gather}
B_{25s_x}(\cS'_+) \supset \cS_+
\end{gather}

By the same argument as before, taking $\eps = \bar\eps_\beta/c(k)$, the hypothesis of Theorem \ref{thm:main_density} together with Theorem \ref{thm:discretized-packing} implies a packing estimate
\begin{gather}
\sum_{x \in \cS'} s_x^k \leq c(n) + \frac{c(n)(\eps + M) + c(k)\Gamma}{a},
\end{gather}
which in turn gives
\begin{gather}
\haus^k_\delta(\cC'_0 \cap B_1) \leq c(n) + \frac{c(n)(\bar\eps_\beta + M) + c(k)\Gamma}{a}.
\end{gather}

While the hypothesis of Theorem \ref{thm:main_upperahlfors} implies that, for any $z \in B_1$ and $0 < r \leq 1$, 
\begin{gather}
\sum_{x \in \cS' \cap B_r(z)} s_x^k \leq \left( c(n) + \frac{c(n)(\eps + M)}{a}\right) r^k ,
\end{gather}
and therefore
\begin{gather}
\haus^k_\delta(\cC'_0 \cap B_r(x)) \leq \left( c(n) + \frac{c(n)(\bar\eps_\beta + M)}{a} \right) r^k.
\end{gather}

Since $\delta$ is arbitrary, this proves the required Hausdorff estimate on $\cC'_0$.
\end{proof}

\vspace{.25cm}

\begin{proof}[Proof of Theorems \ref{thm:main_density}, \ref{thm:main_upperahlfors}, part U)]
Let us first prove Theorem \ref{thm:main_density} part U).   Fix $\eps = \bar\eps_\beta/c(k)$ as required by Theorem \ref{thm:main}.

Let $(\cS, s_x)$ be a covering pair defined by
\begin{gather}
\cS_+ = \{x \in \cC : \Theta_*^k(\mu, \cC, x) \leq b \}, \quad \cS_0 = \cC \setminus \cS_+,
\end{gather}
and choosing $s_x \in (0, 1)$ so that $\mu(B_{s_x}(x)) \leq 2b \omega_k s_x^k$.  By assumption $\mu(B_1 \setminus \cS_+) = \mu(\cS_0) = 0$, and $\cS$ is closed.

Apply Theorem \ref{thm:main} and obtain a $\cS' \subset \cS$.  Trivially we have $\mu(\cS'_0) = 0$.  We have
\begin{align}
\mu(B_1)
&\leq c(n)(\eps + M) + \Gamma + \sum_{x \in \cS'_+} \mu(B_{s_x}(x)) \\
&\leq c(n)(\eps + M) + \Gamma + b \sum_{x \in \cS'_+} s_x^k \\
&\leq c(n)(\bar\eps_\beta + M + b) + \Gamma. 
\end{align}
This proves Theorem \ref{thm:main_density} part U).

We now use this to prove Theorem \ref{thm:main_upperahlfors}.  We first observe that, trivially, $(\cC, r_x/5)$ is a covering pair for $\mu$, and satisfies
\begin{gather}
\mu \left\{ z \in B_r(x) : \int_{5 (r_z/5)}^{2r} \beta^k_{\mu,2, \bar\eps_\beta}(z, s)^2 \frac{ds}{s} > M \right\} \leq M r^k \quad \forall x \in \cS, r \geq r_x.
\end{gather}

Let $\cC' = \cU(\cC, r_x/5)$ as in Lemma \ref{lem:uniform-covering}.  Then from our choice of $r_x/5$, we have $B_{r_x}(\cC') \supset \cC$.  Now take $x \in \cC'$ and $r \geq r_x$, or take $x \not\in B_{r_x/5}(\cC)$ and $r > 0$.  In either case, we have by construction that
\begin{gather}
y \in B_r(x) \cap \cC \implies r_y \leq 10r.
\end{gather}

Let $\mu' = \mu \llcorner B_r(x) \leq \mu$.  Then $\cC \cap B_r(x)$ is a covering pair for $\mu'$, which (by monotonicity of $\beta$) satisfies the requirements of Theorem \ref{thm:main_density} part U) at scale $B_{10r}(x)$.  We deduce that
\begin{gather}
\mu(B_r(x)) = \mu'(B_{10r}(x)) \leq c(n)(\bar\eps_\beta + M + b)(10r)^k,
\end{gather}
which evidently finishes the proof.
\end{proof}

\vspace{1cm}

\section{Rectifiability}\label{sec:rect}

In this section, we prove the rectifiability properties promised in the main theorems, and in particular Theorem \ref{thm:main_rect}.  All these properties will be obtained as corollaries of the following Proposition, which is the main statement of this section.  The main purpose of the following Proposition, in comparison with other results in the literature, is that we sharply weaken the density assumptions required on the measure:

\begin{proposition}\label{prop:rect_with_bounds}
 Let $\mu$ be a non-negative Borel-regular measure such that for $\mu$-a.e.:
 \begin{gather}
  \Theta_*^k (\mu, x)\leq b<\infty\, , \quad \Theta^{*,k}(\mu, x)\geq a>0\, .
 \end{gather}
Suppose also that for $\mu$-a.e.
 \begin{gather}
 \int_0^2 \beta^k_{\mu, 2, \bar\eps_\beta}(x, r)^2 \frac{dr}{r} \leq M\, ,
 \end{gather}
where $\bar \eps_\beta\leq \min\{a, b\}/c(n, b, M)$. Then $\mu$ is $k$-rectifiable, and in particular we have that the density $\Theta^{k}(\mu, x)$ exists with $a\leq \Theta^k(\mu, x)\leq b$ $\mu$-a.e.
\end{proposition}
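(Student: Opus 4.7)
The strategy is to turn the pointwise hypotheses into ball-wise ``good'' configurations where Theorem~\ref{thm:core-estimate} produces a rectifiable approximation of the support, then to patch these local pieces together and finally invoke standard differentiation theory for the density statement. Without loss we may assume $\spt\mu \subset B_1$; Theorem~\ref{thm:main_density}(U) then gives $\mu(B_1) < \infty$. I will show that for each $\tau > 0$ there is a closed, $k$-rectifiable $R_\tau$ with $\mu(B_1 \setminus R_\tau) < \tau$, whence $R := \bigcup_n R_{1/n}$ is $k$-rectifiable and carries $\mu$.

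\emph{Egorov localization.} Since $\int_0^r \beta^k_{\mu,2,\bar\eps_\beta}(x,s)^2\,ds/s \to 0$ as $r \downarrow 0$ for $\mu$-a.e.\ $x$, and the density hypotheses hold $\mu$-a.e., Egorov's theorem produces a compact $E = E_\tau \subset \spt\mu$ with $\mu(\spt\mu \setminus E) < \tau/2$ and $r_0 > 0$ such that uniformly on $E$
\begin{gather*}
\int_0^{r_0} \beta^k_{\mu,2,\bar\eps_\beta}(x,s)^2\,\frac{ds}{s} \leq \eta, \qquad \mu(B_s(x)) \leq 2b s^k \quad \forall\, s \in (0, 2r_0),
\end{gather*}
where $\eta$ will be chosen momentarily.

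\emph{Patching via the core theorem.} For each $x \in E$ use $\Theta^{*,k}(\mu,x) \geq a$ to pick $r_x \in (0, r_0/100)$ with $\mu(B_{r_x}(x)) \geq (a/2)r_x^k$, and extract a Besicovitch subcollection $\{B_{r_i}(x_i)\}_i$ of bounded overlap covering $E$. Let $\mu' := \mu \llcorner E$; by monotonicity of $\beta$ under restriction one has $\beta_{\mu'} \leq \beta_\mu$, and Lemma~\ref{thm:lsc} propagates bounds to $\overline{E}$. Rescale $B_{r_i}(x_i)$ to $B_1$ and apply Theorem~\ref{thm:core-estimate} to $\mu'$ with $\cC := \overline{E}\cap B_{r_i}(x_i)$, $r_x \equiv 0$, and the parameter $M_{\mathrm{core}} := c(k)\eta/\delta_0(n)^2$, using Remark~\ref{rem:sum-vs-int} to convert the integral bound into the required dyadic-sum bound. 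This yields a closed, $k$-rectifiable $\cC'_{0,i} \subset B_{r_i}(x_i)$ with $\mu'(B_{r_i}(x_i) \setminus \cC'_{0,i}) \leq c_{\mathrm{key}}(n) M_{\mathrm{core}} r_i^k$. Bounded overlap combined with $\mu(B_{r_i}(x_i)) \geq (a/2)r_i^k$ forces $\sum_i r_i^k \leq c(n)\mu(B_1)/a$, so choosing $\eta$ small enough relative to $\tau$, $a$, $b$, $n$ gives $\mu(B_1 \setminus R_\tau) \leq \tau$ where $R_\tau := \bigcup_i \cC'_{0,i}$ is a countable union of rectifiable sets.

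\emph{Absolute continuity and density.} With $\mu$ concentrated on rectifiable $R$, Lebesgue-decompose $\mu = f\,\haus^k\llcorner R + \mu_{\mathrm{sing}}$ with $\mu_{\mathrm{sing}} \perp \haus^k\llcorner R$. Standard differentiation theory forces $\Theta_*^k(\mu_{\mathrm{sing}}, x) = \infty$ at $\mu_{\mathrm{sing}}$-a.e.\ $x$, contradicting $\Theta_*^k(\mu,x) \leq b$; hence $\mu_{\mathrm{sing}} = 0$, so $\mu \ll \haus^k$ and $\mu$ is $k$-rectifiable. The approximate tangent structure on $R$ then gives $\Theta^k(\mu,x) = f(x)$ at $\mu$-a.e.~$x$, and the pinching $\Theta_*^k \leq b$, $\Theta^{*,k} \geq a$ delivers $a \leq \Theta^k(\mu,x) \leq b$.

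\emph{Main obstacle.} The quantitative bottleneck is ensuring that the core theorem's hypothesis $\bar\eps_\beta \leq c(k)M_{\mathrm{core}}$ is compatible with the fixed smallness $\bar\eps_\beta \leq \min\{a,b\}/c(n,b,M)$ imposed in the statement: this precisely fixes how small $\eta$ can be taken in the Egorov step, and dictates the proportionality between the excess estimate $c_{\mathrm{key}}(n)M_{\mathrm{core}}r_i^k$ and the lower mass $(a/2)r_i^k$. The bookkeeping required to keep the $\mu$-mass outside $E$ separated from the Vitali argument --- so that the $\beta$-bound for $\mu'$ (rather than $\mu$) actually holds $\mu'$-a.e.\ in each rescaled ball --- is the technical heart of the argument.
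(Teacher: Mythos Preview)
Your argument has a genuine gap in the patching step --- the issue you flag under ``Main obstacle'' but do not resolve. You claim that ``choosing $\eta$ small enough relative to $\tau$'' forces the total error $c_{\mathrm{key}} M_{\mathrm{core}} \sum_i r_i^k$ below $\tau/2$; but $\sum_i r_i^k \leq c(n)\mu(B_1)/a$ is a fixed quantity, so this requires $M_{\mathrm{core}} \to 0$ as $\tau \to 0$, and the core-theorem constraint $\bar\eps_\beta \leq c(k)M_{\mathrm{core}}$ that you yourself note then forces $\bar\eps_\beta \to 0$. When $\bar\eps_\beta > 0$ the error is pinned from below by a positive constant independent of $\tau$, and you cannot reach $\mu(B_1 \setminus R_\tau) < \tau$ for small $\tau$. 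The two sentences ``this precisely fixes how small $\eta$ can be taken'' and ``choosing $\eta$ small enough relative to $\tau$'' are in direct contradiction.

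The paper avoids this by replacing the one-shot Egorov scheme with an iterative halving. With $\eta$ held \emph{fixed} (just small enough that the per-ball error is at most $\tfrac{\omega_k a}{4\cdot 10^k}\,s_x^k$, which is compatible with $\bar\eps_\beta \leq a/c(n)$), it shows that for any closed $C$ one can add finitely many Lipschitz manifolds $T_1,\dots,T_N$ with $\mu\bigl(B_1\setminus(C\cup T_1\cup\cdots\cup T_N)\bigr) \leq \tfrac12\mu(B_1\setminus C)$, and then iterates on $C$. The halving succeeds because the covering balls are chosen with $B_{s_x}(x)\subset B_1\setminus C$, a Vitali subcover gives \emph{disjoint} $B_{s_i/5}(x_i)\subset B_1\setminus C$, and $\mu(B_{s_i/5}(x_i)) \geq (a/2)(s_i/5)^k$ together with disjointness yields $\sum_i s_i^k$ controlled by the \emph{remaining} mass $\mu(B_1\setminus C)$, not by $\mu(B_1)$. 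Your Besicovitch bounded-overlap estimate only sees $\mu(B_1)$, so even if reorganized as an iteration it would not halve. The paper also first establishes full upper-Ahlfors regularity $\mu(B_r(x))\leq\Gamma r^k$ via Theorem~\ref{thm:main_upperahlfors}(U); this both feeds Lemma~\ref{lemma_rect_manifold} (used in place of your call to Theorem~\ref{thm:core-estimate}) and gives $\mu\ll\haus^k$ directly, sidestepping your differentiation argument --- which as written invokes $\Theta_*^k(\mu_{\mathrm{sing}},\cdot)=\infty$, whereas Besicovitch differentiation against $\haus^k\llcorner R$ only delivers the \emph{upper} density infinite, and on the $\haus^k$-null carrier of $\mu_{\mathrm{sing}}$ the rectifiable density relation need not hold.
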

\vspace{5mm}

Before proving this Theorem, we briefly discuss the main ideas behind the proofs in this section.\\

First, to prove rectifiability we need control over the $\beta$-numbers all the way down to radius $0$, not just some positive $r_x$.  Second, we need to assume some kind of upper and lower density bounds on $\mu$, see for instance Example \ref{ex:sharp}.  The lower bound prevents any higher dimensional pieces from appearing, and the upper bound prevents any lower dimensional pieces from appearing.  Note, however, we will only assume upper bounds on the \textit{lower} density of the measure and lower bounds on {\it upper} density, which is the weakest density assumption one might hope for. 

With these two assumptions we can apply Theorem \ref{thm:main_upperahlfors} and obtain full-blown upper Ahlfors' regularity for the measure $\mu$ under consideration. This apriori bound immediately implies that $\mu\ll\haus^k$, and more importantly can be used as in Theorem \ref{thm:packing-control} part C) to guarantee that a bad ball carries very small measure.

With this information, by repeating the construction of a good tree rooted at any $\B r x$, we can build a Lipschitz manifold $T_\infty$ such that $\mu (\B r x \setminus T_\infty)\leq c \epsilon r^k$.

By itself, this is not enough to prove rectifiability, as it is clear from Example \ref{ex_density}. The problem is that $\mu \B r x$ can be much smaller than the part of the measure covered by $T_\infty$. However, under lower bounds on the density, we can guarantee that $\B r x$ (for suitably chosen $x$ and $r$) has at least some mass: $\mu(\B r x)\geq a r^k$. If $a>0$ and $\epsilon$ is sufficiently small, this implies that at least half of the measure of $\mu\llcorner \B r x$ is covered by our Lipschitz manifold $T_\infty$. With a simple inductive argument, this implies the rectifiability of $\mu$.

\vspace{2.5mm}

We start by showing that, under uniform upper Ahlfors bounds, smallness of the $\beta$ numbers implies that we can cover a big chunk of the support of $\mu$ with a Lipschitz manifold.  

\begin{lemma}\label{lemma_rect_manifold}
For any $\eps > 0$, there is a $\delta_2(n, \Gamma, \eps)$ so that the following holds: let $\mu$ be a non-negative Borel-regular measure supported in $B_1$, having uniform upper bounds
\begin{gather}
\mu \ton{B_r(x)} \leq \Gamma r^k \quad \forall x \in B_1, \forall 0 < r \leq 1\, ,
\end{gather}
and satisfying
\begin{gather}\label{eq_small_bound}
\int_0^2 \beta^k_{\mu,2,\bar\eps_\beta}(z, r)^2 \frac{dr}{r} \leq \delta^2 \quad \text{for $\mu$-a.e. $z \in B_1$}\, ,
\end{gather}
with $\delta \leq \delta_2$, and $\bar\eps_\beta \leq \eps/c(n, \Gamma)$.

Then there is $e^{c(n,\eps,\Gamma)\delta^2}$-Lipschitz mapping $\tau : \R^k \to \R^n$, so that
\begin{gather}
\mu(B_1 \setminus \tau(B_2^k)) \leq c_4(n)\eps + c_5(n, \eps, \Gamma)\delta^2\, .
\end{gather}
\end{lemma}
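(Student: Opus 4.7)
The strategy is a dichotomy plus good-tree: either $B_1$ is a bad ball (in which case its total mass is already $\lesssim\epsilon$ by Theorem~\ref{thm:packing-control}(C)), or $B_1$ is good and we build a good tree rooted at $B_1$, whose limiting manifold $T_\infty$ will serve as $\tau(B_2^k)$ (up to reparametrization). Concretely, I would fix $\rho=\rho(n,\epsilon,\Gamma)\le 1/20$ small enough for Theorem~\ref{thm:packing-control}(C) to yield a mass drop by factor $\epsilon$ in every bad ball under the upper Ahlfors bound $\Gamma$, set $M:=\epsilon$ and $m:=m_0(n,\rho)M=m_0(n,\rho)\,\epsilon$ so that the good/bad dichotomy of Definition~\ref{def:good-bad-ball} matches these parameters, and finally take the covering pair $\mathcal{C}=\mathcal{C}_0:=\spt\mu$ with $r_x\equiv 0$. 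I would then require $\delta\le\delta_2:=\delta_1(n,\rho)\sqrt{M}=c(n,\epsilon,\Gamma)$, so the hypothesis of Theorem~\ref{thm:good-tree} is met once we pass (via Remark~\ref{rem:sum-vs-int}) from the Dini integral in \eqref{eq_small_bound} to the dyadic sum, and $\bar\epsilon_\beta\le \epsilon/c(n,\Gamma)$ is absorbed into the $\bar\epsilon_\beta\le c(k)M$ hypothesis.

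\textbf{Case 1: $B_1$ is bad.} By Theorem~\ref{thm:packing-control}(C), using the uniform upper Ahlfors bound $\mu B_\rho(y)\le\Gamma\rho^k$, one has $\mu(B_1)\le\epsilon$. Take any contracting Lipschitz embedding $\tau:\mathbb{R}^k\to\mathbb{R}^n$ (e.g.\ an affine one); then $\mu(B_1\setminus\tau(B_2^k))\le\mu(B_1)\le\epsilon\le c_4(n)\epsilon$.

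\textbf{Case 2: $B_1$ is good.} Build the good tree $\mathcal{T}(B_1)$ with parameters $\rho,m$ as above. Theorem~\ref{thm:good-tree} provides, for every $i$, the packing estimate \eqref{eq_pack_est} and an approximating bi-Lipschitz manifold $T_i\to T_\infty$ in $C^{0,\alpha}$ with Lipschitz constant $\le e^{cM^{-1}\delta^2}=e^{c(n,\epsilon,\Gamma)\delta^2}$. Away from $T_\infty$ the measure $\mu$ in $B_1$ is supported in the union of stop balls, bad-ball leaves, and excess sets across all scales; I would bound these three pieces separately:
\begin{itemize}
\item \emph{Stop balls:} each obeys $\mu\le M r^k=\epsilon r^k$, and by \eqref{eq_pack_est} $\sum_{s\in\cup\mathcal{S}_\ell} r_s^k\le 50^k$, giving total $\le 50^k\epsilon$.
\item \emph{Bad leaves:} applying Theorem~\ref{thm:packing-control}(C) inside each $B_{r_b}(b)$ (allowed since $\rho\le\rho_0(n,\epsilon,\Gamma)$ and the ambient $\Gamma$-upper bound descends) gives $\mu B_{r_b}(b)\le\epsilon r_b^k$, and the packing $\sum_b r_b^k\le 50^k$ yields total $\le 50^k\epsilon$.
\item \emph{Excess:} Lemma~\ref{lem:excess-bound} bounds the excess sum by $c(k,\rho)e^{2c_3 M^{-1}\delta^2}\delta^2\le c_5(n,\epsilon,\Gamma)\delta^2$.
\end{itemize}
Summing, $\mu(B_1\setminus T_\infty)\le c_4(n)\epsilon+c_5(n,\epsilon,\Gamma)\delta^2$. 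Finally, the map $\tau_\infty:T_0\to T_\infty$ from Section~\ref{section:prop_T} is Lipschitz with the stated constant; identifying $T_0=L_{0,1}$ with $\mathbb{R}^k$ via an isometry and composing with a bump-type extension (or truncation and linear extension outside), I obtain $\tau:\mathbb{R}^k\to\mathbb{R}^n$ Lipschitz such that $\tau(B_2^k)\supset T_\infty\cap B_1$, which completes the estimate.

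\textbf{Main obstacle.} The only real bookkeeping is arranging the constants $M,\rho,m,\delta$ simultaneously so that (i) bad balls and stop balls each contribute at most $c(n)\epsilon$, (ii) the excess is controlled by $\delta^2$, and (iii) the good-tree threshold $\delta^2\le\delta_1(n,\rho)^2 M$ holds; since all three pull in different directions, this is what forces $\delta_2$ to depend on $\epsilon$ and $\Gamma$. A secondary technical point is ensuring that $T_\infty$ (a Lipschitz disk defined only intrinsically) is reparametrized as a map from a genuine Euclidean ball $B_2^k$ whose image covers $T_\infty\cap B_1$; this is handled by the diameter estimate $|\tau_\infty-\mathrm{Id}|\le c(n,\rho)\delta\,r_0$ of Lemma~\ref{lem:lipschitz-bounds} and a standard Lipschitz extension.
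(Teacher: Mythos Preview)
Your proposal is correct and follows the same route as the paper: fix $\rho=\rho(n,\eps,\Gamma)$ via Theorem~\ref{thm:packing-control}(C), set $M=\eps$, take the trivial covering pair $\cC=\cC_0$ with $r_x\equiv 0$, and run the good tree at $B_1$ to produce $T_\infty$. The paper's presentation is slightly cleaner in one respect: with these parameters any ball that is bad in the sense of Definition~\ref{def:good-bad-ball} already satisfies $\mu(B_r(x))\le\eps r^k=M r^k$ by Theorem~\ref{thm:packing-control}(C) and is therefore classified as a \emph{stop} ball in the tree, so $\cB(\cT)=\emptyset$ from the outset and your ``bad leaves'' bullet is vacuous (indeed a ball in $\cB_i$ by definition has $\mu>M r^k$, which would contradict the conclusion $\mu\le\eps r_b^k$ you derive there); this redundancy does not affect your final estimate.
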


\begin{proof}
Let us remark that the proof of the following will use heavily the results and notation of Section \ref{section:good-tree}.\\

First, by monotonicity of $\beta$ (Remark \ref{rem:pointwise-integral}), and the fact $\mu$ is supported in $B_1$, we have
\begin{gather}
\sum_{\alpha \in \dZ \, \, : \, \, 2^\alpha \leq 2} \beta^k_{\mu, 2, 2^k \bar\eps_\beta}(z, 2^\alpha)^2  \leq c(k) \delta^2\, .
\end{gather}

Choose a scale $\rho(n,\eps, \Gamma) < 1/20$ as in Theorem \ref{thm:packing-control} so that any bad ball $B_r(x)$ with $m = m_0(n,\rho)\eps$ as in Theorem \ref{thm:packing-control} part A), must have $\mu(B_r(x)) \leq \eps r^k$.

Define the covering pair $\cC = \cC_0 = B_1(0)$, with $r_x \equiv 0$.  With our choice of $\rho$ above, any ball $B_r(x)$ must be either stop or good (as per Definition \ref{def:balls}), but cannot be bad.  

We can clearly assume without loss of generality that $B_1$ is a good ball, and build a good tree $\cT = \cT(B_1)$ rooted in $B_1$ following the construction of Section \ref{section:good-tree}.

Let $T_\infty=\tau_\infty(\R^k)$ be the Lipschitz manifold of Theorem \ref{thm:good-tree}.  By the above reasoning, $\cB(\cT) = \emptyset$, and by construction, $\cC_0(\cT) \subset T_\infty$.  We deduce
\begin{gather}
\mu(B_1 \setminus T_\infty) \leq \mu(B_1 \setminus C_0(\cT)) \leq c(n) \eps + c(n,\rho) \delta^2\, .
\end{gather}
This proves the Lemma.
\end{proof}

In order to prove rectifiability, we need smallness of \eqref{eq_small_bound} \emph{relative to} the lower density bound.  However, rectifiability is a local property, and so we can afford to drop in scale until \eqref{eq_small_bound} is verified.  This is the content of the following Lemma.
\begin{lemma}\label{lem:scale-of-D}
Take $\tau, \chi > 0$.  Suppose $\mu$ is a non-negative Borel-regular measure supported in $B_1$, with upper mass bounds
\begin{gather}
\mu(B_r(x)) \leq \Gamma r^k \quad \forall x \in B_1, 0 < r < 1\, ,
\end{gather}
and satisfying
\begin{gather}
\int_{B_1} \int_0^2 \beta^k_{\mu, 2, \bar\eps_\beta}(z, r)^2 \frac{dr}{r} d\mu(z) < \infty\, .
\end{gather}

Then for $\mu$-a.e. $x \in B_1$, there is a scale $R_x > 0$ so that
\begin{equation}\label{eqn:mostly-small-D}
\mu\left\{ z \in B_r(x) : \int_0^{2 r} \beta_{\mu, 2, \bar\eps_\beta}(z, s)^2 \frac{ds}{s} > \tau \right\} \leq \chi r^k \quad \forall 0 < r < R_x\, .
\end{equation}
\end{lemma}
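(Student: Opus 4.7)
The plan is to view the statement as a differentiation-type result for the $\mu$-measure of the level sets of the truncated Dini integral. Set $F(z) = \int_0^2 \beta^k_{\mu,2,\bar\eps_\beta}(z, r)^2\,\frac{dr}{r}$ and $F_r(z) = \int_0^{2r} \beta^k_{\mu,2,\bar\eps_\beta}(z, s)^2\,\frac{ds}{s}$, and let $E_\tau^r = \{z \in B_1 : F_r(z) > \tau\}$. By hypothesis $F \in L^1(\mu\llcorner B_1)$, so $F < \infty$ $\mu$-a.e., and for every such $z$ we have $F_r(z) \downarrow 0$ as $r \downarrow 0$. Since $F_r$ is monotone nondecreasing in $r$, the sets $E_\tau^r$ are monotone nondecreasing in $r$, and $\mu\bigl(\bigcap_{r>0} E_\tau^r\bigr) = 0$.

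To upgrade this global smallness to the claimed \emph{local} $r^k$ bound, I would fix a countable sequence of scales $r_j = 2^{-j}$ and apply the Lebesgue–Besicovitch differentiation theorem (which is valid in $\R^n$ for any finite Borel measure) to each Borel set $E_\tau^{r_j}$: outside a $\mu$-null set $N_j$, every $x \in B_1 \setminus E_\tau^{r_j}$ satisfies
\[
\lim_{r \to 0} \frac{\mu(E_\tau^{r_j} \cap B_r(x))}{\mu(B_r(x))} = 0.
\]
This is where the uniform upper Ahlfors bound $\mu(B_r(x)) \leq \Gamma r^k$ enters: it converts density-zero into $\mu(E_\tau^{r_j} \cap B_r(x))/r^k \to 0$, and hence for each such $x$ there is a radius $R(x,j) > 0$ with $\mu(E_\tau^{r_j} \cap B_r(x)) \leq \chi r^k$ for all $r \leq R(x,j)$.

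To conclude, let $N = \{F = \infty\} \cup \bigcup_j N_j$, which is $\mu$-null, and take any $x \in B_1 \setminus N$. Since $F(x) < \infty$ and $F_{r_j}(x) \downarrow 0$, there is some $j=j(x)$ with $F_{r_j}(x) < \tau$, so $x \notin E_\tau^{r_j}$; define $R_x := \min\{r_j, R(x,j)\}$. Then for every $0 < r \leq R_x$ the monotonicity $E_\tau^r \subseteq E_\tau^{r_j}$ gives
\[
\mu\bigl(E_\tau^r \cap B_r(x)\bigr) \leq \mu\bigl(E_\tau^{r_j} \cap B_r(x)\bigr) \leq \chi r^k,
\]
which is precisely \eqref{eqn:mostly-small-D}. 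The only real subtlety is the quantifier management: one has easy global smallness $\mu(E_\tau^r) \to 0$, but needs a pointwise-almost-everywhere \emph{centered} estimate, and the bridge is Lebesgue–Besicovitch applied along a countable exceptional sequence $r_j$ so that a single $\mu$-null set suffices. Measurability of all the relevant objects is automatic from the lower semicontinuity of $\beta^k_{\mu,2,\bar\eps_\beta}$ proved in Lemma \ref{thm:lsc}, so I do not anticipate further technical obstacles.
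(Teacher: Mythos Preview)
Your argument is correct and takes a genuinely different route from the paper.  The paper argues directly by Vitali covering: it lets $F$ be the set of points where \eqref{eqn:mostly-small-D} fails, notes that for every $R>0$ each $x\in F$ admits some bad scale $s_x<R$, extracts a Vitali subcover $\{B_{5s_i}(x_i)\}$ with disjoint $s_i$-balls, and chains the estimates
\[
\mu(F)\;\leq\;\sum_i\mu(B_{5s_i}(x_i))\;\leq\;c(k)\Gamma\sum_i s_i^k\;\leq\;\frac{c(k)\Gamma}{\chi\tau}\int_{B_1}\int_0^{2R}\beta_\mu(z,r)^2\,\frac{dr}{r}\,d\mu(z),
\]
which tends to $0$ as $R\to 0$ by dominated convergence.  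You instead package the covering into the Lebesgue--Besicovitch differentiation theorem applied to the level sets $E_\tau^{r_j}$ along a dyadic sequence, then use monotonicity $E_\tau^r\subseteq E_\tau^{r_j}$ and the Ahlfors bound to pass from density zero to the $r^k$ estimate.  The paper's approach is slightly more elementary and gives an explicit quantitative bound on $\mu(F)$ in terms of the tail $\int_0^{2R}\beta^2\,ds/s$; yours is cleaner conceptually and isolates the measure-theoretic input (Besicovitch differentiation) from the specific structure of the $\beta$-integral, at the cost of the countable-scale bookkeeping to collapse the null sets.
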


\begin{proof}
Let $F$ be the set of points $x$ which fail \eqref{eqn:mostly-small-D}.  Fix an $0 < R < 1/4$.  For $\mu$-a.e. $x \in F$, we can find a scale $0 < s_x < R$ for which
\begin{gather}
s_x^k < \frac{1}{\chi} \mu \left\{ z \in B_{s_x}(x) : \int_0^{2 s_x} \beta^k_{\mu, 2, \bar\eps_\beta}(z, r)^2 \frac{dr}{r} > \tau \right\} \, .
\end{gather}
Choose a Vitali subcovering $\{B_{5s_i}(x_i)\}_i$ of $\{B_{5s_x}(x)\}_{x \in F}$, so that the $s_i$-balls are disjoint.  Then we calculate
\begin{align}
\mu(F)
&\leq \sum_i \mu(B_{5s_i}(x_i))\notag \\
&\leq c(k) \Gamma \sum_i s_i^k \notag\\
&\leq c(k) \frac{\Gamma}{\chi} \sum_i \frac{1}{\tau} \int_{B_{s_i}(x_i)} \int_0^{2s_i} \beta^k_{\mu, 2, \bar\eps_\beta}(z, r)^2 \frac{dr}{r} d\mu(z) \\
&\leq c(k) \frac{\Gamma}{\chi \tau} \int_{B_1} \int_0^{2R} \beta^k_{\mu, 2, \bar\eps_\beta}(z, r)^2 \frac{dr}{r} d\mu(z) \, .\notag
\end{align}
By the dominated convergence theorem the RHS tends to $0$ as $R \to 0$.  This shows $\mu(F) = 0$.
\end{proof}

We are now ready to prove our rectifiable Proposition.
\begin{proof}[Proof of Proposition \ref{prop:rect_with_bounds}]
Begin by applying Theorem \ref{thm:main_upperahlfors} part U) to deduce for our measure the estimate\footnote{In some cases (as in Section \ref{subsection:structure-of-C_0}) $\mu$ may already be upper-Ahlfors-regular, in the sense that $\mu(B_r(x)) \leq \Gamma(n, b, M) r^k \quad \forall x \in B_1, \ 0 < r \leq 1 $, and one does not need these first few lines of the argument.}
\begin{gather}\label{eq_upper_Ahl_measure}
\mu(B_r(x)) \leq c(n) (b + M) r^k =: \Gamma r^k \quad \forall x \in B_1, \ 0 < r \leq 1\, .
\end{gather}
This implies in particular that
\begin{gather}
\int_{B_1} \int_0^2 \beta^k_{\mu, 2, \bar\eps_\beta}(z, r)^2 \frac{dr}{r} d\mu(z) < \infty\, ,
\end{gather}
and that $\mu$ is absolutely continuous wrt $\haus^k$, in symbols $\mu \ll \haus^k$. 

Choose
\begin{gather}
\eps = \frac{\omega_k a}{c_4(n) 16 \cdot 10^k}, \quad \delta^2 = \min \left\{ \frac{\omega_k a}{c_5(n, \eps, \Gamma) 16 \cdot 10^k}, \delta_2(n, b, M, \eps)^2 \right\}, \quad \chi = \frac{\omega_k a}{8 \cdot 10^k}, \quad \tau = \delta^2\, ,
\end{gather}
so that
\begin{gather}
\chi + c_4 \eps + c_5 \delta^2 \leq \frac{\omega_k a}{4 \cdot 10^k}\, .
\end{gather}

We claim the following: let $C$ be any closed set.  Then there is a finite collection $T_1, \ldots, T_N$ of Lipschitz $k$-manifolds so that
\begin{gather}\label{eq_claim_rect}
\mu(B_1 \setminus (C \cup T_1 \cup \cdots \cup T_N)) \leq \frac{1}{2} \mu(B_1 \setminus C)\, .
\end{gather}
To be specific, each $T_i$ is the image of a bi-Lipschitz mapping $\tau_i : B_3^k \to \R^n$.

Given this claim, rectifiability of $\mu$ follows by an straightforward iteration.  By taking $C = \emptyset$, then $C = T_1 \cup \cdots \cup T_N$, etc. we can inductively construct a nested sequence of closed $k$-rectifiable sets $\cK_1 \subset \cK_2 \subset \ldots$ so that
\begin{gather}
\mu(B_1 \setminus \cK_i) \leq 2^{-i} \mu(B_1) \leq 2^{-i} \Gamma.
\end{gather}
Letting $\cK = \cup_i \cK_i$, then $\mu(B_1 \setminus \cK) = 0$.  We have already established $\mu \ll \haus^k$, and therefore $\mu$ is $k$-rectifiable.

Now we turn to the claim \eqref{eq_claim_rect}. By assumption and Lemma \ref{lem:scale-of-D}, we can find a measurable set $A \subset B_1 \setminus C$ with $\mu(B_1 \setminus (A \cup C))=0$ such that for all $x\in A$, there is a radius $s_x$ so that
\begin{enumerate}
\item[A)] $B_{s_x}(x) \subset B_1 \setminus C$,

\item[B)] $\mu(B_{s_x/5}(x)) \geq \frac{\omega_k a}{2} (s_x/5)^k$,

\item[C)] and
\begin{gather}
\mu \left\{ z \in B_{s_x}(x) : \int_0^{2s_x} \beta^k_{\mu, 2, \bar\eps_\beta}(z, r)^2 \frac{dr}{r} > \delta^2 \right\} \leq \chi s_x^k\, .
\end{gather}
\end{enumerate}

In particular, by Lemma \ref{lemma_rect_manifold} and our choice of constants, there is for every such $x$ a Lipschitz $k$-manifold $T_x$ so that
\begin{gather}
\mu(B_{s_x}(x)\setminus T_x) \leq \frac{\omega_k a}{4 \cdot 10^k} s_x^k\, .
\end{gather}

Let $\{B_{s_i}(x_i)\}$ be a Vitali subcover of $\cup_{x\in A} \B {r_x}{x}$, so that the $B_{s_i/5}$-balls are disjoint, and write $T_i = T_{x_i}$.  We then calculate
\begin{align}
\mu\ton{B_1 \setminus (C \cup \bigcup_i T_i)}\, 
&\leq \sum_i \mu(B_{s_i}(x_i) \setminus T_i)\, , \\
&\leq \sum_i \frac{\omega_k a}{4 \cdot 10^k} s_i^k\, , \\
&\leq \frac{1}{4} \sum_i \frac{\omega_k a}{2} (s_i/5)^k\, , \\
&\leq \frac{1}{4} \mu(B_1 \setminus C)\, .
\end{align}
This proves the Claim, taking $N$ sufficiently large.

\end{proof}

Now we are in a position to obtain Theorem \ref{thm:main_rect} as a simple corollary of the previous result. 

\begin{proof}[Proof of Theorem \ref{thm:main_rect}]
By assumption there is some $x \in \spt\mu$, with $\beta^k_{\mu, 2, 0}(x, 2) < \infty$.  Therefore if $V = V(x, 2)$, and
\begin{gather}
U_j = \{ z \in B_1 : \dist(z, V) > 1/j\}\, , 
\end{gather}
then $\mu \llcorner U_j$ is Radon, and satisfies
\begin{gather}\label{eqn:theta-is-the-same-2}
\Theta^{*, k}(\mu \llcorner U_j, x) = \Theta^{*,k}(\mu, x) \quad \forall x \in U_j\, .
\end{gather}

For each integer $i$, $j$, define the Borel set
\begin{gather}
K_{ij} = \cur{z \in U_j : \int_0^2 \beta^k_{\mu, 2, 0}(z, r)^2 dr/r \leq i, \quad \text{and} \quad \Theta^{*, k}(\mu, z) > 1/i }\, .
\end{gather}
By assumption and \eqref{eqn:theta-is-the-same-2}, we have $\mu(U_j \setminus \cup_i K_{ij}) = 0$ for each $j$.

Using \eqref{eqn:theta-is-the-same-2} and Lemma \ref{lem:density-to-ineq}, we have
\begin{gather}
\haus^k \llcorner K_{ij} \leq i \cdot \mu \llcorner K_{ij}\, ,
\end{gather}
and therefore by monotonicity of $\beta$ we know
\begin{gather}
\int_0^2 \beta^k_{\haus^k \llcorner K_{ij}, 2, 0}(z, r)^2 dr/r \leq i^2 \quad \forall z \in K_{ij}\, .
\end{gather}

Since $\mu \llcorner U_j$ is a finite measure, so is $\haus^k \llcorner K_{ij}$, and hence we have the standard density estimates
\begin{gather}
2^{-k} \leq \Theta^{*, k}(\haus^k \llcorner K_{ij}, x) \leq 1 \quad \text{for $\haus^k$-a.e. $x \in K_{ij}$}\, .
\end{gather}
Using Proposition \ref{prop:rect_with_bounds}, we deduce $K_{ij}$ is rectifiable.

Clearly 
\begin{gather}
\cK_0 = V \cup \bigcup_{ij} K_{ij}\, 
\end{gather}
is still a rectifiable set, and $\mu(\B 1 0 \setminus \cK_0)=0$. This concludes the first part of the proof.

\vspace{2.5 mm} Under the additional assumption that $\Theta_*^k(\mu, x) < \infty$ for $\mu$-a.e. $x$, define the Borel sets
\begin{gather}
W_i = \cur{z \in B_1 : \int_0^2 \beta_{\mu, 2, 0}(z, r) dr/r \leq i \quad \text{and} \quad \Theta_*^k(\mu, z) \leq i }\, .
\end{gather}
By assumption, $\mu(B_1 \setminus \cup_i W_i) = 0$.

For any $i$ we trivially have that
\begin{gather}
\int_0^2 \beta_{\mu \llcorner W_i, 2,0}(z, r)^2 dr/r \leq i, \quad \text{and} \quad \Theta_*^k(\mu \llcorner W_i, z) \leq i \quad \forall z \in W_i\, .
\end{gather}
Therefore, by Theorem \ref{thm:main_upperahlfors} part U), 
\begin{gather}
\mu(W_i \cap B_r(x)) \leq c(i, n) r^k \quad \forall x \in B_1, 0 < r \leq 1\, .
\end{gather}
Therefore $\mu \llcorner W_i \ll \haus^k$ for each $i$, and hence $\mu \ll \haus^k$ also.
\end{proof}

\vspace{1cm}

\section{Remaining Theorems}

In this last section, we gather all the proofs of the secondary results that were stated in the introduction and in Section \ref{section:applications} ``Applications''.  The proofs are more-or-less direct corollaries of the results already proven.

\subsection{Proof of the statements in the introduction}
Theorem \ref{thm:teaser1} follows from Theorem \ref{thm:main} by choosing the trivial covering pair $\cC=\cC_0= \B 1 0$, and $r_x \equiv 0$. In a similar way, Corollary \ref{cor:teaser1} is a direct consequence of Corollary \ref{cor:main_density_1}.

%
%

\begin{proof}[Proof of Theorem \ref{thm:teaser2}]
Consider the measure $\mu_{x, r}(A) := r^{-k} \mu(x + r A)$.  Depending on whether $\mu$ satisfies the condition A), B), or C), then $\mu_{x,r}$ satisfies (respectively):
\begin{enumerate}
\item[A)] 
\begin{gather}
\int_0^2 \beta_{\mu_{x,r}, 2, 0}(z, r)^2 \frac{dr}{r} \leq M \quad \text{for $\mu_{x,r}$-a.e. $z \in B_1$}\, ,
\end{gather}

\item[B)] or
\begin{gather}
\int_{B_1} \int_0^2 \beta_{\mu_{x, r}, 2, 0}(z, r)^2 \frac{dr}{r} d\mu_{x,r}(z) \leq M^2\, ,
\end{gather}

\item[C)] or
\begin{gather}
\int_{B_1} \int_0^2 \beta_{\mu_{x, r}, 2, 0}(z, r)^2 \frac{dr}{r} d\mu_{x, r}(z) \leq M \mu_{x, r}(B_1)\, .
\end{gather}
\end{enumerate}

In cases A) or B), apply Theorem \ref{thm:main_density} 
with $\Gamma = M$, to deduce
\begin{gather}
\mu_{x, r}(B_1) \leq c(n)(M + b + \eps) + M \quad \forall \eps > 0\, .
\end{gather}
In case C), apply Theorem \ref{thm:main_density} with $\Gamma = \frac{1}{2} \mu_{x, r}(B_1)$, to deduce
\begin{gather}
\mu_{x, r}(B_1) \leq c(n)(2M + b + \eps) + \frac{1}{2} \mu_{x,r}(B_1) \quad \forall \eps > 0\, .
\end{gather}
Therefore, in all cases, we have $\mu(B_r(x)) \leq c(n)(M + b) r^k$.

The rectifiability is a direct consequence of Theorem \ref{thm:main_rect}.
\end{proof}

\begin{proof}[Proof of Corollary \ref{cor:teaser2}]
We first note that if $\haus^k(S) < \infty$ (as is automatic in case C), then
\begin{gather}
2^{-k} \leq \Theta^{*, k}(\mu, x) \leq 1 \quad \text{for $\haus^k$-a.e. $x$},
\end{gather}
and can therefore apply Theorem \ref{thm:teaser2} directly.

Let us consider cases A) and B).  In either case, there is an $x \in \spt\mu \cap B_1$ so that
\begin{gather}
\beta^k_{\mu, 2}(x, 2) < \infty.
\end{gather}
Trivially, $S \cap V^k(x, 2)$ is $k$-rectifiable, and
\begin{gather}
\haus^k(V^k(x, 2) \cap S \cap B_r(x)) \leq c(n) r^k \quad \forall x \in B_1, 0 < r \leq 1.
\end{gather}

On the other hand, let us define for each $i$
\begin{gather}
S_i = \{ z \in B_1 : d(z, V^k(x, 2)) > 1/i \} .
\end{gather}
Then $\haus^k(S_i) < \infty$, and by monotonicity of $\beta$ the measure $\haus^k \llcorner S_i$ satisfies either condition A) or B).  We deduce from Theorem \ref{thm:teaser2} that $S_i$ is $k$-rectifiable, and admits Hausdorff bounds
\begin{gather}
\haus^k(S_i \cap B_r(x)) \leq c(n)(1 + M) r^k \quad \forall x \in B_1, 0 < r \leq 1.
\end{gather}
Taking $i \to \infty$, we deduce the required estimate.
\end{proof}

We conclude this section with the proof of Corollary \ref{cor:teaser3}, which follows immediately from our measure bounds and the result of \cite{girela}.
\begin{proof}[Proof of Corollary \ref{cor:teaser3}]
By Theorem \ref{thm:teaser2}, we obtain that $\mu$ is upper Ahlfors regular. In particular, we have the uniform bounds $\mu(B_r(x))/r^k \leq c(n)(M + b)$ for all $x$ and $0 < r \leq 1$.  Therefore condition \eqref{eq:girela-hyp} is satisfied, and the conclusion follows by \cite{girela}.
\end{proof}

\subsection{Applications}
Most of the work to obtain the following Theorems is in decomposing the measure in various ways via Theorem \ref{thm:main}.

\begin{proof}[Proof of Theorem \ref{thm:main-thm-L1}]
Apply Theorem \ref{thm:main} at scale $B_1$, with the trivial covering pair $\cC \equiv \cC_0 = B_1$ (so $r_x \equiv 0$), and $\Gamma = M$, to obtain a closed, $k$-rectifiable set $\cK_0$ with
\begin{gather}
\haus^k(\cK_0 \cap B_r(x)) \leq c(n)r^k \quad \forall x \in B_1, 0 < r \leq 1, \quad\text{and} \quad \mu(B_1 \setminus \cK_0) \leq c(n)(M + \eps)\, .
\end{gather}
Since $\cK_0$ is closed, we know
\begin{gather}\label{eqn:theta-is-the-same}
\Theta^{*, k}(\mu \llcorner (B_1 \setminus \cK_0), x) = \Theta^{*, k}(\mu, x) \quad \text{for every $x \in B_1 \setminus \cK_0$}\, .
\end{gather}

Consider the Borel sets
\begin{gather}
U_+ = \{ x \in B_1 \setminus \cK_0 : \Theta^{*, k}(\mu, x) > 0 \} \quad \text{and}\quad \cK_\infty = \{x \in B_1 \setminus \cK_0 : \Theta^{*, k}(\mu, x) = \infty \}\, .
\end{gather}
Since $\mu \llcorner (B_1 \setminus \cK_0)<\infty$, we know by \eqref{eqn:theta-is-the-same} and Lemma \ref{lem:density-to-ineq} that $\haus^k(\cK_\infty) = 0$, and by Radon-Nikodym theorem 
\begin{gather}
0 < \Theta^{*, k}(\mu \llcorner (U_+ \setminus (\cK_0 \cup \cK_\infty)), x) < \infty \quad \text{for $\mu$-a.e. $x \in U_+ \setminus (\cK_0 \cup \cK_\infty)$}\, .
\end{gather}

Define
\begin{gather}
\mu_\ell = \mu \llcorner (U_+ \setminus (K_0 \cup K_\infty))\, .
\end{gather}
By monotonicity of $\beta$, 
\begin{gather}
\int_0^2 \beta_{\mu_\ell, 2, 0}^k(z, r)^2 dr/r < \infty \quad \mu_\ell-a.e.  \,z\, ,
\end{gather}
and therefore by Theorem \ref{thm:main_rect} $\mu_\ell$ is rectifiable.

Set $\cK_h = \cK_0 \cup \cK_\infty$, and $\mu_h = \mu \llcorner \cK_h$.  Since $\cK_0$ is rectifiable, and $\haus^k(\cK_\infty) = 0$, then $\cK_h$ is rectifiable also.  The volume bounds of part A) follow since $\haus^k(\cK_\infty) = 0$.  Using Theorem \ref{thm:main} part C) with Lemma \ref{lem:density-to-ineq}, we have
\begin{gather}
\haus^k(\cK_h \cap U) = \haus^k(\cK_0 \cap U) \leq c(k) \eps^{-1} \mu(U) \quad \forall \text{ open $U$}\, .
\end{gather}

Last, set $U_0$ to be the points $x \in B_1 \setminus \cK_0$ for which $\Theta^{*, k}(\mu, x) = 0$.  By definition,
\begin{gather}
B_1 \subset (\cK_0 \cup \cK_\infty) \cup U_+ \cup U_0\, ,
\end{gather}
thus we can set $\mu_0 = \mu \llcorner U_0$, and this completes the decomposition.  Since $\mu_0 \leq \mu \llcorner (B_1 \setminus \cK_0)$, the mass bound of Theorem \ref{thm:main-thm-L1} part C) is immediate.  The density assertion in C) is by construction.
\end{proof}

In a similar way, to prove Theorem \ref{cor:main-thm-L1-scales}, we decompose $\mu$ as in Theorem \ref{thm:main-thm-L1} \emph{at every scale}.
\begin{proof}[Proof of Theorem \ref{cor:main-thm-L1-scales}]
Let $\cK_0$, $\cK_\infty$, $U_+$, and $U_0$ be defined as in the proof of Theorem \ref{thm:main-thm-L1}.  Thus, $\cK_0$ is closed, rectifiable, and satisfies
\begin{gather}
\haus^k(\cK_0 \cap B_r(x)) \leq c(n) r^k \quad \forall x \in B_1, 0 < r \leq 1 , \quad\text{and} \quad \mu(B_1 \setminus \cK_0) \leq c(n)(\eps + M)\, .
\end{gather}
Moreover, by Theorem \ref{thm:main} part C) and Lemma \ref{lem:density-to-ineq}, 
\begin{gather}
\haus^k \llcorner \cK_0 \leq c(k)\eps^{-1} \mu \quad\text{on open sets}\, .
\end{gather}
Recall that $\haus^k(\cK_\infty) = 0$.

Define the intermediate measure
\begin{gather}
\mu_1 = \mu \llcorner (B_1 \setminus (\cK_0 \cup \cK_\infty))\, .
\end{gather}
Then $\mu_1$ is Radon.

Let $\{B_{s_i}(x_i)\}_i$ be a countable collection of balls, so that centers and radii $\{(x_i, s_i)\}_i$ are dense in $B_1 \times (0, 1)$.  For each $i$, apply Theorem \ref{thm:main} to $\mu_1$ at scale $B_{s_i}(x_i)$, with covering pair $\cC = \cC_0 = B_{s_i}(x_i)$ and $\Gamma = M$.

This produces a sequence of closed, $k$-rectifiable sets $\cK_i$ such that
\begin{gather}\label{eqn:upper-ahlfors-by-cutting}
\mu_1(B_{s_i}(x_i) \setminus \cK_i) \leq c(n)(\eps + M) s_i^k \quad\text{and}\quad \haus^k \llcorner \cK_i \leq c(k)\eps^{-1} \mu_1 \llcorner \cK_i \, .
\end{gather}
This last statement follows from Theorem \ref{thm:main} part C), Lemma \ref{lem:density-to-ineq}, and finiteness of $\mu_1$.  Since each $\cK_i$ is Borel, we have
\begin{gather}
\haus^k \llcorner \cup_i \cK_i \leq c(k) \eps^{-1} \mu_1 \llcorner \cup_i \cK_i \quad\text{on Borel sets}\, .
\end{gather}
We deduce by monotonicity of $\beta$ that the measure $\eta = \haus^k \llcorner \cup_i \cK_i$ satisfies
\begin{gather}
\int_{B_r(x)} \int_0^2 \beta_{\eta, 2, 0}^k(z, r)^2 \frac{dr}{r} d\eta(z) \leq c(k)M^2/\eps^2 \quad \forall x \in B_1, 0 < r \leq 1\, .
\end{gather}

Since $\mu_1$ is finite, $\haus^k(\cup_i \cK_i) < \infty$, and so $\Theta^{*,k}(\eta, x) \leq 1$ at $\eta$-a.e. $x$.  Theorem \ref{thm:main_upperahlfors} part U) implies that
\begin{gather}
\eta(B_r(x)) \leq c(n)(1 + M/\eps) r^k \quad \forall x \in B_1, 0 < r \leq 1\, .
\end{gather}
Set 
\begin{gather}
\cK_h = \cK_0 \cup \cK_\infty \cup \bigcup_i \cK_i\, ,\quad \mu_\ell = \mu_1 \llcorner (U_+ \setminus \cup_i \cK_i), \quad \mu_0 = \mu_1 \llcorner (U_0 \setminus \cup_i \cK_i).
\end{gather}
With this definition, $\mu_\ell$ is $k$-rectifiable, and $\Theta^{*, k}(\mu, x) = 0$ at $\mu_0$-a.e. $x$, as in the proof of Theorem \ref{thm:main-thm-L1}.  Upper-Ahlfors-regularity of $\mu_\ell + \mu_0$ follows by \eqref{eqn:upper-ahlfors-by-cutting}.
\end{proof}

We conclude this section with the proofs of Theorems \ref{thm:main_discrete} and \ref{thm:main_hausdorff_massbounds}, which are immediate consequences of previous results.

\begin{proof}[Proof of Theorem \ref{thm:main_discrete}]
Define the covering pair $\cC = \cC_+ = \{x_i\}_i$, with $r_{x_i} = r_i$.  Then Theorem \ref{thm:main_discrete} is an immediate Corollary of Theorem \ref{thm:main_density}.
\end{proof}

\begin{proof}[Proof of Theorem \ref{thm:main_hausdorff_massbounds}]
By lower-semi-continuity of $\beta$ (Lemma \ref{thm:lsc}) there is no loss in assuming that $S$ is closed.  As in the Proof of Corollary \ref{cor:teaser2}, hypothesis \eqref{eq_hausdorff_integral_bounds} implies $S$ is $\sigma$-finite.  Therefore $\Theta^*(\haus^k \llcorner S, x) \leq 1$ at $\haus^k$-a.e. $x \in S$.  In order to conclude, simply choose the covering pair $\cC = \cC_0 = S$, and apply Theorem \ref{thm:main_density}.
\end{proof}

\bibliographystyle{aomalpha}
\bibliography{ENV_Reifenberg}

\providecommand{\bysame}{\leavevmode\hbox to3em{\hrulefill}\thinspace}
\providecommand{\noopsort}[1]{}
\providecommand{\mr}[1]{\href{http://www.ams.org/mathscinet-getitem?mr=#1}{MR~#1}}
\providecommand{\zbl}[1]{\href{http://www.zentralblatt-math.org/zmath/en/search/?q=an:#1}{Zbl~#1}}
\providecommand{\jfm}[1]{\href{http://www.emis.de/cgi-bin/JFM-item?#1}{JFM~#1}}
\providecommand{\arxiv}[1]{\href{http://www.arxiv.org/abs/#1}{arXiv~#1}}
\providecommand{\doi}[1]{\url{http://dx.doi.org/#1}}
\providecommand{\MR}{\relax\ifhmode\unskip\space\fi MR }
\providecommand{\MRhref}[2]{%
  \href{http://www.ams.org/mathscinet-getitem?mr=#1}{#2}
}
\providecommand{\href}[2]{#2}
\begin{thebibliography}{DLMS23}

\bibitem[AS18]{AS}
\bgroup\scshape{}J.~Azzam\egroup{} and \bgroup\scshape{}R.~Schul\egroup{}, An
  analyst's traveling salesman theorem for sets of dimension larger than one,
  \emph{Math. Ann.} \textbf{370} (2018), 1389--1476. \mr{3770170}.
  \doi{10.1007/s00208-017-1609-0}.  Available at
  \url{https://arxiv.org/abs/1609.02892}.

\bibitem[AT15]{azzam-tolsa}
\bgroup\scshape{}J.~Azzam\egroup{} and \bgroup\scshape{}X.~Tolsa\egroup{},
  Characterization of {$n$}-rectifiability in terms of {J}ones' square
  function: {P}art {II},  \emph{Geom. Funct. Anal.} \textbf{25} (2015),
  1371--1412. \mr{3426057}.  \zbl{06521333}.  \doi{10.1007/s00039-015-0334-7}.

\bibitem[BS15]{BS1}
\bgroup\scshape{}M.~Badger\egroup{} and \bgroup\scshape{}R.~Schul\egroup{},
  Multiscale analysis of 1-rectifiable measures: necessary conditions,
  \emph{Math. Ann.} \textbf{361} (2015), 1055--1072. \mr{3319560}.
  \zbl{1360.28004}.  \doi{10.1007/s00208-014-1104-9}.

\bibitem[BS17]{BS2}
\bysame, Multiscale analysis of 1-rectifiable measures {II}:
  {C}haracterizations,  \emph{Anal. Geom. Metr. Spaces} \textbf{5} (2017),
  1--39. \mr{3627148}.  \doi{10.1515/agms-2017-0001}.  Available at
  \url{https://arxiv.org/abs/1602.03823}.

\bibitem[Dav98]{david:vanishing}
\bgroup\scshape{}G.~David\egroup{}, Unrectifiable {$1$}-sets have vanishing
  analytic capacity,  \emph{Rev. Mat. Iberoamericana} \textbf{14} (1998),
  369--479. \mr{1654535}.  \zbl{0913.30012}.  \doi{10.4171/RMI/242}.

\bibitem[DS93]{david-semmes}
\bgroup\scshape{}G.~David\egroup{} and \bgroup\scshape{}S.~Semmes\egroup{},
  \emph{Analysis of and on uniformly rectifiable sets}, \emph{Mathematical
  Surveys and Monographs} \textbf{38}, American Mathematical Society,
  Providence, RI, 1993. \mr{1251061}.  \zbl{0832.42008}.
  \doi{10.1090/surv/038}.

\bibitem[DT12]{davidtoro}
\bgroup\scshape{}G.~David\egroup{} and \bgroup\scshape{}T.~Toro\egroup{},
  Reifenberg parameterizations for sets with holes,  \emph{Mem. Amer. Math.
  Soc.} \textbf{215} (2012), vi+102. \mr{2907827}.  \zbl{1236.28002}.
  \doi{10.1090/S0065-9266-2011-00629-5}.

\bibitem[DLMS23]{dlms}
\bgroup\scshape{}C.~De~Lellis\egroup{}, \bgroup\scshape{}P.~Minter\egroup{},
  and \bgroup\scshape{}A.~Skorobogatova\egroup{}, The {Fine} {Structure} of the
  {Singular} {Set} of {Area}-{Minimizing} {Integral} {Currents} {III}:
  {Frequency} 1 {Flat} {Singular} {Points} and $\mathcal{H}^{m-2}$-a.e.
  {Uniqueness} of {Tangent} {Cones}, to appear on Commentarii Mathematici
  Helvetici, 2023. \arxiv{arXiv:2304.11553}.  Available at
  \url{https://arxiv.org/abs/2304.11553}.

\bibitem[Dor85]{Dorronsoro}
\bgroup\scshape{}J.~R. Dorronsoro\egroup{}, A characterization of potential
  spaces,  \emph{Proc. Amer. Math. Soc.} \textbf{95} (1985), 21--31.
  \mr{796440}.  \zbl{0577.46035}.  \doi{10.2307/2045567}.

\bibitem[GS]{girela}
\bgroup\scshape{}D.~Girela-Sarrión\egroup{}, Geometric conditions for the
  $l^2$-boundedness of singular integral operators with odd kernels with
  respect to measures with polynomial growth in $r^d$, arXiv:1505.07264.
  Available at \url{https://arxiv.org/abs/1505.07264}.

\bibitem[JNT]{jaye-nazarov-tolsa}
\bgroup\scshape{}B.~Jaye\egroup{}, \bgroup\scshape{}F.~Nazarov\egroup{}, and
  \bgroup\scshape{}X.~Tolsa\egroup{}, The measures with an associated square
  function operator bounded in {$L^2$}, arXiv:1612.04754. Available at
  \url{https://arxiv.org/abs/1612.04754}.

\bibitem[Jon90]{jones}
\bgroup\scshape{}P.~W. Jones\egroup{}, Rectifiable sets and the traveling
  salesman problem,  \emph{Invent. Math.} \textbf{102} (1990), 1--15.
  \mr{1069238}.  \zbl{0731.30018}.  \doi{10.1007/BF01233418}.  Available at
  \url{http://dx.doi.org/10.1007/BF01233418}.

\bibitem[Ler03]{Lerman}
\bgroup\scshape{}G.~Lerman\egroup{}, Quantifying curvelike structures of
  measures by using {$L_2$} {J}ones quantities,  \emph{Comm. Pure Appl. Math.}
  \textbf{56} (2003), 1294--1365. \mr{1980856}.  \doi{10.1002/cpa.10096}.
  Available at \url{https://doi.org/10.1002/cpa.10096}.

\bibitem[Mat95]{mattila}
\bgroup\scshape{}P.~Mattila\egroup{}, \emph{Geometry of sets and measures in
  {E}uclidean spaces}, \emph{Cambridge Studies in Advanced Mathematics}
  \textbf{44}, Cambridge University Press, Cambridge, 1995, Fractals and
  rectifiability. \mr{1333890}.  \zbl{0819.28004}.
  \doi{10.1017/CBO9780511623813}.

\bibitem[Mis18]{miskiewicz}
\bgroup\scshape{}M.~Miskiewicz\egroup{}, Discrete reifenberg-type theorem,
  \emph{Annales Academiae Scientiarum Fennicae Mathematica} \textbf{43} (2018),
  3--19. \doi{10.5186/aasfm.2018.4301}.  Available at
  \url{https://arxiv.org/abs/1612.02461}.

\bibitem[NV15]{naber-valtorta:varifold}
\bgroup\scshape{}A.~Naber\egroup{} and \bgroup\scshape{}D.~Valtorta\egroup{},
  The singular structure and regularity of stationary and minimizing varifolds,
   (2015), arXiv:1505.03428. Available at
  \url{https://arxiv.org/abs/1505.03428}.

\bibitem[NV17]{naber-valtorta:harmonic}
\bgroup\scshape{}A.~Naber\egroup{} and \bgroup\scshape{}D.~Valtorta\egroup{},
  Rectifiable-{R}eifenberg and the regularity of stationary and minimizing
  harmonic maps,  \emph{Ann. of Math. (2)} \textbf{185} (2017), 131--227.
  \mr{3583353}.  \zbl{06686585}.  \doi{10.4007/annals.2017.185.1.3}.  Available
  at \url{https://arxiv.org/abs/1504.02043}.

\bibitem[NTV97]{nazarov-tolsa-volberg:cauchy}
\bgroup\scshape{}F.~Nazarov\egroup{}, \bgroup\scshape{}S.~Treil\egroup{}, and
  \bgroup\scshape{}A.~Volberg\egroup{}, Cauchy integral and
  {C}alder\'on-{Z}ygmund operators on nonhomogeneous spaces,  \emph{Internat.
  Math. Res. Notices} (1997), 703--726. \mr{1470373}.  \zbl{0889.42013}.
  \doi{10.1155/S1073792897000469}.

\bibitem[Rei60]{reif_orig}
\bgroup\scshape{}E.~R. Reifenberg\egroup{}, Solution of the {P}lateau {P}roblem
  for {$m$}-dimensional surfaces of varying topological type,  \emph{Acta
  Math.} \textbf{104} (1960), 1--92. \mr{0114145}.  \zbl{0099.08503}.
  Available at \url{http://link.springer.com/article/10.1007%2FBF02547186}.

\bibitem[Sim]{simon_reif}
\bgroup\scshape{}L.~Simon\egroup{}, Reifenberg's topological disc theorem.
  Available at
  \url{http://www.math.uni-tuebingen.de/ab/analysis/pub/leon/reifenberg/reifenberg.dvi}.

\bibitem[Sim83]{simon:gmt}
\bysame, \emph{Lectures on geometric measure theory}, \emph{Proceedings of the
  Centre for Mathematical Analysis, Australian National University} \textbf{3},
  Australian National University, Centre for Mathematical Analysis, Canberra,
  1983. \mr{756417}.  \zbl{0546.49019}.

\bibitem[Tol99]{tolsa:l2-boundedness}
\bgroup\scshape{}X.~Tolsa\egroup{}, {$L^2$}-boundedness of the {C}auchy
  integral operator for continuous measures,  \emph{Duke Math. J.} \textbf{98}
  (1999), 269--304. \mr{1695200}.  \zbl{0945.30032}.
  \doi{10.1215/S0012-7094-99-09808-3}.

\bibitem[Tol15]{tolsa:jones-rect}
\bysame, Characterization of {$n$}-rectifiability in terms of {J}ones' square
  function: part {I},  \emph{Calc. Var. Partial Differential Equations}
  \textbf{54} (2015), 3643--3665. \mr{3426090}.  \zbl{06544048}.
  \doi{10.1007/s00526-015-0917-z}.

\bibitem[Tor95]{toro:reifenberg}
\bgroup\scshape{}T.~Toro\egroup{}, Geometric conditions and existence of
  bi-{L}ipschitz parameterizations,  \emph{Duke Math. J.} \textbf{77} (1995),
  193--227. \mr{1317632}.  \zbl{0847.42011}.
  \doi{10.1215/S0012-7094-95-07708-4}.

\end{thebibliography}

\end{document}